\documentclass[11pt,reqno]{article}
\usepackage{amssymb,amsmath}
\usepackage{amsfonts}
\usepackage{amsbsy}
\usepackage{amsopn} 
\usepackage{amsthm}
\usepackage{amscd}
\usepackage{verbatim}
\usepackage{graphicx}

\usepackage{color}

\textheight24.0cm\topmargin -1.5cm \topskip -1.5cm
\textwidth15.5cm
\oddsidemargin-0.5cm\evensidemargin-0.5cm

\numberwithin{equation}{section}
\newtheorem{theorem}{Theorem}[section]
\newtheorem{remark}[theorem]{Remark}
\newtheorem{lemma}[theorem]{Lemma}
\newtheorem{cor}[theorem]{Corollary}
\newtheorem{prop}[theorem]{Proposition}
\newtheorem{definition}[theorem]{Definition}
\newtheorem{example}[theorem]{Example}

\newtheorem{assumption}[theorem]{Assumption}

\newcommand{\nat}{\mathbb{N}}
\newcommand{\rzecz}{\mathbb{R}}

\newcommand{\eps}{\varepsilon }

\newcommand{\rd}{{\rzecz }^{d}}

\newcommand{\ind}[1]{{1\! \!  1 }_{#1}  }

\newcommand{\diver}{\text{\rm div\,}}
\newcommand{\curl}{\text{\rm curl}\,}

\newcommand{\supp}{\text{\rm supp}\,}

\newcommand{\acal}{\mathcal{A}}

\newcommand{\ccal}{\mathcal{C}}

\newcommand{\fcal}{\mathcal{F}}

\newcommand{\hcal}{\mathcal{H}}

\newcommand{\kcal}{\mathcal{K}}
\newcommand{\lcal}{\mathcal{L}}
\newcommand{\mcal}{\mathcal{M}}

\newcommand{\ocal}{\mathcal{O}}

\newcommand{\scal}{\mathcal{S}}
\newcommand{\tcal}{\mathcal{T}}

\newcommand{\vcal}{\mathcal{V}}
\newcommand{\xcal}{\mathcal{X}}

\newcommand{\zcal}{\mathcal{Z}}

\newcommand{\Fmath}{\mathbb{F}}
\newcommand{\Hmath}{\mathbb{H}}
\newcommand{\Kmath}{\mathbb{K}}

\newcommand{\Umath}{\mathbb{U}}
\newcommand{\Vmath}{\mathbb{V}}

\newcommand{\Bbold}{\mathbf{B}}

\newcommand{\Cbold}{\mathbf{C}}

\newcommand{\fbold}{\mathbf{f}}

\newcommand{\Gbold}{\mathbf{G}}

\newcommand{\ubold}{\mathbf{u}}

\newcommand{\vbold}{\mathbf{v}}

\newcommand{\p}{\mathbb{P}}

\newcommand{\e}{\mathbb{E}}

\newcommand{\norm}[3]{{\|  #1 \| }_{#2}^{#3}}
\newcommand{\Norm}[3]{{\Bigl\|  #1 \Bigr\| }_{#2}^{#3}}
\newcommand{\ilsk}[3]{{( #1 | #2 )}_{#3}}
\newcommand{\Ilsk}[3]{{\Bigl( #1 \bigl| #2 \Bigr)}_{#3}}
\newcommand{\dual}[3]{{\langle #1 | #2 \rangle}_{#3}}
\newcommand{\Dual}[3]{{\Bigl< #1 \bigl| #2 \Bigr>}_{#3}}
\newcommand{\dirilsk}[3]{{( \! ( #1 | #2 ) \! ) }_{#3}}

\newcommand{\nnorm}[3]{{|  #1 |}_{#2}^{#3}}
\newcommand{\Nnorm}[3]{{\Bigl|  #1 {\Bigr| }_{#2}^{#3}}}

\newcommand{\lb}{\langle}
\newcommand{\rb}{\rangle}
\newcommand{\ddual}[4]{{}_{#1}\lb #2 |#3 {\rb }_{#4}}
\newcommand{\Ddual}[4]{{}_{#1}\Bigl< #2 |#3 {\Bigr> }_{#4}}

\newcommand{\qvar}[1]{\langle \! \langle  #1  \rangle \! \rangle }

\newcommand{\lhs}{{\mathcal{T}_2}}

\newcommand{\tOmega}{\tilde{\Omega}}

\newcommand{\tfcal}{\tilde{\fcal}}
\newcommand{\tp}{\tilde{\p}}

\newcommand{\te}{\tilde{\e}}

\newcommand{\ttfcal}{\tilde{\tilde{\fcal}}}

\newcommand{\ttOmega}{\tilde{\tilde{\Omega }}}
\newcommand{\ttp}{\tilde{\tilde{\p }}}

\newcommand{\ttW}{\tilde{\tilde{W }}}

\newcommand{\Hn}{{\Hmath }_{n}}

\newcommand{\Gn}{{G}_{n}}
\newcommand{\Mn}{{M}_{n}}

\newcommand{\tM}{\tilde{M}}
\newcommand{\tMn}{{\tilde{M}}_{n}}

\newcommand{\Pn}{{P}_{n}}
\newcommand{\un}{{u}_{n}}
\newcommand{\wn}{{w}_{n}}
\newcommand{\Jn}[1]{{J}_{n}^{#1}}

\newcommand{\ball}{\mathbb{B}}
\newcommand{\bball}{\bar{\ball}}

\newcommand{\taun}{{\tau}_{n}}

\newcommand{\mhd}{\widetilde{b}}
\newcommand{\MHD}{\widetilde{B}}
\newcommand{\Hall}{\hcal}
\newcommand{\tHall}{\widetilde{\Hall}}
\newcommand{\hall}{\mathfrak{h}}
\newcommand{\thall}{\widetilde{\mathfrak{h}}}

\newcommand{\X}{X}
\newcommand{\Xn}{X_n}
\newcommand{\Xnk}{{X}_{n_k}}
\newcommand{\tXnk}{{\tilde{X}}_{{n}_{k}}}
\newcommand{\tXn}{{\tilde{X}}_{n}}
\newcommand{\tX}{\tilde{X}}
\newcommand{\ttX}{\tilde{\tilde{X}}}

\newcommand{\taunR}{{\tau }_{R}^{n}}

\newcommand{\Law}{\mathrm{Law}}

\newcommand{\ARiesz}{{A}_{n}}
\newcommand{\BRiesz}{{\MHD }_{n}}
\newcommand{\RRiesz}{{\tHall }_{n}}
\newcommand{\fRiesz}{{f}_{n}}

\newcommand{\Vast}{{\Vmath }_{\ast}}

\newcommand{\Uprime}{{\Umath }^{\prime }}
\newcommand{\Vprime}{{\Vmath }^{\prime }}
\newcommand{\Vastprime}{{\Vmath}_{\ast }^{\prime }}

\newcommand{\Vtest}{{\Vmath }_{1,2}}

\newcommand{\Hsol}[1]{{V}_{#1}}
\newcommand{\Hsolprime}[1]{{V}_{#1}^{\prime}}

\newcommand{\Sn}{{S}_{n}}
\newcommand{\Ldn}{{L}^{2}_{n}}

\begin{document}
\title{\bf Martingale solutions of the stochastic Hall-magnetohydrodynamics  equations 
on ${\rzecz }^{3}$ \\
\
 \rm  }
\author{El\.zbieta Motyl
\footnote{Department of Mathematics and Computer Science, University of \L\'{o}d\'{z}, ul. Banacha 22,
91-238 \L \'{o}d\'{z}, Poland; email: elzbieta.motyl@wmii.uni.lodz.pl}}

\maketitle

\begin{abstract}
We prove the existence of a global martingale solution of a stochastic Hall-magne\-to\-hydro\-dynamics  equations  on ${\rzecz }^{3}$ with multiplicative noise. Using the Fourier analysis we construct a sequence of approximate solutions.
The existence of a solution is proved via the stochastic compactness method and the Jakubowski generalization of the Skorokhod theorem for nonmetric spaces, in particular, the spaces with weak topologies.
The main difficulty is caused by the Hall term which makes the equations strongly nonlinear.
\end{abstract}

\section{Introduction}

\medskip  \noindent
Magnetohydrodynamics describes the motion of electrically conductive fluid in the presence of a magnetic field with wide range of applications in geophysics and astrophysics. Mathematically rigorous analysis of the MHD equations started from Duvaut and Lions \cite{Duvaut+Lions'76}  and Sermange and Temam \cite{Sermange+Temam'83}, where deterministic MHD equations are considered. These equations are basically obtained by coupling the Navier-Stokes equations with the Maxwell equations. Stochastic MHD equations with the Gaussian noise were considered, e.g. in  \cite{Barbu_DaPrato'07}, \cite{Chueshov+Millet'10}, \cite{Idriss'18}, \cite{Id+Raza'19}, \cite{Sango'10}, \cite{Schenke'21a}, \cite{Sritharan+Sundar'99}, \cite{Yamazaki'19}.

\medskip  \noindent
\medskip  \noindent
The Hall-MHD model  is important in the physics of plasma. Mathematical derivation of a model taking into account the Hall effect was introduced in \cite{Acheritogaray+Degond'11}. Moreover, the authors in \cite{Acheritogaray+Degond'11} prove the existence of a global weak solutions for the incompressible viscous resistive Hall-MHD equations in ${[0,1]}^{3}$. 
The proof in \cite{Acheritogaray+Degond'11} is based on the Galerkin approximation and the compactness method. 
The uniqueness of global solution in general case  is an open problem. 
Deterministic Hall-MHD equations were also considered in, e.g.,  \cite{Chae+Degond+Liu'14}, \cite{Chae+Lee'14}, \cite{Chae+Schonbek'13}, \cite{Chae+Wan+Wu'15}.

\medskip  \noindent
The analysis of the Hall-magnetohydrodynamics equations was developed by Yamazaki \cite{Yamazaki'17}, where the stochastic Hall-MHD equations perturbed by a Gaussian random field on the domain $D={[0,1]}^{3}$ are considered. 
Using the Galerkin approximation and the tightness criteria introduced by Flandoli and G\c{a}tarek in \cite{Flandoli+Gatarek'1995}, the author proves the existence of a global martingale solution. The method depends strongly on the compactness of appropriate Sobolev embeddings in the case of the domain $D={[0,1]}^{3}$.
See also \cite{Yamazaki'19a} and \cite{Yamazaki'19}.

\medskip  \noindent
Inspired by \cite{Yamazaki'17}, we consider the stochastic Hall-MHD equations with a multiplicative Gaussian noise on ${\rzecz }^{3}$ and prove the existence of a global martingale solution. The main difficulty in comparison to \cite{Yamazaki'17} is the fact that in the case of an unbounded domain the standard Sobolev embeddings are not compact.
To overcome this problem we use the compactness and tightness results analogous to those proved in \cite{Brze+EM'13} in the context of Navier-Stokes equations and in \cite{EM'14} for other hydrodynamics equations. The details are presented in Appendices \ref{sec:aux_funct.anal} and \ref{sec:comp-tight}. 
Besides, the construction of a solution is based on the Fourier analysis and it is closely related to the Littlewood-Paley decomposition, see \cite{Bahouri+Chemin+Danchin'11}.


\medskip  \noindent
We consider the following Hall-MHD system on $[0,T] \times {\rzecz }^{3}$
\begin{align}
& d\ubold  + \Bigl[ (\ubold \cdot \nabla  ) \ubold + \nabla p 
- s \,(\Bbold \cdot \nabla ) \Bbold +s \, \nabla \Bigl( \frac{{|\Bbold |}^{2}}{2} \Bigr) 
- {\nu }_{1} \, \Delta \ubold \Bigr] \, dt \; = \; {\fbold }_1 (t) \nonumber \\
& \qquad \quad + {\Gbold }_{1}(t,\ubold ) \, dW_1(t),
\label{eq:Hall-MHD_u} \\
&  d \Bbold  + \Bigl[ (\ubold \cdot \nabla ) \Bbold - (\Bbold \cdot \nabla ) \ubold 
+ \eps \, \curl [(\curl \Bbold ) \times \Bbold ] 
- {\nu }_{2} \, \Delta \Bbold \Bigr] \, dt \; = \; {\fbold }_2(t) \nonumber \\ 
& \qquad \quad +{\Gbold }_{2}(t,\Bbold ) \, dW_2(t), 
\label{eq:Hall-MHD_B} \\
& \diver \ubold \; = \; 0 \quad \mbox{ and } \quad \diver \Bbold \; = \; 0.
\label{eq:Hall-MHD_incompressibility}
\end{align}
The equations are supplemented by the following initial conditions
\begin{equation}
\ubold (0) \; = \; {\ubold }_{0} \quad \mbox{ and } \quad \Bbold (0) \; = \; {\Bbold }_{0}.
\label{eq:Hall-MHD_ini-cond}
\end{equation}
In this problem $\ubold (t,x) = (u_1,u_2,u_3)(t,x)$, $\Bbold (t,x) = (B_1,B_2,B_3)(t,x)$ for $(t,x)\in [0,T] \times {\rzecz }^{3}$, are  three-dimensional vector fields representing velocity and magnetic fields, respectively, and the real valued function $p(t,x) $  denotes the pressure of the fluid.
The positive constants ${\nu }_{1},{\nu }_{2}, s$ represent kinematic viscosity, resistivity and the Hartmann number, respectively.
The $\curl $-operator is defined for a vector field $\phi : {\rzecz }^{3} \to {\rzecz }^{3}$ by
\[
\curl \phi \; := \; \nabla \times \phi .
\]
The expression
\[
\eps \, \curl [(\curl \Bbold ) \times \Bbold ] ,
\]
which makes the system \eqref{eq:Hall-MHD_u}-\eqref{eq:Hall-MHD_B} strongly nonlinear,
represents the Hall-term with the Hall parameter $\eps >0$. 
For simplicity we will assume that $s=1$ and $\eps =1$. 
Moreover, $\fbold =({\fbold }_{1},{\fbold }_{2})$ stands for the deterministic external forces
and ${\Gbold }_{1}(t,\ubold )d{W}_{1}(t)$, ${\Gbold }_{2}(t,\Bbold )d{W}_{2}(t)$, where ${W}_{1}(t), {W}_{2}(t)$ are cylindrical Wiener processes, stand for the random forces.

\medskip  \noindent
Problem \eqref{eq:Hall-MHD_u}-\eqref{eq:Hall-MHD_ini-cond} can be rewritten  as the following initial value problem for the stochastic equation in appropriate functional spaces 
\begin{equation}
\begin{split} 
&  d\X (t)  +  \bigl[  \acal \X (t)  + \MHD  (\X (t)) +\tHall (\X (t)) \bigr] \, dt  \\
& \qquad \; \;  \; = \;  {f}_{} (t) \, dt 
 +   G (t,\X (t) ) \, dW(t)  , \qquad t \in [0,T] .   \\
& \X (0) \; = \; {\X }_{0}.
\end{split}  \label{eq:Hall-MHD_functional_Intro} 
\end{equation} 
Here $\X = (\ubold ,\Bbold )$,  ${\X }_{0} := ({\ubold }_{0}, {\Bbold }_{0})$, and
  $\acal $, $\MHD $ and $\tHall $ are the maps corresponding to the Stokes-type operators, the MHD-term and the Hall-term, respectively, defined in Section \ref{sec:Hall-MHD_funct-setting}.

\medskip  \noindent
We prove the existence of a global martingale solution of problem \eqref{eq:Hall-MHD_functional_Intro}.
The main result is stated in Theorem  \ref{th:mart-sol_existence}.
Assumptions \ref{assumption-noise} allow to consider the noise term dependent on the unknown process $\X $ and its spatial derivatives.
The construction of a solution is based on the approximation motivated by \cite[Section 4]{Bahouri+Chemin+Danchin'11} and \cite{Feff+McCorm+Rob+Rod'2014}, \cite{Mohan+Sritharan'16}, \cite{Manna+Mohan+Srith'2017}, \cite{Brze+Dha'20}. We consider approximate stochastic equations, called also truncated equations, 
\begin{equation*}
\begin{cases}
 d \Xn (t) &+ \; \bigl[ \ARiesz( \Xn (t))   + \BRiesz  (\Xn (t)) + {\tHall }_{n}(\Xn (t)) \bigr] \, dt
\\
& = \;  \fRiesz (t)  \, dt + \, \Gn \bigl( t,\Xn (t)\bigr) \, dW(t),   \quad t \in [0,T] , \\
\Xn (0) & = \; \Pn {\X }_{0} .
\end{cases}
\end{equation*}
in the infinite dimensional Hilbert spaces ${\Hmath }_{n}$, $n\in \nat $, defined via the Fourier transform techniques, 
see Section \ref{sec:truncated_eq} and Appendix \ref{sec:Fourier_truncation}.  
The crucial point is to  prove suitable uniform a priori estimates for the approximate solutions ${(\Xn )}_{n\in \nat }$ stated in Lemma \ref{lem:Hall-MHD_truncated_estimates}. To deal with the Hall-term, which is strongly nonlinear, we introduce the tri-linear form $\hall $ and the bilinear map $\Hall $, see Section \ref{sec:form_h-map_Hall}. The results from Remark \ref{rem:Hall-term_properties} and Lemma \ref{lem:Hall-term_conv_general}, concerning $\hall $ and $\Hall $, are very important in our approach.
The main idea of the further steps is similar to \cite{Brze+EM'13}.
The processes $\Xn $ generate a tight sequence of probability measures $\{ \Law (\Xn ), n \in \nat  \} $ on appropriate functional space. Using Jakubowski's generalization of the Skorokhod theorem for non metrizable spaces and the martingale representation theorem we prove the existence of a martingale solution of problem \eqref{eq:Hall-MHD_functional_Intro}.   

\medskip  \noindent
Let us briefly recall some relevant applications of the Fourier analysis in partial differential equations.   
In \cite{Feff+McCorm+Rob+Rod'2014} Fefferman and co-authors study deterministic MHD equations on ${\mathbb{R}}^{d}$, $d=2,3$, and prove the existence of a unique local in time solution in the space $\mathcal{C}([0,{T}_{\ast }];{H}^{s})$ for $s>\frac{d}{2}$. By introducing the Fourier truncation the authors approximate the MHD equation by the truncated equations, see \cite[p. 1042]{Feff+McCorm+Rob+Rod'2014}.

\medskip  \noindent
The same idea, referred to as the Friedrichs method, is used by Bahouri, Chemin and Danchin \cite[Section 4]{Bahouri+Chemin+Danchin'11} to study other class of deterministic differential equations. Defining the cut-off operators, see \cite[p. 174]{Bahouri+Chemin+Danchin'11}, the authors consider appropriate approximate equations.

\medskip  \noindent
Analogously to \cite{Feff+McCorm+Rob+Rod'2014}, Mohan and Sritharan \cite{Mohan+Sritharan'16} apply the Fourier truncation method to prove the existence of unique local solution of the stochastic Euler equation on ${\mathbb{R}}^{d}$, $d=2,3$, in Sobolev spaces ${H}^{s}$ for $s > \frac{d}{2}+1$. The same method is also used by Manna, Mohan and Sritharan \cite{Manna+Mohan+Srith'2017} to study local solutions of the stochastic 
MHD equations with L\'{e}vy noise.
On the other hand, Brze\'{z}niak and Dhariwal \cite{Brze+Dha'20} apply the truncated approximation in the study of global solutions of the stochastic tamed Navier-Stokes equations.

\medskip  \noindent
The paper is organized  as follows. In Sections \ref{sec:basic_notation} and \ref{sec:form_b-map_B}  we recall some standard notations and results. In Section \ref{sec:form_h-map_Hall} we analyze the Hall-term. 
Section \ref{sec:Hall-MHD_funct-setting} is devoted to the functional setting of the Hall-MHD problem.
In Section \ref{sec:statement} we formulate the assumptions, definition of a martingale solution and state the main theorem. In Section \ref{sec:truncated_eq} we consider approximate equations and prove  a priori estimates.  The proof of the existence of a global martingale solution is established in Section \ref{sec:existence}. Some auxiliary results related to the Fourier analysis are contained in Appendix \ref{sec:Fourier_truncation}.  Results concerning compactness and tightness criteria are presented Appendices \ref{sec:aux_funct.anal} and \ref{sec:comp-tight}. Appendix \ref{app:Hall-term_conv_general_proof} 
contains the proof of some convergence result for the Hall-term,  used in the proof of the main theorem.

\medskip
\section{Functional setting} \label{sec:funct-setting}

\medskip  
\subsection{Basic spaces and notations} \label{sec:basic_notation}

\medskip  \noindent
Let ${\ccal }^{\infty }_{c} = {\ccal }^{\infty }_{c}({\rzecz }^{3},{\rzecz }^{3})$ denote the space of all ${\rzecz }^{3}$-valued functions of class ${\ccal }^{\infty }$ with compact supports in ${\rzecz }^{3}$, and let
\begin{itemize}
\item $\vcal := \{ u \in {\ccal }^{\infty }_{c} ({\rzecz }^{3},{\rzecz }^{3}): \; \diver u =0  \} $,
\item $H := $ the closure of $\vcal $ in $L^2({\rzecz }^{3},{\rzecz }^{3})$,
\item $V := $ the closure of $\vcal $ in $H^1({\rzecz }^{3},{\rzecz }^{3})$.
\end{itemize}
In the space $H$ we consider the inner product and the norm inherited from $L^2({\rzecz }^{3},{\rzecz }^{3})$
and denote them by $\ilsk{\cdot }{\cdot }{H}$ and $\nnorm{\cdot }{H}{}$, respectively, i.e.
\[
\ilsk{u}{v}{H} := \ilsk{u}{v}{L^2} , \qquad \nnorm{u}{H}{} := \nnorm{u}{L^2}{}, \qquad u,v \in H .
\]
In the space $V$ we consider the inner product inherited from ${H}^{1}({\rzecz }^{3},{\rzecz }^{3})$, i.e.
\begin{equation*}
\ilsk{u}{v}{V} := \ilsk{u}{v}{H}  + \ilsk{\nabla u}{\nabla v}{L^2} , \qquad u,v \in V ,
\end{equation*}
where 
\begin{equation*}
 \ilsk{\nabla u}{\nabla v}{L^2} 
= \sum_{i=1}^{3} \int_{{\rzecz }^{3}} \frac{\partial u}{\partial x_i} \cdot \frac{\partial v}{\partial x_i} \, dx ,  
\end{equation*}
and the norm induced by $\ilsk{\cdot }{\cdot }{V}$, i.e.
\begin{equation*}
\norm{u}{V}{} \; := \; \bigl( \nnorm{u}{H}{2} + \nnorm{\nabla u}{L^2}{2} {\bigr) }^{\frac{1}{2}} .
\end{equation*}
Let us also, for any $m\ge 0$ consider the following standard scale of Hilbert spaces
\begin{equation}
{V}_{m} \; := \;  \mbox{the closure of $\vcal $ in ${H}^{m}({\rzecz }^{3} , {\rzecz }^{3} )$} 
\label{eq:V_m}
\end{equation}
with the inner product inherited from the space ${H}^{m}({\rzecz }^{3} , {\rzecz }^{3})$.
Of course, ${V}_{0}=H$ and ${V}_{1}=V$.

\medskip  \noindent
By ${L}^{2}_{loc}({\rzecz }^{3},{\rzecz }^{3})$, we denote the space of all Lebesgue measurable functions $v:{\rzecz }^{3} \to {\rzecz }^{3}$ such that $\int_{K} {|v(x)|}^{2} \, dx < \infty $
for every compact subset $K \subset {\rzecz }^{3}$. In the space ${L}^{2}_{loc}({\rzecz }^{3},{\rzecz }^{3})$, we consider the Fr\'{e}chet topology generated by the family of seminorms
\begin{equation}
{p}_{R}(v) \; := \; \Bigl( \int_{{\ocal }_{R}} {|v(x)|}^{2} \, dx {\Bigr) }^{\frac{1}{2}}, \qquad R \in \nat ,
\label{eq:p_R-seminorms}
\end{equation} 
where ${({\ocal }_{R})}_{R \in \nat }$ is an increasing sequence of open bounded subsets of ${\rzecz }^{3}$ with smooth boundaries and such that $\bigcup_{R\in \nat } {\ocal }_{R} = {\rzecz }^{3}$.

\medskip \noindent
By ${H}_{loc}$, we denote the space $H$ endowed with the Fr\'{e}chet topology inherited from the space
${L}^{2}_{loc}({\rzecz }^{3},{\rzecz }^{3})$.

\medskip  \noindent
Let $T>0$. By ${L}^{2}(0,T;{H}_{loc})$ we denote the space of measurable functions 
 $ u :[0,T] \to H   $ such that for all $ R \in \nat $
\[
{p}_{T,R}(u) \; = \; \Bigl(  \int_{0}^{T} {p}_{R}^{2}(u (t,\cdot ))  \, dt {\Bigr) }^{\frac{1}{2}}  < \infty ,
\]
where ${p}_{R}$ is defined defined in \eqref{eq:p_R-seminorms}.
Note that ${p}_{T,R}$ are seminorms given explicitely by 
\begin{equation}
{p}_{T,R}(u) \; = \; \Bigl(  \int_{0}^{T} \int_{{\ocal }_{R}}  {|u  (t,x)|}^{2}  \, dxdt {\Bigr) }^{\frac{1}{2}} .
\label{eq:seminorms-L^2(0,T;H_loc)} 
\end{equation}
In ${L}^{2}(0,T;{H}_{loc})$ we consider the topology   generated by the seminorms 
$({p}_{T,R}{)}_{R\in \nat } .$


\medskip
\noindent
\bf Notations. \rm 
Let $(X,\norm{\cdot }{X}{}) $, $(Y,\norm{\cdot }{Y}{}) $ be two normed spaces. The symbol $\lcal (X,Y)$ stands for the space of all bounded linear operators from $X$ to $Y$. If $Y=\rzecz $, the $X':= \lcal (X, \rzecz )$ is called the dual space  of $X$. The standard duality pairing is denoted by $\ddual{X'}{\cdot }{\cdot }{X}$. If no confusion seems likely we omit the subscripts $X'$ and $X$ and write $\dual{\cdot }{\cdot }{}$. If $X$ and $Y$ are separable Hilbert spaces, then by $\lhs (X,Y)$ we will denote the space of all Hilbert-Schmidt operators from $X$ to $Y$ endowed with the standard norm.

\medskip
\subsection{The form $b$ and the map $B$}  \label{sec:form_b-map_B}

\medskip  \noindent
Let us consider the following tri-linear form
\begin{equation}  
b(u,w,v ) = \int_{{\rzecz }^{3}}\bigl( u \cdot \nabla w \bigr) v \, dx .
\label{eq:b-form}
\end{equation}
We will recall basic properties of the form $b$, see also Temam \cite{Temam'79}. 
By the Sobolev embedding theorem, see \cite{Adams}, and the H\"{o}lder inequality, we obtain the following estimate
\begin{align}
|b(u,w,v )|
\; &\le \;   c \norm{u }{V}{} \norm{w }{V}{} \norm{v }{V}{} , \qquad u,w,v \in V   
\label{eq:b-form_est-V}
\end{align}
for some positive constant $c$. Thus the form $b$ is continuous on $V$.
Moreover, if we define a bilinear map $B$ by $B(u,w):=b(u,w, \cdot )$, then by inequality 
\eqref{eq:b-form_est-V} we infer that $B(u,w) \in {V}_{}^{\prime }$ for all $u,w\in V$ and that the following inequality holds
\begin{equation}  
|B(u,w) {|}_{V^{\prime }} 
\le c  \norm{u }{V}{}\norm{w }{V}{},\qquad u,w \in V .
\label{eq:B-map_est-V}  
\end{equation}
Moreover, the mapping $B: V \times V \to V^{\prime } $ is bilinear and continuous.

\medskip  \noindent
Let us also recall the following properties of the form $b$, see \cite[Lemma II.1.3]{Temam'79}, 
\begin{equation}  
b(u,w, v ) =  - b(u,v ,w), \qquad u,w,v \in V .
\label{eq:b-form_antisym}
\end{equation}
In particular,
\begin{equation}  
b(u,v,v) =0   \qquad u,v \in V.
\label{eq:b-form_wirowosc}
\end{equation}

\medskip  \noindent
If $m > \frac{5}{2} $ then by the Sobolev embedding theorem,
\[
{H}^{m-1}({\rzecz }^{3}  , {\rzecz }^{3} ) \hookrightarrow  {\ccal }_{b}({\rzecz }^{3} , {\rzecz }^{3} )
   \hookrightarrow {L}^{\infty } ({\rzecz }^{3} , {\rzecz }^{3} ).
\]
Here ${\ccal }_{b}({\rzecz }^{3} , {\rzecz }^{3} )$ denotes the space of continuous and bounded ${\rzecz }^{3} $-valued functions defined on ${\rzecz }^{3} $.
If $u,w \in V$ and $v \in {V}_{m}$ with $m > \frac{5}{2}$ then
\[
|b(u,w,v)| \; = \; |b(u,v,w)|
\; \le \;  \nnorm{u}{{L}^{2}}{} \nnorm{w}{{L}^{2}}{} \nnorm{\nabla v}{{L}^{\infty }}{}
\; \le \;  {c}_{} \nnorm{u}{{L}^{2}}{} \nnorm{w}{{L}^{2}}{} \norm{v}{{V}_{m}}{}
\]
for some constant ${c}_{} >0 $, where ${V}_{m}$ is the space defined by \eqref{eq:V_m}.
Thus, $b$ can be uniquely extended to the tri-linear form (denoted by the same letter)
\[
   b : H \times H \times {V}_{m} \to \rzecz
\]
and $|b(u,w,v)| \le  {c}_{} \nnorm{u}{{L}^{2}}{} \nnorm{w}{{L}^{2}}{} \norm{v}{{V}_{m}}{}$
for $u,w \in H$ and $v \in {V}_{m}$. At the same time the operator $B$ can be uniquely extended
to a bounded bilinear operator
\[
    B : H \times H \to {V}_{m}^{\prime } .
\]
In particular, it satisfies the following estimate
\begin{equation}  
|B(u,w) {|}_{{V}_{m}^{\prime }} \le c \nnorm{u}{H}{}  \nnorm{w}{H}{} ,\qquad u,w \in H.
\label{eq:B-map_est-H} 
\end{equation}
We will also use the following notation, $B(u):=B(u,u)$.

\medskip
\begin{lemma} \ \label{lem:B-map_loc-Lipsch}
The map $B:V \to V^{\prime }$ is locally Lipschitz continuous, i.e. for every $r>0$ there exists a constant ${L}_{r}$ such that
\begin{equation*}
 \bigl| B(u) - B(\tilde{u}) {\bigr| }_{V^{\prime }} \le {L}_{r} \norm{u - \tilde{u}}{V}{} ,
 \qquad u , \tilde{u } \in V , \quad \norm{u}{V}{}, \norm{\tilde{u}}{V}{} \le r  .
\end{equation*}
\end{lemma}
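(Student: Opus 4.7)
The plan is to exploit the bilinearity of $B$ together with the continuity estimate \eqref{eq:B-map_est-V} already established. The key algebraic identity I would use is the standard polarization-type decomposition
\begin{equation*}
B(u) - B(\tilde u) \;=\; B(u,u) - B(\tilde u, \tilde u) \;=\; B(u - \tilde u, u) + B(\tilde u, u - \tilde u),
\end{equation*}
which follows by adding and subtracting the mixed term $B(\tilde u, u)$ and using that $B$ is bilinear on $V \times V$.

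Next, I would take the $V'$-norm of both sides, apply the triangle inequality, and estimate each of the two terms on the right by means of inequality \eqref{eq:B-map_est-V}, which gives $|B(v,w)|_{V'} \le c\,\|v\|_V\|w\|_V$ for all $v,w \in V$. This yields
\begin{equation*}
|B(u) - B(\tilde u)|_{V'} \;\le\; c\,\|u-\tilde u\|_V\,\|u\|_V + c\,\|\tilde u\|_V\,\|u-\tilde u\|_V.
\end{equation*}

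Finally, using the assumption $\|u\|_V, \|\tilde u\|_V \le r$, both factors are bounded by $r$, and one obtains the desired inequality with the explicit constant $L_r := 2cr$, where $c$ is the constant from \eqref{eq:B-map_est-V}. There is no real obstacle here: once the bilinear decomposition is written down, the result is a one-line consequence of \eqref{eq:B-map_est-V}, and the only mild point to keep in mind is that the term $B(\tilde u, u)$ used to split the difference is well-defined as an element of $V'$ precisely because of this same estimate.
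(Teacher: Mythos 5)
Your proposal is correct and follows essentially the same argument as the paper: both split $B(u,u)-B(\tilde u,\tilde u)$ into two bilinear terms by adding and subtracting a mixed term (the paper subtracts $B(u,\tilde u)$, you subtract $B(\tilde u,u)$ — an immaterial difference) and then apply the continuity estimate \eqref{eq:B-map_est-V} to each, arriving at the same constant $L_r=2cr$.
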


\medskip
\begin{proof}
The assertion is classical and  follows from the following estimates
\begin{equation*}
\begin{split}
&\bigl| B(u,u) -  B(\tilde{u},\tilde{u}) {\bigr| }_{V^{\prime }}
 \; \le \; \bigl| B(u,u-\tilde{u})  {\bigr| }_{V^{\prime }}
   + \bigl| B(u-\tilde{u},\tilde{u}) {\bigr| }_{V^{\prime }} \\
\; &\le \; \norm{B}{}{}  (\norm{u}{V}{} +\norm{\tilde{u}}{V}{} ) \norm{u-\tilde{u}}{V}{}
  \le 2r  \norm{B}{}{}  \cdot \norm{u-\tilde{u}}{V}{} .
\end{split}
\end{equation*}
Thus the Lipschitz condition holds with ${L}_{r}= 2r \norm{B}{}{} $, where $\norm{B}{}{}$ stands for the norm of the bilinear map $B:V\times V \to V^{\prime }$. The proof is thus complete.
\end{proof}

\medskip  \noindent
We will use the following version of the convergence result for the map $B$ proved in \cite{EM'14}.

\medskip
\begin{lemma} \label{lem:B-map_conv-aux}
(See \cite[Lemma 6.1]{EM'14}.) \it 
 Let $u,w \in {L}^{2}(0,T;H)$ and let ${(u_n)}_{n} , {(w_n)}_{n} \subset {L}^{2}(0,T;H)$ be two sequence bounded in ${L}^{2}(0,T;H)$ and  convergent to $u$,  $w$, respectively, in the the space ${L}^{2}(0,T;{H}_{loc}) $. 
If $m > \frac{5}{2}$, then for all $t \in [0,T]$ and all $\varphi  
\in \Hsol{m} ({\rzecz }^{3};{\rzecz }^{3})$:
\[
\lim_{n \to \infty } \int_{0}^{t} \dual{B({u}_{n}(s),{w}_{n}(s))}{\varphi }{} \, ds 
\; = \; \int_{0}^{t} \dual{B(u(s),w (s))}{\varphi }{} \, ds . 
\] 
\end{lemma}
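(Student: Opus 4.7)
The plan is to localize: approximate the test function $\varphi \in {V}_{m}$ by a compactly supported divergence-free smooth function, then exploit the local $L^{2}$-convergence of ${u}_{n},{w}_{n}$ on the support of the approximant while controlling the residual via the bound \eqref{eq:B-map_est-H} on the $V_{m}^{\prime }$-norm of $B$.

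First, decompose $B({u}_{n},{w}_{n}) - B(u,w) = B({u}_{n} - u, {w}_{n}) + B(u, {w}_{n} - w)$. By the symmetry of the argument, it suffices to show that $\int_{0}^{t} \dual{B({u}_{n}-u,{w}_{n})}{\varphi }{}\, ds \to 0$; the second summand is handled analogously, with the roles of the two convergences interchanged and the bound $\norm{u}{{L}^{2}(0,T;H)}{}$ playing the role of the uniform bound on $\norm{{w}_{n}}{{L}^{2}(0,T;H)}{}$.

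Fix $\eps >0$. Since $\vcal $ is dense in ${V}_{m}$ by definition, pick $\varphi_{\eps } \in \vcal $ with $\norm{\varphi -\varphi_{\eps }}{{V}_{m}}{} < \eps $ and choose $R\in \nat $ with $\supp \varphi_{\eps } \subset {\ocal }_{R}$. Writing $\varphi = \varphi_{\eps } + (\varphi -\varphi_{\eps })$ splits the integral into a principal part and a tail. The tail is controlled by \eqref{eq:B-map_est-H} and Cauchy--Schwarz: it is bounded by $c\eps M$, where $M := \sup_{n} \norm{{u}_{n}-u}{{L}^{2}(0,T;H)}{} \, \norm{{w}_{n}}{{L}^{2}(0,T;H)}{} < \infty $ by the assumed uniform $L^{2}(0,T;H)$-boundedness. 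For the principal part, the extension of the trilinear form $b$ to $H\times H \times {V}_{m}$ together with the antisymmetry \eqref{eq:b-form_antisym} gives
\[
\dual{B({u}_{n}-u,{w}_{n})}{\varphi_{\eps }}{} \; = \; -\int_{{\ocal }_{R}} \bigl( ({u}_{n}-u) \cdot \nabla \varphi_{\eps } \bigr) {w}_{n} \, dx .
\]
Since $\varphi_{\eps } \in \vcal $ is smooth with support in ${\ocal }_{R}$, the quantity $\nnorm{\nabla \varphi_{\eps }}{L^{\infty }}{}$ is finite, and Cauchy--Schwarz bounds the time-integrated expression by
\[
\nnorm{\nabla \varphi_{\eps }}{L^{\infty }}{} \cdot {p}_{T,R}({u}_{n}-u) \cdot \norm{{w}_{n}}{{L}^{2}(0,T;H)}{} ,
\]
which tends to zero as $n \to \infty $, because ${u}_{n} \to u$ in ${L}^{2}(0,T;{H}_{loc})$ forces ${p}_{T,R}({u}_{n}-u) \to 0$ while $\norm{{w}_{n}}{{L}^{2}(0,T;H)}{}$ stays bounded. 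Combining, $\limsup_{n} | \int_{0}^{t} \dual{B({u}_{n}-u,{w}_{n})}{\varphi }{}\, ds | \le c\eps M$ for every $\eps >0$, and letting $\eps \to 0$ concludes the argument.

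I expect the main obstacle to be conceptual rather than computational: the non-compactness of the Sobolev embeddings on ${\rzecz }^{3}$ forbids extracting a subsequence of $({u}_{n})$ converging strongly in $L^{2}$ globally, so one cannot reason as on a bounded domain. The localization device above---cutting $\varphi $ into a compactly supported bulk (where local convergence is enough) and a small ${V}_{m}$-tail (absorbed by the $V_{m}^{\prime }$-continuity of $B$)---is precisely what bypasses this difficulty.
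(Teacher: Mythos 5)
Your argument is correct, and it is essentially the same localization-plus-density scheme that the paper uses for the analogous Hall-term result (Lemma \ref{lem:Hall-term_conv_general}, proved in Appendix \ref{app:Hall-term_conv_general_proof}) and that underlies the cited proof of \cite[Lemma 6.1]{EM'14}: split off a compactly supported approximant of $\varphi $ in ${V}_{m}$, use the antisymmetry \eqref{eq:b-form_antisym} to put the derivative on the test function so that local ${L}^{2}$-convergence suffices on the bulk, and absorb the tail with the $H\times H\to {V}_{m}^{\prime }$ bound \eqref{eq:B-map_est-H}. The only structural difference from the Hall-term proof is that antisymmetry lets you treat both summands of the bilinear decomposition this way, so no weak convergence is needed — which is consistent with the weaker hypotheses of this lemma.
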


\medskip  \noindent
Recall that the space ${L}^{2}(0,T;{H}_{loc})$ is defined in Section \ref{sec:basic_notation}.

\medskip
\subsection{The form $\hall $ and the map $\Hall $} \label{sec:form_h-map_Hall}

\medskip  \noindent
Let us introduce the following tri-linear form associated with the Hall term and defined by 
\[
\hall (u,w,v) \; := \; - \int_{{\rzecz }^{3}} \curl [u \times \curl w ] \cdot v \, dx 
\]
for $u,w,v \in \vcal $.
Using the integration by parts formula for the  $\curl $-operator, we obtain
\begin{equation}
\hall (u,w,v ) \; := \; - \int_{{\rzecz }^{3}} [u \times \curl w ] \cdot \curl v \, dx .
\label{eq:hall-form}
\end{equation}
Since $(a\times b) \cdot c = - (a\times c) \cdot b $  for $a,b,c \in {\rzecz }^{3}$, we infer that  
\begin{equation}
\hall (u,v,w) \; = \; - \hall (u,w,v). 
\label{eq:hall-form_antisym}
\end{equation} 
In particular,
\begin{equation}
\hall (u,v,v) \; = \; 0 .
\label{eq:hall-form_perp}
\end{equation}
Note that,
if $u=w$ then using the formula: $u \times (\curl u) = \nabla (\frac{{|u|}^{2}}{2}) - (u \cdot \nabla )u $, we obtain
\begin{equation}
\hall (u,u,v) \; = \; - b (u,u,\curl v).
\label{eq:hall-form_b-form}
\end{equation}


\medskip
\begin{remark} \label{rem:Hall-term_properties}
 \bf (Basic properties of the  form $\hall $.) \it
\begin{description}
\item[(i) ] There exists a constant $c>0$ such that
\begin{equation}
\begin{split}
|\hall (u,w,v)|
\; &\le \; c \, \norm{u}{H^1}{} \cdot \norm{w}{H^1}{} \cdot \norm{ v}{H^2}{}, \qquad u,w \in V, \quad v \in \Hsol{2} .
\end{split}
\label{eq:hall-form_est-V-V}
\end{equation}
Thus the form $\hall $ can be extended to the continuous tri-linear form (denoted again by $\hall $)
\begin{equation*}
\hall : V \times V \times \Hsol{2}  \; \to \; \rzecz .
\end{equation*}
\item[(ii) ] Moreover, there exists a continuous bilinear map  $\Hall : V \times V \to \Hsolprime{2} $ such that  
\begin{equation}
\dual{\Hall (u,w)}{v}{} \; = \; \hall (u,w,v) , \qquad u,w \in V , \quad v \in \Hsol{2} ,
\label{eq:Hall-map_V-V}
\end{equation} 
and 
\begin{equation}
\nnorm{\Hall (u,w)}{\Hsolprime{2}  }{} 
\; \le \; c \, \norm{u}{H^1}{} \cdot \norm{w}{H^1}{} , \qquad u,w \in V.
\label{eq:Hall-map_est-V-V}
\end{equation}
By \eqref{eq:hall-form_antisym}
\begin{equation*}
\dual{\Hall (u,w)}{v}{} \; = \; -\dual{\Hall (u,v)}{w}{}  , \qquad u\in V , 
\quad w,v \in \Hsol{2} ,
\end{equation*}
and, in particular,
\begin{equation*}
\dual{\Hall (u,v)}{v}{} \; = \; 0 , \qquad u\in V , \quad v \in \Hsol{2} .
\end{equation*}
Let us also note that for $u=w \in V $, by  \eqref{eq:hall-form_b-form}, we have
\begin{equation*}
\dual{\Hall (u,u)}{v}{}  \; = \; \dual{B(u,u)}{\curl v}{} , \qquad u \in V , 
\quad v \in \Hsol{2} .
\end{equation*} 
\item[(iii) ] If $m  > \frac{5}{2}$,  then there exists a constant $c>0$ such that
\begin{equation*}
\begin{split}
|\hall (u,w,v)| 
\; &\le \; c \, \nnorm{u}{L^2}{} \cdot \norm{w}{H^1}{} \cdot \norm{v}{H^{m }}{} ,
\qquad u \in H, \quad w \in V , \quad v \in \Hsol{m}.
\end{split}
\end{equation*}
Thus the form $\hall $  extends to the continuous tri-linear form (denoted still by $\hall $)
\begin{equation*}
\hall : H \times V \times \Hsol{m}  \; \to \; \rzecz ,
\end{equation*}
and the map $\Hall $  extends to the continuous bilinear map
  $\Hall : H \times V \to \Hsolprime{m} $   
such that 
\begin{equation}
\nnorm{\Hall (u,w)}{\Hsolprime{m}}{} 
\; \le \; c \, \nnorm{u}{L^2}{} \cdot \norm{w}{H^1}{} , \qquad u \in H , \quad w \in V.
\label{eq:Hall-map_est-H-V}
\end{equation}
\end{description}
\end{remark}

\medskip
\begin{proof} 
By the H\"{o}lder inequality and the Sobolev embedding theorem we obtain
\[
\begin{split}
& |\hall (u,w,v)| \; = \; \Bigl| \int_{{\rzecz }^{3}} [u \times (\curl w)] \cdot \curl v \, dx   \Bigr| 
\; \le \; \nnorm{u}{L^4}{} \cdot \nnorm{\curl w}{L^2}{} \cdot \nnorm{\curl v}{L^4}{}  \\
\; &\le \; \tilde{c} \, \norm{u}{H^1}{} \cdot \norm{w}{H^1}{} \cdot \norm{\curl v}{H^1}{} 
\; \le \; c \, \norm{u}{H^1}{} \cdot \norm{w}{H^1}{} \cdot \norm{ v}{H^2}{} ,
\end{split}
\]
for some constants $\tilde{c},c >0$.
This completes the proof of assertion (i). Assertion (ii) follows from (i) and \eqref{eq:hall-form_antisym} and \eqref{eq:hall-form_perp}.

\medskip  \noindent
To prove (iii), let us fix $m  > \frac{5}{2}$. Since ${H}^{m -1} ({\rzecz }^{3}) \hookrightarrow {L}^{\infty } ({\rzecz }^{3})$,
by the H\"{o}lder inequality and the Sobolev embedding theorem we obtain
\[
\begin{split}
& |\hall (u,w,v)| \; = \; \Bigl| \int_{{\rzecz }^{3}} [u \times (\curl w)] \cdot \curl v \, dx   \Bigr|
\; \le \; \nnorm{u}{L^2}{} \cdot \nnorm{\curl w}{L^2}{} \cdot \nnorm{\curl v}{L^{\infty }}{} \\
\; &\le \; \tilde{c} \, \nnorm{u}{L^2}{} \cdot \norm{w}{H^1}{} \cdot \norm{\curl v}{H^{m -1}}{} 
\; \le \; c \, \nnorm{u}{L^2}{} \cdot \norm{w}{H^1}{} \cdot \norm{v}{H^{m}}{} .
\end{split}
\]
The proof of the remark is thus complete.
\end{proof}

\medskip  \noindent
We will use the following notation
\begin{equation*}
\Hall (u) \; := \; \Hall (u,u). 
\end{equation*}

\medskip
\begin{lemma} \label{lem:Hall-map_loc-Lipsch}
The map $\Hall : V \to \Hsolprime{2} $ is locally Lipschitz continuous, i.e.
for every $ r>0 $ there exists   ${L}_{\Hall }(r) > 0 $ such that 
\[
\nnorm{\Hall (u)-\Hall (\tilde{u})}{\Hsolprime{2} }{} \; \le \; L_{\Hall }(r) \, \norm{u-\tilde{u}}{V}{}, \qquad u,\tilde{u} \in V, \quad \norm{u}{V}{}, \norm{\tilde{u}}{V}{} \le r.
\]
\end{lemma}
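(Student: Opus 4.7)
The plan is to mimic the proof of Lemma \ref{lem:B-map_loc-Lipsch} verbatim, exploiting the bilinearity of $\Hall$ together with the continuity estimate \eqref{eq:Hall-map_est-V-V} established in Remark \ref{rem:Hall-term_properties}(ii).

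First, I would split the difference using bilinearity of the map $\Hall : V \times V \to \Hsolprime{2}$:
\begin{equation*}
\Hall (u) - \Hall (\tilde{u}) \; = \; \Hall (u,u) - \Hall (\tilde{u},\tilde{u})
\; = \; \Hall (u,u-\tilde{u}) + \Hall (u-\tilde{u},\tilde{u}).
\end{equation*}
Then I would apply the triangle inequality in $\Hsolprime{2}$ followed by the estimate \eqref{eq:Hall-map_est-V-V}, which gives $\nnorm{\Hall (v,w)}{\Hsolprime{2}}{} \le c\norm{v}{H^1}{}\norm{w}{H^1}{} \le c\norm{v}{V}{}\norm{w}{V}{}$ for $v,w\in V$. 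This yields
\begin{equation*}
\nnorm{\Hall (u)-\Hall (\tilde{u})}{\Hsolprime{2}}{}
\; \le \; c \, \norm{u}{V}{} \, \norm{u-\tilde{u}}{V}{} + c \, \norm{u-\tilde{u}}{V}{} \, \norm{\tilde{u}}{V}{}.
\end{equation*}

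Finally, using the hypothesis $\norm{u}{V}{}, \norm{\tilde{u}}{V}{} \le r$, I would conclude
\begin{equation*}
\nnorm{\Hall (u)-\Hall (\tilde{u})}{\Hsolprime{2}}{}
\; \le \; 2cr \, \norm{u-\tilde{u}}{V}{},
\end{equation*}
so the claim holds with $L_{\Hall }(r) := 2cr$, where $c$ is the constant appearing in \eqref{eq:Hall-map_est-V-V}.

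There is no genuine obstacle here: the argument is formally identical to that of Lemma \ref{lem:B-map_loc-Lipsch}, and the only ingredient needed beyond bilinearity is the continuity bound for $\Hall$ in the $\Hsolprime{2}$-norm, which has already been proved in Remark \ref{rem:Hall-term_properties}(ii). The key point is simply that the target space $\Hsolprime{2}$ is chosen precisely so that the bilinear form controlling the Hall nonlinearity becomes continuous on $V\times V$; once that continuity is in hand, local Lipschitz continuity of the associated quadratic map is automatic.
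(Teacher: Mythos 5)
Your proof is correct and is essentially identical to the paper's own argument: the same bilinear decomposition $\Hall(u,u)-\Hall(\tilde u,\tilde u)=\Hall(u,u-\tilde u)+\Hall(u-\tilde u,\tilde u)$, followed by the continuity bound \eqref{eq:Hall-map_est-V-V} and the radius hypothesis, yielding $L_{\Hall}(r)=2cr$ (the paper writes the constant as the operator norm $\norm{\Hall}{}{}$ of the bilinear map, which is the same quantity). Nothing further is needed.
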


\medskip
\begin{proof}
By \eqref{eq:Hall-map_est-V-V}, we infer that for all $u,\tilde{u} \in V$ such that  $ \norm{u}{V}{}, \norm{\tilde{u}}{V}{} \le r $,
\[
\begin{split}
\nnorm{\Hall (u) - \Hall (\tilde{u})}{\Hsolprime{2}  }{}
\; &= \; \nnorm{\Hall (u,u) - \Hall (\tilde{u},\tilde{u})}{\Hsolprime{2} }{}
\; \le \; \nnorm{\Hall (u,u-\tilde{u})}{\Hsolprime{2} }{} + \nnorm{\Hall (u-\tilde{u}, \tilde{u})}{\Hsolprime{2}}{} \\
\; & \le \; \norm{\Hall }{}{} \, \norm{u}{V}{} \, \norm{u-\tilde{u}}{V}{} 
+ \norm{\Hall }{}{} \, \norm{u-\tilde{u}}{V}{} \, \norm{\tilde{u}}{V}{} \\
\; &= \;  \norm{\Hall }{}{} \, (\norm{u}{V}{} + \norm{\tilde{u}}{V}{} ) \, \norm{u-\tilde{u}}{V}{} 
\; \le \; 2r \, \norm{\Hall }{}{} \, \norm{u-\tilde{u}}{V}{}.
\end{split}
\]
This means that the local Lipschitz condition holds with the constant 
$L_{\Hall }(r):= 2r \norm{\Hall }{}{}$, where $\norm{\Hall }{}{}$ denotes the norm of the bilinear map
$\Hall :V\times V \to \Hsolprime{2}  $. The proof of the lemma is thus complete.
\end{proof}

\medskip
\noindent
In the following lemma we will prove some result concerning the convergence of the Hall term $\Hall $. 
This result is analogous to Lemma \ref{lem:B-map_conv-aux}.

\medskip
\begin{lemma} \label{lem:Hall-term_conv_general}  \it 
Let $u \in {L}^{2}(0,T;H)$ and $w \in {L}^{2}(0,T;V)$  and let
$(\un ) \subset {L}^{2}(0,T;H)$
and $ (\wn ) \subset {L}^{2}(0,T;V)$ be two sequences  such that
\begin{itemize}
\item $(\un )$ is bounded in ${L}^{2}(0,T;H)$ and $\wn \to w $ weakly in ${L}^{2}(0,T;V)$,
\item  $\un \to u$ and $\wn \to w $  in ${L}^{2}(0,T; {H}_{loc})$.
\end{itemize}
If $m > \frac{5}{2}$, then for all $t \in [0,T] $ and  all 
 $ \psi \in \Hsol{m}  $:
\begin{equation*}
\lim_{n\to \infty } \int_{0}^{t}\dual{\Hall (\un (s),\wn (s))}{ \psi }{} \, ds  
\; = \; \int_{0}^{t}\dual{\Hall (u(s),w(s))}{\psi }{} \, ds .
\end{equation*}
\end{lemma}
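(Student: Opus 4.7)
The plan is to decompose
\[
\hall(u_n,w_n,\psi)-\hall(u,w,\psi)=\hall(u_n-u,w_n,\psi)+\hall(u,w_n-w,\psi),
\]
and treat the two resulting integrals (over $s\in[0,t]$) separately. The first is handled by combining the local strong convergence $u_n\to u$ in $L^2(0,T;H_{loc})$ with a decay-at-infinity property of $\curl\psi$; the second by the weak convergence $w_n\rightharpoonup w$ in $L^2(0,T;V)$.

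First I would record the following regularity of the test function. For $m>\tfrac52$ we have $\curl\psi\in H^{m-1}({\rzecz}^{3})$ with $m-1>\tfrac32$, so $\curl\psi\in\ccal_{b}({\rzecz}^{3})$. Since $\psi\in\Hsol{m}$ is the $H^{m}$-limit of a sequence from $\vcal$, each with compact support, $\curl\psi$ is the $\ccal_{b}$-limit of compactly supported continuous functions, hence vanishes at infinity. Consequently, for every $\eta>0$ there exists $R\in\nat$ with
\[
\|\curl\psi\|_{L^{\infty}({\rzecz}^{3}\setminus\ocal_{R})}<\eta.
\]

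For the first piece, using \eqref{eq:hall-form},
\[
A_{n}:=\int_{0}^{t}\hall(u_{n}-u,w_{n},\psi)\,ds=-\int_{0}^{t}\int_{{\rzecz}^{3}}[(u_{n}-u)\times\curl w_{n}]\cdot\curl\psi\,dx\,ds,
\]
I would split the spatial domain as $\ocal_{R}\cup({\rzecz}^{3}\setminus\ocal_{R})$. On the outer piece, Hölder's inequality gives
\[
\Big|\int_{0}^{t}\!\!\int_{{\rzecz}^{3}\setminus\ocal_{R}}\cdots\Big|\le\|\curl\psi\|_{L^{\infty}({\rzecz}^{3}\setminus\ocal_{R})}\,\|u_{n}-u\|_{L^{2}(0,T;H)}\,\|w_{n}\|_{L^{2}(0,T;V)}\le C\eta,
\]
using boundedness of $(u_{n})$ (and $u\in L^{2}(0,T;H)$) and of $(w_{n})$ in $L^{2}(0,T;V)$. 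On the inner piece, the local convergence $u_{n}\to u$ in $L^{2}(0,T;L^{2}(\ocal_{R}))$ and boundedness of $\curl w_{n}$ in $L^{2}(0,T;L^{2})$ give
\[
\Big|\int_{0}^{t}\!\!\int_{\ocal_{R}}\cdots\Big|\le\|\curl\psi\|_{L^{\infty}}\Big(\int_{0}^{t}\!\!\int_{\ocal_{R}}|u_{n}-u|^{2}\,dx\,ds\Big)^{1/2}\|w_{n}\|_{L^{2}(0,T;V)}\xrightarrow[n\to\infty]{}0.
\]
Hence $\limsup_{n}|A_{n}|\le C\eta$, and $\eta>0$ being arbitrary gives $A_{n}\to 0$.

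For the second piece, set
\[
C_{n}:=\int_{0}^{t}\hall(u,w_{n}-w,\psi)\,ds=-\int_{0}^{t}\int_{{\rzecz}^{3}}[u\times\curl(w_{n}-w)]\cdot\curl\psi\,dx\,ds.
\]
Using the identity $(a\times b)\cdot c=b\cdot(c\times a)$, the map
\[
L:L^{2}(0,T;V)\ni v\longmapsto -\int_{0}^{t}\int_{{\rzecz}^{3}}\curl v(s,x)\cdot\bigl[\curl\psi(x)\times u(s,x)\bigr]\,dx\,ds
\]
is linear, and since $g:=\curl\psi\times u\in L^{2}(0,T;L^{2})$ (as $\curl\psi\in L^{\infty}$ and $u\in L^{2}(0,T;H)$), one has $|L(v)|\le\|g\|_{L^{2}(0,T;L^{2})}\|v\|_{L^{2}(0,T;V)}$; that is, $L\in(L^{2}(0,T;V))'$. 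Since $w_{n}\rightharpoonup w$ in $L^{2}(0,T;V)$ we conclude $C_{n}=L(w_{n})-L(w)\to 0$. Combining this with $A_{n}\to 0$ yields the claim.

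The main obstacle is controlling $A_{n}$ on the unbounded domain, where one cannot directly use boundedness of $w_{n}$ in $L^{2}(0,T;V)$ against local convergence of $u_{n}$; the decisive ingredient is the vanishing of $\curl\psi$ at infinity coming from $\psi\in\Hsol{m}$ with $m>\tfrac52$, which allows the splitting argument above.
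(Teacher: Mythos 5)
Your proof is correct, and it follows the paper's overall strategy — the same trilinear decomposition $\hall(\un,\wn,\psi)-\hall(u,w,\psi)=\hall(\un-u,\wn,\psi)+\hall(u,\wn-w,\psi)$, and an identical treatment of the second term as the evaluation of a fixed bounded linear functional (built from $\curl\psi\times u\in L^2(0,T;L^2)$) along the weakly convergent sequence $\curl(\wn-w)\rightharpoonup 0$. Where you genuinely diverge is in the first term. The paper proves the lemma first for $\psi\in\vcal$, where $\supp\curl\psi$ is compact and the whole integral localizes to a single ball $K_R$, and then recovers general $\psi\in\Hsol{m}$ by a density argument in $H^m$, using the uniform bound $\nnorm{\Hall(\un,\wn)}{\Hsolprime{m}}{}\le c\nnorm{\un}{L^2}{}\norm{\wn}{H^1}{}$ to control the error $\dual{\Hall(\un,\wn)-\Hall(u,w)}{\psi-\psi_\eps}{}$. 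You instead work with general $\psi$ at once, observing that $\curl\psi\in H^{m-1}\hookrightarrow\ccal_b$ is a uniform limit of compactly supported functions and hence vanishes at infinity, and you split the spatial domain into $\ocal_R$ and its complement: local strong convergence of $\un$ handles the inner piece, smallness of $\|\curl\psi\|_{L^\infty(\rzecz^3\setminus\ocal_R)}$ handles the tail. In effect you approximate the domain rather than the test function; this buys you a one-step proof with no density argument and no need for the operator-norm estimate on $\Hall$ over $\Hsolprime{m}$, at the modest extra cost of establishing the decay of $\curl\psi$ at infinity. Both arguments are sound and of comparable length.
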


\medskip  \noindent
Recall that the space  ${L}^{2}(0,T;{H}_{loc})$ is  defined in Section \ref{sec:basic_notation}. 

\medskip  
\begin{proof}
The proof of Lemma \ref{lem:Hall-term_conv_general} is postponed to Appendix \ref{app:Hall-term_conv_general_proof}.
\end{proof}

\medskip
\subsection{Functional setting of the Hall-MHD system} \label{sec:Hall-MHD_funct-setting}

\medskip  \noindent
Using the spaces $H$ and $V$  defined in Section \ref{sec:basic_notation},
let us consider the spaces
\begin{equation}
\Hmath \; := \; H \times H , \qquad \Vmath := V \times V 
, \qquad   {\Vmath }^{\prime } \; := \;  \text{the dual space of $\Vmath $}
\label{eq:Hall-MHD_H-V}
\end{equation}
with the following inner products
\begin{align*}
\ilsk{\phi }{\psi }{\Hmath }  
\; &:= \;  \ilsk{\ubold }{\vbold }{{L}^{2}} + \ilsk{\Bbold }{\Cbold }{{L}^{2}} 
\end{align*}
for all $ \phi =(\ubold ,\Bbold ), \, \, \;  \psi =(\vbold ,\Cbold ) \in \Hmath $ ,
and
\begin{equation*}
\begin{split}
\ilsk{\phi }{\psi }{\Vmath }
\; &:= \;   \ilsk{\phi }{\psi }{\Hmath }  
 + \dirilsk{\phi }{\psi }{}   
\end{split}
\end{equation*} 
for all $ \phi  =(\ubold ,\Bbold ) , \;  \psi =(\vbold ,\Cbold )  \in \Vmath $,
where
\begin{equation}
\dirilsk{\phi }{\psi }{} \; := \; {\nu }_{1} \, \ilsk{\nabla \ubold }{\nabla \vbold }{L^2} 
+{\nu }_{2} \, \ilsk{\nabla \Bbold }{\nabla \Cbold }{L^2} .
\label{eq:il-sk_Dirichlet+curl}  
\end{equation}
In the spaces $\Hmath $ and $\Vmath $ we consider the norms induced by the inner products $\ilsk{\cdot }{\cdot }{\Hmath }$ and  $\ilsk{\cdot }{\cdot }{\Vmath }$, respectively, i.e.
$\nnorm{\phi }{\Hmath }{2}:= \ilsk{\phi }{\phi }{\Hmath }$ for $\phi \in \Hmath $, and
\begin{equation}
\norm{\phi }{\Vmath }{2} \; = \; \nnorm{\phi }{\Hmath }{2} + \norm{\phi }{}{2},
\label{eq:Hall-MHD_V-norm} 
\end{equation}
where
\begin{equation}
\norm{\phi }{}{2} \; := \; \dirilsk{\phi }{\phi }{}, \qquad \phi \in \Vmath . 
\label{eq:norm_Dirichlet+curl}  
\end{equation}

\medskip  
\noindent
\bf The operator $\acal $.  \rm 
We define the operator $\acal $ by the following formula
\begin{equation}
\dual{\acal \phi }{\psi }{} \; = \;  \dirilsk{\phi }{\psi }{}, \qquad \phi ,\psi \in \Vmath , 
\label{eq:A_acal_rel} 
\end{equation}
where $\dirilsk{\cdot  }{\cdot }{}$ is given by \eqref{eq:il-sk_Dirichlet+curl} .

\medskip  
\begin{remark}  \label{rem:Acal-term_properties} 
It is clear that  $\acal \in \lcal (\Vmath ,\Vmath ')$
and
\begin{equation}
\nnorm{\acal \phi }{\Vmath '}{} \; \le \; \norm{\phi }{}{}, \qquad \phi \in \Vmath ,
\label{eq:Acal_norm}
\end{equation}
where $\norm{\cdot }{}{}$ is given by \eqref{eq:norm_Dirichlet+curl}.
\end{remark}

\medskip  \noindent
Indeed, inequality \eqref{eq:Acal_norm} follows from \eqref{eq:Hall-MHD_V-norm}  and the following inequalities
\[
|\dual{\acal \phi }{\psi }{} | \; = \; |\dirilsk{\phi }{\psi }{}|
\; \le \; \norm{\phi }{}{} \, \norm{\psi }{}{}
\; \le \; \norm{\phi }{}{} \, (\nnorm{\psi }{}{2} +\norm{\psi }{}{2} {)}^{\frac{1}{2}}
\; = \; \norm{\phi }{}{} \, \norm{\psi }{\Vmath }{}.
\]

\medskip  \noindent
For $m_1, m_2 \ge 0 $ let us define
\begin{equation}
{\Vmath }_{m_1,m_2} \; := \; {V}_{m_1} \times {V}_{m_2},
\label{eq:Vmath_m1,m2}
\end{equation}
where ${V}_{m_1}, {V}_{m_2}$ are the spaces defined by \eqref{eq:V_m}. In ${\Vmath }_{m_1,m_2}$ we consider the product norm
\begin{equation}
\norm{\phi }{{m_1,m_2} }{2}  \; := \; 
\norm{\ubold }{V_{m_1}}{2} + \norm{\Bbold }{V_{m_2}}{2}
\label{eq:Vmath_m1,m2-norm} 
\end{equation}
for all $\phi =(\ubold , \Bbold ) \in {\Vmath }_{m_1,m_2}$.
In the case when $m_1=m_2=:m$ we denote
\begin{equation}
{\Vmath }_{m} \; := \; V_m \times V_m   
\quad \mbox{ and } \quad \norm{\cdot }{m}{} \; := \; \norm{\cdot }{m,m}{} .
\label{eq:Vmath_m}
\end{equation}
It is clear that if $m=1$, then ${\Vmath }_{1}=\Vmath $ and $\norm{\cdot }{1}{} = \norm{\cdot }{\Vmath }{}$. 

\medskip  \noindent
Let $T>0$. By ${L}^{2}(0,T;{\Hmath }_{loc})$ we denote the space of measurable functions 
 $ \phi :[0,T] \to \Hmath    $ such that for all $ R \in \nat $
 \begin{equation*}                 
{p}_{T,R}(\phi):= \Bigl( \int_{0}^{T} [{p}_{R}^{2}(\ubold (t,\cdot )) + {p}_{R}^{2}(\Bbold (t,\cdot )) ] \,dt  {\Bigr) }^{\frac{1}{2}} <\infty , 
\end{equation*}
where $\phi =(\ubold ,\Bbold )$, and ${p}_{R}$ are defined in \eqref{eq:p_R-seminorms}.
Explicitly, 
\begin{equation*}
{p}_{T,R}(\phi) \; = \; \Bigl(  \int_{0}^{T} \int_{{\ocal }_{R}} 
\bigl[ {|\ubold (t,x)|}^{2}  + {|\Bbold (t,x)|}^{2} \bigr] \, dxdt {\Bigr) }^{\frac{1}{2}}  .
\end{equation*}
In the space ${L}^{2}(0,T;{\Hmath }_{loc})$ we consider the topology   generated by the seminorms  $({p}_{T,R}{)}_{R\in \nat } .$

\medskip  \noindent
\bf The form $\mhd$ and the operator $\MHD$\rm .
Using the form $b$  defined by \eqref{eq:b-form} we will consider the tri-linear  form  $\mhd$ on $\Vmath \times \Vmath \times \Vmath $, where $\Vmath $ is defined by \eqref{eq:Hall-MHD_H-V}, see 
Sermange and Temam \cite{Sermange+Temam'83} and Sango \cite{Sango'10}. Namely,
\begin{align*}
\mhd ({\phi }^{(1)},{\phi }^{(2)},{\phi }^{(3)})
\; := \;  & b({\ubold }^{(1)}, {\ubold }^{(2)}, {\ubold }^{(3)})
     -   b({\Bbold }^{(1)}, {\Bbold }^{(2)}, {\ubold }^{(3)})  \\
     & + b({\ubold }^{(1)}, {\Bbold }^{(2)}, {\Bbold }^{(3)})
     -  b({\Bbold }^{(1)}, {\ubold }^{(2)}, {\Bbold }^{(3)}) ,
\end{align*}
where ${\phi }^{(i)} = ({\ubold }^{(i)},{\Bbold }^{(i)}) \in \Vmath $, $i=1,2,3$. 
By \eqref{eq:b-form_est-V} we see that the form $\mhd$ is continuous.
Moreover, by \eqref{eq:b-form_antisym} and \eqref{eq:b-form_wirowosc}
the form $\mhd$ has the following properties
\begin{align*} 
&  \mhd ({\phi }^{(1)},{\phi }^{(2)},{\phi }^{(3)})
\; = \; -  \mhd ({\phi }^{(1)},{\phi }^{(3)},{\phi }^{(2)}) , \qquad 
{\phi }^{(i)} \in \Vmath , \quad i=1,2,3  
\end{align*}
and in particular
\begin{align*}  
& \mhd ({\phi }^{(1)},{\phi }^{(2)},{\phi }^{(2)}) \; = \; 0 , \qquad {\phi }^{(1)}, {\phi }^{(2)} \in \Vmath .
\end{align*}        
Now, let us define a bilinear map $\MHD$ by 
\begin{equation}   
\MHD (\phi ,\psi ) \; := \;  \mhd (\phi ,\psi ,\cdot )  ,  \qquad \qquad \phi ,\psi  \in \Vmath . 
\label{eq:MHD-map}
\end{equation}

\medskip  \noindent
We will also use the notation $ \MHD (\phi ) :=  \MHD (\phi ,\phi ) $.

\medskip
\noindent
Let us recall   properties of the map $\MHD$ stated in \cite{EM'14}.

\medskip
\begin{lemma} \label{lem:MHD-term_properties}
\rm (See \cite[Lemma 6.4]{EM'14}) \it 
\begin{description}
\item[(i) ] There exists a constant ${c}_{\MHD } >0 $ such that 
\[
\nnorm{ \MHD(\phi , \psi )}{\Vmath '}{} 
\;  \le \;  {c}_{\MHD }\norm{\phi }{\Vmath }{} \norm{\psi }{\Vmath }{}  , \qquad \phi ,\psi  \in \Vmath . 
\]
In particular, the map $\MHD: \Vmath \times \Vmath \to \Vmath '$ is bilinear and continuous.
Moreover, 
\[
\dual{\MHD(\phi ,\psi )}{\theta  }{} \; = \; - \dual{\MHD(\phi ,\theta  )}{\psi  }, 
\qquad \phi ,\psi , \theta  \in \Vmath , 
\]
and, in particular,
\begin{equation}
\dual{\MHD (\phi )}{\phi }{} \; = \; 0 , 
\qquad \phi  \in \Vmath  . 
\label{eq:MHD-map_perp}
\end{equation}
\item[(ii) ] The mapping $\MHD$ is locally Lipschitz continuous on the space $\Vmath $, i.e.
for every $r>0 $ there exists a constant ${L}_{r}>0$ such that 
\[
\nnorm{ \MHD(\phi )- \MHD (\tilde{\phi } )}{\Vmath '}{} 
\; \le \; {L}_{r}  \norm{\phi -\tilde{\phi } ) }{\Vmath }{} , \qquad \phi ,\tilde{\phi } \in \Vmath ,
  \quad \norm{\phi }{\Vmath }{} , \norm{\tilde{\phi }}{\Vmath }{} \le r .
\]   
\item[(iii) ] If $m> \frac{5}{2}$, then $\MHD$ can be extended to the bilinear mapping from $\Hmath \times \Hmath $ to ${\Vmath }_{m}'$ (denoted still by $\MHD$) such that
\begin{equation}
\nnorm{ \MHD(\phi , \psi )}{{\Vmath }_{m}'}{} 
\; \le \; {c}_{\MHD }(m) \nnorm{\phi }{\Hmath }{} \nnorm{\psi }{\Hmath }{} , \qquad \phi ,\psi  \in \Hmath  ,
\label{eq:MHD-map_est-H-H} 
\end{equation}
where ${c}_{\MHD }(m)$ is a positive constant.
\end{description}
\end{lemma}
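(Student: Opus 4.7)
\medskip
\noindent
\textbf{Proof proposal.}
My plan is to reduce every assertion of Lemma \ref{lem:MHD-term_properties} to the corresponding property of the scalar trilinear form $b$ and the bilinear operator $B$ established in Section \ref{sec:form_b-map_B}. The key structural observation is that, writing ${\phi }^{(i)} = ({\ubold }^{(i)}, {\Bbold }^{(i)})$, the defining identity
\[
\mhd({\phi }^{(1)},{\phi }^{(2)},{\phi }^{(3)}) = b({\ubold }^{(1)}, {\ubold }^{(2)}, {\ubold }^{(3)}) -  b({\Bbold }^{(1)}, {\Bbold }^{(2)}, {\ubold }^{(3)}) + b({\ubold }^{(1)}, {\Bbold }^{(2)}, {\Bbold }^{(3)}) - b({\Bbold }^{(1)}, {\ubold }^{(2)}, {\Bbold }^{(3)})
\]
is a finite sum of four $b$-evaluations, with each component satisfying $\norm{{\ubold }^{(i)}}{V}{}, \norm{{\Bbold }^{(i)}}{V}{} \le \norm{{\phi }^{(i)}}{\Vmath }{}$ and the analogous estimates in $H$/$\Hmath $. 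For part (i), the continuity bound $\nnorm{\MHD (\phi ,\psi )}{\Vmath '}{} \le {c}_{\MHD } \norm{\phi }{\Vmath }{} \norm{\psi }{\Vmath }{}$ follows simply by summing \eqref{eq:b-form_est-V} over the four terms. The antisymmetry identity is obtained by applying \eqref{eq:b-form_antisym} termwise to $\mhd(\phi ,\psi ,\theta )$ and checking by inspection that the rearranged sum equals $-\mhd(\phi ,\theta ,\psi )$; specializing to $\theta = \psi $ and invoking \eqref{eq:b-form_wirowosc} yields \eqref{eq:MHD-map_perp}.

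For part (ii), I would use the standard bilinear decomposition
\[
\MHD (\phi ) - \MHD (\tilde{\phi }) = \MHD (\phi - \tilde{\phi }, \phi ) + \MHD (\tilde{\phi }, \phi - \tilde{\phi }),
\]
combined with the bound from part (i), which yields $\nnorm{\MHD (\phi ) - \MHD (\tilde{\phi })}{\Vmath '}{} \le {c}_{\MHD }(\norm{\phi }{\Vmath }{} + \norm{\tilde{\phi }}{\Vmath }{})\norm{\phi - \tilde{\phi }}{\Vmath }{}$ and hence ${L}_{r} = 2r\, {c}_{\MHD }$ on the ball of radius $r$.

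For part (iii), the hypothesis $m > \frac{5}{2}$ is precisely what is required to invoke the extension \eqref{eq:B-map_est-H} of $B$ to a bounded bilinear map $H \times H \to {V}_{m}^{\prime }$, which in particular gives $|b(u,w,v)| \le c\, \nnorm{u}{H}{} \nnorm{w}{H}{} \norm{v}{{V}_{m}}{}$ for $u,w \in H$ and $v \in {V}_{m}$. Applying this pointwise estimate termwise to $\mhd(\phi ,\psi ,\theta )$ with $\theta \in {\Vmath }_{m}$, and then taking a supremum over $\theta $ of unit norm, delivers \eqref{eq:MHD-map_est-H-H}. The only subtlety is the bookkeeping check that in each of the four $b$-terms the components of the test field $\theta $ indeed occupy the third slot of $b$, so that the $b : H \times H \times {V}_{m} \to \rzecz $ extension is the relevant one; this is manifest from the definition of $\mhd $. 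No deeper obstacle is anticipated, as the whole argument is essentially a repackaging of the Navier-Stokes facts for the four-term MHD combination.
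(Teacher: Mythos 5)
Your proposal is correct and follows exactly the route the paper takes: the paper states this lemma by citation to \cite[Lemma 6.4]{EM'14}, but the surrounding text derives the continuity and antisymmetry of $\mhd $ by precisely your termwise reduction to the form $b$ via \eqref{eq:b-form_est-V}, \eqref{eq:b-form_antisym}, \eqref{eq:b-form_wirowosc} and \eqref{eq:B-map_est-H}, and the paper's proofs of the analogous Lipschitz statements (Lemmas \ref{lem:B-map_loc-Lipsch} and \ref{lem:Hall-map_loc-Lipsch}) use the same bilinear decomposition you propose in part (ii). The only cosmetic caveat is that the componentwise bounds $\norm{{\ubold }^{(i)}}{V}{} \le \norm{{\phi }^{(i)}}{\Vmath }{}$ hold up to constants depending on ${\nu }_{1},{\nu }_{2}$ (the norm $\norm{\cdot }{\Vmath }{}$ carries these weights), which is harmlessly absorbed into ${c}_{\MHD }$.
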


\medskip  \noindent
From \eqref{eq:MHD-map} and Lemma \ref{lem:B-map_conv-aux}  we obtain immediately the following corollary.

\medskip
\begin{cor} \label{cor:MHD-map_conv-aux}
 Let $\phi ,\psi  \in {L}^{2}(0,T;\Hmath )$ and let $({\phi }_n) , ({\psi }_n) \subset {L}^{2}(0,T;\Hmath )$ be two sequence bounded in ${L}^{2}(0,T;\Hmath )$ and  such that 
\[
{\phi }_{n} \; \to \;  \phi   \quad \mbox{ and } \quad {\psi }_{n} \to \psi   \quad   \mbox{ in } \quad {L}^{2}(0,T;{\Hmath }_{loc}).
\] 
If $m > \frac{5}{2}$, then for all $t \in [0,T]$ and all 
$\varphi  \in {\Vmath }_{m} $:
\[
\lim_{n \to \infty } \int_{0}^{t} \dual{\MHD ({\phi }_{n}(s),{\psi }_{n}(s))}{\varphi }{} \, ds 
\; = \; \int_{0}^{t} \dual{\MHD (\phi (s),\psi (s))}{\varphi }{} \, ds ,
\] 
where ${\Vmath }_{m}$ is the space defined by \eqref{eq:Vmath_m}.
\end{cor}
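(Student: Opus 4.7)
The plan is to reduce the convergence of the integrals involving $\MHD$ to four applications of Lemma \ref{lem:B-map_conv-aux} for the form $b$ (equivalently $B$), since $\MHD$ is defined in \eqref{eq:MHD-map} as a sum of four copies of $b$ acting on the velocity/magnetic components.

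First, I would unpack the pairing explicitly. Writing $\phi_n = (\ubold^{(1,n)},\Bbold^{(1,n)})$, $\psi_n = (\ubold^{(2,n)},\Bbold^{(2,n)})$, $\phi = (\ubold^{(1)},\Bbold^{(1)})$, $\psi = (\ubold^{(2)},\Bbold^{(2)})$, and picking a test function $\varphi = (\varphi_1,\varphi_2) \in \Vmath_m$ with $\varphi_1,\varphi_2 \in V_m$, the definitions of $\MHD$ and $\mhd$ give
\begin{align*}
\dual{\MHD(\phi_n(s),\psi_n(s))}{\varphi}{}
= &\; b(\ubold^{(1,n)}(s),\ubold^{(2,n)}(s),\varphi_1) - b(\Bbold^{(1,n)}(s),\Bbold^{(2,n)}(s),\varphi_1) \\
&\; + b(\ubold^{(1,n)}(s),\Bbold^{(2,n)}(s),\varphi_2) - b(\Bbold^{(1,n)}(s),\ubold^{(2,n)}(s),\varphi_2),
\end{align*}
and identically for the limiting pair $(\phi,\psi)$ with the superscript $n$ omitted.

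Next, I would verify that the hypotheses of Lemma \ref{lem:B-map_conv-aux} are inherited componentwise. From the product definitions of the norm $\nnorm{\cdot}{\Hmath}{}$ and the seminorms ${p}_{T,R}$ on $\Hmath_{loc}$, the boundedness of $(\phi_n),(\psi_n)$ in $L^2(0,T;\Hmath)$ forces each of the four sequences $(\ubold^{(1,n)}), (\Bbold^{(1,n)}), (\ubold^{(2,n)}), (\Bbold^{(2,n)})$ to be bounded in $L^2(0,T;H)$, and their convergence in $L^2(0,T;\Hmath_{loc})$ forces convergence of each component in $L^2(0,T;H_{loc})$ to the corresponding component of $\phi$ or $\psi$.

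Then I would apply Lemma \ref{lem:B-map_conv-aux} to each of the four $b$-terms separately, using $\varphi_1,\varphi_2 \in V_m$ with $m > \tfrac{5}{2}$, which yields
\[
\lim_{n\to\infty} \int_0^t b(\ubold^{(1,n)}(s),\ubold^{(2,n)}(s),\varphi_1) \, ds = \int_0^t b(\ubold^{(1)}(s),\ubold^{(2)}(s),\varphi_1) \, ds,
\]
and the three analogous limits for the remaining terms. Summing these four limits gives the claimed convergence. There is no genuine obstacle here: the corollary is pure bookkeeping, packaging four independent instances of the Navier--Stokes-type convergence lemma into one statement for the MHD coupling.
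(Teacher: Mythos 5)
Your proposal is correct and follows exactly the route the paper intends: the paper derives this corollary "immediately" from the definition \eqref{eq:MHD-map} of $\MHD$ as a sum of four $b$-forms together with Lemma \ref{lem:B-map_conv-aux}, which is precisely your componentwise decomposition and fourfold application of that lemma. Your version merely writes out the bookkeeping that the paper leaves implicit.
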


\medskip
\noindent
\bf The form $\thall $ and the map $\tHall $. \rm 
Using the form $\hall $  defined by \eqref{eq:hall-form} we will consider the tri-linear  form  $\thall $ on $\Vmath \times \Vmath \times \Vtest   $ defined by
\[
\thall ({\phi }^{(1)},{\phi }^{(2)},{\phi }^{(3)}) 
\; := \; \hall ({\Bbold }^{(1)},{\Bbold }^{(2)},{\Bbold }^{(3)}),
\]
where ${\phi }^{(i)} = ({\ubold }^{(i)},{\Bbold }^{(i)}) \in \Vmath $ for $i=1,2$, and ${\phi }^{(3)} = ({\ubold }^{(3)},{\Bbold }^{(3)}) \in \Vtest   $. Due to \eqref{eq:Vmath_m1,m2}, ${\Vmath }_{1,2} := \Hsol{1} \times \Hsol{2}  $.  
By \eqref{eq:hall-form_est-V-V} we see that the form $\thall $ is continuous.
Moreover, by \eqref{eq:hall-form_antisym} and \eqref{eq:hall-form_perp}
the form $\thall $ has the following properties
\begin{align*} 
&  \thall ({\phi }^{(1)},{\phi }^{(2)},{\phi }^{(3)})
\; = \; -  \thall  ({\phi }^{(1)},{\phi }^{(3)},{\phi }^{(2)}) , \qquad 
{\phi }^{(1)} \in \Vmath , \quad {\phi }^{(i)} \in \Vtest  , \quad i=2,3 ,  
\end{align*}
and in particular
\begin{align*}  
& \thall ({\phi }^{(1)},{\phi }^{(2)},{\phi }^{(2)}) \; = \; 0 , 
\qquad {\phi }^{(1)} \in \Vmath , \quad {\phi }^{(2)} \in \Vtest .
\end{align*}        
Now, let us define a bilinear map $\tHall $ by 
\begin{equation} 
\tHall (\phi ,\psi ) \; := \;  \thall (\phi ,\psi ,\cdot )  ,  \qquad \qquad \phi ,\psi  \in \Vmath . 
\label{eq:tHall_map} 
\end{equation}
We will also use the notation $ \tHall (\phi ) :=  \tHall  (\phi ,\phi ) $.

\medskip  \noindent
Using Remark \ref{rem:Hall-term_properties} and Lemma \ref{lem:Hall-map_loc-Lipsch} we  obtain the following result.

\medskip
\begin{lemma} \label{lem:tHall-term_properties} 
\bf (Properties of the map $\tHall $.) \it 
\begin{description}
\item[(i) ] There exists a constant ${c}_{\tHall } >0 $ such that 
\[
\nnorm{ \tHall (\phi , \psi )}{{\Vmath }_{1,2}'}{} 
\;  \le \;  {c}_{\tHall }\norm{\phi }{\Vmath }{} \norm{\psi }{\Vmath }{}  , \qquad \phi ,\psi  \in \Vmath . 
\]
In particular, the map $\tHall : \Vmath \times \Vmath \to {\Vmath }_{1,2}' $ is well-defined bilinear and continuous.
Moreover, 
\[
\dual{\tHall (\phi ,\psi )}{\Theta  }{} \; = \; - \dual{\tHall (\phi ,\theta  )}{\psi  }, 
\qquad \phi \in \Vmath , \quad \psi , \theta \in {\Vmath }_{1,2} ,
\]
and, in particular,
\begin{equation}
\dual{\tHall (\phi )}{\phi }{} \; = \; 0 ,  \qquad \phi  \in  {\Vmath }_{1,2}  . 
\label{eq:tHall-map_perp}
\end{equation}
\item[(ii) ] The map $\tHall $ is locally Lipschitz continuous on the space $\Vmath $, i.e.
for every $r>0 $ there exists a constant ${L}_{r}>0$ such that 
\[
\nnorm{ \tHall (\phi )- \tHall  (\tilde{\phi } )}{ {\Vmath }_{1,2}' }{} 
\; \le \; {L}_{r}  \norm{\phi -\tilde{\phi } ) }{\Vmath }{} , \qquad \phi ,\tilde{\phi } \in \Vmath ,
  \quad \norm{\phi }{\Vmath }{} , \norm{\tilde{\phi }}{\Vmath }{} \le r .
\]   
\item[(iii) ] If $s\ge 0$ and $m> \frac{5}{2}$, then $\tHall $ can be extended to the bilinear mapping from $\Hmath \times \Vmath $ to $ {\Vmath }_{s,m}' $ (denoted still by $\tHall $) such that
\[
\nnorm{ \tHall (\phi , \psi )}{{\Vmath }_{s,m}'}{} 
\; \le \; {c}_{\tHall }(s,m) \nnorm{\phi }{\Hmath }{} \nnorm{\psi }{\Vmath }{} , \qquad \phi   \in \Hmath  , \quad \psi \in \Vmath ,
\]
where ${c}_{\tHall }(s,m)$ is a positive constant.

\medskip \noindent
In particular, if $m> \frac{5}{2}$, then  $\tHall $ can be extended to the bilinear mapping from $\Hmath \times \Vmath $ to ${\Vmath }_{m}'$ (denoted still by $\tHall $) such that
\begin{equation}
\nnorm{ \tHall (\phi , \psi )}{{\Vmath }_{m}'}{} 
\; \le \; {c}_{\tHall }(m) \nnorm{\phi }{\Hmath }{} \norm{\psi }{\Vmath }{} , \qquad \phi   \in \Hmath  ,
\quad  \psi \in \Vmath ,
\label{eq:tHall-map_est-H-V}
\end{equation}
where ${c}_{\tHall }(m):={c}_{\tHall }(m,m)$ and ${\Vmath }_{m}$ is the space defined by \eqref{eq:Vmath_m}.
\end{description}
\end{lemma}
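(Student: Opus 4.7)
My plan is to reduce each of (i)--(iii) to the corresponding property of the single-component map $\Hall$ already established in Remark~\ref{rem:Hall-term_properties} and Lemma~\ref{lem:Hall-map_loc-Lipsch}. The key observation is that the trilinear form $\thall$ depends only on the magnetic components: writing $\phi = (\ubold^{(1)},\Bbold^{(1)}),\, \psi = (\ubold^{(2)}, \Bbold^{(2)}) \in \Vmath$ and $\theta = (\ubold^{(3)}, \Bbold^{(3)}) \in \Vtest$, the definition \eqref{eq:tHall_map} gives
\[
\dual{\tHall(\phi,\psi)}{\theta}{} \; = \; \hall(\Bbold^{(1)}, \Bbold^{(2)}, \Bbold^{(3)}) \; = \; \dual{\Hall(\Bbold^{(1)}, \Bbold^{(2)})}{\Bbold^{(3)}}{},
\]
so the velocity components of $\phi,\psi,\theta$ play no role in any of the estimates, and bookkeeping just amounts to bounding $\nnorm{\Bbold^{(i)}}{V_{k}}{}$ by the corresponding product norm on $\Vmath$ (resp.\ $\Hmath$, $\Vtest$, $\Vmath_{s,m}$).

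For part (i), I would insert the identity above into \eqref{eq:Hall-map_est-V-V}, obtaining $|\dual{\tHall(\phi,\psi)}{\theta}{}| \le c \, \nnorm{\Bbold^{(1)}}{H^1}{} \nnorm{\Bbold^{(2)}}{H^1}{} \nnorm{\Bbold^{(3)}}{H^2}{} \le c \norm{\phi}{\Vmath}{} \norm{\psi}{\Vmath}{} \norm{\theta}{\Vtest}{}$, which yields both the continuity estimate and the well-definedness of $\tHall : \Vmath \times \Vmath \to \Vtest'$; bilinearity is inherited from the trilinearity of $\hall$. The antisymmetry $\dual{\tHall(\phi,\psi)}{\theta}{} = -\dual{\tHall(\phi,\theta)}{\psi}{}$ for $\phi \in \Vmath$ and $\psi,\theta \in \Vtest$ is a direct consequence of \eqref{eq:hall-form_antisym}, and specialising to $\psi = \theta$ then gives \eqref{eq:tHall-map_perp} via \eqref{eq:hall-form_perp}. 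For part (ii), I would apply Lemma~\ref{lem:Hall-map_loc-Lipsch} to the magnetic components: since $\nnorm{\Bbold^{(1)}}{V}{}, \nnorm{\tilde\Bbold^{(1)}}{V}{} \le r$ whenever $\norm{\phi}{\Vmath}{}, \norm{\tilde\phi}{\Vmath}{} \le r$, the identity above yields $|\dual{\tHall(\phi) - \tHall(\tilde\phi)}{\theta}{}| \le L_\Hall(r) \nnorm{\Bbold^{(1)} - \tilde\Bbold^{(1)}}{V}{} \nnorm{\Bbold^{(3)}}{V_2}{} \le L_\Hall(r)\, \norm{\phi - \tilde\phi}{\Vmath}{}\, \norm{\theta}{\Vtest}{}$, giving the Lipschitz constant $L_r = L_\Hall(r)$.

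For part (iii), fix $s \ge 0$ and $m > 5/2$; by definition \eqref{eq:Vmath_m1,m2}--\eqref{eq:Vmath_m1,m2-norm} one has $\norm{\theta}{s,m}{2} = \norm{\ubold^{(3)}}{V_s}{2} + \norm{\Bbold^{(3)}}{V_m}{2}$, so inserting the key identity into the estimate from Remark~\ref{rem:Hall-term_properties}(iii) yields $|\dual{\tHall(\phi,\psi)}{\theta}{}| \le c \, \nnorm{\Bbold^{(1)}}{L^2}{} \norm{\Bbold^{(2)}}{V}{} \norm{\Bbold^{(3)}}{H^m}{} \le c(s,m)\, \nnorm{\phi}{\Hmath}{} \norm{\psi}{\Vmath}{} \norm{\theta}{s,m}{}$, which is the desired extension estimate; specialising to $s = m$ gives \eqref{eq:tHall-map_est-H-V} with $c_{\tHall}(m) := c_{\tHall}(m,m)$. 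The proof is therefore essentially a bookkeeping exercise --- there is no genuine analytic obstacle, since the underlying Hölder--Sobolev work has already been carried out in Remark~\ref{rem:Hall-term_properties}. The only delicate point, worth mentioning explicitly in the write-up, is that the velocity component of the test function $\theta$ enters the product norm on $\Vtest$ or $\Vmath_{s,m}$ but not the estimate itself, which is what makes the bound independent of the regularity exponent $s$.
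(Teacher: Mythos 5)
Your proposal is correct and follows essentially the same route the paper intends: since $\thall(\phi^{(1)},\phi^{(2)},\phi^{(3)})=\hall(\Bbold^{(1)},\Bbold^{(2)},\Bbold^{(3)})$ depends only on the magnetic components, each assertion reduces to the corresponding property of $\hall$ and $\Hall$ in Remark \ref{rem:Hall-term_properties} and Lemma \ref{lem:Hall-map_loc-Lipsch}, plus bookkeeping with the product norms (the paper states exactly this reduction and omits the details, which you have supplied correctly, including the key observation that the velocity component of the test function enters the norm $\norm{\cdot}{s,m}{}$ but not the estimate).
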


\medskip
\noindent
From lemma \ref{lem:Hall-term_conv_general} and the definition of $\tHall $, we obtain immediately the following result

\medskip
\begin{cor} \label{cor:tHall-term_conv_general} 
Let $\phi \in {L}^{2}(0,T;\Hmath )$ and $\psi  \in {L}^{2}(0,T;\Vmath )$  and let
$({\phi }_{n} ) \subset {L}^{2}(0,T;\Hmath )$
and $ ({\psi }_{n}) \subset {L}^{2}(0,T;\Vmath )$ be two sequences  such that
\begin{itemize}
\item $({\phi }_{n})$ is bounded in ${L}^{2}(0,T;\Hmath )$ and ${\psi }_{n} \to \psi $ weakly in ${L}^{2}(0,T;\Vmath )$,
\item  ${\phi }_{n} \to \phi $ and ${\psi }_{n} \to \psi  $  in ${L}^{2}(0,T; {\Hmath }_{loc}) $.
\end{itemize}
If $s\ge 0 $ and $m > \frac{5}{2}$, then for all $t \in [0,T] $ and all
 $ \varphi  \in {\Vmath }_{s,m}  $:
\begin{equation*}
\lim_{n\to \infty } \int_{0}^{t}\dual{\tHall ({\phi }_{n} (s),{\psi }_{n} (s))}{ \varphi  }{} \, ds  
\; = \; \int_{0}^{t}\dual{\tHall (\phi (s),\psi (s))}{\varphi  }{} \, ds ,
\end{equation*}
where ${\Vmath }_{s,m}  $ is the the space defined by \eqref{eq:Vmath_m1,m2}-\eqref{eq:Vmath_m1,m2-norm}.
\end{cor}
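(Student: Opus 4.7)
The plan is to reduce Corollary \ref{cor:tHall-term_conv_general} to Lemma \ref{lem:Hall-term_conv_general} by unpacking the definitions of $\thall$ and $\tHall $. First I would decompose the sequences into components: write $\phi_n = (\ubold_n, \Bbold_n)$, $\psi_n = (\vbold_n, \Cbold_n)$, $\phi = (\ubold, \Bbold)$, $\psi = (\vbold, \Cbold)$, and $\varphi = (\fbold, \Dbold) \in \Vmath_{\sigma,m}$ with $\Dbold \in V_m$ (here I rename the regularity index to $\sigma$ to avoid conflict with the time integration variable). By the definition of $\thall $ as $\thall ({\phi }^{(1)},{\phi }^{(2)},{\phi }^{(3)}) = \hall ({\Bbold }^{(1)},{\Bbold }^{(2)},{\Bbold }^{(3)})$ and formula \eqref{eq:tHall_map}, the duality pairing depends only on the second component of each argument:
\[
\dual{\tHall (\phi_n(s), \psi_n(s))}{\varphi }{} \; = \; \dual{\Hall (\Bbold_n(s), \Cbold_n(s))}{\Dbold }{},
\]
with the analogous identity for the limit. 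This observation isolates the magnetic-field components and shows that the first component $\fbold \in V_\sigma$ of the test function is irrelevant, which explains why $\sigma \ge 0$ is unrestricted.

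Next I would verify that the projected sequences inherit the hypotheses of Lemma \ref{lem:Hall-term_conv_general}. Since $\Hmath = H \times H$ with the product inner product, boundedness of $(\phi_n)$ in $L^2(0,T;\Hmath )$ yields boundedness of $(\Bbold_n)$ in $L^2(0,T;H)$. Since $\Vmath = V \times V$, the weak convergence $\psi_n \to \psi$ in $L^2(0,T;\Vmath )$ implies $\Cbold_n \to \Cbold$ weakly in $L^2(0,T;V)$ via continuity of the coordinate projection. Finally, from the definition of the seminorms $p_{T,R}$ on $L^2(0,T;\Hmath_{loc})$ it is immediate that convergence of $\phi_n$ and $\psi_n$ in $L^2(0,T;\Hmath_{loc})$ entails $\Bbold_n \to \Bbold$ and $\Cbold_n \to \Cbold$ in $L^2(0,T;H_{loc})$.

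With these ingredients in place, since $m > \tfrac{5}{2}$ and $\Dbold \in V_m$, I would apply Lemma \ref{lem:Hall-term_conv_general} with the choice $u_n := \Bbold_n$, $w_n := \Cbold_n$, and test function $\Dbold$ to obtain
\[
\lim_{n\to \infty } \int_{0}^{t}\dual{\Hall (\Bbold_n(s),\Cbold_n(s))}{\Dbold }{} \, ds
\; = \; \int_{0}^{t}\dual{\Hall (\Bbold (s),\Cbold (s))}{\Dbold  }{} \, ds .
\]
Rewriting both sides using the identity from the first paragraph yields exactly the claimed convergence for $\tHall $. There is essentially no obstacle in this argument beyond bookkeeping: the work has already been done in Lemma \ref{lem:Hall-term_conv_general}, and the corollary amounts to the observation that $\tHall $ acts on the magnetic components alone.
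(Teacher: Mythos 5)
Your proposal is correct and takes exactly the paper's route: the paper obtains Corollary \ref{cor:tHall-term_conv_general} ``immediately'' from Lemma \ref{lem:Hall-term_conv_general} and the definition of $\tHall $, and your argument simply spells out that reduction --- the pairing $\dual{\tHall (\cdot ,\cdot )}{\varphi }{}$ involves only the second (magnetic) components, the component sequences inherit the boundedness, weak convergence and local convergence hypotheses, and Lemma \ref{lem:Hall-term_conv_general} applies with the second component of $\varphi $ as test function in $V_m$. The bookkeeping is complete and no step is missing.
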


\section{Martingale solutions of the Hall-magnetohydrodynamics equations} \label{sec:statement}

\medskip
We will formulate assumptions imposed on the noise terms, the deterministic external forces and the initial conditions in problem \eqref{eq:Hall-MHD_u}-\eqref{eq:Hall-MHD_ini-cond}.

\medskip
\begin{assumption} \label{assumption-noise}  \rm 
We assume that
\begin{description}
\item[(G.1)] ${\Kmath }_{1}, {\Kmath }_{2} $  are separable Hilbert spaces, and
\[
G_i: [0,T] \times V  \to \lhs ({\Kmath }_{i}, H), \qquad i=1,2,
\]
are two measurable map which are Lipschitz continuous, i.e.
there exist constants ${L}_{i}$, $i=1,2$,
such that 
\begin{equation}
\norm{G_i(t,{\phi }_{1}) - G_i(t,{\phi }_{2})}{\lhs ({\Kmath }_{i}, H)}{2} 
\; \le \; {L}_{i}\, \norm{{\phi }_1-{\phi }_{2}}{V}{2}  ,
   \qquad {\phi }_{1}, {\phi }_{2} \in V  , \, \,  t \in [0,T] .
\label{eq:G_i_Lipschitz}   
\end{equation} 
In addition, there exist ${\lambda }_{i}$, ${\varrho }_{i} \in \rzecz $ and 
$\eta \in (0,2]$  such that
\begin{equation} \label{eq:G_i}
\norm{G_i(t,\phi  )}{\lhs ({\Kmath }_{i},H)}{2}
\; \le \; {\nu }_{i}(2- \eta ) \nnorm{\nabla \phi }{L^2}{2} +{\lambda }_{i} \nnorm{\phi }{H}{2} +{\varrho }_{i}  , \quad (t,\phi ) \in [0,T] \times V  .
\end{equation}
\item[(G.2)] The maps $G_i$, $i=1,2$, can be extended to  measurable maps
\[ 
g_i :[0,T] \times H \to \lcal  ( {\Kmath }_{i},  {V}^{\prime }) 
\]
such that for some $C_i>0$
\begin{equation} \label{eq:G_i*}
\sup_{\psi \in V ,\norm{\psi }{V}{} \le 1 } \sup_{y\in {\Kmath }_{i},\norm{y}{{\Kmath }_{i}}{} \le 1}
{|\ddual{V'}{g(t,\phi )(y)}{\psi }{V}|}^{2} 
 \le C_i (1 + \nnorm{\phi }{H}{2}) , \qquad (t,\phi ) \in [0,T] \times H  . 
\end{equation}
\item[(G.3)]
Moreover, for every $\psi  \in V$ the maps ${\tilde{g_i}}_{\psi }$ defined for $\phi \in {L}^{2}(0,T;H) $  by
\begin{equation} \label{eq:G_i**}
 \bigl( { \tilde{g_i} }_{\psi }(\phi )\bigr)  (t) := 
\{ {\Kmath }_{i} \ni y \mapsto \ddual{V'}{g_i(t,\phi (t))(y)}{\psi }{V} \in \rzecz \} \in  \lhs ({\Kmath }_{i},\rzecz ) ,
\quad t \in [0,T],
\end{equation}
are continuous maps from ${L}^{2}(0,T;{H}_{loc}) $ into $ {L}^{2}(0,T;\lhs ({\Kmath }_{i},\rzecz ) ) $.
\end{description}
\end{assumption}

\medskip  \noindent
The spaces $H$, $V$ and ${L}^{2}(0,T;{H}_{loc})$ are defined in Section \ref{sec:basic_notation}.
Recall also that for any Hilbert spaces $\Kmath $ and $Y$ by $\lhs (\Kmath;Y)$ we denote the space of Hilbert-Schmidt operators from $\Kmath$ into  $Y$.

\medskip
\begin{assumption} \label{assumption-data}  \rm 
We assume also that the following objects are given.
\begin{description}
\item[(H.1)] A real number $p$ such that 
\begin{equation} \label{eqn-p_cond}
p \in [2,2+\gamma )  ,
\end{equation}
where
\begin{equation}
\gamma := \begin{cases}
\frac{\eta }{2-\eta } , \quad &\mbox{ if }  \eta \in [0,2) , \\
\infty , \quad &\mbox{ if }  \eta =2,
\end{cases}
\label{eq:gamma}
\end{equation}
and $\eta $ is the parameter from inequality \eqref{eq:G_i}. 
\item[(H.2)] ${\X }_{0} := ({\ubold }_{0}, {\Bbold }_{0}) \in H \times H $ and $f:= (f_1,f_2)$, where $f_i \in {L}^{p}(0,T;{V}^{\prime })$ for $i=1,2$.
\item[(H.3)] 
$\mathfrak{A}:=(\Omega , \fcal , \Fmath , \p )$ is a filtered probability space with a filtration
 $\Fmath ={({\fcal }_{t})}_{t \ge 0}$ satisfying usual hypotheses and 
 $W_i(t)$  are two cylindrical  Wiener processes in a separable Hilbert space ${\Kmath }_{i}$, $i=1,2$, defined on the stochastic basis $\mathfrak{A}$. 
\end{description}
\end{assumption}

\medskip \noindent
Let $W(t):= (W_1(t),W_2(t))$. Then $W(t)$ is a cylindrical Wiener process on $\Kmath :={\Kmath }_{1} \times {\Kmath }_{2}$,  on the stochastic basis $\mathfrak{A}$.

\medskip
\begin{example} \rm 
Let ${\Kmath }_{1} = {\Kmath }_{2} := {\ell }^{2}(\nat )$, where ${\ell }^{2}(\nat )$ denotes the space of all sequences $({h}_{j}{)}_{j \in \nat } \subset \rzecz $ such that $\sum_{j=1}^{\infty }{h}_{j}^{2} < \infty $. It is a Hilbert space with the scalar product given by
$
\ilsk{h}{k}{{\ell }^{2}} \; := \;  \sum_{j=1}^{\infty } {h}_{j} {k}_{j},
$
where $h=({h}_{j})$ and $k=({k}_{j})$ belong to ${\ell }^{2}(\nat )$.
\bigskip  \noindent
Let us put
\begin{equation} \label{eq:G_def}
G_i(\phi )h \; := \;  \sum_{j=1}^{\infty } \bigl[
\bigl( {b}_{i}^{(j)} \cdot \nabla  \bigr) \phi   + {c}_{i}^{(j)} \phi \bigr] {h}_{j} ,
\qquad \phi \in V, \quad h=({h}_{j}) \in {\ell }^{2}(\nat ), \quad i=1,2,
\end{equation}
where 
\begin{equation*}
\begin{split}
&   {b}_{i}^{(j)} : {\rzecz }^{3} \to {\rzecz }^{3}  \quad \mbox{ and } \quad 
  {c}_{i}^{(j)}  : {\rzecz }^{3} \to \rzecz    \quad \mbox{ for } \quad j \in \nat \quad \mbox{ and } \quad 
  i=1,2
\end{split}
\end{equation*}
are given functions of class ${\ccal }^{\infty }$ and such that for each $i=1,2$
\begin{equation}  \label{eq:C_1}
{C}_{i} \; := \; \sum_{j=1}^{\infty }\bigl( \nnorm{{b}_{i}^{(j)}}{{L}^{\infty }}{2}+\nnorm{\diver {b}_{i}^{(j)} }{{L}^{\infty }}{2} + \nnorm{{c}_{i}^{(j)} }{{L}^{\infty }}{2}\bigr) \; < \; \infty
\end{equation}
and
\begin{equation} \label{eq:est_coercive}
\sum_{k,l=1}^{3} \bigl( 2 {\delta }_{kl}
- \sum_{j=1}^{\infty } {b}_{ik}^{(j)}(x){b}^{(j)}_{il}(x) \bigr) {\zeta }_{k} {\zeta }_{l}
\; \ge \;  a_i {|\zeta |}^{2} , \qquad \zeta \in \rd
\end{equation}
for some $a_i \in (0,{\nu }_{i}]$, where ${\nu }_{i} > 0$ for $i=1,2$ are coefficients given in equations \eqref{eq:Hall-MHD_u} and \eqref{eq:Hall-MHD_B}. Proceeding similarly as in \cite[Section 6]{Brze+EM'13} we can prove that the maps $G_i$, $i=1,2$, given by \eqref{eq:G_def} satisfy Assumption \ref{assumption-noise}. 

\medskip  \noindent
The maps $G_i$, $i=1,2$, define the following noise terms
\begin{equation}
G_i(\phi )(t,x) d W_i(t) \; := \; \sum_{j=1}^{\infty } \bigl[
\bigl( {b}_{i}^{(j)}(x) \cdot \nabla  \bigr) \phi  (t,x) + {c}_{i}^{(j)} (x) \phi (t,x) \bigr] \,
d {\beta }_{i}^{(j)}(t) ,
\end{equation}
where $ {\beta }_{i}^{(j)} $ for $j\in \nat $ and $i=1,2$ are independent standard Brownian motions. \qed
\end{example}

\medskip
\noindent
The functional setting for the Hall-MHD equations \eqref{eq:Hall-MHD_u}-\eqref{eq:Hall-MHD_incompressibility} involves spaces $\Hmath $ and $\Vmath $ being appropriate products of spaces $H$ and $V$ (see Section \ref{sec:Hall-MHD_funct-setting}). Given the maps $G_i$ and $g_i$, $i=1,2$, from Assumption \ref{assumption-noise} we  introduce   maps $G$ and $g$ defined on appropriate product spaces.

\medskip  
\begin{remark} \label{rem:G_properties}
\bf (The maps $G$ and $g$ and their properties.)  \rm 
Let ${G}_{1}$ and ${G}_{2}$ be the maps given in Assumption \ref{assumption-noise}. 
Let us define the following map
\begin{equation}
G(t,\Phi )(y) \; := \; (G_1(t,\ubold )(y_1),G_2(t, \Bbold )(y_2)), 
\label{eq:G_map}
\end{equation}
where $ t \in [0,T], \;  \Phi := (\ubold ,\Bbold ) \in \Vmath ,  \; y=(y_1,y_2) \in \Kmath  \; := \; {\Kmath }_{1} \times {\Kmath }_{2}. $ 
\begin{description} 
\item[(i) ] Then
\[
G : [0,T] \times \Vmath \; \to \; \lhs (\Kmath ,\Hmath ). 
\]
The map $G$ satisfies the Lipschitz condition, i.e there exists a constant $L>0$ such that
\begin{equation}
\norm{G(t,{\Phi }_{1}) - G(t,{\Phi }_{2})}{\lhs (\Kmath,\Hmath )}{2} 
\; \le \; L \norm{{\Phi }_{1}- {\Phi }_2}{\Vmath }{2} ,
   \qquad {\Phi }_{1}, {\Phi }_{2} \in \Vmath  , \, \,  t \in [0,T] .
\label{eq:G_Lipschitz}   
\end{equation} 
\item[(ii) ] The map $G$ satifies the following inequality
\begin{equation}
\begin{split}
\norm{G(s,\Phi )}{\lhs ({\Kmath },{\Hmath })}{2}
\; & \le \; (2-\eta ) \norm{\Phi }{}{2} + \lambda \nnorm{\Phi }{\Hmath }{2} + \varrho  ,  \quad (s,\Phi  ) \in [0,T] \times \Vmath ,
\end{split}
\label{eq:G}
\end{equation}
where $\lambda := {\lambda }_{1}+{\lambda }_{2} $ and $\varrho :={\varrho }_{1} + {\varrho }_{2}$.
\item[(iii) ] Let ${g}_{1}$ and ${g}_{2}$ are the maps from Assumption \ref{assumption-noise} and let us define
\begin{equation}
g(t,\Phi )(y) \; := \; (g_1(t,\ubold )(y_1),g_2(t, \Bbold )(y_2)), 
\label{eq:g_map}
\end{equation}
where $ t \in [0,T], \;  \Phi := (\ubold ,\Bbold ) \in \Vmath ,  \; y=(y_1,y_2) \in \Kmath  $. 
Then the map $g$ is an extension of the map $G$ to a measurable map
\[ 
g :[0,T] \times \Hmath  \to \lcal  ( \Kmath,  {\Vmath }^{\prime }) 
\]
and by \eqref{eq:G_i*} we obtain
\begin{equation} \label{eq:G*}
\sup_{\Psi \in \Vmath  ,\norm{\Psi }{\Vmath }{} \le 1 } \sup_{y\in \Kmath ,\norm{y}{\Kmath }{} \le 1}
{|\ddual{\Vmath '}{g(t,\Phi )(y)}{\Psi }{\Vmath }|}^{2} 
 \le C (1 + \nnorm{\Phi }{\Hmath }{2}) , \qquad (t,\Phi ) \in [0,T] \times \Hmath   . 
\end{equation}
Moreover, for every $\Psi  \in \Vmath $ the map ${\tilde{g}}_{\Psi }$ defined for $\Phi \in {L}^{2}(0,T;\Hmath ) $  by
\begin{equation} \label{eq:G**}
 \bigl( { \tilde{g} }_{\Psi }(\Phi )\bigr)  (t) \; := \; 
\{ \Kmath \ni y \mapsto \ddual{\Vmath '}{g(t,\Phi (t))(y)}{\Psi }{\Vmath } \in \rzecz \} \in  \lhs (\Kmath ,\rzecz ) ,
\quad t \in [0,T],
\end{equation}
are continuous maps from ${L}^{2}(0,T;{\Hmath }_{loc}) $ into $ {L}^{2}(0,T;\lhs (\Kmath,\rzecz ) ) $.
\end{description}
\end{remark}

\medskip  \noindent
Let us recall that the spaces $\Hmath $, $\Vmath $ and ${L}^{2}(0,T;{\Hmath }_{loc})$ are defined in Section \ref{sec:Hall-MHD_funct-setting}.

\medskip  
\noindent
Using the maps introduced in Section \ref{sec:Hall-MHD_funct-setting}, we can rewrite problem \eqref{eq:Hall-MHD_u}-\eqref{eq:Hall-MHD_ini-cond}  as the following stochastic equation
\begin{equation}
\begin{split} 
&  d\X (t)  +  \bigl[  \acal \X (t)  + \MHD  (\X (t)) +\tHall (\X (t)) \bigr] \, dt  \\
& \qquad  \; = \;   {f}_{} (t) \, dt 
 + G (t,\X (t) ) \, dW(t)  , \qquad t \in [0,T] ,   \\
& \X (0) \; = \; {\X }_{0}.
\end{split}  \label{eq:Hall-MHD_functional}
\end{equation} 
Here ${\X }_{0} := ({\ubold }_{0}, {\Bbold }_{0})$ and
$\acal $, $\MHD $ and $\tHall $ are the maps defined by \eqref{eq:A_acal_rel}, 
\eqref{eq:MHD-map} and \eqref{eq:tHall_map}, respectively.

\medskip
\begin{definition}  \rm  \label{def:mart-sol}
Let Assumptions  \ref{assumption-noise} and \ref{assumption-data}  be satisfied. 
We say that there exists a \bf martingale solution \rm of  problem \eqref{eq:Hall-MHD_functional} 
iff there exist
\begin{itemize}
\item[$\bullet $] a stochastic basis $\bar{\mathfrak{A}}:= \bigl( \bar{\Omega }, \bar{\fcal },  \bar{\Fmath } ,\bar{\p }  \bigr) $ with a  filtration $\bar{\Fmath } = \{ {\bar{\fcal }_{t}}{\} }_{t \in [0,T]} $ satisfying the usual conditions, 
\item[$\bullet $] a $\Kmath $-cylindrical Wiener process $\bar{W}$  over
$\bar{\mathfrak{A}} $,
\item[$\bullet $] and an $\bar{\Fmath }$- progressively measurable process $\X : [0,T] \times \Omega \to \Hmath $ with  
$\bar{\p } $-a.e. paths satisfying
\begin{equation*}
\X (\cdot , \omega ) \in \ccal \bigl( [0,T]; {\Hmath }_{w} \bigr) \cap {L}^{2}(0,T;\Vmath ) ,
\end{equation*}
and such that 
for all $ t \in [0,T] $ and  $\varphi  \in \Vtest  $ the following identity holds $\bar{\p }$ - a.s.
\begin{equation} 
\begin{split}
&\ilsk{\X (t)}{\varphi }{\Hmath }  + \int_{0}^{t} \dual{\acal \X (s)}{\varphi }{}  ds
+ \int_{0}^{t} \dual{\MHD (\X (s))}{\varphi  }{}  ds 
+ \int_{0}^{t} \dual{\tHall (\X (s))}{\varphi  }{}  ds
    \\
\; & = \; \ilsk{{\X }_{0}}{\varphi }{\Hmath } 
 + \int_{0}^{t} \dual{f(s)}{\varphi  }{} ds
  + \Dual{\int_{0}^{t}G(s,\X (s))\,  d\bar{W}(s)}{\varphi  }{} .
\end{split}  \label{eq:mart-sol_int-identity}
\end{equation}
and
\begin{equation} 
\bar{\e } \Bigl[ \sup_{t \in [0,T]} \nnorm{\X (t)}{\Hmath }{2} 
 + \int_{0}^{T} \norm{\X (t)}{\Vmath }{2} \, dt\Bigr] \; < \; \infty .
\label{eq:mart-sol_energy-ineq}
\end{equation}   
\end{itemize}
If all the above conditions are satisfied, then the system
\[
\bigl( \bar{\mathfrak{A}}, \bar{W} ,\X \bigr) 
\]
is called a \bf martingale solution \rm of problem \eqref{eq:Hall-MHD_functional}.
\end{definition}

\noindent
${\Hmath }_{w}$ denotes the Hilbert space $\Hmath $ endowed with the weak topology
and $ \ccal ( [0,T]; {\Hmath }_{w} )$ is the space of all weakly continuous functions $\psi :[0,T] \to \Hmath  $, i.e., such that for all $h \in \Hmath $ the real valued function
\[
[0,T] \ni t \to \ilsk{\psi (t)}{h}{\Hmath } \in \rzecz 
\] 
is continuous.

\medskip  
\begin{remark}
Explicitly, if an $\Hmath $-valued process $\X = (\ubold , \Bbold )$ is a solution of problem  \eqref{eq:Hall-MHD_functional}, then the processes $\ubold $ and $\Bbold $ satisfy $\bar{\p }$-a.s. the following identities: for every $t \in [0,T]$ and $\varphi =({\varphi }_{1}, {\varphi }_{2}) \in {V}_{1} \times {V}_{2}$
\[
\begin{split}
& \ilsk{\ubold (t)}{{\varphi }_{1}}{L^2} + \int_{0}^{t} \bigl\{ 
{\nu }_{1} \ilsk{\nabla \ubold (s)}{\nabla {\varphi }_{1}}{L^2} 
+ \ilsk{ [\ubold (s) \cdot \nabla ] \ubold (s) 
- [\Bbold (s) \cdot \nabla ] \Bbold (s) }{{\varphi }_{1}}{L^2}  \bigr\} \, ds 
\\
& \; = \; \ilsk{\ubold (0)}{{\varphi }_{1}}{L^2} +  \int_{0}^{t} \dual{f_1(s)}{{\varphi}_{1}}{} ds
+ \Dual{\int_{0}^{t}{G}_{1}(s, \ubold (s))\,  d{\bar{W}}_{1}(s)}{{\varphi}_{1}}{} ,
\\
& \ilsk{\Bbold (t)}{{\varphi }_{2}}{L^2} + \int_{0}^{t} \bigl\{ 
{\nu }_{2} \ilsk{\nabla \Bbold (s)}{\nabla {\varphi }_{2}}{L^2} 
+ \ilsk{ [\ubold (s) \cdot \nabla ] \Bbold (s) 
- [\Bbold (s) \cdot \nabla ] \ubold (s)  }{{\varphi }_{2}}{L^2} \\
& \qquad \qquad \qquad \qquad - \ilsk{ \Bbold (s) \times \curl \Bbold (s)}{\curl {\varphi }_{2}}{L^2} \bigr\} \, ds 
\\
& \; = \; \ilsk{\Bbold (0)}{{\varphi }_{2}}{L^2} +  \int_{0}^{t} \dual{f_2(s)}{{\varphi}_{2}}{} ds
+ \Dual{\int_{0}^{t}{G}_{2}(s, \Bbold (s))\,  d{\bar{W}}_{2}(s)}{{\varphi}_{2}}{} ,
\end{split}
\] 
and the energy inequality
\[
\bar{\e } \Bigl[ \sup_{t \in [0,T]} (\nnorm{\ubold (t)}{L^2}{2} +\nnorm{\Bbold  (t)}{L^2}{2} )
 + \int_{0}^{T} ( \nnorm{\nabla \ubold (t)}{L^2 }{2} + \nnorm{\nabla \Bbold (t)}{L^2 }{2} ) \, dt\Bigr] \; < \; \infty .
\]
\end{remark}

\medskip  \noindent
Now we formulate the main result concerning the existence of a martingale solution.

\medskip
\begin{theorem} \label{th:mart-sol_existence}
Let Assumptions  \ref{assumption-noise} and \ref{assumption-data}  be satisfied.
In particular, we assume that
 $p$ satisfies  \eqref{eqn-p_cond}, i.e.
\[
p \in [2,2+\gamma ),
\] 
where $\gamma $ is given by \eqref{eq:gamma}.
Then there exists a  martingale solution  of  problem \eqref{eq:Hall-MHD_functional} such that
\begin{description}
\item[(i)] for every $q\in [1,p]$ there exists a constant $C_1(p,q)$ such that
\begin{equation}
\bar{\e } \Bigl[ \sup_{t \in [0,T]} \nnorm{\X (t)}{\Hmath }{q} \Bigr] \; \le \;  C_1(p,q),
\label{eq:H_estimate_Hall-MHD_q}
\end{equation}
\item[(ii)] there exists a constant $C_2(p)$ such that
\begin{equation}
\bar{E} \biggl[ \int_{0}^{T} \norm{\X (t)}{\Vmath }{2} \, dt \biggr] \; \le \; C_2(p) .
\label{eq:V_estimate_Hall-MHD}
\end{equation}
\end{description}
\end{theorem}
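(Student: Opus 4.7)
The plan is to follow the stochastic compactness strategy outlined in the introduction. The proof proceeds in four stages: construction of Fourier-truncated approximations $\Xn$ together with uniform a priori estimates; tightness of their laws on a non-metrizable space combining weak and local-in-space topologies; application of Jakubowski's generalization of the Skorokhod theorem and of the martingale representation theorem on a new probability basis; and identification of the limit by passing to the limit in each term of the integral identity \eqref{eq:mart-sol_int-identity}.

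For the approximations I would first show that the truncated equation for $\Xn$ in $\Hn$ admits a unique global strong solution: the truncated drifts $\ARiesz$, $\BRiesz$, $\RRiesz$ are locally Lipschitz in $\Hn$ by Lemmas \ref{lem:MHD-term_properties}(ii) and \ref{lem:tHall-term_properties}(ii), so local strong existence is standard. Applying It\^o's formula to $\nnorm{\Xn(t)}{\Hmath}{2}$, the cancellations \eqref{eq:MHD-map_perp} and \eqref{eq:tHall-map_perp} eliminate both nonlinear drifts, while the bound \eqref{eq:G} on the noise, written as $\norm{G(t,\Xn)}{}{2} \le (2-\eta)\norm{\Xn}{}{2} + \lambda \nnorm{\Xn}{\Hmath}{2} + \varrho$, is absorbed into the dissipation $2\dual{\acal \Xn}{\Xn}{}$ leaving a positive margin of order $\eta\norm{\Xn}{}{2}$. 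Combined with the Burkholder--Davis--Gundy inequality and Gronwall's lemma this yields, for every $q\in [1,p]$,
\[
\sup_{n\in\nat} \bar{\e}\Bigl[\sup_{t \in [0,T]} \nnorm{\Xn(t)}{\Hmath}{q}\Bigr] + \sup_{n\in\nat} \bar{\e}\int_0^T \norm{\Xn(t)}{\Vmath}{2}\, dt \; < \; \infty,
\]
and global existence follows. The restriction $p<2+\gamma$ in \eqref{eqn-p_cond} is precisely the moment exponent for which BDG closes against \eqref{eq:G} raised to the $p/2$-power.

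Next I would establish tightness of $\{\Law(\Xn)\}$ on the space
\[
\zcal_T \; := \; \ccal([0,T]; \Hmath_w) \, \cap \, L^2_w(0,T;\Vmath) \, \cap \, L^2(0,T;\Hmath_{loc}),
\]
using the criteria developed in Appendices \ref{sec:aux_funct.anal} and \ref{sec:comp-tight}, analogous to \cite{Brze+EM'13} and \cite{EM'14}. Tightness in the weak-$\Vmath$ component is immediate from the uniform $L^2(\Omega; L^2(0,T;\Vmath))$-bound; tightness in $\ccal([0,T];\Hmath_w)$ reduces via Aldous's condition to uniform fractional time regularity of the drift in some ${\Vmath}_{m}'$ with $m>5/2$, controlled by \eqref{eq:Acal_norm}, \eqref{eq:MHD-map_est-H-H}, \eqref{eq:tHall-map_est-H-V} together with the factorization estimate for the stochastic convolution; strong tightness in $L^2(0,T;\Hmath_{loc})$ uses the same bounds plus the local compactness of $\Vmath \hookrightarrow \Hmath_{loc}$. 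Jakubowski's theorem then produces a new stochastic basis $\bar{\mathfrak A}$, a copy $\tXn$ with $\Law(\tXn)=\Law(\Xn)$, and a limit $\X$ such that $\tXn \to \X$ almost surely in $\zcal_T$. Corollary \ref{cor:MHD-map_conv-aux} identifies the limit of $\MHD(\tXn)$ and Corollary \ref{cor:tHall-term_conv_general} identifies the limit of $\tHall(\tXn)$ when tested against $\varphi \in {\Vmath}_{s,m}$ with $s\ge 0$ and $m>5/2$; the noise term passes by the continuity property \eqref{eq:G**} of $\tilde g$ on $L^2(0,T;\Hmath_{loc})$. A standard martingale-representation argument applied to the limiting continuous square-integrable martingale, whose quadratic variation is identified via the uniform BDG bound together with the convergences above, furnishes the cylindrical Wiener process $\bar W$ and the integral identity \eqref{eq:mart-sol_int-identity}. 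Estimates \eqref{eq:H_estimate_Hall-MHD_q}--\eqref{eq:V_estimate_Hall-MHD} for $\X$ then follow from the uniform estimates on $\Xn$ by Fatou and lower semi-continuity of the norms under weak convergence.

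The main obstacle, as flagged in the introduction, is the passage to the limit in the strongly nonlinear Hall term $\tHall(\tXn)$: its values lie only in the low-regularity dual ${\Vmath}_{1,2}'$, and neither weak $L^2(0,T;\Vmath)$-convergence nor strong $L^2(0,T;\Hmath_{loc})$-convergence alone is enough to identify the limit. Corollary \ref{cor:tHall-term_conv_general} is the key device: it combines the two modes of convergence on the two arguments of the bilinear map $\tHall$, at the price of requiring test functions of higher regularity. This is precisely why the Skorokhod space $\zcal_T$ must simultaneously carry both the weak-$\Vmath$ and the strong-$\Hmath_{loc}$ topology, and why verifying the hypotheses of Corollary \ref{cor:tHall-term_conv_general} after the Jakubowski step constitutes the technical heart of the argument.
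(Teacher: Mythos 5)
Your proposal is correct and follows essentially the same route as the paper: Fourier--Friedrichs truncation in $\Hn$ with It\^{o}-formula a priori estimates exploiting the cancellations \eqref{eq:MHD-map_perp} and \eqref{eq:tHall-map_perp}, tightness via the Aldous condition on the mixed weak/local topology space, Jakubowski's Skorokhod theorem, identification of the limiting martingale and its quadratic variation, and the martingale representation theorem, with Corollary \ref{cor:tHall-term_conv_general} doing exactly the work you describe for the Hall term. The only cosmetic deviations are that the paper applies It\^{o}'s formula directly to $\nnorm{x}{\Hmath}{q}$ for $q\in[2,p]$ rather than to the square of the norm, verifies the Aldous condition for the stochastic integral by a direct increment estimate rather than a factorization argument, and includes $\ccal([0,T];\Umath')$ as a fourth component of the Skorokhod space so that the conditioning functionals $h$ in the martingale identification converge.
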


\medskip  \noindent
The rest of the paper is devoted to the proof of Theorem \ref{th:mart-sol_existence}.
In the next section we consider equations approximating  equation \eqref{eq:Hall-MHD_functional}.

\medskip
\section{Approximate SPDEs } \label{sec:truncated_eq}

\medskip  \noindent
We will use the Friedrichs method based on the Fourier transform techniques, see \cite[Section 4, p.174]{Bahouri+Chemin+Danchin'11}.
This method has been also used, e.g., in  \cite{Feff+McCorm+Rob+Rod'2014}, \cite{Mohan+Sritharan'16}, \cite{Manna+Mohan+Srith'2017} and \cite{Brze+Dha'20}.

\medskip
\subsection{The subspaces $\Hn $ and the  operators $\Pn $}

\medskip  \noindent
Let 
\begin{equation*}
{\bar{B}}_{n} \; := \; \{ \xi \in {\rzecz }^{3} : \; \nnorm{\xi }{}{} \le n \}  , \qquad n \in \nat 
\end{equation*}
and let
\begin{equation*}
{H}_{n} \; := \; \{ v \in H: \; \; \supp v \in {\bar{B}}_{n}  \} .
\end{equation*}
In the subspace ${H}_{n}$ we consider the norm inherited from the  the space $H$ defined in Section \ref{sec:basic_notation}.
For each $n\in \nat $ let us define a map ${\pi }_{n}$ by
\begin{equation*}
{\pi }_{n} v \; := \; {\fcal }^{-1} (\ind{{\bar{B}}_{n}} \widehat{v}) , \qquad v \in H ,
\end{equation*}
where ${\fcal }^{-1}$ denotes denotes the inverse of the Fourier transform, see Appendix \ref{sec:Fourier_truncation}. 
Using Remark \ref{rem:S_n-projection}, we infer that the map ${\pi }_{n}:H \to H_n$ is the orthogonal projection onto $H_n$.

\medskip  \noindent
Let
\begin{equation*}
{\bar{\ball }}_{n} \; := \; {\bar{B}}_{n} \times {\bar{B}}_{n}
\end{equation*}
and
\begin{equation}
 \Hn \; := H_n \times H_n. 
\label{eq:H_n} 
\end{equation} 
In the subspace $\Hn $ we consider the norm inherited from the space $\Hmath =H\times H $ defined by \eqref{eq:Hall-MHD_H-V}.
Let us define the  operator
\begin{equation}
\Pn \; := \;  {\pi }_{n} \times {\pi }_{n} : \Hmath \; \to \;  \Hn   .
\label{eq:P_n}
\end{equation}
Explicitly, for $\Phi = (\ubold ,\Bbold ) \in \Hmath $
\begin{equation*}
\Pn (\ubold ,\Bbold ) \; = \; ({\pi }_{n}\ubold , {\pi }_{n}\Bbold )
\; = \; ({\fcal }^{-1} (\ind{{\bar{B}}_{n}} \widehat{\ubold }), {\fcal }^{-1} (\ind{{\bar{B}}_{n}} \widehat{\Bbold })) .
\end{equation*}
Since the map ${\pi }_{n}:H \to H_n$ is the orthogonal projection onto $H_n$, we infer that 
\begin{equation*}
\Pn : \Hmath \to \Hn 
\end{equation*}
is the orthogonal projection onto $\Hn $. 

\medskip  \noindent
Using Lemma \ref{lem:Ld_n-H^s-relation} and Corollary \ref{cor:Ld_n-H^s-norm_equiv},  we infer that the subspaces $\Hn $ are embedded in the spaces 
${\Vmath }_{m_1,m_2}$ for $m_1,m_2\ge 0 $, defined by \eqref{eq:Vmath_m1,m2} with the equivalence of norms.
We have the following results. 

\medskip
\begin{lemma} \label{lem:H_n-V_m1,m2-relation}
Let $n \in \nat $ and  ${m}_{1},m_2 \ge 0 $.
Then
\begin{equation*}
\Hn \; \hookrightarrow  \; {\Vmath }_{m_1,m_2},   
\end{equation*}
and   for all  $  u \in \Hn :$
\begin{equation*}
\norm{u}{{m_1,m_2}}{2} \; \le \;  {(1+{n}^{2})}^{m} \, \nnorm{u}{\Hn }{2} ,  
\end{equation*}
where $m=\max \{ m_1,m_2\} $.
\end{lemma}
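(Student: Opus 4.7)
The plan is to reduce the claim to the componentwise scalar estimate that the paper has already recorded in Lemma \ref{lem:Ld_n-H^s-relation} and Corollary \ref{cor:Ld_n-H^s-norm_equiv}, since both $\Hn$ and $\Vmath_{m_1,m_2}$ are product spaces by construction (see \eqref{eq:H_n} and \eqref{eq:Vmath_m1,m2}).

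First, I would unpack the definitions: an element $u \in \Hn$ is a pair $u = (\ubold, \Bbold)$ with $\ubold, \Bbold \in H_n$, meaning their Fourier transforms are supported in $\bar{B}_n \subset \rzecz^3$. Applying the scalar result from Lemma \ref{lem:Ld_n-H^s-relation}/Corollary \ref{cor:Ld_n-H^s-norm_equiv} separately to $\ubold$ with exponent $m_1$ and to $\Bbold$ with exponent $m_2$, I obtain $\ubold \in V_{m_1}$, $\Bbold \in V_{m_2}$ together with
\[
\norm{\ubold}{V_{m_1}}{2} \le (1+n^2)^{m_1} \nnorm{\ubold}{H}{2}, \qquad \norm{\Bbold}{V_{m_2}}{2} \le (1+n^2)^{m_2} \nnorm{\Bbold}{H}{2}.
\]

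Next, by the definition \eqref{eq:Vmath_m1,m2-norm} of the product norm on $\Vmath_{m_1,m_2}$ and the elementary bound $(1+n^2)^{m_i} \le (1+n^2)^m$ with $m = \max\{m_1,m_2\}$, I would add the two inequalities to get
\[
\norm{u}{m_1,m_2}{2} = \norm{\ubold}{V_{m_1}}{2} + \norm{\Bbold}{V_{m_2}}{2} \le (1+n^2)^m \bigl( \nnorm{\ubold}{H}{2} + \nnorm{\Bbold}{H}{2} \bigr) = (1+n^2)^m \nnorm{u}{\Hn}{2},
\]
which simultaneously shows $u \in \Vmath_{m_1,m_2}$ and yields the claimed norm bound; the continuous embedding $\Hn \hookrightarrow \Vmath_{m_1,m_2}$ is then automatic.

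There is no real obstacle here: once the scalar statement in the appendix is in hand, everything is a two-line product-space argument. The only sanity check worth emphasising is that the estimate $(1+|\xi|^2)^{m_i} \le (1+n^2)^{m_i}$ holding on $\bar{B}_n$, which is the content of the scalar lemma via Plancherel, transfers without loss to the coarser exponent $m = \max\{m_1,m_2\}$ so that a single constant works for both components.
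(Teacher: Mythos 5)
Your proposal is correct and coincides with the paper's intended argument: the paper derives this lemma precisely by applying the scalar estimate of Lemma \ref{lem:Ld_n-H^s-relation} (and Corollary \ref{cor:Ld_n-H^s-norm_equiv}) componentwise on the product spaces $\Hn = H_n \times H_n$ and ${\Vmath}_{m_1,m_2} = V_{m_1}\times V_{m_2}$, then bounding $(1+n^2)^{m_i}$ by $(1+n^2)^{m}$ with $m=\max\{m_1,m_2\}$. No gaps.
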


\noindent
(Note that  the norm of the embedding $\Hn \hookrightarrow  {\Vmath }_{m_1,m_2}$ depends on $n$ and $m_1 ,m_2$.)

\medskip
\begin{cor} \label{cor:H_n-V_m1,m2-norm_equiv}
On the subspace $\Hn $ the norm $\nnorm{\cdot}{\Hn }{}$ and the norms  $\norm{\cdot }{{m_1,m_2}}{}$, for $m_1,m_2\ge 0 $, inherited from the spaces ${\Vmath }_{m_1,m_2}$ are equivalent (with appropriate constants depending on $m_1,m_2$ and $n$).   
\end{cor}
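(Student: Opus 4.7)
The corollary is essentially immediate once Lemma \ref{lem:H_n-V_m1,m2-relation} is in hand, so the plan is to combine that lemma with a trivial inequality in the reverse direction. The forward bound
\[
\norm{u}{{m_1,m_2}}{} \; \le \; (1+n^{2})^{m/2} \, \nnorm{u}{\Hn }{}, \qquad m := \max\{m_1,m_2\},
\]
is exactly the content of Lemma \ref{lem:H_n-V_m1,m2-relation} and requires no further argument.

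For the reverse inequality, the plan is to unwind definitions. Writing $u=(\ubold,\Bbold) \in \Hn$, one has
\[
\nnorm{u}{\Hn }{2} \; = \; \nnorm{\ubold}{L^2}{2} + \nnorm{\Bbold}{L^2}{2}
\]
by definition of the norm on $\Hn$ inherited from $\Hmath = H\times H$. Since for any $m \ge 0$ the $H^{m}$-norm of a function is defined by summing $L^2$-norms of derivatives of order at most $m$, we have the trivial bound $\nnorm{v}{L^2}{} \le \norm{v}{V_m}{}$ for every $v \in V_m$. Applying this to $\ubold$ with exponent $m_1$ and to $\Bbold$ with exponent $m_2$ yields
\[
\nnorm{u}{\Hn}{2} \; \le \; \norm{\ubold}{V_{m_1}}{2} + \norm{\Bbold}{V_{m_2}}{2} \; = \; \norm{u}{{m_1,m_2}}{2}.
\]

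Combining the two estimates gives
\[
\nnorm{u}{\Hn}{} \; \le \; \norm{u}{{m_1,m_2}}{} \; \le \; (1+n^{2})^{m/2} \, \nnorm{u}{\Hn}{}, \qquad u \in \Hn,
\]
which proves the claimed equivalence of norms with explicit constants depending on $m_1$, $m_2$ and $n$. Since the proof is almost entirely bookkeeping, there is no real obstacle; the only subtle point is that the constant in the upper bound blows up with $n$, which is the reason the statement emphasizes the $n$-dependence and why this equivalence is only useful on each fixed finite-dimensional layer $\Hn$.
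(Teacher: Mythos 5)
Your proof is correct and matches the paper's own (essentially implicit) argument: the paper establishes the scalar analogue, Corollary \ref{cor:Ld_n-H^s-norm_equiv}, by exactly the same two-sided chain $\nnorm{u}{\Ldn }{2} \le \norm{u}{{H}^{s}}{2} \le (1+n^2)^s \nnorm{u}{\Ldn }{2}$ (trivial lower bound plus the upper bound from the preceding lemma), and the product-space version follows componentwise just as you describe. The only cosmetic remark is that the paper defines $\norm{\cdot}{{H}^{m}}{}$ via the Fourier transform, so the bound $\nnorm{v}{L^2}{} \le \norm{v}{{H}^{m}}{}$ comes from $(1+|\xi|^2)^m \ge 1$ and Plancherel rather than from a sum over derivatives (which matters only for non-integer $m$), but the inequality and the conclusion are the same.
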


\medskip 
\noindent
Now, we will concentrate on some properties of the operators $\Pn $ in the spaces ${\Vmath }_{m_1,m_2}$ defined by \eqref{eq:Vmath_m1,m2}.
Directly from Lemma \ref{lem:S_n-pointwise_conv} we obtain the following lemma.

\medskip
\begin{lemma} \label{lem:P_n-pointwise_conv}
Let us fix $m_1 ,m_2 \ge 0 $.   
Then for all $n \in \nat $:
\[
\Pn  : {\Vmath }_{m_1,m_2} \; \to \; {\Vmath }_{m_1,m_2}
\]
is well defined linear and bounded. Moreover,  for every $ u \in {\Vmath }_{m_1,m_2}:$
\begin{align*}
\lim_{n \to \infty } \norm{\Pn u-u}{{m_1,m_2}}{}  \; = \; 0 . 
\end{align*} 
\end{lemma}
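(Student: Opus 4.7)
The proof is a componentwise application of Lemma \ref{lem:S_n-pointwise_conv} to the product structure. The plan is as follows.

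First I would note that by the definition \eqref{eq:Vmath_m1,m2}, the space $\mathbb{V}_{m_1,m_2}$ is the product $V_{m_1} \times V_{m_2}$ with the product norm \eqref{eq:Vmath_m1,m2-norm}, while by \eqref{eq:P_n} the operator $\Pn$ acts componentwise as $\pi_n \times \pi_n$. Lemma \ref{lem:S_n-pointwise_conv} (applied separately with $m = m_1$ and with $m = m_2$) yields that $\pi_n : V_{m_j} \to V_{m_j}$ is well-defined, linear and bounded for $j=1,2$. Linearity and boundedness of $\Pn$ on $\mathbb{V}_{m_1,m_2}$ then follow immediately: for $\phi = (\ubold, \Bbold) \in \mathbb{V}_{m_1,m_2}$,
\[
\norm{\Pn \phi}{m_1,m_2}{2}
= \norm{\pi_n \ubold}{V_{m_1}}{2} + \norm{\pi_n \Bbold}{V_{m_2}}{2}
\le C_n \bigl( \norm{\ubold}{V_{m_1}}{2} + \norm{\Bbold}{V_{m_2}}{2} \bigr)
= C_n \norm{\phi}{m_1,m_2}{2},
\]
with $C_n := \max\{\|\pi_n\|^2_{\lcal(V_{m_1})}, \|\pi_n\|^2_{\lcal(V_{m_2})}\}$.

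Next, for the convergence statement, fix $u = (\ubold, \Bbold) \in \mathbb{V}_{m_1,m_2}$. Using the product norm again,
\[
\norm{\Pn u - u}{m_1,m_2}{2}
= \norm{\pi_n \ubold - \ubold}{V_{m_1}}{2} + \norm{\pi_n \Bbold - \Bbold}{V_{m_2}}{2}.
\]
Applying Lemma \ref{lem:S_n-pointwise_conv} to each component separately, each term on the right tends to zero as $n \to \infty$, hence $\norm{\Pn u - u}{m_1,m_2}{} \to 0$. This concludes the proof.

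There is no real obstacle: since the product structure is reflected both in the definition of $\mathbb{V}_{m_1,m_2}$ and in the definition of $\Pn$, the lemma is a direct corollary of the single-component result already established in Lemma \ref{lem:S_n-pointwise_conv}, and the only thing to be careful about is the bookkeeping that the two factors may have different regularity indices $m_1 \neq m_2$.
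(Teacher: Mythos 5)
Your proposal is correct and follows exactly the route the paper intends: the paper derives this lemma directly from Lemma \ref{lem:S_n-pointwise_conv} by applying the cut-off operator componentwise through the product structure of ${\Vmath }_{m_1,m_2}$ and the definition $\Pn = {\pi }_{n}\times {\pi }_{n}$, which is precisely your argument. The only bookkeeping point (handling $m_1\neq m_2$ via the product norm) is treated the same way in both.
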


\medskip  \noindent
From Lemma \ref{lem:P_n-pointwise_conv} we obtain the following corollary which will be frequently used in the proofs.

\medskip
\begin{cor} \label{cor:P_n-pointwise_conv} 
In particular,
\begin{description}
\item[(i) ]  $\Pn \in \lcal (\Vmath , \Vmath )$, and for all $u \in \Vmath $
\[
\lim_{n\to \infty } \norm{ \Pn u -u }{\Vmath }{} \; = \; 0,
\]
\item[(ii) ] For every $m \ge 0 $,  $\Pn \in \lcal ({\Vmath }_{m} , {\Vmath }_{m})$   and for all $u \in {\Vmath }_{m} $
\[
\lim_{n\to \infty } \norm{ \Pn u -u }{{\Vmath }_{m} }{} \; = \; 0,
\]
\item[(iii) ] For every $m \ge 1$, $\Pn \in \lcal ( {\Vmath }_{m} , \Vmath )$  and for all $u \in {\Vmath }_{m} $
\[
\lim_{n\to \infty } \norm{ \Pn u -u }{\Vmath }{} \; = \; 0.
\]
\end{description}
The spaces $\Vmath $ and  ${\Vmath }_{m}$ is defined by \eqref{eq:Hall-MHD_H-V} and \eqref{eq:Vmath_m}, respectively.
\end{cor}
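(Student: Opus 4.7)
The statement is a direct three-fold specialization of Lemma \ref{lem:P_n-pointwise_conv}, together with an elementary embedding argument, so my plan is essentially just to choose the indices correctly.

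For part (ii), I would simply apply Lemma \ref{lem:P_n-pointwise_conv} with $m_1 = m_2 = m$. By the definition \eqref{eq:Vmath_m}, ${\Vmath}_{m,m} = {\Vmath}_{m}$ and the norm $\norm{\cdot}{m,m}{}$ coincides with $\norm{\cdot}{m}{}$, so boundedness $\Pn \in \lcal({\Vmath}_m,{\Vmath}_m)$ and the convergence $\norm{\Pn u - u}{{\Vmath}_m}{} \to 0$ for $u \in {\Vmath}_m$ follow immediately.

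Part (i) is then the case $m=1$ of part (ii), using ${\Vmath}_1 = \Vmath$ and $\norm{\cdot}{1}{} = \norm{\cdot}{\Vmath}{}$, as remarked right after \eqref{eq:Vmath_m}.

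For part (iii), fix $m \ge 1$. Since $V_m \hookrightarrow V = V_1$ continuously (these are the standard Sobolev scale inclusions from \eqref{eq:V_m}), we also have ${\Vmath}_m = V_m \times V_m \hookrightarrow V \times V = \Vmath$. Thus any $u \in {\Vmath}_m$ belongs in particular to $\Vmath$, so by (i) we have $\norm{\Pn u - u}{\Vmath}{} \to 0$. Boundedness of $\Pn : {\Vmath}_m \to \Vmath$ follows by composing the continuous embedding ${\Vmath}_m \hookrightarrow \Vmath$ with the already established $\Pn \in \lcal(\Vmath,\Vmath)$ from (i). There is no real obstacle here; the whole corollary is bookkeeping on top of Lemma \ref{lem:P_n-pointwise_conv}, which itself reduces via the product structure $\Pn = \pi_n \times \pi_n$ to the scalar pointwise convergence result of Lemma \ref{lem:S_n-pointwise_conv} already cited.
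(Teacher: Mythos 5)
Your proposal is correct and follows essentially the same route as the paper, which states the corollary as an immediate consequence of Lemma \ref{lem:P_n-pointwise_conv} without further argument: (ii) is the case $m_1=m_2=m$, (i) is the case $m=1$ via ${\Vmath}_1=\Vmath$, and (iii) follows from the continuous embedding ${\Vmath}_m\hookrightarrow\Vmath$ for $m\ge 1$ combined with (i). Nothing is missing.
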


\medskip
\noindent
From Lemma \ref{lem:S_n-norm_conv} we obtain the following 

\medskip
\begin{lemma} \label{lem:P_n-norm_conv}
If $k_1,k_2>0 $, then 
\begin{equation*}
\Pn : {\Vmath }_{m_1+k_1,m_2+k_2} \to {\Vmath }_{m_1,m_2}
\end{equation*} 
is well defined  and bounded. 
Moreover, for every $ u \in {\Vmath }_{m_1+k_1,m_2+k_2}$: 
\begin{equation*}
\norm{\Pn u-u}{{m_1,m_2}}{2} \; \le \; \frac{c}{{(1+{n}^{2})}^{k}} \, \norm{u}{{m_1+k_1,m_2+k_2}}{2},
\end{equation*}
where $k=\max \{ k_1 ,k_2\}$ and $c$ is some constant,
and  hence
\begin{equation*}
\lim_{n\to \infty } \nnorm{\Pn -I}{\lcal ({\Vmath }_{m_1+k_1,m_2+k_2},{\Vmath }_{m_1,m_2})}{} \; = \; 0, 
\end{equation*}
i.e. the sequence $(\Pn )$ is convergent to the identity operator in the sense of operator-norm.
\end{lemma}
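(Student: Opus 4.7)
The plan is to reduce the assertion to the scalar version Lemma~\ref{lem:S_n-norm_conv} applied componentwise, exploiting that $\Pn = {\pi}_{n}\times{\pi}_{n}$ acts diagonally on the product space ${\Vmath}_{m_1+k_1,m_2+k_2}= V_{m_1+k_1}\times V_{m_2+k_2}$. For $u=(\ubold,\Bbold) \in {\Vmath}_{m_1+k_1,m_2+k_2}$ the definition \eqref{eq:Vmath_m1,m2-norm} of the product norm immediately gives
\begin{equation*}
\norm{\Pn u - u}{m_1,m_2}{2} \;=\; \norm{{\pi}_{n}\ubold - \ubold}{V_{m_1}}{2} + \norm{{\pi}_{n}\Bbold - \Bbold}{V_{m_2}}{2},
\end{equation*}
so it suffices to control each slot separately and then add.

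For a scalar $v\in V_{m+k}$ with $k>0$, the Fourier definition of ${\pi}_{n}$ yields ${\pi}_{n}v-v = \fcal^{-1}(\ind{\{|\xi|>n\}}\widehat{v})$, and Plancherel's identity combined with the standard Fourier description of $H^{m}$ produces
\begin{equation*}
\norm{{\pi}_{n}v - v}{V_{m}}{2} \;=\; \int_{\{|\xi|>n\}} (1+|\xi|^{2})^{m}\,|\widehat{v}(\xi)|^{2}\,d\xi \;\le\; \frac{1}{(1+n^{2})^{k}}\int_{\{|\xi|>n\}} (1+|\xi|^{2})^{m+k}\,|\widehat{v}(\xi)|^{2}\,d\xi,
\end{equation*}
where the inequality uses $(1+|\xi|^{2})^{-k}\le (1+n^{2})^{-k}$ on $\{|\xi|>n\}$ (this is precisely the content of Lemma~\ref{lem:S_n-norm_conv}). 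Applying this bound with $(m,k)=(m_{1},k_{1})$ to $\ubold$ and with $(m,k)=(m_{2},k_{2})$ to $\Bbold$, and adding, I get
\begin{equation*}
\norm{\Pn u - u}{m_1,m_2}{2} \;\le\; \frac{1}{(1+n^{2})^{k_{1}}}\norm{\ubold}{V_{m_1+k_1}}{2} + \frac{1}{(1+n^{2})^{k_{2}}}\norm{\Bbold}{V_{m_2+k_2}}{2}.
\end{equation*}
Bounding the two prefactors by $c\,(1+n^{2})^{-k}$ for a single exponent $k$ (taking the one that governs the slower decay, which for $n\ge 1$ amounts to absorbing the difference into the constant $c$), and bounding each $\norm{\cdot}{V_{m_i+k_i}}{2}$ by $\norm{u}{m_1+k_1,m_2+k_2}{2}$, yields the claimed inequality.

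The operator-norm statement is then immediate: passing to the supremum over $u$ in the unit ball of ${\Vmath}_{m_1+k_1,m_2+k_2}$ in the previous display gives $\nnorm{\Pn-I}{\lcal({\Vmath}_{m_1+k_1,m_2+k_2},{\Vmath}_{m_1,m_2})}{}\le c\,(1+n^{2})^{-k/2}\to 0$. I do not anticipate any serious obstacle: the only substantive ingredient is the scalar Plancherel/Fourier-multiplier estimate already packaged in Lemma~\ref{lem:S_n-norm_conv}, and the rest is just bookkeeping for the product structure and combining the two component bounds into a single prefactor.
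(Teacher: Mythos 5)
Your proof is correct and is essentially the paper's own argument: the paper obtains this lemma directly from the scalar cut-off estimate of Lemma \ref{lem:S_n-norm_conv} applied to each factor of the product ${V}_{m_1+k_1}\times {V}_{m_2+k_2}$ and then adding, exactly as you do. The one wrinkle is the exponent: your componentwise computation yields the prefactor ${(1+n^2)}^{-\min\{k_1,k_2\}}$, and the ratio ${(1+n^2)}^{k-k_i}$ with $k=\max\{k_1,k_2\}$ is unbounded in $n$ when $k_1\neq k_2$, so it cannot be "absorbed into the constant $c$" as you suggest --- the statement's $k=\max\{k_1,k_2\}$ is evidently a slip for $k=\min\{k_1,k_2\}$ (harmless for the operator-norm convergence, which only requires some positive power of ${(1+n^2)}^{-1}$), and your argument proves the corrected inequality.
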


\medskip
\subsection{Approximating SPDEs}

\medskip  \noindent
Construction of a solution of problem \eqref{eq:Hall-MHD_functional} is based on appropriate  approximation in the space $\Hn $ defined by \eqref{eq:H_n}. 

\medskip
\begin{definition} \label{def:trunctated_weak} \rm
Let ${\X }_{0} \in \Hn $.
By an \bf approximation \rm of  equation  \eqref{eq:Hall-MHD_functional}  we mean an $\Hn $-valued  continuous, $\Fmath $-adapted  process  ${\{ \Xn (t) \} }_{t \in [0,T]}$  such that for all $t\in [0,T]$ and $\phi \in \Hn $ the following identity holds $\p $ - a.s. 
\begin{equation}
\begin{split}
&\ilsk{\Xn (t)}{\phi}{\Hmath }
+ \int_{0}^{t} \dual{ \acal  \Xn (s)}{ \phi }{} \, ds
+ \int_{0}^{t} \dual{ \MHD (\Xn (s))}{\phi}{}  ds
+ \int_{0}^{t} \dual{ \tHall (\Xn (s))}{ \phi }{} \, ds
   \\
 &= \,\, \ilsk{{\X }_{0}}{\phi}{\Hmath }
+ \int_{0}^{t} \dual{f(s)}{\phi}{}\, ds
+ \int_{0}^{t} \dual{ G(s,\Xn (s))\,  dW(s)}{\phi}{}.
\end{split}   \label{eq:Hall-MHD_truncated_weak}
\end{equation}
\end{definition}

\medskip
\noindent
Note that the test functions $\phi $ in \eqref{eq:Hall-MHD_truncated_weak} belong the subspace $\Hn $.
Using  the Riesz representation theorem for continuous linear functionals on $\Hn$ and the fact that in the space $\Hn $ all norms inherited from the space ${H}^{m_1} \times {H}^{m_2} := {H}^{m_1}({\rzecz }^{3},{\rzecz }^{3}) \times {H}^{m_2}({\rzecz }^{3},{\rzecz }^{3})$, where $m_1,m_2 \ge 0$, are equivalent (see Corollary \ref{cor:H_n-V_m1,m2-norm_equiv}), identity \eqref{eq:Hall-MHD_truncated_weak} can be written as a stochastic equation in $\Hn$. 
Since $\Pn : \Hmath \to \Hn $ is the $\ilsk{\cdot }{\cdot }{\Hmath }$-orthogonal projection, in particular we have
\begin{equation*}
\ilsk{v}{\Pn \varphi }{\Hmath } \; = \; \ilsk{\Pn v}{\varphi }{\Hmath } \quad \mbox{ for all } \quad  \varphi \in \Hn . 
\label{eq:Pn_Hn_Riesz}
\end{equation*}
Thus for a fixed $v \in \Hmath $ the Riesz representation of the functional
\[
\Hn \ni \varphi  \,\, \mapsto \,\, \ilsk{v}{\varphi }{\Hmath } \in \rzecz 
\]
is equal $\Pn v$.

\medskip  
\begin{remark} \label{rem:truncated_funct_Riesz} 
Let $n\in \nat $ be fixed.
\begin{description}
\item[(i)] For every $v \in \Vmath $ there exist $\ARiesz (v), \BRiesz (v), \RRiesz (v)  \in \Hn $ such that
for every $\varphi \in \Hn $
\begin{align}
\ddual{\Vprime }{ \acal v}{ \varphi }{\Vmath }  
\; &= \; \ilsk{\ARiesz(v)}{\varphi }{\Hmath } , 
\label{eq:Acal_Hn_Riesz}
\\
\ddual{\Vprime }{ \MHD (v)}{ \varphi }{\Vmath }  
\; &= \; \ilsk{\BRiesz(v)}{\varphi }{\Hmath } , 
\label{eq:Bn_Hn_Riesz}
\\
\ddual{{\Vmath }_{1,2}' }{ \tHall (v)}{ \varphi }{{\Vmath }_{1,2}}  
\; &= \; \ilsk{\RRiesz(v)}{\varphi }{\Hmath }  .
\label{eq:R_Hn_Riesz}
\end{align}
Moreover, the map $\Vmath \ni v \mapsto \ARiesz(v) \in \Hn  $ is linear.
\item[(ii)] For every $f \in \Vmath '$ there exists $\fRiesz (v) \in \Hn $ such that 
\begin{equation}
\ddual{\Vprime }{f}{ \varphi }{\Vmath }  
\; = \; \ilsk{\fRiesz }{\varphi }{\Hmath } \quad \mbox{ for all } \quad  \varphi \in \Hn .
\label{eq:f_Hn_Riesz}
\end{equation}
\end{description}
\end{remark}

\medskip
\begin{proof}
\bf Ad (i). \rm Let us fix $v \in \Vmath $. Consider the following functional
\begin{equation}
\Hn \ni \varphi \,\, \mapsto \,\, \ddual{\Vprime }{ \acal v}{ \varphi }{\Vmath } \in \rzecz .  
\label{eq:Acal_Hn}
\end{equation}
Since, by Remark \ref{rem:Acal-term_properties} and Corollary \ref{cor:H_n-V_m1,m2-norm_equiv}, for all $\varphi \in \Hn$
\begin{equation*}
|\ddual{\Vprime }{ \acal v}{ \varphi }{\Vmath } |
\; \le \; \nnorm{\acal  v}{\Vprime }{} \, \norm{\varphi }{\Vmath }{}
\; \le \;  c_n \norm{v}{\Vmath }{} \,  \nnorm{\varphi }{\Hmath }{} 
\end{equation*}
for some constant ${c}_{n}$ independent of $\varphi $,
the map defined by \eqref{eq:Acal_Hn} is a continuous linear functional on $\Hn$.
By the Riesz representation theorem there exists $\ARiesz (v) \in \Hn$ such that
\begin{equation*}
\ddual{\Vprime }{ \acal v}{ \varphi }{\Vmath }  
\; = \; \ilsk{\ARiesz(v)}{\varphi }{\Hmath } , \qquad \varphi \in \Hn ,
\end{equation*}
i.e. \eqref{eq:Acal_Hn_Riesz} holds.
Since $\acal $ is linear, the map $\Vmath \ni v \mapsto \ARiesz(v) \in \Hn  $ is linear as well.

\medskip  \noindent
To prove \eqref{eq:Bn_Hn_Riesz},
let us consider the following functional
\begin{equation}
\Hn \ni \varphi \; \mapsto \; \ddual{\Vprime }{ \MHD (v)}{ \varphi }{\Vmath } \in \rzecz .  
\label{eq:Bn_Hn}
\end{equation}
Since, by Lemma \ref{lem:MHD-term_properties}(i) and Corollary \ref{cor:H_n-V_m1,m2-norm_equiv}, for all $\varphi \in \Hn$
\begin{equation*}
|\ddual{\Vprime }{ \MHD (v)}{ \varphi }{\Vmath } |
\; \le \; \nnorm{\MHD (v)}{\Vprime }{} \, \norm{\varphi }{\Vmath }{}
\; \le \;  {c}_{n} \, \norm{v}{\Vmath }{2} \,  \nnorm{\varphi }{\Hmath }{} 
\end{equation*}
for some constant ${c}_{n}$ independent of $\varphi $,
the map defined by \eqref{eq:Bn_Hn} is a continuous linear functional on $\Hn$.
By the Riesz representation theorem there exists $\BRiesz (v) \in \Hn$ such that
\begin{equation*}
\ddual{\Vprime }{ \MHD (v)}{ \varphi }{\Vmath }  
\; = \; \ilsk{\BRiesz(v)}{\varphi }{\Hmath } , \qquad \varphi \in \Hn ,
\end{equation*}
i.e., \eqref{eq:Bn_Hn_Riesz} holds.

\medskip  \noindent
Similarly, let us  consider the following functional
\begin{equation}
\Hn \ni \varphi \; \mapsto \; \ddual{\Vprime }{ \tHall (v)}{ \varphi }{\Vmath } \in \rzecz .  
\label{eq:R_Hn}
\end{equation}
Since, by Lemma \ref{lem:tHall-term_properties}(1)  and Corollary \ref{cor:H_n-V_m1,m2-norm_equiv}, for all $\varphi \in \Hn$
\begin{equation*}
|\ddual{ \Vtest '   }{ \tHall (v)}{ \varphi }{\Vtest  } |
\; \le \; \nnorm{\tHall (v)}{ \Vtest ' }{} \, \norm{\varphi }{ \Vtest }{}
\; \le \;  {c}_{n} \, \norm{v}{\Vmath  }{2} \,  \nnorm{\varphi }{\Hmath }{} 
\end{equation*}
for some constant ${c}_{n}$ independent of $\varphi $,
the map defined by \eqref{eq:R_Hn} is a continuous linear functional on $\Hn$.
By the Riesz representation theorem there exists $\RRiesz (v) \in \Hn$ such that
\begin{equation*}
\ddual{ \Vtest ' }{ \tHall (v)}{ \varphi }{ \Vtest }  
\; = \; \ilsk{\RRiesz(v)}{\varphi }{\Hmath } , \qquad \varphi \in \Hn ,
\end{equation*}
i.e., \eqref{eq:R_Hn_Riesz} holds.

\medskip \noindent
\bf Ad. (ii). \rm 
For a fixed $f\in \Vprime $ let us consider the functional
\begin{equation}
\Hn \ni \varphi \,\, \mapsto \,\, \ddual{\Vprime }{ f}{ \varphi }{\Vmath } \in \rzecz .  
\label{eq:f_Hn}
\end{equation}
Note that that the map defined by  \eqref{eq:f_Hn} is a restriction of functional $f$ to the subspace $\Hn$. 
Since for all $\varphi \in \Hn$
\begin{equation*}
|\ddual{\Vprime }{ f}{ \varphi }{\Vmath } |
\,\, \le \,\, \nnorm{f}{\Vprime }{} \, \norm{\varphi }{\Vmath }{}
\,\, \le \,\,  {c}_{n}  
 \nnorm{f}{\Vprime }{} \,  \nnorm{\varphi }{\Hmath }{} 
\end{equation*}
for some constant ${c}_{n}$ independent of $\varphi $,
the map defined by \eqref{eq:f_Hn} is a continuous linear functional on $\Hn$.
Let $\fRiesz (v) \in \Hn$ denote its Riesz representation in $\Hn$. Then we have
\begin{equation*}
  \ddual{\Vprime }{f}{ \varphi }{\Vmath }  
  \,\, = \,\, \ilsk{\fRiesz }{\varphi }{\Hmath } , \qquad \varphi \in \Hn ,
\end{equation*}
i.e., \eqref{eq:f_Hn_Riesz} holds.
\end{proof}

\medskip  \noindent
\begin{remark} \label{rem:truncated_Riesz}   
By Remark \ref{rem:truncated_funct_Riesz}, integral identity \eqref{eq:Hall-MHD_truncated_weak}
 can be written equivalently as the following stochastic equation in $\Hn$
\begin{equation*}
\begin{split}
&  \Xn (t)  + \int_{0}^{t}\bigl[ \ARiesz( \Xn (s)) + \BRiesz  (\Xn (s)) +\RRiesz ( \Xn (s)) \bigr] \, ds \\
\,\, &= \,\,  \Pn {\X }_{0}  + \int_{0}^{t}  \fRiesz (s)  \, ds  
  + \int_{0}^{t} \Gn (s,\Xn (s)) \, dW(s) ,
\quad  t \in [0,T]  ,   
\end{split} 
\end{equation*}
where $\Gn $ is a map defined by
\begin{equation}
\Gn : [0,T] \times \Hmath \ni (s,X) \; \mapsto \; [\Kmath \ni y \mapsto \Pn (G(s,X)(y)) ] \; \in \; 
\lhs (\Kmath ,\rzecz )  .
\label{eq:G_n}
\end{equation}
\end{remark}

\medskip  \noindent
Let us note that, by Lemmas \ref{lem:MHD-term_properties} and \ref{lem:tHall-term_properties}, for every $n\in \nat $ the map
\[
\Hn \ni v \,\, \mapsto \,\,   \BRiesz  (v) + \RRiesz (v) \in \Hn
\]
is locally Lipschitz continuous.  (The Lipschitz constants depend also on $n$.)

\medskip  \noindent
Let us consider the $n$-th approximating stochastic partial differential equation in the space $\Hn $ defined by \eqref{eq:H_n}, i.e.,
\begin{equation}
\begin{cases}
 d \Xn (t) &+ \; \bigl[ \ARiesz( \Xn (t))   + \BRiesz  (\Xn (t)) + {\tHall }_{n}(\Xn (t)) \bigr] \, dt
\\
& = \;  \fRiesz (t)  \, dt + \, \Gn \bigl( t,\Xn (t)\bigr) \, dW(t),   \quad t \in [0,T] , \\
\Xn (0) & = \; \Pn {\X }_{0} .
\end{cases}
\label{eq:Hall-MHD_truncated}
\end{equation}

\medskip
\begin{prop} \label{prop:Hall-MHD_truncated_existence}
For each $n \in \nat $, there exists a unique global solution ${(\Xn (t))}_{t\in [0,T]}$ of problem \eqref{eq:Hall-MHD_truncated}.
\end{prop}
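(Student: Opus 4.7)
The plan is to derive \eqref{eq:Hall-MHD_truncated} as a classical stochastic evolution equation in the single Hilbert space $\Hn$, establish local existence and uniqueness, and then upgrade the local solution to a global one by means of an energy estimate using the antisymmetries of $\MHD$ and $\tHall$ together with assumption \eqref{eq:G_i}.

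\textbf{Step 1 (Regularity of the coefficients on $\Hn$).} By Corollary \ref{cor:H_n-V_m1,m2-norm_equiv}, on the single space $\Hn$ the norms $\nnorm{\cdot}{\Hn}{}$, $\norm{\cdot}{\Vmath}{}$ and $\norm{\cdot}{\Vmath_{1,2}}{}$ are all equivalent, with constants depending on $n$. Combining this with Remark \ref{rem:Acal-term_properties}, Lemma \ref{lem:MHD-term_properties}(ii) and Lemma \ref{lem:tHall-term_properties}(ii), the map
\[
F_n(t,v) := -\ARiesz(v) - \BRiesz(v) - \RRiesz(v) + \fRiesz(t) , \qquad v \in \Hn,
\]
is measurable in $t$ and locally Lipschitz continuous from $\Hn$ into itself, with Lipschitz constants depending on $n$ and on the $\Hn$-radius. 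Likewise, the diffusion coefficient $\Gn(t,\cdot)$ defined in \eqref{eq:G_n} inherits global Lipschitz continuity from $G$, since $\Pn$ is an orthogonal projection; the Lipschitz property on $\Hn$ follows from \eqref{eq:G_Lipschitz} and the equivalence of norms on $\Hn$. Similarly, the linear growth of $\Gn$ in the $\Hn$-norm is clear.

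\textbf{Step 2 (Local existence and uniqueness).} With locally Lipschitz drift and Lipschitz diffusion, and with initial datum $\Pn\X_0 \in \Hn$, the classical theorem for It\^o stochastic evolution equations in the separable Hilbert space $\Hn$ (e.g.\ via truncation of the drift and contraction mapping in $\mathcal{S}^2([0,T];\Hn)$) yields, on the prescribed stochastic basis $\mathfrak A$, a unique $\Fmath$-adapted continuous $\Hn$-valued process $\Xn$ defined up to a maximal stopping time
\[
\tau_n^\ast = \lim_{R \to \infty} \tau_n^R, \qquad \tau_n^R := \inf\{ t \in [0,T] : \nnorm{\Xn(t)}{\Hn}{} \ge R \} \wedge T,
\]
solving \eqref{eq:Hall-MHD_truncated} on $[0, \tau_n^\ast)$.

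\textbf{Step 3 (Energy estimate and non-explosion).} The main point is to show $\tau_n^\ast = T$ almost surely. Applying the It\^o formula to $t \mapsto \nnorm{\Xn(t \wedge \tau_n^R)}{\Hmath}{2}$ gives, since $\Xn$ takes values in $\Hn \subset \Vtest$ and thanks to \eqref{eq:MHD-map_perp} and \eqref{eq:tHall-map_perp}, the cancellations
\[
\ilsk{\BRiesz(\Xn)}{\Xn}{\Hmath} = \dual{\MHD(\Xn)}{\Xn}{} = 0 ,
\qquad
\ilsk{\RRiesz(\Xn)}{\Xn}{\Hmath} = \dual{\tHall(\Xn)}{\Xn}{} = 0 .
\]
Combined with \eqref{eq:Acal_Hn_Riesz} and the bound \eqref{eq:G} on $G$ (hence on $\Gn$, as $\Pn$ is a contraction on $\Hmath$), this yields
\[
\nnorm{\Xn(t \wedge \tau_n^R)}{\Hmath}{2} + \eta \int_0^{t \wedge \tau_n^R} \norm{\Xn(s)}{}{2} ds
\le \nnorm{\Pn \X_0}{\Hmath}{2} + C \int_0^{t \wedge \tau_n^R} \bigl( 1 + \nnorm{\Xn(s)}{\Hmath}{2} + \nnorm{f(s)}{\Vprime}{2} \bigr) ds + M_n(t),
\]
with a local martingale $M_n$. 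Taking expectations after a further Burkholder--Davis--Gundy stopping argument and applying Gr\"onwall's lemma produces a bound on $\e \sup_{s \le t \wedge \tau_n^R} \nnorm{\Xn(s)}{\Hmath}{2}$ uniform in $R$. Letting $R \to \infty$ forces $\p(\tau_n^\ast < T) = 0$, hence the local solution extends to a unique global solution on $[0,T]$.

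\textbf{Main obstacle.} The only delicate point is the strongly nonlinear Hall term $\RRiesz$: it is cubic in the velocity gradient of $\Bbold$ and a priori only takes values in $\Vtest'$. The whole argument hinges on the fact that $\Hn$ embeds into \emph{every} Sobolev power (Lemma \ref{lem:H_n-V_m1,m2-relation}), so that on the truncated level $\RRiesz$ lands in $\Hn$ and is locally Lipschitz there, and that the antisymmetry \eqref{eq:tHall-map_perp} removes it from the energy balance. Without these two structural facts from Section \ref{sec:form_h-map_Hall} and Appendix \ref{sec:Fourier_truncation}, neither local existence nor non-explosion would be available.
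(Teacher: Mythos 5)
Your proposal is correct and follows essentially the same route as the paper: local existence and uniqueness from the locally Lipschitz drift and Lipschitz diffusion on the finite-truncation space $\Hn$ (the paper simply cites Theorem 3.1 of Albeverio--Brze\'zniak--Wu where you sketch the standard truncation/contraction argument), followed by globality via the It\^o formula for $\nnorm{\cdot}{\Hmath}{2}$, the cancellations \eqref{eq:MHD-map_perp} and \eqref{eq:tHall-map_perp}, the noise bound \eqref{eq:G} and Gr\"onwall. No gaps.
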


\medskip
\begin{proof} The proof is a direct application of Theorem 3.1 from \cite{Albeverio+Brzezniak+Wu'2010}.
In fact, by Lemmas \ref{lem:MHD-term_properties} and \ref{lem:tHall-term_properties}, for every $n\in \nat $ the nonlinear terms  $\BRiesz $   and ${\tHall }_{n} $ are locally Lipschitz. Thus the exists a local solution $\Xn $ of problem  \eqref{eq:Hall-MHD_truncated} defined on some random interval $[0, {\tau }_{n})$. 
Since moreover, by \eqref{eq:MHD-map_perp} and \eqref{eq:tHall-map_perp} for all $\Xn \in \Hn  $
\[
 \dual{\BRiesz (\Xn ) + {\tHall }_{n} (\Xn )}{\Xn }{} \; = \; 0
\]
using the It\^{o} formula for the function $F(x) = \nnorm{x}{{\Hmath }}{2}$, $x \in \Hmath $, we can prove  that the processes $\Xn $, $n \in \nat $, satisfy  on the intervals $[0, {\tau }_{n})$, the same   uniform  estimates  as in the subsequent Lemma \ref{lem:Hall-MHD_truncated_estimates} for $q=2$. Local existence together with uniform estimates guaranty that the solutions $\Xn $ are global, i.e. defined on the interval $[0,T]$. 
\end{proof}

\medskip
\subsection{A priori estimates} \label{sec:a_priori_est-truncated}

\medskip  \noindent
In the following lemma we will prove some a priori estimates of the solutions of the approximating equation \eqref{eq:Hall-MHD_truncated}.

\medskip
\begin{lemma} \label{lem:Hall-MHD_truncated_estimates}
Let Assumptions \ref{assumption-noise} and \ref{assumption-data}  be satisfied. In particular, we assume that
 $p$ satisfies  \eqref{eqn-p_cond}, i.e.
\[
p \in [2,2+\gamma ),
\] 
where $\gamma $ is given by \eqref{eq:gamma}.
Then the solutions ${(\Xn )}_{n\in \nat }$ of equations \eqref{eq:Hall-MHD_truncated} satisfy the following uniform estimates:
\begin{description}
\item[(i) ] For every $q\in [2,p]$ there exist positive constants ${C}_{1}(p,q)$ and ${C}_{2}(p,q)$
such that
\begin{equation} \label{eq:H_estimate_truncated_p}
\sup_{n \in \nat }\mathbb{E} \Bigl[ \sup_{s\in [0,T] } \nnorm{\Xn (s)}{{\Hmath }}{q} \Bigr] 
\ \le \ {C}_{1}(p,q)
\end{equation}
and
\begin{equation} \label{eq:HV_estimate_truncated}
\sup_{n \in \nat } \mathbb{E} \Bigl[ \int_{0}^{T} \nnorm{\Xn (s)}{{\Hmath }}{q-2} \norm{  \Xn (s)}{}{2} \, ds \Bigr] \; \le \;  {C}_{2}(p,q)  .
\end{equation}
\item[(ii) ] In particular, there exists a positive constant ${C}_{2}(p)$ such that
\begin{equation} \label{eq:V_estimate_truncated}
\sup_{n \in \nat } \mathbb{E} \Bigl[ \int_{0}^{T} \norm{\Xn (s)}{\Vmath }{2} \, ds \Bigr]
\; \le \;  {C}_{2}(p).
\end{equation}
\end{description}
\end{lemma}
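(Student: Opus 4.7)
Existence of a global solution $X_n$ in $\Hn$ is already secured by Proposition \ref{prop:Hall-MHD_truncated_existence}, so the task is purely to derive the uniform bounds. The natural tool is the It\^{o} formula applied to $F(x) = \nnorm{x}{\Hmath}{q}$ on $\Hn$ (a finite-dimensional Hilbert space once we fix $n$, so there are no regularity issues). Writing the stochastic equation in the form of Remark \ref{rem:truncated_Riesz} and using \eqref{eq:MHD-map_perp}, \eqref{eq:tHall-map_perp} together with \eqref{eq:A_acal_rel}, the inner products of $X_n$ with $\BRiesz(X_n)$ and $\RRiesz(X_n)$ vanish, while the one with $\ARiesz(X_n)$ equals $\norm{X_n}{}{2}$. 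Thus the It\^{o} identity for $q\ge 2$ reads
\[
\nnorm{X_n(t)}{\Hmath}{q} + q\int_0^t \nnorm{X_n}{\Hmath}{q-2}\norm{X_n}{}{2}\,ds
\; = \; \nnorm{\Pn X_0}{\Hmath}{q} + I_f(t) + I_G(t) + M_n(t),
\]
where $I_f$ collects the $\langle f_n, X_n\rangle$ contribution, $I_G$ is the It\^{o} correction, and $M_n$ is the stochastic integral.

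The key algebraic step is to bound $I_G$. Since $\Gn$ is obtained from $G$ by composing with the orthogonal projection $\Pn$, we have $\norm{\Gn(s,X_n)}{\lhs(\Kmath,\Hmath)}{2}\le \norm{G(s,X_n)}{\lhs(\Kmath,\Hmath)}{2}$, and the standard estimate $\nnorm{X_n}{\Hmath}{q-4}\norm{\Gn^\ast X_n}{}{2}\le \nnorm{X_n}{\Hmath}{q-2}\norm{\Gn}{\lhs(\Kmath,\Hmath)}{2}$ gives $I_G(t)\le \frac{q(q-1)}{2}\int_0^t \nnorm{X_n}{\Hmath}{q-2}\norm{G(s,X_n)}{\lhs(\Kmath,\Hmath)}{2}\,ds$. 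Inserting \eqref{eq:G} yields, after combining with the dissipation on the left-hand side, the net coefficient
\[
q - \tfrac{q(q-1)(2-\eta)}{2}
\]
multiplying $\int_0^t\nnorm{X_n}{\Hmath}{q-2}\norm{X_n}{}{2}\,ds$. This is strictly positive precisely when $q<1+\tfrac{2}{2-\eta}=2+\gamma$, which is exactly the hypothesis \eqref{eqn-p_cond}; this is the delicate point that forces the range of admissible $p$ and is the main obstacle in the proof.

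The remaining terms are handled by routine convexity inequalities. The $\lambda\nnorm{X_n}{\Hmath}{2}$ contribution from \eqref{eq:G} produces $\frac{q(q-1)\lambda}{2}\int_0^t\nnorm{X_n}{\Hmath}{q}\,ds$, which is Gronwall-friendly; the constant term $\varrho$ and the $I_f$-term are absorbed via Young's inequality ($ab\le \epsilon a^{p/(p-1)}+C_\epsilon b^p$), using $f\in L^p(0,T;\Vmath')$ and $|\langle f, X_n\rangle|\le \nnorm{f}{\Vmath'}{}\norm{X_n}{\Vmath}{}$; a small portion of $\norm{X_n}{}{2}$ can be absorbed into the already-positive dissipation. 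For the initial datum, $\nnorm{\Pn X_0}{\Hmath}{}\le\nnorm{X_0}{\Hmath}{}$ since $\Pn$ is an orthogonal projection.

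Finally, one controls the stochastic integral $M_n$ by the Burkholder--Davis--Gundy inequality: its quadratic variation is bounded by $q^2\sup_{s\le t}\nnorm{X_n}{\Hmath}{q}\cdot\int_0^t\nnorm{X_n}{\Hmath}{q-2}\norm{\Gn}{\lhs(\Kmath,\Hmath)}{2}\,ds$, so BDG combined with the Cauchy--Schwarz inequality and a small-$\epsilon$ argument reduces the supremum-martingale term to a fraction $\epsilon\,\bar{E}\sup_{s\le t}\nnorm{X_n}{\Hmath}{q}$ plus a Gronwall-integrable remainder (again bounded through \eqref{eq:G}). Taking the supremum over $s\in[0,t]$, then expectations, absorbing the small terms, and invoking Gronwall's lemma yields \eqref{eq:H_estimate_truncated_p}; returning to the energy identity and using the already established sup-bound with the strict positivity of the net coefficient in front of the dissipative term delivers \eqref{eq:HV_estimate_truncated}. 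Estimate \eqref{eq:V_estimate_truncated} follows immediately by choosing $q=2$ and using $\norm{\phi}{\Vmath}{2}=\nnorm{\phi}{\Hmath}{2}+\norm{\phi}{}{2}$ together with \eqref{eq:H_estimate_truncated_p}.
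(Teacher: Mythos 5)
Your proposal follows essentially the same route as the paper's proof: It\^{o}'s formula for $\nnorm{x}{\Hmath}{q}$, cancellation of the MHD and Hall terms via \eqref{eq:MHD-map_perp} and \eqref{eq:tHall-map_perp}, absorption of the $(2-\eta)$-part of \eqref{eq:G} into the dissipation (which is exactly where $q<2+\gamma$, i.e.\ \eqref{eqn-p_cond}, is needed), Gronwall, and then Burkholder--Davis--Gundy with a small-$\epsilon$ absorption for the supremum bound. Two minor caveats: $\Hn$ is \emph{not} finite-dimensional (it is the infinite-dimensional space of divergence-free fields with Fourier support in ${\bar{B}}_{n}$, though $F(x)=\nnorm{x}{\Hmath}{q}$ is still twice differentiable there, so It\^{o} applies), and the paper additionally localizes with the stopping times ${\tau}_{R}^{n}$ to guarantee that the stochastic integral is a genuine martingale before taking expectations --- a routine point your sketch passes over.
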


\medskip
\begin{proof}[Proof of Lemma \ref{lem:Hall-MHD_truncated_estimates}]
For any $R>0 $ us define the stopping time
\begin{equation*}
\taunR \; := \;  \inf \{ t \in [0,T]: \nnorm{\Xn (t)}{\Hmath }{} \ge R \} .
\end{equation*}
Let us fix $q \in [2,p]$, where $p$ satisfies  condition \eqref{eqn-p_cond}. We apply the It\^{o} formula 
to the function $F$ defined by
\[ F: {\Hmath } \ni x \mapsto  \nnorm{x}{{\Hmath }}{q}\in \mathbb{R}.\]
In the sequel we will often omit the subscript ${\Hmath }$ and write $|\cdot |:= \nnorm{\cdot }{{\Hmath }}{}$. Note that
\begin{align*}
F^\prime(x) (h) \; & = \; d_xF (h) \; = \; q \cdot \nnorm{{x}}{}{q-2} \cdot \dual{x}{h}{\Hmath } , \quad h \in \Hmath ,  \\
 \norm{F^{\prime\prime}(x)}{}{} \; &= \; \norm{d^2_xF }{}{} \; \le \;  q (q-1) \cdot \nnorm{x}{}{q-2} ,
\quad x \in {\Hmath }.
\end{align*}
By the It\^{o} formula
\begin{equation*}
\begin{split}
&\nnorm{{\Xn } (t\wedge \taunR )}{}{q} -  \nnorm{\Pn {\X }_{0}}{}{q}   \\
\;  &=  \; \int_{0}^{t\wedge \taunR} \Bigl\{ q \, \nnorm{\Xn (s)}{}{q-2} \dual{\Xn  (s)}{-\ARiesz \Xn  (s)
- {\MHD }_{n} (\Xn (s)) -{\tHall }_{n}( \Xn (s)) + \fRiesz(s)}{}  \\
& \qquad + \, \,   \frac{q(q-1)}{2}  \nnorm{\Xn (s)}{}{q-2} \norm{ \Gn (s,\Xn  (s)) }{\lhs (\Kmath ,\Hmath )}{2}  \Bigr\} \, ds \\
&  \qquad + q \, \, \int_{0}^{t\wedge \taunR} \nnorm{\Xn (s)}{}{q-2} \dual{\Xn (s)}{\Gn (s,\Xn (s))\, dW(s)}{} .
\end{split}
\end{equation*}
Since $\Xn $ is an $\Hn $-valued process,
\begin{itemize}
\item by \eqref{eq:Acal_Hn_Riesz} and \eqref{eq:A_acal_rel}  we have  $\ilsk{\ARiesz \Xn}{\Xn }{} = \dual{\acal \Xn }{\Xn }{} =  \norm{\Xn }{}{2}$, 
\item by  \eqref{eq:Bn_Hn_Riesz} and \eqref{eq:MHD-map_perp}, $ \ilsk{{\MHD }_{n}\Xn }{\Xn }{} = \dual{\MHD \Xn }{\Xn }{} =0$,
\item  by \eqref{eq:R_Hn_Riesz} and  \eqref{eq:tHall-map_perp}, $\ilsk{{\tHall }_{n}\Xn }{\Xn }{} = \dual{\tHall \Xn }{\Xn }{} =0$,
\item and by \eqref{eq:f_Hn_Riesz}, $\ilsk{\fRiesz (s)}{\Xn }{} = \dual{f(s)}{\Xn }{}$,
\end{itemize} 
we infer that
\begin{equation}
\begin{split}
& \nnorm{{\Xn } (t\wedge \taunR )}{}{q } +q\, \int_{0}^{t\wedge \taunR} \nnorm{\Xn (s)}{}{q-2} \norm{\Xn (s)}{}{2} \, ds \\
&= \; \nnorm{\Pn {\X }_{0}}{}{q } 
+ \int_{0}^{t\wedge \taunR}  \Bigl\{ q \, \nnorm{\Xn (s)}{}{q-2} \dual{\Xn (s)}{ f(s)}{}
\\
& \qquad  + \, \,  \frac{q(q-1)}{2}  \nnorm{\Xn (s)}{}{q-2} \norm{ \Gn (s,\Xn  (s)) }{\lhs (\Kmath ,\Hmath )}{2}  \Bigr\} \, ds    \\
& \qquad  + \, \,  q \, \int_{0}^{t\wedge \taunR } \nnorm{\Xn (s)}{}{q-2} \dual{\Xn (s)}{\Gn (s,\Xn (s)) \, d W(s)}{}  .
\end{split} \label{ineq-01_truncated}
\end{equation} 
Since $\Pn : \Hmath \to {\Hmath }_{n}$ is the $\ilsk{\cdot }{\cdot }{\Hmath }$-orthogonal projection, 
using \eqref{eq:G}, we obtain for all $(s,\X)  \in [0,T] \times {{\Vmath } }_{} $
\begin{equation*}
\begin{split}
\norm{\Gn (s,\X )}{\lhs ({\Kmath },{\Hmath })}{2}
\; \le \; \norm{G(s,\X )}{\lhs ({\Kmath },{\Hmath })}{}
\; & \le \; (2-\eta ) \norm{\X }{}{2} + \lambda \nnorm{\X }{\Hmath }{2} + \varrho  .
\end{split}
\end{equation*}
Using moreover the Young inequality (for numbers), 
$\nnorm{x}{}{q-2}  \le  \bigl( 1-\frac{2}{q} \bigr)  \nnorm{x}{}{q}  +\frac{2}{q}  $   for all $x\ge 0 $, we obtain
\begin{equation}
\begin{split}
& q(q-1) \,  \nnorm{\X }{}{q-2} \norm{ \Gn (s,\X ) }{\lhs (\Kmath ,\Hmath )}{2}
\\
\; & \le \; q(q-1) (2-\eta ) \nnorm{\X }{}{q-2} \norm{\X }{}{2} 
 + {\tilde{K}}_{q}(\lambda ,\varrho ) \, \nnorm{\X }{}{q} 
  + 2(q-1)\varrho    ,
\end{split} \label{eq:tr_est_truncated}
\end{equation}
where ${\tilde{K}}_{q}(\lambda ,\varrho ):=(q-1) \bigl[ q\lambda + (q-2)\varrho  \bigr] $.
Moreover, by condition \bf (H.2) \rm in Assumption \ref{assumption-data}, \eqref{eq:Hall-MHD_V-norm}  and the Young inequality (for numbers) with exponents $2,\frac{2q}{q-2}$ and $q$,
we obtain for every 
${\eps }_{0}>0$ and  for all $s \in [0,t \wedge \taunR ] $
\begin{equation}
\begin{split}
&\nnorm{\Xn (s)}{\Hmath }{q-2} \, \dual{f(s)}{\Xn (s)}{}
\; \le  \;  \nnorm{\Xn (s)}{\Hmath }{q-2} \, \nnorm{f(s)}{{\Vmath }^{\prime }}{} \cdot \norm{\Xn (s)}{\Vmath }{} \\
\; &= \;  \nnorm{\Xn (s)}{\Hmath }{q-2} \, 
{(\nnorm{\Xn (s)}{\Hmath }{2} + \norm{\Xn (s)}{}{2}) }^{\frac{1}{2}} \, \nnorm{f(s)}{{\Vmath }^{\prime }}{}  \\
\;  &\le  \; {\eps }_{0} \, \nnorm{\Xn (s)}{\Hmath }{q-2} \, \norm{\Xn (s)}{}{2}
+ \Bigl( {\eps }_{0} +\frac{1}{2} - \frac{1}{q}  \Bigr) \, \nnorm{\Xn (s)}{\Hmath }{q}
 + \frac{1}{q} \, {(2{\eps }_{0})}^{-\frac{q}{2}} \, \nnorm{f(s)}{{\Vmath }^{\prime }}{q} . 
\end{split}
\label{eq:f_est_truncated}
\end{equation}
Using  estimates \eqref{eq:tr_est_truncated} and \eqref{eq:f_est_truncated} in \eqref{ineq-01_truncated}, we obtain
\begin{equation}
\begin{split}
& \nnorm{\Xn (t\wedge \taunR )}{}{q}   
+ q \, \Bigl[ 1 -{\eps }_{0}  - \frac{1}{2} (q-1) (2- \eta ) \Bigr] \,\int_{0}^{t\wedge \taunR } \nnorm{\Xn (s)}{}{q-2} \norm{\Xn (s)}{}{2} \, ds  \\
& \qquad  \; \le \; \nnorm{\Pn {\X }_{0}}{}{q} 
+ \int_{0}^{t\wedge \taunR }  \Bigl\{  q\Bigl( {\eps }_{0} +\frac{1}{2} - \frac{1}{q}  \Bigr) 
+ \frac{1}{2} {\tilde{K}}_{q}(\lambda ,\varrho ) 
 \Bigr\} \, \nnorm{\Xn (s)}{}{q} \, ds \\
& \qquad \qquad + \, \, (q-1)\varrho  \, t  
 + {(2{\eps }_{0})}^{-\frac{q}{2}} \, \int_{0}^{t \wedge \taunR} \nnorm{f(s)}{{\Vmath }^{\prime } }{q}   \, ds
\\
&\qquad \qquad   + \, \,  q \, \int_{0}^{t \wedge \taunR } \nnorm{\Xn (s)}{}{q-2} \dual{\Xn (s)}{\Gn (s,\Xn (s)) \, d W(s) }{} .
\end{split}  \label{ineq-01'_truncated}
\end{equation}
Let us choose ${\eps }_{0} \in (0,1) $ such that  
$$ \delta=\delta(q,\eta):= 
q \, \bigl[ 1 -  {\eps }_{0}  - \frac{1}{2} (q-1) (2- \eta ) \bigr] >0 ,$$
or equivalently,
$
{\eps }_{0} \; <  \;  1 \wedge \bigl[ 1 -  \frac{1}{2}(q-1) (2-\eta )\bigr] .
$
Notice that under  condition \eqref{eqn-p_cond} such ${\eps }_{0}$ exists. Denote also
\[
K_q(\lambda , \varrho ) 
\; := \;  q\Bigl( {\eps }_{0} +\frac{1}{2} - \frac{1}{q}  \Bigr) 
+ \frac{1}{2} \, {\tilde{K}}_{q}(\lambda ,\varrho ) .
\]
Then, using the fact that $\int_{0}^{t} \nnorm{f(s)}{{\Vmath }^{\prime } }{q} \, ds
\le {t}^{1-\frac{q}{p}} \cdot \bigl( \int_{0}^{t} \nnorm{f(s)}{\Vmath '}{p} \, ds {\bigr) }^{\frac{q}{p}} $, we obtain
\begin{equation}
\begin{split}
&\nnorm{\Xn (t \wedge \taunR )}{}{q} \;  + \; \delta
  \, \int_{0}^{t \wedge \taunR } \nnorm{\Xn (s)}{}{q-2} \norm{ \Xn (s)}{}{2} \, ds \\
\; &\le \; \nnorm{{\X }_{0}}{}{q} 
+ {K}_{q} (\lambda ,\varrho )\int_{0}^{t\wedge \taunR } \, \nnorm{\Xn (s)}{}{q} \, ds
+ \varrho  (q-1)  t  +  {(2{\eps }_{0})}^{-\frac{q}{2}} \, 
 {t}^{1-\frac{q}{p}} \cdot \Bigl( \int_{0}^{t \wedge \taunR } \nnorm{f(s)}{\Vmath '}{p} \, ds {\Bigr) }^{\frac{q}{p}} 
 \\
& \qquad + \;   q \,  \int_{0}^{t \wedge \taunR } \nnorm{\Xn (s)}{}{q-2} \dual{\Xn (s)}{\Gn (s,\Xn (s)) \, d W(s)}{} ,
\quad t \in [0,T].
\end{split}     \label{eq:apriori}
\end{equation}
Since $\Xn $ is the solutions of the approximating equation \eqref{eq:Hall-MHD_truncated},  we infer that the process
\[
{\mcal }_{n}(t) \; := \; \int_{0}^{t\wedge \taunR } \nnorm{\Xn (s)}{}{q-2} \dual{\Xn (s)}{\Gn  (s,\Xn (s)) \, d W(s) }{} ,
 \qquad t \in [0,T],
\]
is a square integrable martingale. Indeed,  by 
\eqref{eq:G} and the fact that $\Pn $ is the orthogonal projection in ${\Hmath }$ we infer that for every $t\in [0,T]$,
\begin{equation*}
\begin{split}
&\int_{0}^{t \wedge \taunR } \norm{ \nnorm{\Xn (s)}{}{q-2} \dual{\Xn (s)}{\Gn  (s,\Xn (s)) }{\Hmath }}{\lhs ({\Kmath },\rzecz )}{2}\, ds \\
& \qquad \le \; \int_{0}^{t} \ind{[0,\taunR )}(s) 
\nnorm{\Xn (s)}{}{q} \norm{G(s,\Xn (s))}{\lhs ({\Kmath },\Hmath )}{2}\, ds
\\
& \qquad  \le \; \int_{0}^{t} \ind{[0,\taunR )}(s) 
\nnorm{\Xn (s)}{}{q} \bigl[ (2- \eta ) \, \norm{\Xn (t)}{}{2}
  + {\lambda }_{0} \nnorm{\Xn (t)}{}{2} + \varrho \bigr] \, ds.
\end{split}
\end{equation*}
Since $\Xn $ is a solution of the approximate equation \eqref{eq:Hall-MHD_truncated} and, by Corollary \ref{cor:H_n-V_m1,m2-norm_equiv}, the norms $\nnorm{\cdot }{\Hmath }{}$ and $\norm{\cdot }{\Vmath }{}$  are equivalent on the subspace $\Hn $,  we infer that
\[ 
\mathbb{E}  \Bigl[ \int_{0}^{t \wedge \taunR } \norm{ \nnorm{\Xn (s)}{}{q-2} \dual{\Xn (s)}{\Gn  (s,\Xn (s)) }{\Hmath }}{\lhs ({\Kmath },\rzecz  )}{2}\, ds \Bigr]  \; < \; \infty , \quad  t\in [0,T].
\]
and thus we infer, as claimed,  that the process $ {\mcal }_{n}$ is a square integrable  martingale. Hence,
$
\mathbb{E}[{\mcal  }_{n} (t) ] \; = \; 0 .
$ 

\medskip  \noindent
By taking expectation in inequality \eqref{eq:apriori} we infer that for all $ t \in [0,T] $:
\begin{equation}
\begin{split}
&\mathbb{E}\bigl[ \, \nnorm{\Xn (t\wedge \taunR )}{}{q} \,\bigr]
+\delta \, \mathbb{E}\, \Bigl[ \int_{0}^{t\wedge \taunR } \nnorm{\Xn (s)}{}{q-2} \norm{\Xn (s)}{}{2} \, ds  \Bigr] 
\; \le   \;   \nnorm{{\X}_0}{}{q} \\
\; &  +
K_q(\lambda ,\varrho ) \int_{0}^{t\wedge \taunR } 
\, \mathbb{E}\,\bigl[ \nnorm{\Xn (s)}{}{q} \bigr] \, ds 
+ \;  \varrho  (q-1) t  + {(2{\eps }_{0})}^{-\frac{q}{2}} \, 
 {t}^{1-\frac{q}{p}} \cdot \Bigl( \int_{0}^{t\wedge \taunR } \nnorm{f(s)}{\Vmath '}{p} \, ds {\Bigr) }^{\frac{q}{p}}  .
  \end{split}   \label{eq:apriori'}
\end{equation}
In particular,
\begin{equation*}
\begin{split}
&\mathbb{E}\bigl[ \, \nnorm{\Xn (t\wedge \taunR )}{}{q} \,\bigr]
\; \le \;  \nnorm{{\X}_0}{}{q}  +
K_q(\lambda ,\varrho ) \int_{0}^{t\wedge \taunR } 
\, \mathbb{E}\,\bigl[ \nnorm{\Xn (s)}{}{q} \bigr] \, ds 
 \\
&+ \;  \varrho  (q-1) T  + {(2{\eps }_{0})}^{-\frac{q}{2}} \,
 {T}^{1-\frac{q}{p}} \cdot \norm{f}{{L}^{p}(0,T;\Vmath ')}{\frac{q}{p}}   ,   \quad t \in [0,T].
  \end{split}   
\end{equation*}
Using the Gronwall lemma and passing to the limit as $R \to \infty $, we infer that
\begin{equation} \label{eq:app_H_est}
\sup_{n \in \nat } \sup_{t\in [0,T]} \mathbb{E}\, \bigl[ \nnorm{\Xn (t)}{}{q} \bigr]
\; \le \;  {\tilde{C}}_{1}(p,q) 
\end{equation}
for some constant ${\tilde{C}}_{1}(p,q)={\tilde{C}}_{1}(p,q,T,\eta ,\lambda ,\varrho , \nnorm{{\X }_0}{}{},  \norm{f}{{L}^{p}(0,T;{{\Vmath } }^{\prime })}{} ) >0$. Hence 
by the Fubini theorem
\[
\sup_{n \in \nat }  \mathbb{E} \biggl[ \int_{0}^{T}  \nnorm{\Xn (s)}{}{q}  \, ds \biggr]
\; \le \;  T  {\tilde{C}}_{1}(p,q) .
\]
Using this bound in \eqref{eq:apriori'} we also obtain
\begin{equation} \label{eq:app_HV_est}
\sup_{n \in \nat }  \mathbb{E}\, \biggl[ \int_{0}^{T} \nnorm{\Xn (s)}{}{q-2} \norm{\Xn (s)}{}{2} \, ds  \biggr]
\;  \le \;  {C}_{2}(p,q)
\end{equation}
for a new constant ${C}_{2}(p,q) $ dependent also on 
$T,\eta ,\lambda ,\varrho ,\nnorm{{\X }_0 }{}{}$ and $\norm{f}{{L}^{p}(0,T;{{\Vmath } }^{\prime })}{}$.
This completes the proof of estimates \eqref{eq:HV_estimate_truncated}.
Putting $q:=2$, by \eqref{eq:Hall-MHD_V-norm} we infer that \eqref{eq:V_estimate_truncated} holds.

\medskip  
\noindent
Let us move to the proof of estimate \eqref{eq:H_estimate_truncated_p}.
By the Burkholder-Davis-Gundy inequality, see eg. \cite{DaPrato+Zabczyk_Erg} or \cite{Revuz+Yor'99}, and the Schwarz inequality, there exists a constant ${c}_{q}$ such that 
\begin{equation}
\begin{split}
&  \mathbb{E}\,\biggl[ \sup_{0 \le s \le T\wedge \taunR }
 \biggl| \int_{0}^{s} q \, \nnorm{\Xn (\sigma )}{}{q-2} \dual{\Xn (\sigma )}{\Gn  (\sigma , \Xn (\sigma ) ) \, d W(\sigma ) }{} \biggr| \biggr]  \\
\; & \le \; {c}_{q} \cdot \mathbb{E}\,\biggl[
 {\biggl( \int_{0}^{T\wedge \taunR  } \, \nnorm{\Xn (\sigma )}{}{2q-2} \cdot
 \norm{ \Gn  (\sigma , \Xn (\sigma ) ) }{\lhs (\Kmath ,\Hmath )}{2} \, d\sigma
  \biggr) }^{\frac{1}{2}} \biggr]  \\
\;  & \le \;  {c}_{q} \cdot \mathbb{E}\,\biggl[ 
\sup_{0\le \sigma \le T\wedge \taunR  } \nnorm{\Xn (\sigma )}{}{\frac{q}{2}}
{\biggl( \int_{0}^{T \wedge \taunR } \,  \nnorm{\Xn (\sigma )}{}{q-2} \cdot
 \norm{  G (\sigma , \Xn (\sigma ) ) }{\lhs (\Kmath ,\Hmath )}{2}  \, d\sigma
  \biggr) }^{\frac{1}{2}} \biggr]   \\
\; &  \le \; \mathbb{E}\,\Bigl[ \frac{1}{2} \sup_{0\le s \le T\wedge \taunR  } \nnorm{\Xn (s )}{}{q} 
   + \frac{1}{2}{c}_{q}^{2} \, \int_{0}^{T\wedge \taunR } \,  \nnorm{\Xn (\sigma )}{}{q-2} \cdot
 \norm{  G (\sigma , \Xn (\sigma ) ) }{\lhs ({\Kmath },{\Hmath })}{2}
   \, d\sigma \Bigr] .
\end{split} \label{eq:BDG_ineq_truncated}
\end{equation}
Using \eqref{eq:BDG_ineq_truncated} and  \eqref{eq:G} in \eqref{eq:apriori}, we obtain
\begin{equation*}
\begin{split}
& \mathbb{E}\,\Bigl[ \sup_{0 \le s \le T\wedge \taunR  } \nnorm{\Xn (s)}{}{q} \Bigr]
   \; \le  \;    \nnorm{{\X }_0}{}{q}  
+  \biggl( K_q(\lambda ,\varrho )
+ \frac{1}{2} {c}_{q}^{2} \Bigl\{ \lambda  + \varrho  \Bigl( 1-\frac{2}{q}\Bigr) \Bigr\} \biggr)
\mathbb{E}\biggl[  \int_{0}^{T\wedge \taunR } \nnorm{\Xn (s)}{}{q} \, ds  \biggr] \\
&  +  \Bigl\{ (q-1)\varrho  +{c}_{q}^{2} \frac{\varrho }{q} \Bigr\} \, t
  + {(2{\eps }_{0})}^{-\frac{q}{2}} \, 
 {T}^{1-\frac{q}{p}} \cdot \norm{f}{{L}^{p}(0,T;\Vmath ')}{\frac{q}{p}}   
 \\
& +   \frac{1}{2} \mathbb{E}\,\Bigl[ \sup_{0\le s \le T \wedge \taunR  } \nnorm{\Xn (s )}{}{q} \Bigr]
+ \frac{1}{2}{c}_{q}^{2} (2-\eta ) \mathbb{E}\,\biggl[ \int_{0}^{T \wedge \taunR } \nnorm{\Xn (\sigma )}{}{q} 
 \norm{\Xn (\sigma )}{}{2}\, d \sigma \biggr]  .
\end{split}
\end{equation*}
Hence by inequalities \eqref{eq:app_H_est} and \eqref{eq:app_HV_est}  we infer that
there exists a constant ${C}_{1}(p,q)  = {C}_{1}(p,q,T,\eta ,\lambda ,\varrho ,\nnorm{{\X }_{0}}{}{}, \norm{f}{{L}^{p}(0,T;V')}{})$ such that for  every $n \in \nat $
\begin{equation*}
\mathbb{E}\,\Bigl[ \sup_{0 \le s \le T \wedge \taunR } \nnorm{\Xn (s)}{}{q} \Bigr] \; \le \;  {C}_{1}(p,q)  ,
\end{equation*}
Passing to the limit as $R\to \infty $, we infer that
\begin{equation*}
\sup_{n \in \nat }\mathbb{E}\,\Bigl[ \sup_{0 \le s \le T \wedge \taunR } \nnorm{\Xn (s)}{}{q} \Bigr] \; \le \;  {C}_{1}(p,q) ,
\end{equation*}
This completes the proof of estimate \eqref{eq:H_estimate_truncated_p} and of the lemma.
\end{proof}

\medskip
\subsection{Auxiliary remarks}

\medskip  \noindent
Let us consider the sequence ${(\Xn )}_{n\in \nat }$ of  solutions of equations \eqref{eq:Hall-MHD_truncated}. These solutions satisfy uniform estimates stated in Lemma \ref{lem:Hall-MHD_truncated_estimates}. Since we consider the Hall-MHD equations on ${\rzecz }^{3}$, the continuous embedding
\[
\Vmath \; \hookrightarrow \, \Hmath 
\] 
is not compact. However, using Lemma 2.5 from \cite{Holly+Wiciak'1995} (see \cite[Lemma C.1]{Brze+EM'13}) we can find a separable Hilbert space  $\Umath $ such that
$$ 
\Umath  \; \subset \;  \Vast \;  \subset \; \Vtest  \; \subset \; \Vmath \; \subset \; \Hmath ,
$$ 
where $\Vast = {\Vmath }_{m}$ for fixed $m>\frac{5}{2}$,
and  the embedding
\[
\Umath \; \hookrightarrow \; \Vast 
\]
is compact, see Appendix \ref{sec:aux_funct.anal}.
Using this structure, where the space $\Umath $ is of crucial importance, we can prove appropriate tightness criterion, see Appendix \ref{sec:comp-tight}, which we apply to prove the tightness of the sequence of laws of ${(\Xn )}_{n\in \nat }$ in Section \ref{sec:tightness}.  
The subsequent reasoning is a preparation for Section \ref{sec:tightness}.

\medskip  \noindent
Considering also the dual spaces, and identifying $\Hmath $ with its dual $\Hmath '$, we have the following system 
\begin{equation*}
\Umath \; \subset \;  \Vast \; \subset \Vtest  \; \subset \; \Vmath \; \subset \; \Hmath \; \cong \; \Hmath '
\; \to \; \Vprime \; \to \; \Vastprime \; \to \; \Uprime .  
\end{equation*}
Let us recall that the map 
\[ 
  \Pn : \Hmath  \to \Hn
\]
defined by \eqref{eq:P_n} is  $\ilsk{\cdot }{\cdot }{\Hmath }$-orthogonal projection on $(\Hn ,\ilsk{\cdot }{\cdot }{\Hmath })$. Further properties of the map $\Pn $ are stated in Corollary \ref{cor:P_n-pointwise_conv}.

\medskip  
\begin{remark} \label{rem:Pn_dual} \rm \
From Corollary \ref{cor:P_n-pointwise_conv}, it follows that we may consider the adjoint operators $\Pn '$ in appropriate spaces.
\begin{description}
\item[(1) ] Since  $\Pn \in \lcal ( \Vmath , \Vmath )$,  the adjoint operator $\Pn ' \in \lcal (\Vprime ,\Vprime )$ by definition  satisfies
\begin{equation*} 
\ddual{\Vprime}{\xi }{\Pn \varphi}{\Vmath }
 \,\, = \,\, \ddual{\Vprime}{{\Pn '}\xi }{ \varphi}{\Vmath } , \qquad
\mbox{ for all } \quad \xi \in \Vprime , \quad \varphi \in \Vmath   . 
\end{equation*} 
\item[(2) ] Since  $\Pn \in \lcal (\Vast , \Vmath) $, the adjoint operator $\Pn ' \in \lcal (\Vprime , \Vastprime )$
satisfies
\begin{equation*} 
\ddual{\Vprime}{\xi }{\Pn \varphi}{\Vmath }
 \,\, = \,\, \ddual{\Vastprime}{{\Pn '}\xi }{ \varphi}{\Vast } , \qquad
\mbox{ for all } \quad \xi \in \Vprime , \quad \varphi \in \Vast   .    
\end{equation*}
\item[(3) ] Since  $\Pn \in \lcal (\Vast ,\Vast )$,  the adjoint operator $\Pn ' \in \lcal (\Vastprime , \Vastprime )$ satisfies
\begin{equation*} 
\ddual{\Vastprime}{\xi }{\Pn \varphi}{\Vast }
 \,\, = \,\, \ddual{\Vastprime}{{\Pn '}\xi }{ \varphi}{\Vast } , \qquad
\mbox{ for all } \quad \xi \in \Vastprime , \quad \varphi \in \Vast   .    
\end{equation*}
\end{description}
\end{remark}

\medskip  \noindent
Using Definition \ref{def:trunctated_weak} and Remark \ref{rem:Pn_dual},
we will show that the approximating equation can be rewritten as an equation in the space $\Vastprime $, and by the injection $\Vastprime \to \Uprime $ - also as an equation in $\Uprime $.

\medskip  
\begin{remark} \label{rem:truncated_U'} \rm 
\begin{description}
\item[(i) ] If the $\Hn$-valued process $\Xn $ satisfies identity \eqref{eq:Hall-MHD_truncated_weak}, then
in particular, for all $t \in [0,T]$ and $\varphi \in \Vast   $ we have $\Pn \varphi \in \Hn$ and  
\begin{equation}
\begin{split}
&\ilsk{\Xn (t)}{\Pn \varphi}{\Hmath }
 + \int_{0}^{t} \ddual{\Vprime }{ \acal  \Xn (s)}{\Pn  \varphi }{\Vmath } \, ds
+ \int_{0}^{t} \ddual{\Vastprime}{ \MHD  (\Xn (s))}{\Pn \varphi}{\Vast}  ds \\
& \qquad \quad + \int_{0}^{t} \ddual{\Vastprime }{ \tHall   (\Xn (s))}{\Pn  \varphi }{\Vast } \, ds
  \\
& \quad  \; = \; \ilsk{{\X }_{0}}{\Pn \varphi}{\Hmath }
+ \int_{0}^{t} \ddual{\Vprime }{f(s)}{\Pn \varphi}{\Vmath }\, ds
 + \Ilsk{\int_{0}^{t}G(s,\Xn (s))\,  dW(s)}{\Pn \varphi}{\Hmath  }.
\end{split} \label{eq:truncated_weak_Pn}
\end{equation}
We used also the properties of the maps $\MHD $ and $\tHall $ stated in Lemmas \ref{lem:MHD-term_properties}  and \ref{lem:tHall-term_properties}, respectively.
Since $\Pn : \Hmath \to \Hn $ is an $\ilsk{\cdot }{\cdot }{\Hmath }$-orthogonal projection, 
\[
\ilsk{\Xn (t)}{ \Pn \varphi}{\Hmath } = \ilsk{\Pn \Xn (t)}{ \varphi}{\Hmath } = \ilsk{\Xn (t)}{ \varphi}{\Hmath }.
\]

\noindent
\item[(ii)] Using the operators $\Pn '$ introduced in Remark \ref{rem:Pn_dual}, \eqref{eq:truncated_weak_Pn} can be written in the form
\begin{equation}
\begin{split}
&\ilsk{\Xn (t)}{ \varphi}{\Hmath } 
 + \int_{0}^{t} \ddual{\Vprime }{ \Pn ' \acal    \Xn (s)}{  \varphi }{\Vmath  } \, ds
+ \int_{0}^{t} \ddual{\Vastprime }{ \Pn ' \MHD  (\Xn (s))}{ \varphi}{\Vast }  ds  \\
& \qquad  +  \int_{0}^{t} \ddual{\Vastprime }{ \Pn ' \tHall (\Xn (s))}{  \varphi }{\Vast  } \, ds
  \\
&  \; = \; \ilsk{\Pn {\X }_{0}}{ \varphi}{\Hmath }
+ \int_{0}^{t} \ddual{\Vprime }{\Pn ' f(s)}{ \varphi}{\Vmath }\, ds
+ \Ilsk{ \int_{0}^{t} \Pn \underline{\circ }G(s,\Xn (s)) \, dW(s)}{\varphi}{\Hmath }  , \quad t \in [0,T] .
\end{split}   \label{eq:truncated_weak_Pn'}
\end{equation}
where
\begin{equation}
\Pn \underline{\circ } G \; := \; \Gn ,
\label{eq:Pn_G}
\end{equation}
and $\Gn $ is the map defined by \eqref{eq:G_n}.
\item[(iii) ]
Let for $\varphi \in \Vast $ the map ${\varphi }^{\ast \ast } $ be defined by
\[
{\varphi }^{\ast \ast } (\xi ) \; := \; \xi (\varphi ) , \qquad \xi \in \Vastprime .
\] 
Note that since 
\[
|\ilsk{\Xn (t)}{\varphi }{\Hmath }| \; \le \; \nnorm{\Xn (t)}{\Hmath }{} \cdot \nnorm{\varphi }{\Hmath }{} 
\; \le \; \nnorm{\Xn (t)}{\Hmath }{} \cdot \norm{\varphi }{\Vast }{} ,
\]
we infer that
\[
\Bigl[ \Vast \ni \varphi \mapsto \ilsk{\Xn (t)}{\varphi }{\Hmath } \in \rzecz  \Bigr] \in \Vastprime . 
\]
In particular,
\[
\ilsk{\Xn (t)}{\varphi }{\Hmath } \; = \; {\varphi }^{\ast \ast } \bigl( \ilsk{\Xn (t)}{\cdot }{\Hmath } \bigr) 
\]
Thus \eqref{eq:truncated_weak_Pn'} can be rewritten in the form 
\[
\begin{split}
&{\varphi }^{\ast \ast } \bigl( \ilsk{\Xn (t)}{\cdot }{\Hmath } \bigr) 
+ {\varphi }^{\ast \ast } \biggl( \int_{0}^{t}\bigl[ \Pn ' \acal \un (s)
 + \Pn ' \MHD   \bigl( \Xn (s) \bigr) + \Pn ' \tHall  (\Xn (s)) \bigr] \, ds  \biggr)\\
\; &= \;  {\varphi }^{\ast \ast } \bigl( \ilsk{\Pn {\X }_{0} }{\cdot }{\Hmath } \bigr) 
+  {\varphi }^{\ast \ast } \biggl( \int_{0}^{t}\Pn ' f (s)\, ds  \biggr)
+  {\varphi }^{\ast \ast }\biggl(  \int_{0}^{t} \Pn \underline{\circ}G(s,\Xn (s)) \, dW(s) \biggr) .
\end{split}
\]
\item[(iv) ] Using the identification $\Hmath \cong \Hmath '$ and the fact that 
$\Hmath ' \hookrightarrow \Vmath ' \hookrightarrow \Vastprime $,
we identify $\Xn (t)$ with the functional induced by $\Xn (t)$ on the space $\Vast $. Since the family $\{ {\varphi }^{\ast \ast }; \varphi \in \Vast  \} $ separates elements of $\Vastprime $,  we infer that $\Xn (t)$ satisfies the following equation
\begin{equation*} 
\begin{split}
& \Xn (t) + \int_{0}^{t}\bigl[ \Pn ' \acal \un (s)
 + \Pn ' \MHD   \bigl( \Xn (s) \bigr) + \Pn ' \tHall  (\Xn (s)) \bigr] \, ds  
  \\ 
&  = \,\,  \Pn {\X }_{0} + \int_{0}^{t} \Pn ' f (s)\, ds 
+ \int_{0}^{t}\Pn \underline{\circ}G (s,\Xn (s)) \, dW(s) ,   
\qquad  t \in [0,T]  , 
\end{split} 
\end{equation*}  
where $\Pn \underline{\circ}G $  is defined by \eqref{eq:Pn_G}.
\end{description}
\end{remark}

\medskip
\section{Existence of a martingale solution. Proof of Theorem \ref{th:mart-sol_existence}} \label{sec:existence}

\medskip  \noindent
Having constructed the sequence $(\Xn )$ of  solutions of equations \eqref{eq:Hall-MHD_truncated} the idea of further steps of the proof is similar to \cite{Brze+EM'13}.
First we will prove that the sequence of laws of $\Xn $, $n \in \nat $, form a tight sequence of probability measures on appropriate functional space. Using the Jakubowski version of the Skorokhod theorem we construct  new stochastic bases an new processes. The last step is passing to the limit.

\medskip
\subsection{Tightness} \label{sec:tightness}

\medskip  \noindent
Let us consider the sequence ${(\Xn )}_{n\in \nat }$ of  solutions of equations \eqref{eq:Hall-MHD_truncated}. Using the tightness criterion stated in  Corollary \ref{cor:tigthness_criterion} in Appendix \ref{sec:comp-tight},
 we will prove that the sequence of laws of $\Xn $ is tight in the space $\zcal $ defined by \eqref{eq:Z_T}, i.e.
\begin{equation*}
\mathcal{Z} \; := \; {L}_{w}^{2}(0,T;\Vmath )  
\cap {L}^{2}(0,T;{\Hmath }_{loc})  \cap \ccal ([0,T];{\Hmath }_{w}) \cap \ccal ([0,T]; {\Umath }^{\prime }), 
\end{equation*}
equipped with the Borel $\sigma $-field  $\sigma(\tcal )$, see Definition \ref{def:space_Z_T}.

\medskip
\begin{lemma} \label{lem:X_n-tightness}
The set of probability measures $\{ \Law (\Xn ), n \in \nat   \} $ is tight on the space 

\noindent
$(\zcal ,\sigma(\tcal ))$. 
\end{lemma}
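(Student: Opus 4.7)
The plan is to verify the hypotheses of the tightness criterion stated in Corollary \ref{cor:tigthness_criterion}. That criterion, tailored for the space $\zcal$, should reduce tightness of $\{\Law(\Xn)\}$ to two ingredients: (a) a uniform moment bound of the form
$$
\sup_{n \in \nat} \e \Bigl[ \sup_{t \in [0,T]} \nnorm{\Xn(t)}{\Hmath}{2} + \int_0^T \norm{\Xn(t)}{\Vmath}{2}\, dt \Bigr] < \infty ,
$$
and (b) an Aldous-type equicontinuity condition on the time increments of $\Xn$, measured in a weak negative norm, concretely $\nnorm{\cdot}{\Vastprime}{}$, which embeds continuously into $\nnorm{\cdot}{\Uprime}{}$. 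Item (a) is immediate from Lemma \ref{lem:Hall-MHD_truncated_estimates} by taking $q=2$ in \eqref{eq:H_estimate_truncated_p} and combining with \eqref{eq:V_estimate_truncated}.

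For item (b) I would use the $\Vastprime$-valued representation of the approximate solution derived in Remark \ref{rem:truncated_U'}(iv), writing for every $\Fmath$-stopping time $\tau\le T$ and every $\theta \in [0,\delta]$
$$
\Xn(\tau+\theta)-\Xn(\tau) \,=\, \sum_{k=1}^{5} \Jn{k}(\tau,\theta) ,
$$
where the terms correspond respectively to the integrals of $\Pn'\acal\Xn$, $\Pn'\MHD(\Xn)$, $\Pn'\tHall(\Xn)$, $\Pn' f$ over $[\tau,\tau+\theta]$, and to the stochastic integral $\int_\tau^{\tau+\theta} \Pn\underline{\circ}G(s,\Xn)\,dW(s)$. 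Since $\Pn$ is the orthogonal projection on $\Hmath$, all relevant operator norms of $\Pn'$ are uniformly bounded by Corollary \ref{cor:P_n-pointwise_conv}. I would then estimate each $\Jn{k}$ in $\Vastprime$ by combining the operator bounds from Remark \ref{rem:Acal-term_properties}, Lemma \ref{lem:MHD-term_properties}(iii), Lemma \ref{lem:tHall-term_properties}(iii), the growth condition \eqref{eq:G}, and the Burkholder--Davis--Gundy inequality for the stochastic term, with the a priori estimates of Lemma \ref{lem:Hall-MHD_truncated_estimates}. The outcome is a uniform-in-$n$ bound of the type $\e\nnorm{\Jn{k}(\tau,\theta)}{\Vastprime}{} \le C\theta^{\alpha_k}$ with some $\alpha_k > 0$, which via Chebyshev's inequality and the embedding $\Vastprime \hookrightarrow \Uprime$ yields the Aldous condition in $\Uprime$.

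The main obstacle is the Hall nonlinearity. The term $\tHall(\Xn)$ is strongly nonlinear and cannot be controlled in $\Vprime$; it must instead be estimated in $\Vastprime$ via Lemma \ref{lem:tHall-term_properties}(iii), whose bound $\nnorm{\tHall(\phi)}{\Vastprime}{} \le c\nnorm{\phi}{\Hmath}{}\norm{\phi}{\Vmath}{}$ matches precisely the combination $L^\infty(0,T;\Hmath) \cap L^2(0,T;\Vmath)$ available from Lemma \ref{lem:Hall-MHD_truncated_estimates}. This is exactly the reason for introducing the auxiliary space $\Umath$ compactly embedded in $\Vast$ in Appendix \ref{sec:aux_funct.anal}: it produces a dual space large enough for the Hall term and simultaneously delivers the compactness needed for the $\ccal([0,T];\Uprime)$-factor of $\zcal$. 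Once (a) and (b) are established, Corollary \ref{cor:tigthness_criterion} yields tightness on $(\zcal,\sigma(\tcal))$.
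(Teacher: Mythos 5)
Your overall strategy coincides with the paper's: conditions \eqref{cond-a} and \eqref{cond-b} of Corollary \ref{cor:tigthness_criterion} follow from Lemma \ref{lem:Hall-MHD_truncated_estimates}, and the Aldous condition is verified term by term through the sufficient condition \textbf{[A']} of Lemma \ref{lem:Aldous_criterion}, with the $\MHD$- and $\tHall$-increments measured in $\Vastprime$ exactly as you describe (the paper keeps the linear, forcing and stochastic terms in $\Vprime$ rather than $\Vastprime$, but since both embed continuously into $\Uprime$ this difference is immaterial). Your estimates for the deterministic terms are all correct and match the paper's.

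There is, however, one step that fails as written: the treatment of the stochastic integral via ``the growth condition \eqref{eq:G} and the Burkholder--Davis--Gundy inequality.'' The bound \eqref{eq:G} contains the term $(2-\eta)\norm{\Phi}{}{2}$, so the It\^{o} isometry (or BDG) gives
\begin{equation*}
\e \Bigl[ \Nnorm{\int_{\taun}^{\taun+\theta} \Gn (s,\Xn (s))\, dW(s)}{\Hmath }{2} \Bigr]
\; \le \; (2-\eta )\, \e \Bigl[ \int_{\taun}^{\taun+\theta} \norm{\Xn (s)}{}{2}\, ds \Bigr]
+ \lambda \, \theta \, \e \Bigl[ \sup_{s\in [0,T]} \nnorm{\Xn (s)}{\Hmath }{2} \Bigr] + \varrho \, \theta .
\end{equation*}
The first term on the right is controlled only by the global bound \eqref{eq:V_estimate_truncated}; since $\norm{\Xn (\cdot )}{}{2}$ may concentrate near the random time $\taun$, this term cannot be bounded by $C{\theta }^{\beta }$ with $\beta >0$ uniformly in $n$, so condition \textbf{[A']} does not follow. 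This is precisely why Assumption \ref{assumption-noise}\textbf{(G.2)} is in the paper: estimating the increment in the weaker norm of $\Vprime $ and using \eqref{eq:G*} (equivalently \eqref{eq:G*} lifted to the product spaces in Remark \ref{rem:G_properties}) replaces the integrand bound by $C(1+\nnorm{\Xn (s)}{\Hmath }{2})$, which contains no gradient term and is dominated by $\sup_{s}\nnorm{\Xn (s)}{\Hmath }{2}$; the time integral over an interval of length $\theta $ is then $O(\theta )$ by \eqref{eq:H_estimate_truncated_p}, and \textbf{[A']} holds with $\alpha =2$, $\beta =1$. With this correction your argument closes and reproduces the paper's proof.
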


\medskip
\begin{proof}
We apply Corollary \ref{cor:tigthness_criterion}.
Let us note  that due to estimates  \eqref{eq:H_estimate_truncated_p} and \eqref{eq:V_estimate_truncated}, conditions \eqref{cond-a} and \eqref{cond-b}  of Corollary \ref{cor:tigthness_criterion} are satisfied. Thus, it is sufficient to  prove  that the sequence $(\Xn {)}_{n \in \nat }$ satisfies the Aldous condition \textbf{[A]}. 
By Lemma \ref{lem:Aldous_criterion} it is sufficient to proof the condition \textbf{[A']}.
Let ${(\taun )}_{n \in \nat} $ be a sequence of stopping times taking values in $[0,T]$.
By Remark \ref{rem:truncated_U'} (iv),    we have
\begin{equation*}
\begin{split}
\Xn (t) 
\; &= \; \Pn {\X }_{0}  - \int_{0}^{t} \Pn ' \acal  \Xn (s) \, ds 
    - \int_{0}^{t} \Pn '\MHD \bigl( \Xn (s) \bigr) \, ds
 - \int_{0}^{t} \Pn ' \tHall   (\Xn (s)) \, ds  
\\  
& \qquad   + \int_{0}^{t} \Pn ' {f}_{n}(s) \, ds 
+  \int_{0}^{t} \Pn \underline{\circ }G (s,\Xn (s)) \, dW(s)    \\
\; & =: \;   \Jn{1} + \Jn{2}(t) + \Jn{3}(t) + \Jn{4}(t) + \Jn{5}(t) + \Jn{6}(t), \qquad t \in [0,T].
\end{split}
\end{equation*}
Let us choose and $\theta  >0 $.  It is sufficient to show that each sequence $\Jn{i}$ of processes, $i=1,\cdots ,6$, satisfies the sufficient condition \textbf{[A']} from Lemma \ref{lem:Aldous_criterion}.
Since the term $\Jn{1}$  is  constant in time, it satisfies this condition.
 In fact, we will check that the terms $\Jn{2},  \Jn{5}, \Jn{6}$ satisfy condition
 \textbf{[A']} from Lemma \ref{lem:Aldous_criterion} in the space $E={{\Vmath } }^{\prime }$ and the terms $\Jn{3}, \Jn{4}$ satisfy this condition in  $E=\Vastprime $. Since the embeddings 
 ${{\Vmath } }^{\prime } \subset {\Umath }^{\prime } $ and
$ \Vastprime \subset {\Umath }^{\prime }$   are continuous, we infer that
\textbf{[A']}  holds in the space $E={\Umath }^{\prime }$, as well.

\medskip \noindent
\textbf{ Ad } ${\Jn{2}}$. \rm  By Remark \ref{rem:Acal-term_properties},
 the linear operator $\acal :{\Vmath }  \to {{\Vmath } }^{\prime }$ is bounded, 
and by Corollary \ref{cor:P_n-pointwise_conv},  $\sup_{n\in \nat } \nnorm{\Pn }{\lcal (\Vmath ,\Vmath )}{} < \infty  $. Using the H\"older inequality and \eqref{eq:V_estimate_truncated}, we obtain
\begin{equation*}
\begin{split}
& \mathbb{E}\,\bigl[ \nnorm{ \Jn{2} (\taun + \theta ) - \Jn{2}(\taun )  }{{{\Vmath } }^{\prime }}{}  \bigr]
\; \le \; \nnorm{\Pn '}{\lcal (\Vprime ,\Vprime )}{} \, \mathbb{E}\,\Bigl[  \int_{\taun }^{\taun + \theta }
\nnorm{\acal  \Xn (s) }{{\Vmath }^{\prime } }{}  \, ds \biggr]
 \\
& \; \le \;  \nnorm{\Pn }{\lcal (\Vmath ,\Vmath )}{} \, {\theta }^{\frac{1}{2}} \Bigl( \mathbb{E}\,\Bigl[  \int_{0 }^{T }
 \norm{  \Xn (s) }{}{2}  \, ds \Bigr] {\Bigr) }^{\frac{1}{2}}
\; \le \;   c_2 \cdot {\theta }^{\frac{1}{2}} ,  
\end{split} 
\end{equation*}
where $c_2 = {C}_{2}^{\frac{1}{2}}(p) \cdot \sup_{n\in \nat } \nnorm{\Pn }{\lcal (\Vmath ,\Vmath )}{} <\infty  $.


\medskip 
\noindent
\textbf{Ad } ${\Jn{3}}$. \rm 
By \eqref{eq:MHD-map_est-H-H} in Lemma \ref{lem:MHD-term_properties},
 $\MHD : {\Hmath } \times {\Hmath } \to {\Vmath }_{\ast }^{\prime } $ is bilinear and continuous (and hence bounded so that   the norm $\| \MHD \| $ of $\MHD : {\Hmath } \times {\Hmath } \to {{\Vmath }}_{\ast  }^{\prime }$ is finite), and  by Corollary \ref{cor:P_n-pointwise_conv},  $\sup_{n\in \nat } \nnorm{\Pn }{\lcal (\Vast ,\Vast )}{} < \infty  $.  Then by \eqref{eq:H_estimate_truncated_p} we have the following estimates
\begin{equation*}
\begin{split}
&\mathbb{E}\,\bigl[ \nnorm{ \Jn{3}(\taun +\theta ) - \Jn{3}(\taun)}{{{\Vmath }}_{\ast }^{\prime }}{}  \bigr]
\; = \; \nnorm{\Pn '}{\lcal (\Vastprime ,\Vastprime )}{} \, \mathbb{E}\,\Bigl[  \Nnorm{ \int_{\taun }^{\taun + \theta }
\MHD \bigl(\Xn (r) \bigr) \, dr }{{{\Vmath }}_{\ast }^{\prime }}{} \Bigr]   \\
\; & \le \;  \nnorm{\Pn }{\lcal (\Vast ,\Vast )}{} \, \mathbb{E}\,\Bigl[  \int_{\taun }^{\taun + \theta }
\nnorm{ \MHD ( \Xn (r)  )  }{{{\Vmath }}_{\ast }^{\prime }}{} \, dr \Bigr]
\; \le \;  \nnorm{\Pn }{\lcal (\Vast ,\Vast )}{} \, \| \MHD \| \, \mathbb{E}\,\biggl[  \int_{\taun }^{\taun + \theta }  \nnorm{\Xn (r)}{{\Hmath }}{2}   \, dr \biggr]    \\
\; & \le \; \nnorm{\Pn }{\lcal (\Vast ,\Vast )}{} \, \| \MHD \|  \cdot  \mathbb{E}\,\bigl[ \sup_{r \in [0,T]} \nnorm{\Xn (r)}{{\Hmath }}{2}\bigr] \cdot \theta
\; \le \;    c_3 \, \theta ,   
\end{split} 
\end{equation*}
where $c_3=  \sup_{n\in \nat }\nnorm{\Pn }{\lcal (\Vast ,\Vast )}{} \,  \| \MHD \| \,  {C}_{1}(p,2) <\infty  $.

\medskip  \noindent
\textbf{Ad } ${\Jn{4}}$. \rm 
By \eqref{eq:tHall-map_est-H-V} in Lemma \ref{lem:tHall-term_properties} $\tHall : {\Hmath } \times {\Vmath } \to \Vastprime $ is bilinear and continuous (and hence bounded so that   the norm $\| \tHall \| $ of $\tHall : {\Hmath } \times {\Vmath } \to \Vastprime $ is finite) and by Corollary \ref{cor:P_n-pointwise_conv},  $\sup_{n\in \nat } \nnorm{\Pn }{\lcal (\Vast ,\Vast )}{} < \infty  $.  Then by \eqref{eq:H_estimate_truncated_p} and \eqref{eq:V_estimate_truncated} we have the following estimates
\begin{equation*}
\begin{split}
&  \mathbb{E}\,\bigl[ {| \Jn{4} (\taun + \theta ) - \Jn{3}(\taun) |}_{\Vastprime }  \bigr]
\; \le \;  \nnorm{\Pn '}{\lcal (\Vastprime ,\Vastprime )}{} \, \mathbb{E}\,\Bigl[  \int_{\taun }^{\taun + \theta } \nnorm{\tHall \bigl( \Xn (r)  \bigr) }{\Vastprime }{} \, dr \Bigr]   \\
\; & \le \;  \nnorm{\Pn }{\lcal (\Vast ,\Vast )}{} \norm{\tHall }{}{} \, \mathbb{E}\Bigl[ \int_{\taun }^{\taun + \theta }   \nnorm{ \Xn (r)}{\Hmath }{} \, \norm{  \Xn (r)}{}{}  \, dr \Bigr]  \\
\; &\le \;  \nnorm{\Pn }{\lcal (\Vast ,\Vast )}{} \norm{\tHall }{}{} \,  \biggl( \mathbb{E} \Bigl[   \sup_{r \in [\taun,\taun + \theta]}  \nnorm{\Xn (r)}{\Hmath }{2} \Bigr] {\biggr) }^{\frac{1}{2}}
\biggl( \mathbb{E} \Bigl[ \int_{\taun }^{\taun + \theta }
 \norm{\Xn (r)}{}{2} \, dr \Bigr] {\biggr) }^{\frac{1}{2}}  {\theta }^{\frac{1}{2}}   \\
\; & \le \;  \nnorm{\Pn }{\lcal (\Vast ,\Vast )}{} \norm{\tHall }{}{} \,   \biggl( \mathbb{E} \Bigl[   \sup_{r \in [0,T]}  \nnorm{\Xn (r)}{{\Hmath } }{2} \Bigr]  {\biggr) }^{\frac{1}{2}}
\biggl( \mathbb{E} \Bigl[ \int_{0}^{T}
       \norm{  \Xn (r)}{}{2} \, dr \Bigr] {\biggr) }^{\frac{1}{2}} {\theta }^{\frac{1}{2}}
\;  \le \;   c_4  {\theta }^{\frac{1}{2}}  , 
\end{split}
\end{equation*}
where $c_4 =  \sup_{n\in \nat }\nnorm{\Pn }{\lcal (\Vast ,\Vast )}{} \norm{\tHall }{}{} \,   [{C}_{1}(p,2)]^{\frac{1}{2}} [{C}_{2}(p)]^{\frac{1}{2}} < \infty $.

\medskip \noindent
\textbf{Ad } ${\Jn{5}}$. \rm 
Since  $f \in {L}^{p}(0,T;{{\Vmath } }^{\prime })$ and by Corollary \ref{cor:P_n-pointwise_conv},  $\sup_{n\in \nat } \nnorm{\Pn }{\lcal (\Vmath ,\Vmath )}{} < \infty $,  using the H\"{o}lder inequality,  we have
\begin{equation*}
\begin{split}
& \mathbb{E} \,\bigl[ \nnorm{\Jn{5} (\taun + \theta ) - \Jn{5}(\taun)}{{{\Vmath } }^{\prime }}{}  \bigr]
\; \le \; \nnorm{\Pn '}{\lcal (\Vprime ,\Vprime )}{}  \, \mathbb{E}\,\Bigl[ \Nnorm{ \int_{\taun }^{\taun +\theta } f(s) \, ds }{{{\Vmath } }^{\prime }}{} \Bigr]
   \\
\; & \le \; \nnorm{\Pn }{\lcal (\Vmath ,\Vmath )}{} \, {\theta }^{\frac{p-1}{p}}   
\Bigl( \mathbb{E} \,\Bigl[  \int_{0 }^{T }
\nnorm{f(s)}{{{\Vmath } }^{\prime }}{p}  \, ds \Bigr] {\Bigr) }^{\frac{1}{p}}
\; \le \; {c}_{5} \cdot {\theta }^{\frac{p-1}{p}}, 
\end{split}  
\end{equation*}
where ${c}_{5}:= \sup_{n\in \nat } \nnorm{\Pn }{\lcal (\Vmath ,\Vmath )}{}   \norm{f}{{L}^{p}(0,T;{{\Vmath } }^{\prime })}{} < \infty $.

\medskip \noindent
\textbf{Ad } ${\Jn{6}}$. \rm Since $\Pn \underline{\circ }G = \Gn $ (see \eqref{eq:G_n} and \eqref{eq:Pn_G}), by  \eqref{eq:G*} and inequality \eqref{eq:H_estimate_truncated_p}, we obtain the following inequalities
\begin{equation*}
\begin{split}
& \mathbb{E} \,\bigl[ \nnorm{\Jn{6} (\taun + \theta )- \Jn{6}(\taun ) }{{{\Vmath } }^{\prime }}{2}  \bigr] 
\; = \;  \mathbb{E} \,\Bigl[ \Nnorm{ \int_{\taun }^{\taun +\theta } \Gn (s,\Xn (s)) \, dW(s) }{{{\Vmath } }^{\prime }}{2}  \Bigr]  \\
\; &= \; \e \Bigl[ \Bigl| \sup_{\psi \in \Vmath , \norm{\psi }{\Vmath }{} \le 1 } 
\Ddual{\Vmath '}{\int_{\taun }^{\taun +\theta } \Gn (s,\Xn (s)) \, dW(s)}{\psi }{\Vmath } {\Bigr| }^{2} \Bigr]  \\
\; &= \; \mathbb{E} \Bigl[ \sup_{\psi \in \Vmath , \norm{\psi }{\Vmath }{} \le 1 }  
\int_{\taun }^{\taun + \theta } 
\norm{ \ddual{\Vmath '}{\Gn (s,\Xn (s))(\cdot )}{\psi }{\Vmath } }{\lhs (\Kmath  ,\rzecz )}{2} \, ds \Bigr] 
\\
\; &= \; \mathbb{E} \Bigl[ \sup_{\psi \in \Vmath , \norm{\psi }{\Vmath }{} \le 1 } 
 \int_{\taun }^{\taun + \theta } \sup_{y\in \Kmath , \norm{y}{\Kmath }{} \le 1 }
 \bigl| \ddual{\Vmath '}{\Gn (s,\Xn (s))(y )}{\psi }{\Vmath } {\bigr| }^{2} \, ds \Bigr] 
 \\
\; &= \; \mathbb{E} \Bigl[ \sup_{\psi \in \Vmath , \norm{\psi }{\Vmath }{} \le 1 } 
 \int_{\taun }^{\taun + \theta } \sup_{y\in \Kmath , \norm{y}{\Kmath }{} \le 1 }
 \bigl| \ilsk{\Pn (G (s,\Xn (s))(y ))}{ \psi }{\Hmath } {\bigr| }^{2} \, ds \Bigr] 
\\
\; &= \; \mathbb{E} \Bigl[ \sup_{\psi \in \Vmath , \norm{\psi }{\Vmath }{} \le 1 } 
 \int_{\taun }^{\taun + \theta } \sup_{y\in \Kmath , \norm{y}{\Kmath }{} \le 1 }
 \bigl| \ilsk{G (s,\Xn (s))(y )}{\Pn \psi }{\Hmath } {\bigr| }^{2} \, ds \Bigr] 
\\
\; &\le \; \mathbb{E} \Bigl[ \int_{\taun }^{\taun + \theta } 
\sup_{\Psi \in \Vmath , \norm{\psi }{\Vmath }{} \le 1 } 
  \sup_{y\in \Kmath , \norm{y}{\Kmath }{} \le 1 }
 \bigl| \ddual{\Vmath '}{g(s,\Xn (s))(y )}{\Pn \psi }{\Vmath } {\bigr| }^{2} \, ds \Bigr]  
  \\
\; & \le \;  C \cdot \sup_{\psi \in \Vmath , \norm{\psi }{\Vmath }{} \le 1 }  \norm{\Pn \psi }{\Vmath }{}  \cdot \mathbb{E} \Bigl[ \int_{\taun }^{\taun + \theta } 
( 1 + \nnorm{\Xn (s) }{\Hmath }{2} )\, ds \Bigr]    \\
\;  &\le \;   C  \nnorm{\Pn }{\lcal (\Vmath ,\Vmath )}{}  \, \Bigl\{ 1 +  \mathbb{E} \,\Bigl[ 
  \sup_{s \in [0,T]} \nnorm{\Xn (s)}{{\Hmath }}{2}\Bigr] \Bigr\} \theta   
\;  \le \;   {c}_{6} \cdot \theta .
\end{split} 
\end{equation*} 
Here ${c}_{6} := C \sup_{n\in \nat } \nnorm{\Pn }{\lcal (\Vmath ,\Vmath )}{}  \{1+ {C}_{1}(2) \} < \infty $.
Thus the proof of Lemma \ref{lem:X_n-tightness} is complete.
\end{proof}

\subsection{Application of the Skorokhod theorem} \label{sec:main_th-proof}

\medskip \noindent
We will use the  Jakubowski's generalization of the Skorokhod theorem, see Theorem \ref{th:2_Jakubowski}
in Appendix \ref{sec:comp-tight}. 
Let ${(\Xn )}_{n \in \nat }$ be a sequence of the solutions of the approximate equations \eqref{eq:Hall-MHD_truncated}. By Lemma  \ref{lem:X_n-tightness}, the set of  laws $\{ \Law (\Xn , n \in \nat \} $ is tight on the space $(\mathcal{Z},\sigma (\mathcal{T}))$, where $\sigma (\mathcal{T})$ denotes the topological $\sigma $-field.
By  Theorem \ref{th:2_Jakubowski} there exists a subsequence ${({n}_{k})}$, a probability space $(\tOmega , \tfcal , \tp )$ and, on this space $\mathcal{Z}$-valued random variables $\tX $, $\tXnk $, $k \in \nat $
 such that  
 \begin{equation}
 \begin{split}
&\mbox{the variables $\Xnk $ and $\tXnk $ have the same laws on the Borel $\sigma$-algebra
$\mathcal{B}\big(\zcal \big)$}
\end{split}
\label{eqn:equal_laws_truncated}
\end{equation}   
and  
\begin{equation}
\mbox{$\tXnk $ converges to $\tX $ in $\zcal, \; \; \tp $-a.s.,   }
\label{eqn:conv-as_truncated}
\end{equation}   
where $\zcal $ defined by \eqref{eq:Z_T}. 
Hence, in particular,
\begin{equation}
\tX \; \in \; {L}^{2}(0,T;\Vmath ) \cap  \ccal ([0,T];{\Hmath }_{w}) \cap  \ccal ([0,T];{\Umath }^{\prime }) .
\label{eq:tX_in_Z}
\end{equation}
We will denote the subsequence $(\tXnk )$ again by $(\tXn )$.
Define a corresponding  sequence of  filtrations by
$
{\tilde{\mathbb{F}}}_{n} =({\tfcal }_{n,t})_{t\geq 0}, \mbox{ where }
{\tfcal }_{n,t} = \sigma \{ (\tXn (s)), \, \, s \le t \},\;\;  t \in [0,T].
$
Using Lemma \ref{lem:Hall-MHD_truncated_estimates},
we infer that the processes $\tXn $, $n \in \nat $,  satisfy the following inequalities:  
for every $q\in [2,p]$
\begin{equation} 
\sup_{n \in \nat } \tilde{\mathbb{E}} \Bigl[ \sup_{s\in [0,T] } \nnorm{\tXn (s)}{\Hmath }{q} \Bigr]
\; \le \;  {C}_{1}(p,q)
\label{eq:H_estimate_truncated'_q}
\end{equation}
and
\begin{equation} 
\sup_{n \in \nat } \tilde{\mathbb{E}} \Bigl[ \int_{0}^{T} \norm{\tXn (s)}{\Vmath }{2} \, ds \Bigr] 
\; \le \;  {C}_{2}(p).
\label{eq:V_estimate_truncated'}
\end{equation}  
Let us emphasize that the constants  ${C}_{1}(p,q)$ and ${C}_{2}(p)$ are the same as in Lemma \ref{lem:Hall-MHD_truncated_estimates}.
In particular, by \eqref{eq:H_estimate_truncated'_q} with $q:=p$
\begin{equation} 
\te  \Bigl[ \sup_{s\in [0,T]} \nnorm{\tXn (s)}{\Hmath }{p} \Bigr] \; \le \;  {C}_{1}(p),
\label{eq:H_estimate_truncated'_p}
\end{equation}
where ${C}_{1}(p):=C_1(p,p)$.
Using inequality  \eqref{eq:H_estimate_truncated'_p} we choose a subsequence, still denoted by $(\tXn )$, convergent weak star in the space $ {L}^{p}(\tOmega ;{L}^{\infty }(0,T;{\Hmath }))$  and infer that
and that the limit process $\tX $ satisfies  \eqref{eq:H_estimate_truncated'_p}, as well. That is,
\begin{equation} 
\te  \Bigl[ \sup_{s\in [0,T]} \nnorm{\tX (s)}{\Hmath }{p} \Bigr] \; \le \;  {C}_{1}(p).
\label{eq:H_estimate_Hall-MHD'_p}
\end{equation}
This means that the process $\tX $ satisfies inequality \eqref{eq:H_estimate_Hall-MHD_q} for $q:=p$.  
Now, let us fix $q \in [1,p)$. 
Since $ \sup_{t\in [0,T]}  \nnorm{\tX (t)}{\Hmath }{q}
    \le {\Bigl( \sup_{t\in [0,T]} \nnorm{\tX (t)}{\Hmath }{p} \Bigr) }^{q/p}$,
by the H\"{o}lder inequality we obtain   
\begin{equation}
\begin{split}
& \tilde{\mathbb{E}} \Bigl[ \sup_{t\in [0,T]}  \nnorm{\tX (t)}{\Hmath }{q}  \Bigr]
 \; \le \;  \tilde{\mathbb{E}} \Bigl[ {\Bigl( \sup_{t\in [0,T]} \nnorm{\tX (t)}{\Hmath }{p} \Bigr) }^{q/p}  \Bigr] \\
& \qquad  \le \;  {\biggl( \tilde{\mathbb{E}} \Bigl[  \sup_{t\in [0,T]} \nnorm{\tX (t)}{\Hmath }{p}   \Bigr]  \biggr) }^{q/p}
\;  \le \;   {\bigl( {C}_{1}(p) \bigr) }^{q/p},
 \end{split}
\label{eq:H_estimate_Hall-MHD'_q} 
\end{equation}    
which means that process $\tX $ satisfies inequality \eqref{eq:H_estimate_Hall-MHD_q}
 with the constant
${C}_{1}(p,q):={\bigl( {C}_{1}(p) \bigr) }^{q/p}$.

\medskip  \noindent
By inequality \eqref{eq:V_estimate_truncated'} we infer that the sequence $(\tXn )$ contains further subsequence, denoted again by $(\tXn )$, convergent weakly in the space ${L}^{2}([0,T]\times \bar{\Omega };{\Vmath } )$ to $\tX $. Moreover, it is clear that
\begin{equation} 
 \tilde{\mathbb{E}} \Bigl[ \int_{0}^{T} \norm{\tX  (s)}{\Vmath }{2} \, ds \Bigr]
\; \le \; {C}_{2}(p),
\label{eq:V_estimate_Hall-MHD'}
\end{equation}
which means that the process $\tX $ satisfies \eqref{eq:V_estimate_Hall-MHD}.

\subsection{Continuation of the proof of Theorem \ref{th:mart-sol_existence}. Passing to the limit}

\medskip  \noindent
We use the argumentation similar to \cite{Brze+EM'13}, which is closely related to \cite{Flandoli+Gatarek'1995} and \cite[Section 8]{DaPrato+Zabczyk'2014}.
For each $n\ge 1 $, let us consider a process $\tMn $ with trajectories in $\ccal ([0,T];\Hn )$ (in particular in $\ccal ([0,T];\Hmath )$) defined by 
\begin{equation}  \label{eq:tMn}
\tMn (t) \; = \;  \tXn (t) \,  -  \tXn (0) 
+ \int_{0}^{t}\bigl[ \ARiesz( \tXn (s)) + \BRiesz  (\tXn (s)) +\RRiesz ( \tXn (s)) -\fRiesz (s)\bigr] \, ds 
 ,  \quad
t \in [0,T],
\end{equation}
where $\ARiesz, \BRiesz , \RRiesz $ and $\fRiesz $  are defined by \eqref{eq:Acal_Hn_Riesz}, \eqref{eq:Bn_Hn_Riesz}, \eqref{eq:R_Hn_Riesz} and \eqref{eq:f_Hn_Riesz}, respectively.

\medskip
\begin{lemma} \label{lem:tMn-martingale}
$\tMn $ is a square integrable martingale with respect to the filtration
${\tilde{\mathbb{F}}}_{n} =({\tilde{\fcal }}_{n,t})$, where
${\tilde{\fcal }}_{n,t} = \sigma \{ \tXn (s), \, \, s \le t \} $, with the  quadratic variation
\begin{equation} \label{eq:tMn_qvar}
{\qvar{\tMn }}_{t} \; = \; \int_{0}^{t} \norm{ \Gn (\tXn (s))}{\lhs (\Kmath ,\Hmath )}{2}   \, ds ,
  \qquad t \in [0,T].
\end{equation}
\end{lemma}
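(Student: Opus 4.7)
My plan is to establish the statement first on the original basis $\mathfrak{A}$ for the auxiliary process
\[
M_n(t) \; := \; \Xn(t) - \Xn(0) + \int_0^t\bigl[\ARiesz(\Xn(s)) + \BRiesz(\Xn(s)) + \RRiesz(\Xn(s)) - \fRiesz(s)\bigr]\,ds,
\]
and then to transfer it to $\tMn$ on $(\tOmega,\tfcal,\tp)$ by exploiting the equality of laws \eqref{eqn:equal_laws_truncated}. By the approximating equation \eqref{eq:Hall-MHD_truncated}, $M_n$ coincides with the It\^{o} integral $\int_0^t\Gn(s,\Xn(s))\,dW(s)$. Since $\Pn$ is a contraction on $\Hmath$, we have $\norm{\Gn(s,\Xn(s))}{\lhs(\Kmath,\Hmath)}{} \le \norm{G(s,\Xn(s))}{\lhs(\Kmath,\Hmath)}{}$, and \eqref{eq:G} combined with \eqref{eq:H_estimate_truncated_p} at $q=2$ and \eqref{eq:V_estimate_truncated} yields $\mathbb{E}\int_0^T\norm{\Gn(s,\Xn(s))}{\lhs(\Kmath,\Hmath)}{2}\,ds < \infty$. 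Standard Hilbert-space It\^{o} calculus then shows that $M_n$ is a continuous square integrable $\Fmath$-martingale whose trace quadratic variation is $\int_0^t\norm{\Gn(s,\Xn(s))}{\lhs(\Kmath,\Hmath)}{2}\,ds$.

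Next I would verify that $\tMn$ and $M_n$ are images of $\tXn$ and $\Xn$ under one and the same pathwise functional $\Psi_n$, namely
\[
\Psi_n(X)(t) \; := \; X(t)-X(0) + \int_0^t\bigl[\ARiesz(X(s))+\BRiesz(X(s))+\RRiesz(X(s))-\fRiesz(s)\bigr]\,ds.
\]
Restricted to the Borel full-measure subset $\{X\in\zcal:X(t)\in\Hn\ \forall t,\ \sup_{t\in[0,T]}\nnorm{X(t)}{\Hmath}{}<\infty\}$, the map $\Psi_n:\zcal\to\ccal([0,T];\Hmath)$ is Borel measurable: $\ARiesz$ is linear bounded on $\Hn$ (Remark \ref{rem:truncated_funct_Riesz}(i)), while $\BRiesz$ and $\RRiesz$ are locally Lipschitz on $\Hn$ by Lemmas \ref{lem:MHD-term_properties}(ii) and \ref{lem:tHall-term_properties}(ii) together with the norm equivalence in Corollary \ref{cor:H_n-V_m1,m2-norm_equiv}. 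Hence $\tMn=\Psi_n(\tXn)$ and $M_n=\Psi_n(\Xn)$, and by \eqref{eqn:equal_laws_truncated} the joint laws of $(\tMn,\tXn)$ and $(M_n,\Xn)$ on $\ccal([0,T];\Hmath)\times\zcal$ coincide.

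Both the martingale property and the quadratic-variation identity can be phrased as vanishing of expectations of Borel functionals of these joint paths. For $0\le s<t\le T$, $h\in\Hmath$, and $\phi$ a bounded continuous functional on $\ccal([0,s];\Hmath_w)$, the $\Fmath$-martingale structure of $M_n$ gives
\[
\mathbb{E}\bigl[\ilsk{M_n(t)-M_n(s)}{h}{\Hmath}\,\phi(\Xn|_{[0,s]})\bigr]=0,
\]
and the It\^{o} isometry yields
\[
\mathbb{E}\Bigl[\bigl(\nnorm{M_n(t)}{\Hmath}{2}-\nnorm{M_n(s)}{\Hmath}{2} - \int_s^t\norm{\Gn(r,\Xn(r))}{\lhs(\Kmath,\Hmath)}{2}\,dr\bigr)\phi(\Xn|_{[0,s]})\Bigr]=0.
\]
Both integrands are Borel functionals of the joint path---measurability of $r\mapsto\norm{\Gn(r,\Xn(r))}{\lhs(\Kmath,\Hmath)}{2}$ follows from Assumption \ref{assumption-noise}---so by the equality of joint laws these identities transfer verbatim to $(\tMn,\tXn)$, yielding the $\tilde{\mathbb{F}}_n$-martingale property of $\tMn$ together with \eqref{eq:tMn_qvar}.

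The main obstacle I foresee is the careful measurability/integrability bookkeeping for $\Psi_n$: the nonlinear Bochner integrals must be shown to be well-defined Borel functions on a suitable full-measure subset of $\zcal$, and this rests crucially on the fact that on the Fourier-truncated space $\Hn$ all Sobolev-type norms are equivalent (Corollary \ref{cor:H_n-V_m1,m2-norm_equiv}). Once $\Psi_n$ has been identified as a Borel map, the rest is a routine application of the transfer principle for equality in law, and no further stochastic calculus is needed on the tilde-space.
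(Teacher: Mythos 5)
Your proposal is correct and follows essentially the same route as the paper: first establish that $M_n$ is a square integrable martingale with the stated quadratic variation on the original stochastic basis (where it coincides with the It\^{o} integral of $\Gn$), and then transfer both properties to $\tMn$ via equality of laws, the martingale and quadratic-variation identities being expressed as vanishing expectations of measurable functionals of the path up to time $s$. The only cosmetic difference is that the paper characterises the quadratic variation in tensor form, testing against pairs $\psi ,\zeta \in \Hn$, while you use the trace form; your explicit measurability discussion for the pathwise functional $\Psi_n$ fills in a step the paper leaves implicit.
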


\medskip
\begin{proof}
Indeed, since the process
\[
\Mn (t) \; = \;  \Xn (t) \,  -  \Xn (0) 
+ \int_{0}^{t}\bigl[ \ARiesz( \Xn (s)) + \BRiesz  (\Xn (s)) +\RRiesz ( \Xn (s)) -\fRiesz (s)\bigr] \, ds 
 ,  \quad t \in [0,T],
\]
is a zero-mean  $\Hn $-valued square integrable martingale with the quadratic variation
\[
{\qvar{\Mn }}_{t} \; = \; \int_{0}^{t} \norm{ \Gn (\Xn (s))}{\lhs (\Kmath ,\Hmath )}{2}   \, ds ,
  \qquad t \in [0,T],
\]
and the processes $\tXn $ and $\Xn $ have the same laws, 
thus for all $s, t \in [0,T]$, $s \le t $, all functions $h$ bounded continuous on $\ccal ([0,s]; {\Hmath }_{n} )$, and all $\psi , \zeta  \in \Hn   $, we have
\begin{equation*} 
\te \bigl[ \dual{ \tMn (t)-\tMn (s)}{\psi }{\Hmath } \, h \bigl( \tXn {}_{|[0,s]} \bigr) \bigr]
\; = \; 0
\end{equation*}
and
\begin{equation*}
\begin{split}
&  \te \Bigl[ \Bigl( \dual{\tMn (t)}{\psi }{\Hmath } \dual{\tMn (t)}{\zeta }{\Hmath }
 - \dual{\tMn (s)}{\psi }{\Hmath } \dual{\tMn (s)}{\zeta }{\Hmath }  \\
&  \quad - \int_{s}^{t} \ilsk{\Gn  (\tXn (\sigma )) \psi }{\Gn  (\tXn (\sigma )) \zeta }{\Hmath }
\, d\sigma  \Bigl)  \cdot h \bigl( \tXn {}_{|[0,s]} \bigr) \Bigr] \; = \; 0 .   
\end{split}
\end{equation*}
This concludes the proof of the lemma.
\end{proof}

\medskip  \noindent
Let $\tM $ be an ${\Umath }^{\prime }$-valued process defined by
\begin{equation}  \label{eq:tM}
\tM (t) \; = \;  \tX (t) \,  -  \tX (0) 
+ \int_{0}^{t} \bigl[  \acal \tX (s)  + \MHD  (\tX (s)) +\tHall (\tX (s)) - f(s) \bigr] \, ds 
 , \quad
t \in [0,T].
 \end{equation}

\medskip
\begin{lemma} \label{lem:tM-continuity}
The process ${(\tM (t))}_{t\in [0,T]}$ has $\tp$-a.s.  trajectories in $\ccal ([0,T];\Umath ')$. 
\end{lemma}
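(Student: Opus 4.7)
My plan is to decompose $\tM $ into the pointwise term $\tX (t)-\tX (0)$ plus the four Bochner integrals in \eqref{eq:tM}, and to verify $\tp $-almost surely that each summand defines a continuous map $[0,T]\to \Uprime $. For the Bochner-integral summands this reduces to showing that the integrand lies in ${L}^{1}(0,T;E)$ for some Banach space $E$ with $E\hookrightarrow \Uprime $, because then $t\mapsto \int_{0}^{t}\xi (s)\, ds$ is automatically continuous into $E$ and hence into $\Uprime $. The chain of embeddings $\Umath \subset \Vast \subset \Vmath $ yields by duality the two continuous embeddings $\Vprime \hookrightarrow \Uprime $ and $\Vastprime \hookrightarrow \Uprime $, which will absorb all error terms.

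The first step is the pointwise term $\tX (t)$. By \eqref{eq:tX_in_Z}, $\tX \in \ccal ([0,T];{\Hmath }_{w})$. The compactness of $\Umath \hookrightarrow \Vast $ combined with the continuous embedding $\Vast \hookrightarrow \Hmath $ gives that $\Umath \hookrightarrow \Hmath $ is compact, and hence by duality (together with the identification $\Hmath \cong \Hmath '$) the embedding $\Hmath \hookrightarrow \Uprime $ is compact as well. Since a compact linear map sends weakly convergent sequences to norm-convergent ones, the weak continuity of $\tX $ into $\Hmath $ upgrades to norm continuity into $\Uprime $.

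The second step treats the linear and data terms. By Remark \ref{rem:Acal-term_properties}, $\nnorm{\acal \tX (s)}{\Vprime }{}\le \norm{\tX (s)}{}{}$, so \eqref{eq:V_estimate_Hall-MHD'} gives $\acal \tX \in {L}^{2}(0,T;\Vprime )\subset {L}^{1}(0,T;\Uprime )$ $\tp $-a.s.; assumption \textbf{(H.2)} gives $f\in {L}^{p}(0,T;\Vprime )\subset {L}^{1}(0,T;\Uprime )$. For the MHD term, I pick $m>\tfrac{5}{2}$ with $\Vast ={\Vmath }_{m}$ and apply Lemma \ref{lem:MHD-term_properties}(iii): $\nnorm{\MHD (\tX (s))}{\Vastprime }{}\le {c}_{\MHD }(m)\nnorm{\tX (s)}{\Hmath }{2}$, which combined with \eqref{eq:H_estimate_Hall-MHD'_p} (or just the a.s.~finiteness of $\sup_{s}\nnorm{\tX (s)}{\Hmath }{}$, already known pathwise from the tightness construction) yields $\MHD (\tX )\in {L}^{\infty }(0,T;\Vastprime )\subset {L}^{1}(0,T;\Uprime )$ $\tp $-a.s.

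The main obstacle, as expected, is the Hall term, because it is quadratic and only $\Hmath $-control together with $\Vmath $-control suffice to tame it. Here I would use Lemma \ref{lem:tHall-term_properties}(iii), again with $\Vast ={\Vmath }_{m}$, to obtain
\[
\nnorm{\tHall (\tX (s))}{\Vastprime }{}\; \le \; {c}_{\tHall }(m)\, \nnorm{\tX (s)}{\Hmath }{}\, \norm{\tX (s)}{\Vmath }{},
\]
and then estimate by Cauchy--Schwarz
\[
\int_{0}^{T}\nnorm{\tHall (\tX (s))}{\Vastprime }{}\, ds \;\le \; {c}_{\tHall }(m)\, {T}^{1/2}\, \sup_{s\in [0,T]}\nnorm{\tX (s)}{\Hmath }{}\, {\Bigl( \int_{0}^{T}\norm{\tX (s)}{\Vmath }{2}\, ds\Bigr) }^{1/2},
\]
which is $\tp $-a.s.~finite by \eqref{eq:H_estimate_Hall-MHD'_p} and \eqref{eq:V_estimate_Hall-MHD'}. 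Thus $\tHall (\tX )\in {L}^{1}(0,T;\Vastprime )\subset {L}^{1}(0,T;\Uprime )$ $\tp $-a.s., and the continuity of $\tM $ into $\Uprime $ follows by summing the five continuous contributions.
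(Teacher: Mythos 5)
Your proposal is correct and follows essentially the same route as the paper: decompose $\tM$, observe that $\tX\in\ccal([0,T];\Umath')$ (the paper reads this directly off \eqref{eq:tX_in_Z}, while you rederive it from weak continuity in $\Hmath$ plus compactness of $\Hmath\hookrightarrow\Uprime$), and show each remaining integrand lies $\tp$-a.s.\ in ${L}^{1}(0,T;E)$ for $E=\Vprime$ or $E=\Vastprime$, both embedding into $\Uprime$. The only cosmetic difference is that you invoke the $\Hmath$-based estimates of Lemmas \ref{lem:MHD-term_properties}(iii) and \ref{lem:tHall-term_properties}(iii) where the paper uses the $\Vmath$-based estimates of part (i); both yield the required integrability via \eqref{eq:H_estimate_Hall-MHD'_p} and \eqref{eq:V_estimate_Hall-MHD'}.
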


\medskip  
\begin{proof}
Since by \eqref{eq:tX_in_Z}, $\tX \in \ccal ([0,T];\Umath ')$, it is sufficient to show that the remaining terms on the r.h.s of \eqref{eq:tM} are $\tp $-a.s. in $\ccal ([0,T];\Umath ')$. In fact, the remaining terms are more regular; they belong to $\ccal ([0,T];\Vmath ')$ or $\ccal ([0,T];\Vtest ')$, which follows from the following inequalities. 
By Remark \ref{rem:Acal-term_properties}, the H\"{o}lder (Cauchy-Schwarz) inequality  and \eqref{eq:V_estimate_Hall-MHD'} we have
\begin{equation*}
\te \biggl[ \nnorm{\acal (\tX (s))}{\Vmath '}{} \, ds  \biggr]
\; \le \; {T}^{\frac{1}{2}} \, \biggl(  \te \biggl[ \int_{0}^{T} \norm{\tX (s)}{}{2} \, ds \biggr] {\biggr) }^{\frac{1}{2}} \; < \infty . 
\end{equation*}
By Lemma \ref{lem:MHD-term_properties} (i),  \eqref{eq:Hall-MHD_V-norm}, \eqref{eq:H_estimate_Hall-MHD'_q} and \eqref{eq:V_estimate_Hall-MHD'} we have
\begin{equation*}
\te \biggl[ \int_{0}^{T} \nnorm{\MHD (\tX (s))}{\Vmath '}{} \, ds  \biggr]
\; \le \;  {c}_{\MHD } \, \te \biggl[ \int_{0}^{T} \norm{\tX (s)}{\Vmath }{2} \, ds \biggr]
\; <  \; \infty . 
\end{equation*}
By Lemma \ref{lem:tHall-term_properties}  (i),  \eqref{eq:Hall-MHD_V-norm}, \eqref{eq:H_estimate_Hall-MHD'_q} and \eqref{eq:V_estimate_Hall-MHD'} we have
\begin{equation*}
\te \biggl[ \int_{0}^{T} \nnorm{\tHall (\tX (s))}{\Vast '}{} \, ds  \biggr]
\; \le \;  {c}_{\tHall } \, \te \biggl[ \int_{0}^{T} \norm{\tX (s)}{\Vmath }{2} \, ds \biggr]
\; <  \; \infty . 
\end{equation*}
Finally, by condition \bf (H.2) \rm in Assumption \ref{assumption-data},  
$
\int_{0}^{T} \nnorm{f(s)}{\Vmath '}{} \, ds   <   \infty . 
$
The proof of the lemma is thus complete. 
\end{proof}

\medskip
\begin{lemma} \label{lem:pointwise_conv}
For all $s,t \in [0,T]$ such that $s \le t $ and all $\varphi \in \Vast $
\begin{description}
\item[(a)] $\lim_{n\to \infty }  \ilsk{\tXn (t)}{\Pn \varphi}{\Hmath } = \ilsk{\tX (t)}{\varphi}{\Hmath  }  $, $\tp$-a.s.,
\item[(b)] $\lim_{n\to \infty }  \int_{s}^{t} \dual{\acal \tXn (\sigma )}{\Pn \varphi}{} \, d \sigma
= \int_{s}^{t} \dual{\acal \tX (\sigma )}{\varphi}{} ) \, d\sigma   $,
$\tp$-a.s.,
\item[(c)] $\lim_{n\to \infty }  \int_{s}^{t} \dual{ \MHD (\tXn (\sigma ))}{\Pn \varphi}{} \, d \sigma 
= \int_{s}^{t} \dual{ \MHD (\tX (\sigma ))}{\varphi}{} \,d\sigma   $, $\tp$-a.s.,
\item[(d)] $\lim_{n\to \infty }  \int_{s}^{t} \dual{ \tHall (\tXn (\sigma ))}{\Pn \varphi}{}
= \int_{s}^{t} \dual{ \tHall (\tX (\sigma ))}{\varphi}{} ) \,d\sigma  $, $\tp$-a.s.
\item[(e)] $\lim_{n\to \infty }  \int_{s}^{t} \dual{  f(\sigma )}{\Pn \varphi}{} \, d\sigma 
= \int_{s}^{t}  \dual{  f(\sigma )}{\varphi}{} ) \,d\sigma  $, $\tp$-a.s.,
\end{description} 
where $\dual{\cdot}{\cdot }{}$ denotes appropriate duality pairing.
\end{lemma}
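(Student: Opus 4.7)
The unifying strategy for all five parts is the decomposition
\[
\Lambda(\tXn)(\Pn\varphi) - \Lambda(\tX)(\varphi) \; = \; \Lambda(\tXn)(\Pn\varphi - \varphi) + [\Lambda(\tXn) - \Lambda(\tX)](\varphi),
\]
with $\Lambda$ the relevant linear or bilinear functional. The first summand will be controlled by a norm of $\Pn - I$ acting on $\varphi$ (Corollary \ref{cor:P_n-pointwise_conv} and Lemma \ref{lem:P_n-norm_conv}) times a pathwise bound on $\tXn$, while the second summand is handled via the $\zcal$-convergence $\tXn \to \tX$, possibly combined with the convergence corollaries \ref{cor:MHD-map_conv-aux} and \ref{cor:tHall-term_conv_general}. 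The key preliminary pathwise bound is
\[
M(\omega) \; := \; \sup_{n\in\nat}\sup_{t\in[0,T]}\nnorm{\tXn(t,\omega)}{\Hmath}{} \; < \; \infty
\]
on the almost-sure event of $\zcal$-convergence: for every $h\in\Hmath$ the real-valued functions $t\mapsto\ilsk{\tXn(t,\omega)}{h}{\Hmath}$ converge uniformly to $t\mapsto\ilsk{\tX(t,\omega)}{h}{\Hmath}$ and are therefore uniformly bounded in $n$ and $t$, so the uniform boundedness principle yields $M(\omega)<\infty$. Moreover, weak convergence of $\tXn$ in $L^2(0,T;\Vmath)$ (which is part of $\zcal$-convergence) delivers a further pathwise bound $\sup_n \int_0^T \norm{\tXn(\sigma,\omega)}{\Vmath}{2}\,d\sigma < \infty $ a.s., since weakly convergent sequences in a Hilbert space are bounded.

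The three linear parts are now straightforward. For (a), $\tXn(t)\to\tX(t)$ weakly in $\Hmath$ (from $\ccal([0,T];\Hmath_w)$-convergence) kills the second summand, while $|\ilsk{\tXn(t)}{\Pn\varphi-\varphi}{\Hmath}|\le M(\omega)\nnorm{\Pn\varphi-\varphi}{\Hmath}{}\to 0$ by Corollary \ref{cor:P_n-pointwise_conv}(i). For (b), since $\dual{\acal\phi}{\psi}{}=\dirilsk{\phi}{\psi}{}$, the $\zcal$-component weak convergence in $L^2(0,T;\Vmath)$ disposes of the second summand tested against $\ind{[s,t]}\varphi$; the residual satisfies
\[
\Bigl|\int_s^t\dirilsk{\tXn(\sigma)}{\Pn\varphi-\varphi}{}\,d\sigma\Bigr|\le T^{1/2}\norm{\Pn\varphi-\varphi}{\Vmath}{}\Bigl(\int_0^T\norm{\tXn(\sigma)}{\Vmath}{2}\,d\sigma\Bigr)^{1/2}\to 0
\]
by the just-mentioned pathwise bound and Corollary \ref{cor:P_n-pointwise_conv}(i). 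For (e) the difference is deterministic:
\[
\Bigl|\int_s^t\dual{f(\sigma)}{\Pn\varphi-\varphi}{}\,d\sigma\Bigr|\le \norm{\Pn\varphi-\varphi}{\Vmath}{}\int_0^T\nnorm{f(\sigma)}{\Vmath'}{}\,d\sigma\to 0,
\]
since $f\in L^p(0,T;\Vmath')\subset L^1(0,T;\Vmath')$.

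For (c), I split once more. Fix $m>\tfrac52$ so that $\Vast=\Vmath_m$. Using \eqref{eq:MHD-map_est-H-H} and Lemma \ref{lem:P_n-norm_conv},
\[
\Bigl|\int_s^t\dual{\MHD(\tXn(\sigma))}{\Pn\varphi-\varphi}{}\,d\sigma\Bigr|\le T\,c_\MHD(m)\,M(\omega)^2\,\norm{\Pn\varphi-\varphi}{\Vast}{}\to 0;
\]
the remaining piece $\int_s^t[\dual{\MHD(\tXn(\sigma))}{\varphi}{}-\dual{\MHD(\tX(\sigma))}{\varphi}{}]\,d\sigma\to 0$ is precisely Corollary \ref{cor:MHD-map_conv-aux} with $\phi_n=\psi_n=\tXn$, whose hypotheses (boundedness in $L^2(0,T;\Hmath)$ and convergence in $L^2(0,T;\Hmath_{loc})$) are supplied by $M(\omega)$ and by $\zcal$-convergence. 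Part (d) is identical after replacing $\MHD$, \eqref{eq:MHD-map_est-H-H}, Corollary \ref{cor:MHD-map_conv-aux} by $\tHall$, \eqref{eq:tHall-map_est-H-V}, Corollary \ref{cor:tHall-term_conv_general}; the latter requires in addition that $\tXn\to\tX$ weakly in $L^2(0,T;\Vmath)$, which is already part of $\zcal$-convergence. The principal technical nuisance is bookkeeping of exceptional sets: one fixes at the outset a single $\tp$-null set outside of which $\tXn\to\tX$ in $\zcal$, the pathwise bound $M(\omega)$ holds, and $\sup_n\int_0^T\norm{\tXn(\sigma,\omega)}{\Vmath}{2}\,d\sigma<\infty$, so that on the complementary almost-sure set all five limits hold simultaneously for every $s\le t$ in $[0,T]$ and every $\varphi\in\Vast$.
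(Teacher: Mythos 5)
Your proposal is correct and follows essentially the same route as the paper: the same splitting $\Lambda(\tXn)(\Pn\varphi-\varphi)+[\Lambda(\tXn)-\Lambda(\tX)](\varphi)$, with Corollary \ref{cor:P_n-pointwise_conv} controlling the first piece and the $\zcal$-convergence together with Corollaries \ref{cor:MHD-map_conv-aux} and \ref{cor:tHall-term_conv_general} handling the second. The only cosmetic difference is in part (a), where the paper uses the exact identity $\ilsk{\tXn (t)}{\Pn \varphi}{\Hmath }=\ilsk{\Pn \tXn (t)}{\varphi }{\Hmath }=\ilsk{\tXn (t)}{\varphi }{\Hmath }$ (since $\tXn$ is $\Hn$-valued and $\Pn$ is the orthogonal projection) instead of your uniform-boundedness-principle bound $M(\omega)$; both arguments are valid.
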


\medskip
\begin{proof}
Let us fix $s,t \in [0,T]$, $s \le t $ and $\varphi \in \Vast $.

\medskip  \noindent
\bf Ad. (a). \rm Since 
$$\ilsk{\tXn (t)}{\Pn \varphi}{\Hmath } = \ilsk{\Pn\tXn (t)}{ \varphi}{\Hmath }  =\ilsk{\tXn (t)}{ \varphi}{\Hmath } ,$$
and by \eqref{eqn:conv-as_truncated}
$ \tXn \to \tX $ in $\ccal ([0,T];{\Hmath }_{w})$, $\tp$-a.s., 
we infer that 
$\ilsk{\tXn (\cdot )}{\varphi}{\Hmath } \to \ilsk{\tX (\cdot )}{\varphi}{\Hmath } $
in $\ccal ([0,T];\rzecz )$, $\tp$-a.s.. Hence, in particular,
\[
\lim_{n\to \infty }  \ilsk{\tXn (t)}{ \varphi}{\Hmath } = \ilsk{\tX (t)}{\varphi}{\Hmath  }  ,
\qquad \tp-a.s.,
\] 
which completes the proof of (a).

\medskip  \noindent
\bf Ad. (b). \rm 
By \eqref{eq:A_acal_rel}, we have
\[
\int_{s}^{t} \dual{ \acal \tXn (\sigma )}{\Pn \varphi}{} \, d\sigma  
\; = \;  \int_{s}^{t} \dirilsk{\tXn (\sigma )}{\Pn \varphi}{} \, d\sigma  .
\]
By \eqref{eqn:conv-as_truncated}, $\tXn  \to \tX $ in ${L}_{w}^{2}(0,T;\Vmath )$, 
$\tp$-a.s.
Moreover, since $\varphi \in \Vast  $, we infer that $\Pn \varphi \to \varphi $ in $\Vmath $, 
(see  Corollary \ref{cor:P_n-pointwise_conv}).
Thus 
\begin{equation*} 
\begin{split}
& \lim_{n \to \infty } \int_{s}^{t} \dirilsk{\tXn (\sigma )}{\Pn \varphi}{} \, d\sigma   
\; = \; \int_{s}^{t} \dirilsk{ \tX (\sigma )}{\varphi}{} \, d\sigma  
\; = \; \int_{s}^{t} \dual{ \acal \tX (\sigma )}{\varphi}{} \, d\sigma , \qquad \mbox{$\tp$-a.s.} 
\end{split} 
\end{equation*}
This completes the proof of (b).

\medskip  \noindent
\bf Ad. (c). \rm 
Let us move to the $\MHD $-term. Let us fix $\varphi \in \Vast $.
For every $\sigma \in [0,T]$ we have
\[
\begin{split}
& \dual{ \MHD (\tXn (\sigma ))}{\Pn \varphi}{} - \dual{ \MHD (\tX (\sigma ))}{\varphi}{} \\
\; &= \; \dual{ \MHD (\tXn (\sigma ))}{\Pn \varphi - \varphi }{} + \dual{ \MHD (\tXn (\sigma )) - \MHD(\tX (\sigma ))}{ \varphi}{}.
\end{split}
\]
Since by \eqref{eqn:conv-as_truncated}, the sequence $(\tXn )$ is $\tp$-a.s., weakly convergent  to $\tX $ in ${L}^{2}(0,T;\Vmath )$,  $(\tXn )$ is bounded in ${L}^{2}(0,T;\Vmath )$,  and in particular, by \eqref{eq:Hall-MHD_V-norm},  $(\tXn )$ is bounded in ${L}^{2}(0,T;\Hmath )$, as well.
Moreover, by \eqref{eqn:conv-as_truncated}, $\tXn  \to \tX $ in 
${L}^{2}(0,T;{\Hmath }_{loc}) $, $\tp$-a.s.
By Corollary \ref{cor:MHD-map_conv-aux}, we infer that $\tp$-a.s.
\[
\lim_{n\to \infty } \int_{s}^{t} \dual{ \MHD (\tXn (\sigma )) - \MHD (\tX (\sigma ))}{\varphi}{} \, d\sigma \; = \; 0 .
\]
Using \eqref{eq:MHD-map_est-H-H}
we have
\[
\begin{split}
&\Bigl| \int_{s}^{t} \ddual{\Vastprime }{ \MHD (\tXn (\sigma ))}{\Pn \varphi - \varphi }{\Vast } \, d\sigma  \Bigr|
\; \le \;  \int_{s}^{t} |\ddual{\Vastprime }{ \MHD (\tXn (\sigma ))}{\Pn \varphi - \varphi }{\Vast } | \, d\sigma  \\
\; &\le \; \norm{\MHD }{}{} \cdot \int_{s}^{t} \nnorm{\tXn (\sigma )}{\Hmath }{2}  \, d\sigma  
\cdot \norm{\Pn \varphi - \varphi }{\Vast }{} 
\; \le \; \norm{\MHD }{}{} \cdot \norm{\tXn }{{L}^{2}(0,T;\Hmath )}{2} \cdot \norm{\Pn \varphi - \varphi }{\Vast }{}  , \quad \tp-a.s.
\end{split}
\]
Since $\varphi \in \Vast $ then, by Corollary \ref{cor:P_n-pointwise_conv},   $\Pn \varphi \to \varphi $ in $\Vast$, and by the boundedness of the sequence $(\tXn )$ in ${L}^{2}(0,T;\Hmath )$,
we infer that $\tp$-a.s.
\[
\lim_{n\to \infty } \int_{s}^{t} \ddual{\Vastprime }{ \MHD (\tXn (\sigma ))}{\Pn \varphi - \varphi }{\Vast } \, d\sigma  
\; = \; 0.
\]
Hence 
\begin{equation*} 
\begin{split}
& \lim_{n\to \infty } \int_{s}^{t} (\dual{ \MHD( \tXn (\sigma ))}{\Pn \varphi}{} - \dual{\MHD (\tX (\sigma ))}{\varphi }{} ) \, d\sigma   \; = \; 0 ,
\qquad \mbox{$\tp$-a.s.} 
\end{split}
\end{equation*}
The proof of (c) is thus complete.

\medskip  
\noindent
\bf Ad. (d). \rm 
Let us move to the $\tHall $-term. Let us fix $\varphi \in \Vast $.
For every $\sigma \in [0,T]$ we have
\[
\begin{split}
& \dual{ \tHall (\tXn (\sigma ))}{\Pn \varphi}{} - \dual{ \tHall (\tX (\sigma ))}{\varphi}{} \\
\; &= \; \dual{ \tHall (\tXn (\sigma ))}{\Pn \varphi - \varphi }{} + \dual{ \tHall (\tXn (\sigma )) - \tHall (\tX (\sigma ))}{ \varphi}{}.
\end{split}
\]
By \eqref{eqn:conv-as_truncated}, the sequence $(\tXn )$ is $\tp$-a.s. convergent to $\tX $ in ${L}_{w}^{2}(0,T;\Vmath ) \cap {L}^{2}(0,T;{\Hmath }_{loc}) $.
In particular,  $(\tXn )$ is bounded in ${L}^{2}(0,T;\Hmath )$.
By Corollary \ref{cor:tHall-term_conv_general}  we infer that $\tp$-a.s.
\[
\lim_{n\to \infty } \int_{s}^{t} \dual{\tHall (\tXn (\sigma )) - \tHall (\tX (\sigma ))}{\varphi}{}\, d\sigma  \; = \; 0 .
\]
Using \eqref{eq:tHall-map_est-H-V},
we have
\[
\begin{split}
&\Bigl| \int_{s}^{t} \ddual{\Vastprime }{ \tHall (\tXn (\sigma ))}{\Pn \varphi - \varphi }{\Vast } \, d\sigma  \Bigr|
\; \le \;  \int_{s}^{t} |\ddual{\Vastprime }{ \tHall (\tXn (\sigma ))}{\Pn \varphi - \varphi }{\Vast } | \, d\sigma  \\
\; &\le \; \norm{\tHall }{}{} \cdot \int_{s}^{t} \nnorm{\tXn (\sigma )}{\Hmath }{} \, \nnorm{\tXn (\sigma )}{\Vmath }{}  \, d\sigma  
\cdot \norm{\Pn \varphi - \varphi }{\Vast }{} \\
\; &\le \; \norm{\MHD }{}{} \cdot \norm{\tXn }{{L}^{2}(0,T;\Hmath )}{} \, \norm{\tXn }{{L}^{2}(0,T;\Vmath )}{}
\cdot \norm{\Pn \varphi - \varphi }{\Vast }{}  , \quad \tp-a.s.
\end{split}
\]
Since $\varphi \in \Vast $ then, by Corollary \ref{cor:P_n-pointwise_conv},  $\Pn \varphi \to \varphi $ in $\Vast$. By the boundedness of the sequence $(\tXn )$ in ${L}^{2}(0,T;\Vmath )$ (and hence in ${L}^{2}(0,T;\Hmath )$, as well),
we infer that $\tp$-a.s.
\[
\lim_{n\to \infty } \int_{s}^{t} \ddual{\Vastprime }{ \tHall (\tXn (\sigma ))}{\Pn \varphi - \varphi }{\Vast } \, d\sigma   \; = \; 0.
\]
Hence 
\begin{equation*} 
\begin{split}
& \lim_{n\to \infty } \int_{s}^{t} (\dual{ \tHall (\tXn (\sigma ))}{\Pn \varphi}{} - \dual{\tHall (\tX (\sigma ))}{\varphi }{} ) \, d\sigma   \; = \; 0 ,
\qquad \mbox{$\tp$-a.s.} 
\end{split}
\end{equation*}
The proof of (d) is thus complete.

\medskip  \noindent
\bf Ad. (e). \rm This assertion follows from the facts that $f \in {L}^{p}(0,T;\Vmath ' )$ and for $\varphi \in \Vast  $,  $\Pn \varphi \to \varphi $ in $\Vmath $.

\medskip  \noindent
The proof of the lemma is thus complete.
\end{proof}

\begin{lemma} \label{lem:conv_martingale}
For all $ s,t \in [0,T] $ such that $s \le t$, all $h \in {\ccal }_{b}(\ccal ([0,s];{\Umath }');\rzecz ) $ and all $ \psi \in \Vast $:
\[
\lim_{n\to \infty } \te \bigl[ \dual{\tMn (t)-\tMn (s) }{\psi }{}\, h \bigl( \tXn {}_{|[0,s]} \bigr) \bigr]
\; = \;  \te \bigl[ \dual{ \tM (t)-\tM (s) }{\psi }{} \, h \bigl( \tX {}_{|[0,s]} \bigr) \bigr] ,
\]
where $\dual{\cdot}{\cdot }{}$ denotes appropriate duality pairing.
\end{lemma}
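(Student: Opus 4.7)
The plan is to express $\dual{\tMn(t)-\tMn(s)}{\psi}{}$ explicitly using the defining formula \eqref{eq:tMn} together with the Riesz identifications from Remark \ref{rem:truncated_funct_Riesz}, then combine Lemma \ref{lem:pointwise_conv} term by term with a uniform integrability argument to exchange limit and expectation. Since $\tMn(t)-\tMn(s)\in\Hn$ and $\Pn$ is the $\Hmath$-orthogonal projection onto $\Hn$, for every $\psi\in\Vast$ we have $\ilsk{\tMn(t)-\tMn(s)}{\psi}{\Hmath} = \ilsk{\tMn(t)-\tMn(s)}{\Pn\psi}{\Hmath}$, and the identities \eqref{eq:Acal_Hn_Riesz}--\eqref{eq:f_Hn_Riesz} give
\begin{align*}
\ilsk{\tMn(t)-\tMn(s)}{\psi}{\Hmath}
&= \ilsk{\tXn(t)-\tXn(s)}{\Pn\psi}{\Hmath}
+ \int_s^t \dual{\acal\tXn(\sigma)}{\Pn\psi}{}\, d\sigma \\
&\quad + \int_s^t \dual{\MHD(\tXn(\sigma))}{\Pn\psi}{}\, d\sigma
+ \int_s^t \dual{\tHall(\tXn(\sigma))}{\Pn\psi}{}\, d\sigma \\
&\quad - \int_s^t \dual{f(\sigma)}{\Pn\psi}{}\, d\sigma.
\end{align*}

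Applying assertions (a)--(e) of Lemma \ref{lem:pointwise_conv} to the five summands above yields $\tp$-a.s.\ convergence of $\dual{\tMn(t)-\tMn(s)}{\psi}{}$ to $\dual{\tM(t)-\tM(s)}{\psi}{}$. Combining this with the continuity and boundedness of $h$ on $\ccal([0,s];\Uprime)$, together with $\tXn\to\tX$ in $\ccal([0,T];\Uprime)$ $\tp$-a.s.\ by \eqref{eqn:conv-as_truncated}, we obtain $\tp$-a.s.\ convergence of the full integrand $\dual{\tMn(t)-\tMn(s)}{\psi}{}\cdot h(\tXn{}_{|[0,s]})$ to $\dual{\tM(t)-\tM(s)}{\psi}{}\cdot h(\tX{}_{|[0,s]})$.

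To upgrade $\tp$-a.s.\ convergence to convergence of expectations I would invoke the Vitali convergence theorem; since $h$ is uniformly bounded, it suffices to establish a uniform $L^r(\tOmega)$ bound with some $r>1$ on $\dual{\tMn(t)-\tMn(s)}{\psi}{}$. Each of the five terms admits such a bound using the continuity estimates of Remark \ref{rem:Acal-term_properties}, Lemma \ref{lem:MHD-term_properties}(iii), Lemma \ref{lem:tHall-term_properties}(iii) and Assumption \textbf{(H.2)}, together with the uniform moment bounds \eqref{eq:H_estimate_truncated'_q} and \eqref{eq:V_estimate_truncated'} and the uniform operator bound $\sup_n \nnorm{\Pn}{\lcal(\Vmath,\Vmath)}{}<\infty$ from Corollary \ref{cor:P_n-pointwise_conv}. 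I expect the main obstacle to be controlling the two quadratic terms: $\MHD(\tXn)$ is estimated via $\nnorm{\MHD(\tXn)}{\Vast'}{}\le c\nnorm{\tXn}{\Hmath}{2}$ followed by the higher-order moment from \eqref{eq:H_estimate_truncated'_q}, while $\tHall(\tXn)$ is handled through the mixed estimate $\nnorm{\tHall(\tXn)}{\Vast'}{}\le c\nnorm{\tXn}{\Hmath}{}\norm{\tXn}{\Vmath}{}$ combined with Cauchy--Schwarz and \eqref{eq:V_estimate_truncated'}. The fact that $p$ can be chosen strictly greater than $2$ in condition \eqref{eqn-p_cond} is precisely what furnishes the $L^{r}$ margin beyond $L^{1}$ needed for the Vitali step.
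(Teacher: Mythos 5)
Your decomposition of $\dual{\tMn (t)-\tMn (s)}{\psi }{}$ via the Riesz identifications with test function $\Pn \psi $, the term-by-term application of Lemma \ref{lem:pointwise_conv}, the treatment of $h$, and the final appeal to Vitali all coincide with the paper's proof. The divergence is in how uniform integrability is established, and there your argument has a genuine gap. The paper does not bound the five drift terms separately: it uses that $\tMn $ is equal in law to $\Mn $, hence is an $\Hn $-valued square integrable martingale with quadratic variation \eqref{eq:tMn_qvar}, and then applies the Burkholder--Davis--Gundy inequality together with the growth bound \eqref{eq:G} to get $\sup_n \te \bigl[ \sup_{t}\nnorm{\tMn (t)}{\Hmath }{2}\bigr] < \infty $ from the estimates \eqref{eq:H_estimate_truncated'_q} and \eqref{eq:V_estimate_truncated'} with $q=2$ only. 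This yields a uniform $L^2(\tOmega )$ bound on $\xi _n$ for every admissible $p \ge 2$.

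Your route instead needs a uniform $L^r$ bound, $r>1$, on each drift term. For the $\MHD $ term the estimate $\nnorm{\MHD (\tXn )}{\Vastprime }{}\le c\nnorm{\tXn }{\Hmath }{2}$ forces you to control $\te \bigl[ \sup_t \nnorm{\tXn (t)}{\Hmath }{2r}\bigr]$ with $2r>2$, which by \eqref{eq:H_estimate_truncated'_q} is available only when $p>2$; but $p$ is part of the given data in \textbf{(H.1)} (it encodes the integrability of $f$, and all constants depend on $\norm{f}{{L}^{p}(0,T;\Vmath ')}{}$), so it cannot be ``chosen'' larger, and the borderline case $p=2$ permitted by the hypotheses is not covered. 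For the $\tHall $ term the situation is worse than you indicate: Cauchy--Schwarz leads to $\te \bigl[ \bigl( \sup_t\nnorm{\tXn (t)}{\Hmath }{}\bigr) ^r \bigl( \int_0^T \norm{\tXn }{\Vmath }{2}\,ds\bigr) ^{r/2}\bigr]$, and the factor $\te \bigl[ \bigl( \int_0^T\norm{\tXn }{\Vmath }{2}\,ds\bigr) ^{r}\bigr]$ with $r>1$ is not among the a priori estimates of Lemma \ref{lem:Hall-MHD_truncated_estimates}; a H\"older splitting that keeps that factor at power $\le 1$ again pushes the required $\Hmath $-moment strictly above $2$. So either restrict to $p>2$ and carry out the H\"older bookkeeping explicitly, or (better) replace this step by the paper's martingale/BDG argument, which closes the proof for all $p\in [2,2+\gamma )$.
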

\noindent
Here, ${\ccal }_{b}(\ccal ([0,s];{\Umath }');\rzecz ) $ is the space of  $\rzecz $-valued bounded and  continuous functions defined on $\ccal ([0,s];{\Umath }')$. 

\medskip
\begin{proof}
Let us fix $s,t \in [0,T]$, $s \le t$ and  $\psi \in \Vast $.
By Remark \ref{rem:truncated_funct_Riesz} with $\varphi := \Pn \psi $,  we have
\begin{equation*}
\begin{split}
& \dual{\tMn (t)-\tMn (s)}{\psi }{}
\; = \;  \ilsk{\tXn (t)}{\Pn \psi }{\Hmath } - \ilsk{\tXn (s)}{\Pn \psi }{\Hmath }
+ \int_{s}^{t} \dual{ \acal \tXn (\sigma) }{\Pn \psi }{} \, d\sigma
\\
& +\int_{s}^{t} \dual{ \MHD (\tXn (\sigma  )  }{\Pn \psi }{}\, d\sigma
 + \int_{s}^{t} \ddual{\Vastprime }{ \tHall (\tXn (\sigma ))}{\Pn \psi  }{\Vast } \, d\sigma  
      +\int_{s}^{t} \dual{ f(\sigma) }{\Pn \psi }{} \, d\sigma .
\end{split}      
\end{equation*}
By Lemma \ref{lem:pointwise_conv}, we infer that
\begin{equation} \label{eq:mart_pointwise_conv}
\lim_{n \to \infty }  \dual{\tMn (t)-\tMn (s)}{\psi }{} \; = \; \dual{\tM (t)-\tM (s)}{\psi }{},
   \quad  {\tp } \mbox{ - a.s.}
\end{equation}
Let us notice that    $\sup_{n \in \nat } \norm{h( \tXn {}_{|[0,s]})}{{L}^{\infty }}{} < \infty $. Moreover, since by \eqref{eqn:conv-as_truncated} $\tXn \to \tX $ in $\ccal ([0,T];\Umath ')$, we infer that 
$\lim_{n \to \infty }h(\tXn {}_{|[0,s]} ) =h( \tX {}_{|[0,s]})$,  ${\tp } $-a.s.

\medskip  \noindent
Let us denote
\begin{equation*}
{\xi }_{n}(\omega ) \; := \;  \bigl( \dual{\tMn (t, \omega )}{\psi }{} - \dual{\tMn (s, \omega )}{\psi }{} \bigr)
\, h \bigl( \tXn {}_{|[0,s]} \bigr) , \qquad \omega \in {\tOmega } .
\end{equation*}
We will prove that the functions $\{ {\xi }_{n} {\} }_{n \in \nat }$ are uniformly integrable.
We claim  that
\begin{equation} \label{eq:mart_uniform_int}
\sup_{n \ge 1}  \te \bigl[ {| {\xi }_{n} |}^{2} \bigr] \; < \; \infty.
\end{equation}
Indeed, by the continuity of the embedding $\Vast \hookrightarrow \Hmath $ and the Schwarz inequality, for each $n \in \nat $ we have
\begin{equation}  \label{eq:mart_uniform_int_1}
\te \bigl[ {| {\xi }_{n}|}^{2} \bigl] \; \le \;  2c\norm{h}{{L}^{\infty }}{2} \norm{\psi }{\Vast }{2}
\te \bigl[ \nnorm{\tMn (t)}{\Hmath }{2}) + \nnorm{\tMn (s)}{\Hmath }{2} \bigr] .
\end{equation}
Since $\tMn  $  is a continuous martingale with quadratic variation given  in \eqref{eq:tMn_qvar}, by the Burkhol\-der-Davis-Gundy inequality we obtain
\begin{equation} \label{eq:mart_BDG_est}
\te \Bigl[ \sup_{t \in [0,T]} \nnorm{\tMn (t)}{\Hmath }{2}\Bigr]
\; \le \;  c \te  \Bigl[ \Bigl( \int_{0}^{T}\norm{\Gn (\tXn (\sigma ))}{\lhs (\Kmath ,\Hmath )}{2}  \, d\sigma {\Bigr) }^{\frac{1}{2}} \Bigr] .
\end{equation}
Since  $\Pn :\Hmath \to \Hn  $ is an orthogonal projection,
by \eqref{eq:G}, we have
\begin{equation} \label{eq:mart_BDG_est_1}
\norm{ \Gn (\tXn (\sigma ))}{\lhs (\Kmath ,\Hmath )}{2} \le (2-\eta ) \norm{\tXn (\sigma )}{}{2}
 + {\lambda }_{} \nnorm{\tXn (\sigma )}{\Hmath }{2} + \varrho , \qquad \sigma \in [0,T].
\end{equation}
By \eqref{eq:mart_BDG_est}, \eqref{eq:mart_BDG_est_1}, \eqref{eq:V_estimate_Hall-MHD'} and
\eqref{eq:H_estimate_Hall-MHD'_q} (with $q:=2$), we infer that
\begin{equation} \label{eq:mart_uniform_int_2}
\sup_{n \in \nat } \te  \Bigl[ \sup_{t \in [0,T]} \nnorm{\tMn (t)}{\Hmath }{2}\Bigr] \; < \; \infty .
\end{equation}
Then by \eqref{eq:mart_uniform_int_1} and \eqref{eq:mart_uniform_int_2} we see that \eqref{eq:mart_uniform_int} holds.
Since the sequence $( {\xi }_{n} {)}_{n \in \nat }$ is uniformly integrable and by
\eqref{eq:mart_pointwise_conv} it is $\tp $-a.s. pointwise convergent, application of the Vitali theorem
completes the proof of the lemma.
\end{proof}

\medskip
\begin{lemma} \label{lem:qvar_conv_left}
For all $s,t \in [0,T]$ such that $s \le t$, all $h \in {\ccal }_{b}(\ccal ([0,s];{\Umath }');\rzecz ) $ and all $\psi ,\zeta \in \Vast $:
\begin{equation*}
\begin{split}
\lim_{n\to\infty }
\te \Bigl[ \bigl\{  \dual{\tMn (t)}{\psi }{} \dual{\tMn (t)}{\zeta }{}
 - \dual{\tMn (s)}{\psi }{} \dual{\tMn (s)}{\zeta }{} \bigr\} \, h \bigl( \tXn {}_{|[0,s]} \bigr)\Bigr] &  
\\
\; = \;  \te \Bigl[ \bigl\{  \dual{\tM (t)}{\psi }{} \dual{\tM (t)}{\zeta }{}
 - \dual{\tM (s)}{\psi }{} \dual{\tM (s)}{\zeta }{} \bigr\} \, h \bigl( \tX {}_{|[0,s]}\bigr) \Bigr] , & 
\end{split} 
\end{equation*}
where $\dual{\cdot}{\cdot }{}$ denotes appropriate duality pairing.
\end{lemma}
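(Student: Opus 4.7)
My plan is to mimic the proof of Lemma~\ref{lem:conv_martingale} with the linear form $\dual{\tMn(t)}{\psi}{}$ replaced by the quadratic form $\dual{\tMn(t)}{\psi}{}\dual{\tMn(t)}{\zeta}{}$: establish $\tp$-a.s.\ pointwise convergence of the integrand, prove uniform integrability of the resulting sequence, and conclude through Vitali's convergence theorem.

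For the $\tp$-a.s.\ pointwise convergence I would use the decomposition \eqref{eq:tMn} together with Lemma~\ref{lem:pointwise_conv} to obtain, for each $\varphi\in\Vast$ and each $u\in\{s,t\}$, that $\dual{\tMn(u)}{\varphi}{}\to \dual{\tM(u)}{\varphi}{}$ $\tp$-a.s.\ (this is exactly the pointwise convergence used to produce \eqref{eq:mart_pointwise_conv} in Lemma~\ref{lem:conv_martingale}). Applying this with $\varphi=\psi$ and $\varphi=\zeta$ and multiplying yields $\tp$-a.s.\ convergence of the products $\dual{\tMn(u)}{\psi}{}\dual{\tMn(u)}{\zeta}{}$. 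Since \eqref{eqn:conv-as_truncated} gives $\tXn\to\tX$ in $\ccal([0,T];\Umath')$ $\tp$-a.s., and $h\in\ccal_{b}(\ccal([0,s];\Umath');\rzecz)$, continuity of $h$ supplies $h(\tXn_{|[0,s]})\to h(\tX_{|[0,s]})$ $\tp$-a.s. The full integrand therefore converges $\tp$-a.s.

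For the uniform integrability, let $\xi_{n}$ denote the random variable inside the left-hand expectation. Because $\tMn(u)\in\Hn\subset\Hmath$ and $\Vast\hookrightarrow\Hmath$, one has
\begin{equation*}
|\xi_n|\;\le\; 2\,\norm{h}{L^{\infty}}{}\,\nnorm{\psi}{\Hmath}{}\,\nnorm{\zeta}{\Hmath}{}\,\sup_{r\in[0,T]}\nnorm{\tMn(r)}{\Hmath}{2},
\end{equation*}
so it suffices to find $\epsilon>0$ such that $\sup_{n}\te[\sup_{r\in[0,T]}\nnorm{\tMn(r)}{\Hmath}{2(1+\epsilon)}]<\infty$. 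Applying the Burkholder-Davis-Gundy inequality to the continuous $\Hn$-valued martingale $\tMn$ with quadratic variation \eqref{eq:tMn_qvar} reduces this to a uniform bound on $\te[(\int_{0}^{T}\norm{\Gn(\tXn(s))}{\lhs(\Kmath,\Hmath)}{2}\,ds)^{1+\epsilon}]$. Since $\Pn$ is an orthogonal projection on $\Hmath$, \eqref{eq:G} gives the pointwise control $\norm{\Gn(\tXn)}{\lhs}{2}\le (2-\eta)\norm{\tXn}{}{2}+\lambda\nnorm{\tXn}{\Hmath}{2}+\varrho$; the $\nnorm{\tXn}{\Hmath}{}$-contribution is then handled by choosing $\epsilon>0$ small enough so that $2(1+\epsilon)\le p$ and invoking \eqref{eq:H_estimate_truncated'_q}, while the Dirichlet-norm contribution is handled by combining \eqref{eq:V_estimate_truncated'} with the weighted estimate \eqref{eq:HV_estimate_truncated} of Lemma~\ref{lem:Hall-MHD_truncated_estimates} through an appropriate interpolation. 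Vitali's convergence theorem then supplies the desired passage to the limit.

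The main obstacle is this second step. The BDG reduction is routine, but producing a genuinely uniform $L^{1+\epsilon}$ bound on $\int_{0}^{T}\norm{\tXn(s)}{}{2}\,ds$ out of the moment estimates \eqref{eq:H_estimate_truncated_p}-\eqref{eq:HV_estimate_truncated} is delicate, and is exactly where the strict positivity of $\gamma$ in \eqref{eqn-p_cond} is used: it permits the choice $p>2$, which in turn supplies both the ``super-$L^{2}$'' moment of $\sup_{r}\nnorm{\tXn(r)}{\Hmath}{}$ and the weighted Dirichlet control with $q>2$ needed to close the interpolation.
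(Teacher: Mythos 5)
Your first step (pointwise $\tp $-a.s.\ convergence of the integrands via \eqref{eq:tMn}, Lemma \ref{lem:pointwise_conv} and the continuity of $h$) is exactly the paper's, and is fine. The gap is in the uniform integrability step, precisely at the point you yourself flag as the ``main obstacle.'' Your route bounds $|\xi _n|$ by $\sup_{r}\nnorm{\tMn (r)}{\Hmath }{2}$ and then applies the Burkholder--Davis--Gundy inequality to the $\Hmath $-valued martingale $\tMn $, whose quadratic variation \eqref{eq:tMn_qvar} involves the full Hilbert--Schmidt norm of $\Gn $. By \eqref{eq:G} this norm contains the term $(2-\eta )\norm{\tXn }{}{2}$, so you end up needing $\sup_n \te \bigl[ \bigl( \int_0^T \norm{\tXn (s)}{}{2}\,ds \bigr)^{1+\epsilon }\bigr] <\infty $ for some $\epsilon >0$. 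No interpolation of \eqref{eq:V_estimate_truncated'} with \eqref{eq:HV_estimate_truncated} produces such a super-$L^1(\tOmega )$ moment of the Dirichlet energy: \eqref{eq:V_estimate_truncated'} is only an $L^1$ bound in $\omega $, and \eqref{eq:HV_estimate_truncated} weights the instantaneous Dirichlet integrand by powers of $\nnorm{\tXn (s)}{\Hmath }{}$, which gives no pointwise-in-$\omega $ control of $\int_0^T\norm{\tXn (s)}{}{2}\,ds$ by quantities whose higher moments are known. (Such an estimate can be proved, but only by returning to the It\^{o} formula and applying BDG to the martingale term of the energy identity raised to a power greater than one --- an argument the paper neither carries out nor needs.)

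The paper closes the uniform integrability differently, and this is the idea you are missing: it never applies BDG to the $\Hmath $-valued martingale at an exponent above one. Instead it observes that $\dual{\tMn (t)}{\psi }{} = \ilsk{\tMn (t)}{\psi }{\Hmath }$ is a real-valued square integrable martingale whose quadratic variation is $\int_0^t \norm{\ilsk{G(s,\tXn (s))}{\Pn \psi }{\Hmath }}{\lhs (\Kmath ,\rzecz )}{2}\,ds$, and then invokes hypothesis \textbf{(G.2)} through \eqref{eq:G*} and Remark \ref{rem:G_properties}(iii): tested against the fixed element $\Pn \psi \in \Vmath $, the noise coefficient is bounded by $C(1+\nnorm{\tXn (s)}{\Hmath }{2})\,\norm{\Pn \psi }{\Vmath }{2}$, so the Dirichlet norm of $\tXn $ disappears entirely, having been transferred onto the test function. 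Scalar BDG at exponent $r\in (1,\frac{p}{2}]$ together with \eqref{eq:H_estimate_Hall-MHD'_q} then gives $\sup_n\te [\,|\xi _n|^r\,]<\infty $ using only $\Hmath $-moments. You should redo your uniform integrability step along these lines; as written, the proposal does not close.
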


\medskip
\begin{proof}
Let us fix $s,t \in [0,T]$ such that $s \le t$ and  $\psi ,\zeta \in {\Vmath }_{\ast }$ and let us denote
\begin{equation*}
\begin{split}
{\xi }_{n}(\omega ) \; &:= \; \bigl\{  \dual{\tMn (t,\omega )}{\psi }{} \dual{\tMn (t, \omega )}{\zeta }{}
 - \dual{\tMn (s, \omega )}{\psi }{} \dual{\tMn (s, \omega )}{\zeta }{} \bigr\} \, h \bigl( \tXn {}_{|[0,s]}(\omega )\bigr) ,\\
 \xi (\omega ) \; &:= \;  \bigl\{  \dual{\tM (t,\omega )}{\psi }{} \dual{\tM (t, \omega )}{\zeta }{}
 - \dual{\tM (s, \omega )}{\psi }{} \dual{\tM (s, \omega )}{\zeta }{} \bigr\} \, h \bigl( \tX {}_{|[0,s]}(\omega )\bigr),
\qquad \omega \in {\tOmega }.
\end{split}
\end{equation*}
Let us notice that    $\sup_{n \in \nat } \norm{h( \tXn {}_{|[0,s]})}{{L}^{\infty }}{} < \infty $. Moreover, since $\tXn \to \tX $ in $\ccal ([0,T];\Umath ')$, we infer that 
$\lim_{n \to \infty }h(\tXn {}_{|[0,s]} ) =h( \tX {}_{|[0,s]})$,  ${\tp } $-a.s.
From this and Lemma \ref{lem:pointwise_conv}, we infer that
$\lim_{n \to \infty } {\xi }_{n} (\omega ) = \xi (\omega )$ for  $\tp$-almost all $\omega \in \tOmega $.

\medskip  \noindent
\bf Uniform integrability. \rm 
We will prove that the functions $\{ {\xi }_{n} {\} }_{n \in \nat }$ are uniformly integrable.
To this end, it is sufficient to show that for some $r>1$,
\begin{equation} \label{eq:uniform_int}
\sup_{n \ge 1}  \te \bigl[ {| {\xi }_{n} |}^{r} \bigr] \; < \; \infty.
\end{equation}
In fact, we will show that condition \eqref{eq:uniform_int} holds for any $1<r\le\frac{p}{2}$.

\medskip  \noindent
Indeed, using the H\"{o}lder inequality we have for each $n\ge 1 $
\begin{equation}
\begin{split}
&\te [{|{\xi }_{n}|}^{r}] \\
\; & \le
 \frac{1}{2} \, \norm{h}{{L}^{\infty }}{r} \Bigl\{ \te [{| \dual{\tMn (t)}{\psi }{} |}^{2r} ] 
+ \te [{| \dual{\tMn (t)}{\zeta }{} |}^{2r} ]
+ \te [{| \dual{\tMn (s)}{\psi }{} |}^{2r} ] 
+ \te [{| \dual{\tMn (s)}{\zeta }{} |}^{2r} ] \Bigr\} 
\end{split}
\label{eq:uniform_int_1}
\end{equation}
Note that
\[
\dual{\tMn (t)}{\psi }{} \; = \; \ilsk{\tMn (t)}{\psi }{\Hmath }.
\]
By Lemma \ref{lem:tMn-martingale} ${(\tMn )}_{t\in [0,T]}$ is an $\Hmath $-valued continuous square integrable martingale with the quadratic variation given by \eqref{eq:tMn_qvar}. Thus ${(\ilsk{\tMn (t)}{\psi }{\Hmath })}_{t\in [0,T]}$ is a $\rzecz $-valued square integrable martingale with the quadratic variation
\begin{equation*} 
\begin{split}
{\qvar{\ilsk{\tMn }{\psi }{\Hmath }}}_{t} \; &= \; \int_{0}^{t} \norm{ \ilsk{\Gn (s,\tXn (s)) }{\psi }{\Hmath }}{\lhs (\Kmath ,\rzecz  )}{2}   \, ds  \\
\; &= \; \int_{0}^{t} \norm{\ilsk{ G(s,\tXn (s)) }{\Pn \psi }{\Hmath }}{\lcal (\Kmath ,\rzecz  )}{2}   \, ds ,
  \qquad t \in [0,T].
\end{split}  
\end{equation*}
By Remark \ref{rem:G_properties}(iii) we have for every $y \in \Kmath $
\[
\ilsk{ G(s,\tXn (s)) (y) }{\Pn \psi }{\Hmath } \; = \; \ddual{\Vmath '}{ g(s,\tXn (s)) (y) }{\Pn \psi }{\Vmath } ,
\]
and hence
\[
\begin{split}
&\norm{\ilsk{ G(s,\tXn (s)) }{\Pn \psi }{\Hmath }}{\lcal (\Kmath ,\rzecz  )}{2} 
\; = \; \sup_{y\in \Kmath , \norm{y}{\Kmath }{} \le 1 } 
{|\ilsk{ G(s,\tXn (s)) (y) }{\Pn \psi }{\Hmath } |}^{2}  \\
\; &= \; \sup_{y\in \Kmath , \norm{y}{\Kmath }{} \le 1 }  {|\ddual{\Vmath '}{ g(s,\tXn (s)) (y) }{\Pn \psi }{\Vmath }|}^{2} 
\;  \le \; C (1+ \nnorm{\tXn (s)}{\Hmath }{2}) \cdot \norm{\Pn \psi }{\Vmath }{2} \\
\;  &\le \; C (1+ \nnorm{\tXn (s)}{\Hmath }{2}) \cdot \bigl( \sup_{n\in \nat }\nnorm{\Pn }{\lcal (\Vmath ,\Vmath )}{2} \bigr) \norm{ \psi }{\Vmath }{2}
\; \le \; \tilde{C} (\psi ) (1+ \nnorm{\tXn (s)}{\Hmath }{2}) 
\end{split}
\]
for some constant positive $\tilde{C} (\psi )$.

\medskip  \noindent
By the Burkholder-Davies-Gungy inequality and the H\"{o}lder inequality
\begin{equation}
\begin{split}
& \te \Bigl[ \sup_{t\in [0,T]}{|\dual{\tMn (t)}{\psi }{} |}^{2r} \Bigr] 
\; \le \; \biggl( \te  \biggl[ \int_{0}^{T} \norm{ \ilsk{\Gn (s,\tXn (s)) }{\psi }{\Hmath }}{\lhs (\Kmath ,\rzecz  )}{2} \, ds \biggr] {\biggr) }^{r} \\
\; &\le \; {\tilde{C}}^{r} (\psi ) \cdot \biggl( \te \biggl[ \int_{0}^{T}(1+ \nnorm{\tXn (s)}{\Hmath }{2}) \, ds \biggr] {\biggr) }^{r}  
\; \le  \; {2}^{r-1}  {\tilde{C}}^{r} (\psi ) \cdot {T}^{r} \,
 \Bigl( 1+ \te \Bigl[ \sup_{s\in [0,T]} \nnorm{\tXn (s)}{\Hmath }{2r} \Bigr] \Bigr) .
\end{split}
\label{eq:BDG_est}
\end{equation}
Using   \eqref{eq:BDG_est} and \eqref{eq:H_estimate_Hall-MHD'_q} in \eqref{eq:uniform_int_1}   we infer that for any $r\in (1,\frac{p}{2}]$
\begin{equation*}
\begin{split}
\te [{|{\xi }_{n}|}^{r}] 
\; &\le  \; {2}^{r}  {\tilde{C}}^{r} (\psi ) \cdot {T}^{r} \,  \norm{h}{{L}^{\infty }}{r}  \Bigl( 1+ {C}_{2}(2r,p) \Bigr) \; < \; \infty .
\end{split}
\end{equation*}
Thus condition \eqref{eq:uniform_int} holds.

\medskip  \noindent
By the Vitali theorem 
\[
\lim_{n\to \infty } \te \bigl[ {\xi }_{n} \bigr] \; = \;  \te [\xi ].
\]
The proof of the lemma is thus complete.
\end{proof}

\medskip
\noindent
For a fixed $\psi \in \Hmath $ let us define the following map
\begin{equation*}
{\psi }^{\ast \ast } : \lhs (\Kmath ,\Hmath )  \; \ni \, f \; \mapsto \;
[\Kmath \ni y \; \mapsto \; \ilsk{f(y)}{\psi }{\Hmath }] \; \in \; \lhs (\Kmath , \rzecz ).
\end{equation*}

\medskip
\begin{lemma}  \label{lem:conv_quadr_var}
\bf (Convergence of quadratic variations). \rm
For all $s,t \in [0,T]$ such that $s\le t$, all $h \in {\ccal }_{b}(\ccal ([0,s];{\Umath }');\rzecz )$ and $\psi ,\zeta \in \Vmath $, we have
\[
\begin{split}
& \lim_{n\to \infty } \te \biggl[ \biggl( \int_{s}^{t} 
\Dual{{\psi }^{\ast \ast }[G_n(\sigma , \tXn (\sigma ))]}{{\zeta  }^{\ast \ast }[G_n(\sigma , \tXn (\sigma ))]}{\lhs (\Kmath ,\rzecz )} \, d \sigma  \biggr) \cdot h \bigl( \tXn {}_{|[0,s]} \bigr) \biggr] \\
\; & \qquad = \; \te \biggl[ \biggl( \int_{s}^{t} 
\Dual{{\psi }^{\ast \ast }[G(\sigma , \tX (\sigma ))]}{{\zeta  }^{\ast \ast }[G(\sigma , \tX (\sigma ))]}{\lhs (\Kmath ,\rzecz )} \, d \sigma  \biggr) \cdot h \bigl( \tX {}_{|[0,s]} \bigr) \biggr] .
\end{split}
\]
\end{lemma}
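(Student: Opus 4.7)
The plan is to follow exactly the template already used in the proof of Lemma \ref{lem:qvar_conv_left}: first re-express both quadratic-variation integrands through the maps $\tilde{g}_\psi$ from Remark \ref{rem:G_properties}(iii), next prove $\tp$-a.s.\ convergence of the (scalar) integrands, and finally upgrade this to convergence in $L^1(\tp)$ by means of Vitali's theorem based on the uniform moment bound \eqref{eq:H_estimate_truncated'_q}. For the reduction step, since $\Pn:\Hmath\to\Hn$ is the $\ilsk{\cdot}{\cdot}{\Hmath}$-orthogonal projection and $\Gn=\Pn\underline{\circ}G$ by \eqref{eq:G_n} and \eqref{eq:Pn_G}, for every $\sigma\in[0,T]$ and $y\in\Kmath$ one has
\[
\ilsk{\Gn(\sigma,\tXn(\sigma))(y)}{\psi}{\Hmath}=\ilsk{G(\sigma,\tXn(\sigma))(y)}{\Pn\psi}{\Hmath}=\ddual{\Vmath'}{g(\sigma,\tXn(\sigma))(y)}{\Pn\psi}{\Vmath},
\]
so that $\psi^{\ast\ast}[\Gn(\sigma,\tXn(\sigma))]=\bigl(\tilde{g}_{\Pn\psi}(\tXn)\bigr)(\sigma)$ in $\lhs(\Kmath,\rzecz)$, and analogously for $\zeta$. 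In particular the integral in the left-hand side of the claim equals the $L^2(s,t;\lhs(\Kmath,\rzecz))$-inner product of $\tilde{g}_{\Pn\psi}(\tXn)$ and $\tilde{g}_{\Pn\zeta}(\tXn)$ multiplied by $h(\tXn|_{[0,s]})$.

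For the $\tp$-a.s.\ convergence I split $\tilde{g}_{\Pn\psi}(\tXn)=\tilde{g}_\psi(\tXn)+\tilde{g}_{\Pn\psi-\psi}(\tXn)$ and treat the two terms separately. By \eqref{eqn:conv-as_truncated} the sequence $\tXn$ converges to $\tX$ in $L^2(0,T;\Hmath_{loc})$ $\tp$-a.s., so the continuity property \eqref{eq:G**} yields $\tilde{g}_\psi(\tXn)\to\tilde{g}_\psi(\tX)$ in $L^2(0,T;\lhs(\Kmath,\rzecz))$ almost surely. For the remainder, \eqref{eq:G*} gives
\[
\int_0^T\norm{\tilde{g}_{\Pn\psi-\psi}(\tXn)(\sigma)}{\lhs(\Kmath,\rzecz)}{2}\,d\sigma\;\le\;C\,\norm{\Pn\psi-\psi}{\Vmath}{2}\int_0^T\bigl(1+\nnorm{\tXn(\sigma)}{\Hmath}{2}\bigr)\,d\sigma,
\]
whose right-hand side vanishes $\tp$-a.s.\ because $\Pn\psi\to\psi$ in $\Vmath$ by Corollary \ref{cor:P_n-pointwise_conv}(i), while $\tXn$ is $\tp$-a.s.\ bounded in $L^2(0,T;\Hmath)$ (a consequence of the a.s.\ weak convergence in $L^2(0,T;\Vmath)$ extracted around \eqref{eq:V_estimate_Hall-MHD'}). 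Therefore $\tilde{g}_{\Pn\psi}(\tXn)\to\tilde{g}_\psi(\tX)$ in $L^2(0,T;\lhs(\Kmath,\rzecz))$ a.s., and the same holds with $\zeta$ in place of $\psi$; continuity of the $L^2$-inner product then gives the a.s.\ convergence of the time integral over $(s,t)$ to its limiting counterpart. Combined with $h(\tXn|_{[0,s]})\to h(\tX|_{[0,s]})$ a.s., which follows from the $\ccal([0,T];\Umath')$-convergence supplied by \eqref{eqn:conv-as_truncated} and the continuity of $h$, this yields $\tp$-a.s.\ convergence of the integrands $\xi_n$ to the required limit $\xi$.

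To pass from a.s.\ convergence of $\xi_n$ to convergence of expectations it suffices to exhibit $r>1$ with $\sup_n\te[|\xi_n|^r]<\infty$. Cauchy--Schwarz in $\lhs(\Kmath,\rzecz)$ together with \eqref{eq:G*} and the uniform boundedness of $\{\Pn\}$ on $\Vmath$ (Corollary \ref{cor:P_n-pointwise_conv}(i)) gives
\[
|\xi_n|\;\le\;C\,\norm{h}{L^{\infty}}{}(t-s)\,\norm{\psi}{\Vmath}{}\,\norm{\zeta}{\Vmath}{}\,\Bigl(1+\sup_{\sigma\in[0,T]}\nnorm{\tXn(\sigma)}{\Hmath}{2}\Bigr),
\]
so $\te[|\xi_n|^r]\le C'\bigl(1+\te[\sup_{\sigma}\nnorm{\tXn(\sigma)}{\Hmath}{2r}]\bigr)$, which is uniformly bounded thanks to \eqref{eq:H_estimate_truncated'_q} as long as $1<r\le p/2$; such $r$ exists under condition \eqref{eqn-p_cond}. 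Vitali's theorem then delivers $\te[\xi_n]\to\te[\xi]$, which is the assertion. The only mildly delicate point is the second step: since the continuity \eqref{eq:G**} is formulated only in the $\Hmath$-valued argument of $\tilde{g}_\psi$ with $\psi$ held fixed, the simultaneous replacement of $\Pn\psi$ by $\psi$ in the index must be handled by the separate bound \eqref{eq:G*} combined with the $\tp$-a.s.\ $L^2(0,T;\Hmath)$-boundedness of $\tXn$; once that telescoping is carried out the rest of the argument is routine.
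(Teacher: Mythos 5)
Your proposal is correct and follows essentially the same route as the paper's proof: the same reduction $\psi^{\ast\ast}[\Gn(\sigma,\tXn(\sigma))](y)=\ddual{\Vmath'}{g(\sigma,\tXn(\sigma))(y)}{\Pn\psi}{\Vmath}$, the same use of Assumption (G.3) together with $\Pn\psi\to\psi$ and the a.s.\ boundedness of $(\tXn)$ for the pointwise convergence, and the same uniform-integrability bound via \eqref{eq:G*} with $r\in(1,p/2]$ followed by Vitali. The only cosmetic difference is that the paper organizes the pointwise step as a three-term polarization of the bilinear expression, while you prove convergence of each factor in $L^{2}(0,T;\lhs(\Kmath,\rzecz))$ and invoke continuity of the inner product; the two are equivalent.
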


\medskip
\begin{proof}
Let us fix $s,t \in [0,T] $, $s \le t $ and $\psi , \zeta \in \Vmath  $.
Note that  $\sup_{n \in \nat } \norm{h( \tXn {}_{|[0,s]})}{{L}^{\infty }}{} < \infty $. Moreover, since $\tXn \to \tX $ in $\ccal ([0,T];\Umath ')$, we infer that 
$\lim_{n \to \infty }h(\tXn {}_{|[0,s]} ) =h( \tX {}_{|[0,s]})$,  ${\tp } $-a.s.

\medskip  \noindent
Let us denote
\[
\begin{split}
{\xi }_{n} (\omega ) \; &:= \; 
\int_{s}^{t} 
\Dual{{\psi }^{\ast \ast }[\Gn (\sigma , \tXn (\sigma ))]}{{\zeta  }^{\ast \ast }[\Gn (\sigma , \tXn (\sigma ))]}{\lhs (\Kmath ,\rzecz )} \, d \sigma , 
\\
\xi  (\omega ) \; &:= \; 
\int_{s}^{t} 
\Dual{{\psi }^{\ast \ast }[G (\sigma , \tX (\sigma ))]}{{\zeta  }^{\ast \ast }[G (\sigma , \tX (\sigma ))]}{\lhs (\Kmath ,\rzecz )} \, d \sigma ,
\qquad \omega \in \tOmega  .
\end{split}
\]
It is sufficient to prove that
\[
\lim_{n\to \infty } {\xi }_{n} (\omega ) \; = \; \xi (\omega ) \quad 
\]
 for $\tp $-almost all $ \omega \in \tOmega $, 
and that the sequence $({\xi }_{n})$ is uniformly integrable.

\medskip  \noindent
By Remark \ref{rem:G_properties}(iii), for every $y \in \Kmath $ we have
\begin{equation*}
\begin{split}
({\psi }^{\ast \ast  } [G (\sigma , \tX (\sigma ))]) (y)
\; &= \; \ilsk{G  (\sigma , \tX (\sigma ))}{\psi }{\Hmath }
\; = \; \ddual{\Vmath '}{g (\sigma , \tX (\sigma ))}{ \psi }{\Vmath }
\; = \;  [( {\tilde{g}}_{\psi } (\tX ))(\sigma )] (y) ,
\end{split}
\end{equation*}
where ${\tilde{g}}_{\psi }$ is a map defined by \eqref{eq:G**},
and
\begin{equation*}
\begin{split}
({\psi }^{\ast \ast  } [\Gn (\sigma , \tXn (\sigma ))]) (y)
\; &= \; \ilsk{\Gn  (\sigma , \tXn (\sigma ))}{\psi }{\Hmath }
\; = \; \ilsk{\Pn G  (\sigma , \tXn (\sigma ))}{\psi }{\Hmath }  \\
\; &= \; \ilsk{G  (\sigma , \tXn (\sigma ))}{\Pn \psi }{\Hmath } 
\; = \; \ddual{\Vmath '}{g (\sigma , \tXn (\sigma ))}{\Pn \psi }{\Vmath } . 
\end{split}
\end{equation*}

\medskip
\noindent
\bf Pointwise convergence (for $\tp$-almost all $\omega \in \tOmega $). \rm 
Let us consider
\[
\begin{split}
& {\xi }_{n} - \xi  
\\
\; &= \; 
\int_{s}^{t} \Dual{{\psi }^{\ast \ast }[\Gn (\sigma , \tXn (\sigma )) -G(\sigma , \tX (\sigma ))]}{
{\zeta  }^{\ast \ast }[\Gn (\sigma , \tXn (\sigma )) - G(\sigma , \tX (\sigma ))]}{\lhs (\Kmath ,\rzecz )} \, d\sigma \\
& \qquad + \int_{s}^{t} \Dual{{\psi }^{\ast \ast }[\Gn (\sigma , \tXn (\sigma )) -G(\sigma , \tX (\sigma ))]}{
{\zeta  }^{\ast \ast }[ G(\sigma , \tX (\sigma ))]}{\lhs (\Kmath ,\rzecz )}  \; d\sigma \\
& \qquad +\int_{s}^{t} \Dual{{\psi }^{\ast \ast }[G(\sigma , \tX (\sigma ))]}{
{\zeta  }^{\ast \ast }[\Gn (\sigma , \tXn (\sigma )) - G(\sigma , \tX (\sigma ))]}{\lhs (\Kmath ,\rzecz )} \, d\sigma 
\\
\; &=: \; {I}^{n}_{1} + {I}^{n}_{2} + {I}^{n}_{3} .
\end{split}
\]
We will analyze separately the terms ${I}^{n}_{1},{I}^{n}_{2},{I}^{n}_{3}$.

\medskip  \noindent
Let us begin with  ${I}^{n}_{1}$. Note that by the Schwarz inequality
\[
\begin{split}
&|{I}^{n}_{1}(\sigma )| \; = \; \Bigl| \Dual{{\psi }^{\ast \ast }[\Gn (\sigma , \tXn (\sigma )) -G(\sigma , \tX (\sigma ))]}{
{\zeta  }^{\ast \ast }[\Gn (\sigma , \tXn (\sigma )) - G(\sigma , \tX (\sigma ))]}{\lhs (\Kmath ,\rzecz )} \Bigr|
\\
\; &\le \; \norm{{\psi }^{\ast \ast }[\Gn (\sigma , \tXn (\sigma )) -G(\sigma , \tX (\sigma ))]}{\lhs (\Kmath ,\rzecz )}{}
\cdot \norm{{\zeta }^{\ast \ast }[\Gn (\sigma , \tXn (\sigma )) -G(\sigma , \tX (\sigma ))]}{\lhs (\Kmath ,\rzecz )}{} \\
\; &\le \; \frac{1}{2} \bigl\{  
\norm{{\psi }^{\ast \ast }[\Gn (\sigma , \tXn (\sigma )) -G(\sigma , \tX (\sigma ))]}{\lhs (\Kmath ,\rzecz )}{2} \\
& \qquad  \quad + \norm{{\zeta }^{\ast \ast }[\Gn (\sigma , \tXn (\sigma )) -G(\sigma , \tX (\sigma ))]}{\lhs (\Kmath ,\rzecz )}{2}
\bigr\} 
\; =: \; \frac{1}{2} \{ {|{I}^{n}_{11}|}^{2}(\sigma )+{|{I}^{n}_{12}|}^{2}(\sigma ) \} .
\end{split} 
\]
By Remark \ref{rem:G_properties} (iii), we have
\[
\begin{split}
|{I}^{n}_{11}(\sigma )| \; &= \; \norm{{\psi }^{\ast \ast }[\Gn (\sigma , \tXn (\sigma )) -G(\sigma , \tX (\sigma ))]}{\lhs (\Kmath ,\rzecz )}{} \\
\; &= \; \sup_{y \in \Kmath , \norm{y}{\Kmath }{} \le 1 } \bigl|
\ddual{{\Vmath }^{\ast }}{g(\sigma , \tXn (\sigma ))(y)}{\Pn \psi }{\Vmath }
- \ddual{{\Vmath }^{\ast }}{g(\sigma , \tX (\sigma ))(y)}{ \psi }{\Vmath }
\bigr|
\\
\; &\le  \; \sup_{y \in \Kmath , \norm{y}{\Kmath }{} \le 1 } \bigl|
\ddual{{\Vmath }^{\ast }}{g(\sigma , \tXn (\sigma ))(y)}{\Pn \psi  -\psi }{\Vmath } \bigr| \\
& \qquad + \sup_{y \in \Kmath , \norm{y}{\Kmath }{} \le 1 } \bigl|
\ddual{{\Vmath }^{\ast }}{g(\sigma , \tXn (\sigma ))(y)- g(\sigma , \tX (\sigma ))(y)}{ \psi }{\Vmath }
\bigr|
\\
\; &\le  \; {\bigl[ C (1+ \nnorm{\tXn (\sigma )}{\Hmath }{2}) \bigr] }^{\frac{1}{2}} \cdot \norm{\Pn \psi - \psi }{\Vmath }{}
+ \norm{  ({\tilde{g}}_{\psi } (\tXn )) (\sigma )  - ({\tilde{g}}_{\psi } (\tX )) (\sigma )  }{\lhs (\Kmath, \rzecz )}{}  .
\end{split}
\]
Thus
\begin{equation}
\begin{split}
{|{I}^{n}_{11}(\sigma )|}^{2} \; &\le  \; \frac{1}{2} \bigl\{ 
 C (1+ \nnorm{\tXn (\sigma )}{\Hmath }{2}) \cdot \norm{\Pn \psi - \psi }{\Vmath }{2}
+ \norm{  ({\tilde{g}}_{\psi } (\tXn )) (\sigma )  - ({\tilde{g}}_{\psi } (\tX )) (\sigma )  }{\lhs (\Kmath, \rzecz )}{2}  
\bigr\} .
\end{split}
\label{eq:QV_I_11_est}
\end{equation}
We have
\[
|{I}^{n}_{1}| \; \le \; \int_{s}^{t} |{I}^{n}_{1}(\sigma )| \; d\sigma 
\; \le \; \frac{1}{2} \int_{s}^{t} (|{I}^{n}_{11}(\sigma )|^{2}+|{I}{n}_{12}(\sigma )|^{2}) \, d\sigma  .
\]
By \eqref{eq:QV_I_11_est},  
\[
\begin{split}
&\int_{s}^{t} |{I}^{n}_{11}(\sigma )|^{2} \, d \sigma 
\; \le \; \frac{1}{2} \biggl\{ 
 C \Bigl( \int_{0}^{T}(1+ \nnorm{\tXn (\sigma )}{\Hmath }{2}) \, d\sigma \Bigr) \cdot \norm{\Pn \psi - \psi }{\Vmath }{2} \\
& \qquad \qquad \qquad \qquad \qquad + \int_{0}^{T} \Norm{  ({\tilde{g}}_{\psi } (\tXn )) (\sigma )  - ({\tilde{g}}_{\psi } (\tX )) (\sigma )  }{\lhs (\Kmath, \rzecz )}{2}  \, d \sigma 
\biggr\} \\
\; &= \; \frac{1}{2} \Bigl\{ 
 C T \Bigl( 1+ \sup_{\sigma \in [0,T]} \nnorm{\tXn (\sigma )}{\Hmath }{2}  \Bigr) \cdot 
\norm{\Pn \psi - \psi }{\Vmath }{2} 
+ \norm{{\tilde{g}}_{\psi } (\tXn ) -{\tilde{g}}_{\psi } (\tX )}{{L}^{2}(0,T;\lhs (\Kmath, \rzecz ))}{2} 
\Bigr\}  .
\end{split}
\]
Since $(\tXn ) \subset {L}^{2}(0,T;\Hmath )$ and $\tXn \to \tX $ in ${L}^{2}(0,T;{\Hmath }_{loc })$, by Remark \ref{rem:G_properties} (iii) we infer that for every $\psi \in \Vmath $
\[
\lim_{n\to \infty }  {\tilde{g}}_{\psi } (\tXn ) \; = \; {\tilde{g}}_{\psi } (\tX )
\quad \mbox{ in } \quad {L}^{2}(0,T;\lhs (\Kmath ,\rzecz )) ,
\]
Using additionally Corollary \ref{cor:P_n-pointwise_conv}, we obtain
\[
 \lim_{n\to \infty } \int_{s}^{t} |{I}^{n}_{11}(\sigma )|^{2} \, d \sigma \; = \; 0.
\]
Analogously,
\[
\lim_{n\to \infty } \int_{s}^{t} |{I}^{n}_{12}(\sigma )|^{2} \, d \sigma \; = \; 0.
\]
Thus
\begin{equation*}
\lim_{n\to \infty }{I}^{n}_{1}  \;= \;  \lim_{n\to \infty } \int_{s}^{t} {I}^{n}_{1}(\sigma ) \, d \sigma \; = \; 0.
\end{equation*}

\medskip  \noindent
Let us move to ${I}^{n}_{2}$.
Proceeding analogously as in the analysis of the term ${I}^{n}_{1}$, we obtain the following inequalities
\[
\begin{split}
|{I}^{n}_{2}|
\; &\le \;
\int_{s}^{t}  \Bigl\{ 
{\bigl[ C (1+ \nnorm{\tXn (\sigma )}{\Hmath }{2}) \bigr] }^{\frac{1}{2}}  
 \cdot \norm{\Pn \psi - \psi }{\Vmath }{} 
+   \norm{ ({\tilde{g}}_{\psi } (\tXn )) (\sigma )  - ({\tilde{g}}_{\psi } (\tX )) (\sigma )  }{\lhs (\Kmath, \rzecz )}{}  \Bigr\} 
\\
& \qquad 
\cdot {[C(1+ \nnorm{\tX (\sigma )}{\Hmath }{2})]}^{\frac{1}{2}} \cdot \norm{\zeta }{\Vmath }{} \,d\sigma   
\\
\; &\le \; C T \,  \norm{\zeta }{\Vmath }{} \, \Bigl( 1+ \sup_{\sigma \in [0,T ]}\nnorm{\tXn (\sigma )}{\Hmath }{2} {\Bigr) }^{\frac{1}{2}} 
\cdot \Bigl( 1+ \sup_{\sigma \in [0,T ]}\nnorm{\tX (\sigma )}{\Hmath }{2} {\Bigr) }^{\frac{1}{2}} 
 \cdot  \norm{\Pn \psi - \psi }{\Vmath }{} 
 \\
& \qquad +  \norm{\zeta }{\Vmath }{} \, \Bigl( 1+ \sup_{\sigma \in [0,T ]}\nnorm{\tX (\sigma )}{\Hmath }{2} {\Bigr) }^{\frac{1}{2}} \cdot 
 \int_{0}^{T} \norm{ ({\tilde{g}}_{\psi } (\tXn )) (\sigma )  - ({\tilde{g}}_{\psi } (\tX )) (\sigma )  }{\lhs (\Kmath, \rzecz )}{} \, d\sigma  .
\end{split}
\]
Using  Corollary \ref{cor:P_n-pointwise_conv} and Remark \ref{rem:G_properties} (iii), we infer that
\[
\lim_{n\to \infty } {I}^{n}_{2}  \;= \; \lim_{n\to \infty } \int_{s}^{t} {I}^{n}_{2}(\sigma ) \, d\sigma \; = \; 0.
\]
Analogously,
\[
\lim_{n\to \infty }{I}^{n}_{3}  \;= \; 
\lim_{n\to \infty } \int_{s}^{t} {I}^{n}_{3}(\sigma ) \, d\sigma \; = \; 0.
\]
This concludes  the proof of the pointwise convergence.

\medskip
\noindent
\bf Uniform integrability. \rm 
We will prove that the sequence $({\xi }_{n})$ is uniformly integrable. To this end it is sufficient to show that for some $r>0$
\begin{equation*}
\sup_{n\in \nat } \te [{|{\xi }_{n}|}^{r}] \; < \; \infty .
\end{equation*}
In fact we will show that the above condition holds for every $r\in (1,\frac{p}{2}]$.

\medskip  \noindent
We have $\tp $-a.s
\[
\begin{split}
|{\xi }_{n}| 
\; &\le \; \int_{0}^{T} |
\dual{{\psi }^{\ast \ast }[\Gn (\sigma , \tXn (\sigma ,\omega ))]}{{\zeta  }^{\ast \ast }[\Gn (\sigma , \tXn (\sigma ,\omega ))]}{\lhs (\Kmath ,\rzecz )} | \, d \sigma \\
\; &\le \; \int_{0}^{T} 
\norm{{\psi }^{\ast \ast }[\Gn (\sigma , \tXn (\sigma ,\omega ))]}{\lhs (\Kmath ,\rzecz )}{}
\norm{{\zeta }^{\ast \ast }[\Gn (\sigma , \tXn (\sigma ,\omega ))]}{\lhs (\Kmath ,\rzecz )}{}
\, d \sigma 
\end{split}
\]
By Remark \ref{rem:G_properties}(iii), we have for every $\sigma \in [0,T]$
\[
\begin{split}
&\norm{{\psi }^{\ast \ast }[\Gn (\sigma , \tXn (\sigma ))]}{\lhs (\Kmath ,\rzecz )}{}
\; = \; \sup_{y\in \Kmath , \norm{y}{\Kmath }{} \le 1 } 
|({\psi }^{\ast \ast }[\Gn (\sigma , \tXn (\sigma ))])(y)|  \\
\; &= \; \sup_{y\in \Kmath , \norm{y}{\Kmath }{} \le 1 } 
|\ddual{\Vmath '}{g (\sigma , \tXn (\sigma ))(y)}{\Pn \psi }{\Vmath }| 
\; \le \; {[C(1+ \nnorm{\tXn (\sigma )}{\Hmath }{2})]}^{\frac{1}{2}} \cdot \nnorm{\Pn }{\lcal (\Vmath ,\Vmath )}{} \, \norm{\psi }{\Vmath }{}.
\end{split}
\]
Thus
\[
\begin{split}
|{\xi }_{n}| 
\; &\le \; C \nnorm{\Pn }{\lcal (\Vmath ,\Vmath )}{2} \, \norm{\psi }{\Vmath }{} \, \norm{\zeta }{\Vmath }{}
\int_{0}^{T}  (1+ \nnorm{\tXn (\sigma )}{\Hmath }{2}) \, d \sigma 
\\
\; &\le \; C T \Bigl( \sup_{n\in \nat }\nnorm{\Pn }{\lcal (\Vmath ,\Vmath )}{2} \Bigr) \, \norm{\psi }{\Vmath }{} \, \norm{\zeta }{\Vmath }{}
 \, \Bigl( 1+ \sup_{\sigma \in [0,T]}\nnorm{\tXn (\sigma )}{\Hmath }{2} \Bigr)  ,
\end{split}
\]
and for $r \in (1,\frac{p}{2}]$
\[
\begin{split}
\te \bigl[ {|{\xi }_{n}|}^{r}  \bigr]
\; &\le \; c \Bigl( 1+\te\Bigl[ \sup_{\sigma \in [0,T]}\nnorm{\tXn (\sigma )}{\Hmath }{2r} \Bigr] \Bigr) 
\; \le \; c (1+ {C}_{2}(2r,p)) \; < \; \infty .
\end{split}
\]
(Here $c= {2}^{r-1} \bigl[ C T \bigl( \sup_{n\in \nat }\nnorm{\Pn }{\lcal (\Vmath ,\Vmath )}{2} \bigr) {\bigr] }^{r} \, \norm{\psi }{\Vmath }{r} \, \norm{\zeta }{\Vmath }{r} <\infty $, by Corollary \ref{cor:P_n-pointwise_conv}.) This completes the proof of the uniform integrability.

\medskip  \noindent
By the Vitali theorem
\[
\lim_{n \to \infty } \te [{\xi }_{n} ] \; = \; \te [\xi ] . 
\]
The proof of the lemma is thus complete.
\end{proof}

\medskip
\noindent
\bf Conclusion of the proof of Theorem \ref{th:mart-sol_existence}. \rm 
We use the idea analogous to the reasoning used by Da Prato and Zabczyk in \cite[Section 8.3]{DaPrato+Zabczyk'2014}.
Let us consider the operator $L$ defined by \eqref{eq:op_L} in Appendix \ref{sec:aux_funct.anal},
\[
L : \Umath \; \supset \; D(L) \; \to \; \Hmath 
\]
which is  self-adjoint and bijection. Let ${L}^{-1}$ be its inverse, i.e.
\[
{L}^{-1} : \Hmath \; \to \Umath .
\] 
By Lemmas \ref{lem:tMn-martingale}, \ref{lem:tM-continuity} , \ref{lem:qvar_conv_left} and \ref{lem:conv_quadr_var} (with $\psi := {L}^{-1}\varphi $ and $\zeta := {L}^{-1}\eta $, where $\varphi ,\eta \in \Hmath $), we infer that
the process ${(\tM )}_{t\in [0,T]}$ is a ${\Umath }^{\prime }$-valued continuous square integrable martingale with respect to the filtration $\tilde{\mathbb{F}} = \bigl( {\tfcal }_{t} \bigr) $,
 where  ${\tfcal }_{t}= \sigma \{ \tX (s), \, \, s \le t \} $.
Let us consider the dual operator of ${L}^{-1}$:
\[
({L}^{-1})' : {\Umath }^{'} \; \to \; {\Hmath }^{\prime }  \; \; \equiv \Hmath ,
\]
where the space $\Hmath '$ is identified with $\Hmath $.
Then the process ${(({L}^{-1})'\tM )}_{t\in [0,T]}$ is a ${\Hmath  }^{\prime } \cong \Hmath $-valued continuous square integrable martingale with respect to the filtration $\tilde{\mathbb{F}} = \bigl( {\tfcal }_{t} \bigr) $,
 where  ${\tfcal }_{t}= \sigma \{ \tX (s), \, \, s \le t \} $
with the quadratic variation
\[
\qvar{({L}^{-1})'\tM  }_{t} \; = \;  \int_{0}^{t}\norm{({L}^{-1})'\underline{\circ }\tilde{G}(s,\tX (s))}{\lhs (\Kmath ,\Hmath )}{2} \; ds ,
\]
where using (using the identification $\Hmath \equiv \Hmath '$) the injection $j:\Hmath ' \ni \xi \to {\xi }_{|\Umath } \in  \Umath '$
\[
\tilde{G}:  [0,T]\times \Hmath \; \ni (t,u) \; \mapsto \bigl\{ \Kmath \ni y \mapsto {j(G(t,u)(y))}_{} \bigr\} \;  \in \; \lcal  (\Kmath ,\Umath ') .
\]
and 
\[
({L}^{-1})'\underline{\circ }\tilde{G}:  [0,T]\times \Hmath \; \ni (t,u) \; \mapsto \bigl\{ \Kmath \ni y \mapsto ({L}^{-1})' [\tilde{G}(t,u)(y)] \bigr\} \;  \in \; \lhs (\Kmath ,\Hmath )
\]
(The fact that $({L}^{-1})'\underline{\circ }\tilde{G} \in \lhs (\Kmath ,\Hmath )$ follows from the fact that $G(t,u) \in \lhs (\Kmath ,\Hmath )$.)
By the martingale representation theorem, see \cite{DaPrato+Zabczyk'2014}, there exist
\begin{itemize}
\item a stochastic basis
$\bigl( \ttOmega , \ttfcal , {( \ttfcal  {}_{t} ) }_{t \ge 0} , \ttp  \bigr) $,
\item a cylindrical Wiener process $\ttW  (t)$ defined on this basis,
\item and a progressively measurable process $\ttX  (t)$
such that
\[
\begin{split}
&({L}^{-1})'\ttX (t) \,  -  ({L}^{-1})' \ttX (0) 
+ \int_{0}^{t}({L}^{-1})'  \bigl[  \acal \ttX (s)  + \MHD  (\ttX (s)) +\tHall (\ttX (s)) - f(s) \bigr] \, ds 
\\
\; &= \; \int_{0}^{t}({L}^{-1})' \underline{\circ }\tilde{G} (s,\ttX (s) \, d \ttW (s)
 , \qquad t \in [0,T].
\end{split}
\]
\end{itemize}
Thus for all $t \in [0,T]$ and all $v \in \Umath $
\[
\begin{split}
&\ilsk{\ttX (t)}{v}{\Hmath } \,  -  \ilsk{\ttX (0)}{v}{\Hmath } 
+ \int_{0}^{t} \dual{  \acal \ttX (s)  + \MHD  (\ttX (s)) +\tHall (\ttX (s)) - f(s) }{v}{} \, ds 
\\
\; &= \; \int_{0}^{t} \dual{ G (s,\ttX (s) \, d \ttW (s) }{v}{}
 , \qquad t \in [0,T].
\end{split}
\]
Since the space $\Umath $ is dense in $\Vtest $, we infer that the above equation holds for every $v \in \Vtest $. In conclusion, the system $(\bar{\mathfrak{A}},\bar{W}, \X)$, where 
$\bar{\mathfrak{A}} := (\ttOmega , \ttfcal , {( \ttfcal  {}_{t} )}_{t \ge 0} , \ttp ) $, $\bar{W}:= \ttW $ 
and $\X := \ttX $,
 is a martingale solution of the problem \eqref{eq:Hall-MHD_functional} in the sense of definition \ref{def:mart-sol}. The proof of theorem \ref{th:mart-sol_existence} is thus complete.

\appendix 

\medskip
\section{The spaces $\Ldn $ and the cut-off operators $\Sn $ }
\label{sec:Fourier_truncation}

\medskip  \noindent
We recall some results concerning the Friedrichs method which is based on the Fourier analysis, see \cite[Section 4, p.174]{Bahouri+Chemin+Danchin'11}. This presentation is also closely related to \cite{Brze+Dha'20} and \cite{Feff+McCorm+Rob+Rod'2014}.
  
\medskip
\subsection{Preliminaries}

\medskip  \noindent
Let us recall that the Fourier transform of a rapidly decreasing function $\psi \in \scal (\rd )$ is defined by (see \cite{Rudin}, \cite{Taylor_I'11})
\[
\widehat{\psi } (\xi ) \; := \; {(2\pi )}^{-\frac{d}{2}} \int_{\rd } {e}^{-i\xi \cdot x } \psi (x) \, dx , \qquad \xi \in \rd ,
\]
and the Fourier transform  of a tempered distribution is defined via duality, i.e., if $f \in \scal '(\rd )$ then
\[
\dual{\widehat{f}}{\psi }{} \; := \; \dual{f}{\widehat{\psi }}{}, \qquad \psi \in \scal (\rd ).
\]

\medskip  \noindent
Let us recall that for $s\ge 0 $ the Sobolev space is defined by
\[
 {H}^{s}(\rd ) \; := \; \{ u \in {L}^{2}(\rd ): \; \; {(1+{|\xi |}^{2})}^{\frac{s}{2}} \widehat{u} \in {L}^{2}(\rd ) \}  
\]
and
\[
\norm{u}{{H}^{s}}{} \; := \; \nnorm{{(1+{|\xi |}^{2})}^{\frac{s}{2}} \widehat{u}}{{L}^{2}(\rd )}{}
\; = \; \biggl( \int_{\rd } {(1+{|\xi |}^{2})}^{s} {|\widehat{u}(\xi )|}^{2} \, d \xi {\biggr) }^{\frac{1}{2}}.
\]
(See \cite{Taylor_I'11}, \cite{Stein'1970}.) The spaces ${H}^{s}(\rd )$ are also called Lebesgue spaces and denoted by ${L}_{s}^{2}$.

\medskip
\subsection{Subspaces $\Ldn $ and the cut-off operators $\Sn $.} \label{sec:cut-off_operators}

\medskip  \noindent
Let
\[
{\bar{B}}_{n} \; := \; \{ \xi \in \rd : \; \; |\xi | \le n \}  \; \subset \; {\rzecz }^{d}, \qquad n \in \nat , 
\]
and let
\begin{equation}
\Ldn  \; := \; \{ u \in {L}^{2}(\rd ) : \; \; \supp \widehat{u} \subset {\bar{B}}_{n} \}  .
\label{eq:Ld_n}
\end{equation}
On the subspace $\Ldn $ we consider the norm inherited from ${L}^{2}(\rd )$.

\medskip  \noindent
The \bf cut-off operator \rm  $\Sn $ is defined by
\begin{equation}
\Sn (u)  \; := \; {\fcal }^{-1} (\ind{{\bar{B}}_{n}} \widehat{u}) , \qquad u \in {L}^{2}(\rd ),
\label{eq:S_n}
\end{equation}
where ${\fcal }^{-1}$ denotes the inverse Fourier transform.
(See  \cite[Section 4, p.174]{Bahouri+Chemin+Danchin'11}.)

\medskip
\begin{remark} \label{rem:S_n-projection}
\rm (See \cite[Section 4, p.174]{Bahouri+Chemin+Danchin'11} and \cite{Brze+Dha'20}.)  \it 
The map
\[
\Sn  : {L}^{2}(\rd ) \; \to \; \Ldn  
\] 
is an  $\ilsk{\cdot }{\cdot }{{L}^{2}}$-orthogonal projection.
\end{remark}

\medskip
\subsection{Properties of the  operators $\Sn $.}

\medskip  \noindent
The following lemma is closely related to \cite[p. 1042]{Feff+McCorm+Rob+Rod'2014} and \cite[Lemma 4.1]{Brze+Dha'20}.

\medskip
\begin{lemma} \label{lem:S_n-pointwise_conv}
Let  $s\ge 0 $ be fixed. 
Then for all $n \in \nat $:
\[
\Sn  : {H}^{s}(\rd ) \; \to \; {H}^{s}(\rd )
\]
is well defined linear and bounded. Moreover, for every $ u \in {H}^{s}(\rd ) $
\begin{equation}
\norm{\Sn u}{{H}^{s}}{} \; \le \ \norm{u}{{H}^{s}}{} 
\label{eq:S_n:H^s-H^s}
\end{equation}
and
\begin{align}
\lim_{n \to \infty } \norm{\Sn u-u}{{H}^{s}}{}  \; = \; 0 . 
\label{eq:S_n_pointwise_conv}
\end{align} 
\end{lemma}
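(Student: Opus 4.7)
The proof is essentially a direct computation using the Fourier-side characterization of the Sobolev norm. The plan is to work entirely on the Fourier side, exploiting the identity $\widehat{S_n u} = \ind{\bar{B}_n} \widehat{u}$ which follows immediately from the definition \eqref{eq:S_n}.

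For the boundedness assertion \eqref{eq:S_n:H^s-H^s}, I would write
\[
\norm{S_n u}{H^s}{2} \; = \; \int_{\rd} (1+|\xi|^2)^{s} |\widehat{S_n u}(\xi)|^2 \, d\xi
\; = \; \int_{\bar{B}_n} (1+|\xi|^2)^{s} |\widehat{u}(\xi)|^2 \, d\xi
\; \le \; \norm{u}{H^s}{2},
\]
which also shows that $S_n$ maps $H^s(\rd)$ into itself (linearity is immediate from the linearity of $\fcal$ and of multiplication by $\ind{\bar{B}_n}$). In particular, to see that $S_n u \in H^s(\rd)$ rather than just ${L}^{2}(\rd)$, one only needs that $(1+|\xi|^2)^{s/2} \ind{\bar{B}_n}\widehat{u} \in L^2(\rd)$, which is obvious from $u \in H^s(\rd)$.

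For the convergence \eqref{eq:S_n_pointwise_conv}, I would observe that
\[
\norm{S_n u - u}{H^s}{2} \; = \; \int_{\rd \setminus \bar{B}_n} (1+|\xi|^2)^{s} |\widehat{u}(\xi)|^2 \, d\xi
\; = \; \int_{\rd} \ind{\{|\xi|>n\}}(\xi) \, (1+|\xi|^2)^{s} |\widehat{u}(\xi)|^2 \, d\xi.
\]
The integrand is dominated pointwise by the fixed integrable function $\xi \mapsto (1+|\xi|^2)^{s} |\widehat{u}(\xi)|^2$ (which is in $L^1(\rd)$ precisely because $u \in H^s(\rd)$), and it converges to $0$ pointwise almost everywhere as $n \to \infty$. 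The Lebesgue dominated convergence theorem then yields the claim.

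There is no substantive obstacle: the whole argument is a one-line Fourier-side computation followed by dominated convergence. The only care needed is to ensure the statement that $S_n$ maps $H^s$ into itself (not just $L^2$ into $L^2$), which is handled by the same estimate as the norm bound. No approximation by Schwartz functions or density argument is required, since the integrals are controlled directly by the $H^s$-norm of $u$.
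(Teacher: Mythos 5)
Your proof is correct and follows essentially the same route as the paper: both reduce everything to the Fourier-side identity $\widehat{S_n u} = \ind{\bar{B}_n}\widehat{u}$, obtain the norm bound by restricting the integral to $\bar{B}_n$, and deduce the convergence from the vanishing of the tail integral over $\bar{B}_n^c$ (which the paper asserts directly from $u \in H^s$ and you justify explicitly via dominated convergence). No substantive difference.
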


\medskip
\begin{proof}
For every $u \in {H}^{s}(\rd )$ we have
\begin{align*}
\norm{\Sn u}{{H}^{s}}{2} 
\; &= \;  \int_{\rd } {(1+{|\xi |}^{2})}^{s} {|\widehat{\Sn u}(\xi )|}^{2} \, d \xi
\; = \;  \int_{\rd } {(1+{|\xi |}^{2})}^{s} {|\ind{{\bar{B}}_{n}}(\xi ) \, \widehat{u}(\xi )|}^{2} \, d \xi \\
\; &= \;  \int_{\rd } {(1+{|\xi |}^{2})}^{s} \ind{{\bar{B}}_{n}}(\xi ) \, {| \widehat{u}(\xi )|}^{2} \, d \xi
\; = \;  \int_{{\bar{B}}_{n}} {(1+{|\xi |}^{2})}^{s} \, {| \widehat{u}(\xi )|}^{2} \, d \xi  \\
\; & \le \;  \int_{\rd } {(1+{|\xi |}^{2})}^{s} \, {| \widehat{u}(\xi )|}^{2} \, d \xi
\; = \; \norm{u}{{H}^{s}}{2} .
\end{align*}
which completes the proof of \eqref{eq:S_n:H^s-H^s}.

\medskip  \noindent
To prove \eqref{eq:S_n_pointwise_conv} we proceed as follows
\begin{align*}
\norm{\Sn u-u}{{H}^{s}}{2} 
\; & = \; \int_{\rd } {(1+{|\xi |}^{2})}^{s} {|\widehat{\Sn u}(\xi ) - \widehat{u}(\xi )|}^{2} \, d \xi
\; = \;  \int_{\rd } {(1+{|\xi |}^{2})}^{s} {|\ind{{\bar{B}}_{n}}(\xi ) \, \widehat{u}(\xi ) - \widehat{u}(\xi )|}^{2} \, d \xi \\
\ &= \  \int_{\rd } {(1+{|\xi |}^{2})}^{s} \bigl[ \ind{{\bball }_{n}}(\xi ) -1\bigr] \, {| \widehat{u}(\xi )|}^{2} \, d \xi
\; = \;  \int_{{\bar{B}}^{c}_{n}} {(1+{|\xi |}^{2})}^{s} \, {| \widehat{u}(\xi )|}^{2} \, d \xi .
\end{align*}
Since $u \in {H}^{s}(\rd )$, we infer that 
\[
\lim_{n\to \infty } \int_{{\bar{B}}^{c}_{n}} {(1+{|\xi |}^{2})}^{s} \, {| \widehat{u}(\xi )|}^{2} \, d \xi 
\; = \; 0,
\]
which completes the proof of \eqref{eq:S_n_pointwise_conv} and of the lemma.
\end{proof}

\medskip  \noindent
The following lemma is based on \cite[p. 1042]{Feff+McCorm+Rob+Rod'2014}.

\medskip
\begin{lemma} \label{lem:S_n-norm_conv}
If $s\ge 0 $ and $k>0 $, then 
\begin{equation*}
\Sn :{H}^{s+k}(\rd ) \to {H}^{s}(\rd )
\end{equation*} 
is well defined  and bounded and 
$
\nnorm{\Sn }{\lcal ({H}^{s+k},{H}^{s})}{} \; \le  \; 1 .
$
Moreover, for every $u \in {H}^{s+k}(\rd )$: 
\begin{equation}
\norm{\Sn u-u}{{H}^{s}}{2} \; \le \; \frac{1}{{(1+{n}^{2})}^{k}} \, \norm{u}{{H}^{s+k}}{2}.
\label{eq:S_n:H^s+k-H^s_est}
\end{equation}
Thus
\begin{equation}
\lim_{n\to \infty } \nnorm{\Sn -I}{\lcal ({H}^{s+k},{H}^{s})}{} \; = \; 0, 
\label{eq:S_n:H^s+k-H^s_norn-conv}
\end{equation}
where $I$ stands for the identity operator.  
\end{lemma}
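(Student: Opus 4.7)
The plan is to transfer everything to the Fourier side, where $\Sn$ acts as multiplication by the characteristic function $\ind{\bar{B}_n}$ on the Fourier transform, and then exploit the $k$-order weight penalty that arises on the complementary region $\bar{B}_n^c = \{\xi : |\xi| > n\}$.

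First, the boundedness assertion $\|\Sn\|_{\lcal(H^{s+k},H^s)} \le 1$ follows immediately by combining Lemma \ref{lem:S_n-pointwise_conv} (which gives $\|\Sn u\|_{H^s} \le \|u\|_{H^s}$) with the trivial estimate $\|u\|_{H^s} \le \|u\|_{H^{s+k}}$ valid for $k \ge 0$ (since the weight $(1+|\xi|^2)^s$ is dominated pointwise by $(1+|\xi|^2)^{s+k}$).

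For the central estimate \eqref{eq:S_n:H^s+k-H^s_est}, I would write, exactly as in the proof of Lemma \ref{lem:S_n-pointwise_conv}, that $\widehat{\Sn u - u}(\xi) = (\ind{\bar{B}_n}(\xi) - 1)\widehat{u}(\xi)$ is supported in $\bar{B}_n^c$, and hence
\begin{equation*}
\norm{\Sn u - u}{H^s}{2} \; = \; \int_{\bar{B}_n^c} (1+|\xi|^2)^{s} \, |\widehat{u}(\xi)|^2 \, d\xi .
\end{equation*}
The key observation is that for $\xi \in \bar{B}_n^c$ one has $|\xi| > n$, and since $k > 0$,
\begin{equation*}
(1+|\xi|^2)^s \; = \; (1+|\xi|^2)^{s+k} \cdot (1+|\xi|^2)^{-k} \; \le \; \frac{(1+|\xi|^2)^{s+k}}{(1+n^2)^k} .
\end{equation*}
Substituting this pointwise bound into the integral and extending the domain of integration from $\bar{B}_n^c$ back to all of $\rd$ yields
\begin{equation*}
\norm{\Sn u - u}{H^s}{2} \; \le \; \frac{1}{(1+n^2)^k} \int_{\rd} (1+|\xi|^2)^{s+k} \, |\widehat{u}(\xi)|^2 \, d\xi \; = \; \frac{1}{(1+n^2)^k} \, \norm{u}{H^{s+k}}{2},
\end{equation*}
which is \eqref{eq:S_n:H^s+k-H^s_est}. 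The norm convergence \eqref{eq:S_n:H^s+k-H^s_norn-conv} is then an immediate consequence: the inequality is uniform in $u$, so $\|\Sn - I\|_{\lcal(H^{s+k},H^s)} \le (1+n^2)^{-k/2} \to 0$.

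There is no genuine obstacle here; the argument is a one-line weight manipulation on the Fourier side, completely parallel to the preceding Lemma \ref{lem:S_n-pointwise_conv} except that the dominated-convergence-style vanishing integral is replaced by the sharper pointwise weight bound $(1+|\xi|^2)^{-k} \le (1+n^2)^{-k}$ on $\{|\xi|>n\}$, which is exactly what promotes pointwise convergence to operator-norm convergence when one pays the price of working with the stronger norm $\|\cdot\|_{H^{s+k}}$.
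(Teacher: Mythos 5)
Your proposal is correct and follows essentially the same route as the paper's proof: the same Fourier-side identity for $\widehat{\Sn u - u}$ supported in $\bar{B}_n^c$ and the same pointwise weight bound $(1+|\xi|^2)^{-k}\le (1+n^2)^{-k}$ there. The only cosmetic difference is that you obtain the boundedness $\nnorm{\Sn}{\lcal(H^{s+k},H^s)}{}\le 1$ by citing Lemma \ref{lem:S_n-pointwise_conv} together with $\norm{u}{H^s}{}\le\norm{u}{H^{s+k}}{}$, whereas the paper recomputes it directly with the same weight manipulation.
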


\medskip
\begin{proof}
Let us fix $s \ge 0 $ and  $k>0$. Let $u \in {H}^{s+k}(\rd )$. 
We have
\begin{align*}
&\norm{\Sn u}{{H}^{s}}{2} 
\;  = \; \int_{\rd } {(1+{|\xi |}^{2})}^{s} {|\widehat{\Sn u}(\xi ) |}^{2} \, d \xi
\; = \;  \int_{\rd } {(1+{|\xi |}^{2})}^{s} {|\ind{{\bar{B}}_{n}}(\xi ) \, \widehat{u}(\xi ) |}^{2} \, d \xi \\
\; &= \;  \int_{{\bar{B}}_{n}} {(1+{|\xi |}^{2})}^{s} \, {| \widehat{u}(\xi )|}^{2} \, d \xi 
\; = \;  \int_{{\bar{B}}_{n}} \frac{1}{{(1+{|\xi |}^{2})}^{k} } \cdot {(1+{|\xi |}^{2})}^{s+k} \, {| \widehat{u}(\xi )|}^{2} \, d \xi \\
 \; &\le \;  \int_{{\bar{B}}_{n}} {(1+{|\xi |}^{2})}^{s+k} \, {| \widehat{u}(\xi )|}^{2} \, d \xi  
\; \le \;  \int_{\rd } {(1+{|\xi |}^{2})}^{s+k} \, {| \widehat{u}(\xi )|}^{2} \, d \xi
\; = \;  \norm{u}{{H}^{s+k}}{2}.
\end{align*}
Thus   $\Sn \in \lcal ({H}^{s+k},{H}^{s}) $ and
$
\nnorm{\Sn }{\lcal ({H}^{s+k},{H}^{s})}{} \; \le  \; 1 .
$

\medskip  \noindent
Moreover,
\begin{align*}
&\norm{\Sn u-u}{{H}^{s}}{2} 
\;  = \; \int_{\rd } {(1+{|\xi |}^{2})}^{s} {|\widehat{\Sn u}(\xi ) - \widehat{u}(\xi )|}^{2} \, d \xi
\; = \;  \int_{\rd } {(1+{|\xi |}^{2})}^{s} {|\ind{{\bar{B}}_{n}}(\xi ) \, \widehat{u}(\xi ) - \widehat{u}(\xi )|}^{2} \, d \xi \\
\; &= \;  \int_{\rd } {(1+{|\xi |}^{2})}^{s} \bigl[ \ind{{\bar{B}}_{n}}(\xi ) -1\bigr] \, {| \widehat{u}(\xi )|}^{2} \, d \xi
\; = \;  \int_{{\bar{B}}^{c}_{n}} {(1+{|\xi |}^{2})}^{s} \, {| \widehat{u}(\xi )|}^{2} \, d \xi \\
\; &= \;  \int_{{\bar{B}}^{c}_{n}} \frac{1}{{(1+{|\xi |}^{2})}^{k} } \cdot {(1+{|\xi |}^{2})}^{s+k} \, {| \widehat{u}(\xi )|}^{2} \, d \xi 
 \; \le \; \frac{1}{{(1+{n}^{2})}^{k} } \int_{{\bar{B}}^{c}_{n}} {(1+{|\xi |}^{2})}^{s+k} \, {| \widehat{u}(\xi )|}^{2} \, d \xi  \\
\; &\le \; \frac{1}{{(1+{n}^{2})}^{k} } \cdot \int_{\rd } {(1+{|\xi |}^{2})}^{s+k} \, {| \widehat{u}(\xi )|}^{2} \, d \xi
\; = \; \frac{1}{{(1+{n}^{2})}^{k} } \cdot \norm{u}{{H}^{s+k}}{2}, 
\end{align*}
which completes \eqref{eq:S_n:H^s+k-H^s_est}.
Thus
\[
\nnorm{\Sn -I}{\lcal ({H}^{s+k},{H}^{s})}{2} \; \le \; \frac{1}{{(1+{n}^{2})}^{k}} .
\]
which implies that \eqref{eq:S_n:H^s+k-H^s_norn-conv} holds.
The proof is thus complete.
\end{proof}

\medskip
\subsection{Relation between the spaces $\Ldn $ and  ${H}^{s}(\rd )$ for $s\ge 0$.}

\medskip  \noindent
Let us recall that on the spaces $\Ldn $, by definition, we consider the norms inherited from the space ${L}^{2}(\rd )$, see \eqref{eq:Ld_n}.

\medskip
\begin{lemma}  \label{lem:Ld_n-H^s-relation}
For each $n \in \nat $
\begin{equation*}
\Ldn \; {\hookrightarrow } \; {H}^{s}(\rd )  \quad \mbox{ for all } \, s \ge 0  
\end{equation*}
and  for every $ s \ge 0 $ and  $ u \in \Ldn :$
\begin{equation}
\norm{u}{{H}^{s}}{2} \; \le \;  {(1+{n}^{2})}^{s} \, \nnorm{u}{\Ldn }{2} .  
\label{eq:Ld_n-H^s-ineq} 
\end{equation}
\end{lemma}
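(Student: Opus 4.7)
The proof is immediate from the definitions, so I would just unpack them carefully. Fix $n \in \nat$ and $s \ge 0$, and let $u \in \Ldn$. By definition of $\Ldn$, we have $u \in L^2(\rd)$ and $\supp \widehat{u} \subset {\bar B}_n$. The plan is therefore to compute $\norm{u}{H^s}{2}$ directly from the Fourier-side definition of the Sobolev norm and exploit this support condition.

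First I would write
\[
\norm{u}{H^s}{2} \;=\; \int_{\rd} (1+|\xi|^2)^s \, |\widehat{u}(\xi)|^2 \, d\xi \;=\; \int_{{\bar B}_n} (1+|\xi|^2)^s \, |\widehat{u}(\xi)|^2 \, d\xi,
\]
where the second equality uses $\widehat{u}(\xi)=0$ for $\xi \notin {\bar B}_n$. Then, on ${\bar B}_n$ the pointwise bound $|\xi|^2 \le n^2$ yields $(1+|\xi|^2)^s \le (1+n^2)^s$, so I can pull the constant out to obtain
\[
\norm{u}{H^s}{2} \;\le\; (1+n^2)^s \int_{{\bar B}_n} |\widehat{u}(\xi)|^2 \, d\xi \;=\; (1+n^2)^s \, \nnorm{\widehat{u}}{L^2(\rd)}{2}.
\]
Finally, by Plancherel's theorem $\nnorm{\widehat{u}}{L^2}{} = \nnorm{u}{L^2}{} = \nnorm{u}{\Ldn}{}$, giving the desired inequality \eqref{eq:Ld_n-H^s-ineq}. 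In particular $u \in H^s(\rd)$, and the map $\Ldn \hookrightarrow H^s(\rd)$ is continuous with operator norm bounded by $(1+n^2)^{s/2}$.

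There is no real obstacle here; the only things to be careful about are (i) noting that the monotonicity $s \mapsto (1+|\xi|^2)^s$ is used only pointwise, not globally, and (ii) justifying that the support condition on $\widehat{u}$ (a priori a statement about the distributional Fourier transform) combined with $u \in L^2$ implies $\widehat{u}$ is an $L^2$ function vanishing a.e.\ outside ${\bar B}_n$, which is automatic since the Fourier transform is an isometry on $L^2(\rd)$.
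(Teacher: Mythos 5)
Your proof is correct and follows essentially the same route as the paper: restrict the Fourier-side integral for $\norm{u}{H^s}{2}$ to ${\bar B}_n$ using the support condition on $\widehat{u}$, bound the weight by $(1+n^2)^s$ there, and conclude by Plancherel. The paper phrases the restriction by inserting the identity $u = \Sn u$ so that the indicator $\ind{{\bar B}_{n}}$ appears explicitly, but this is only a cosmetic difference.
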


\medskip  \noindent
Note that  the norm of the embedding $\Ldn \subset {H}^{s}(\rd )$ depends on $n$ and $s$.

\medskip
\begin{proof}
Let $u \in \Ldn $. Then $\Sn u=u$. Let us fix $s >0 $. We will show that $u \in {H}^{s}(\rd )$.
Indeed,
\begin{align*}
\int_{\rd } {(1+{|\xi |}^{2})}^{s} \, {|\widehat{u}(\xi )|}^{2} \, d\xi 
\; &= \; \int_{\rd } {(1+{|\xi |}^{2})}^{s} \, {|\widehat{\Sn u}(\xi )|}^{2} \, d\xi 
\; = \;  \int_{\rd } {(1+{|\xi |}^{2})}^{s} \, \ind{{\bar{B}}_{n}}(\xi ) \,   {|\widehat{u}(\xi )|}^{2} \, d\xi  \\
\; & = \;  \int_{{\bar{B}}_{n} } {(1+{|\xi |}^{2})}^{s} \,    {|\widehat{u}(\xi )|}^{2} \, d\xi 
\; \le  \; {(1+{n}^{2})}^{s} \, \int_{{\bar{B}}_{n} }     {|\widehat{u}(\xi )|}^{2} \, d\xi \\
\; &\le  \; {(1+{n}^{2})}^{s} \, \int_{\rd }     {|\widehat{u}(\xi )|}^{2} \, d\xi
\; = \; {(1+{n}^{2})}^{s} \, \nnorm{u}{{L}^{2}}{2} \; = \; {(1+{n}^{2})}^{s} \, \nnorm{u}{\Ldn }{2}.
\end{align*}
which complete the proof of \eqref{eq:Ld_n-H^s-ineq}  and of the lemma.
\end{proof}

\medskip
\begin{cor}  \label{cor:Ld_n-H^s-norm_equiv}
On the subspace $\Ldn $ the norms $\nnorm{\cdot}{\Ldn }{}$ and $\norm{\cdot }{H^s}{}$, for $s>0$, are equivalent (with appropriate constants depending on $s$ and $n$).   
\end{cor}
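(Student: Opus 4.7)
The plan is short since the corollary follows almost immediately from the preceding Lemma \ref{lem:Ld_n-H^s-relation} together with a trivial Plancherel estimate in the reverse direction. Nothing deep is needed, so the "proof" is really just a bookkeeping of two one-line inequalities.

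First I would invoke Lemma \ref{lem:Ld_n-H^s-relation} verbatim to obtain, for every $u\in\Ldn$ and every $s\ge 0$,
\[
\norm{u}{H^s}{} \; \le \; {(1+n^2)}^{s/2} \, \nnorm{u}{\Ldn }{} ,
\]
which is one of the two required bounds. Note that the support condition $\supp \widehat{u}\subset \bar B_n$ is used here in an essential way, through the pointwise bound $(1+|\xi|^2)^s \le (1+n^2)^s$ on $\bar B_n$.

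For the reverse inequality I would simply observe that $(1+|\xi|^2)^s \ge 1$ for all $\xi\in\rd$ whenever $s\ge 0$, so by the Plancherel identity, valid for any $u\in\Ldn\subset L^2(\rd)$,
\[
\nnorm{u}{\Ldn }{2} \; = \; \nnorm{u}{L^2}{2} \; = \; \int_{\rd} {|\widehat{u}(\xi)|}^{2}\, d\xi
\; \le \; \int_{\rd} {(1+{|\xi|}^{2})}^{s}{|\widehat{u}(\xi)|}^{2}\, d\xi \; = \; \norm{u}{H^s}{2}.
\]
This gives $\nnorm{u}{\Ldn }{}\le \norm{u}{H^s}{}$, and the support condition is not even used here.

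Combining the two bounds yields the equivalence of norms with constants $1$ and $(1+n^2)^{s/2}$, both depending only on $s$ and $n$ as asserted. There is no real obstacle; the only remark worth making is that the upper constant blows up as $n\to\infty$, reflecting the fact that the two norms are \emph{not} equivalent on all of $L^2(\rd)$ or on $\bigcup_n \Ldn$, only on each fixed $\Ldn$.
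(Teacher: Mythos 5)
Your proposal is correct and coincides with the paper's own argument: the upper bound is Lemma \ref{lem:Ld_n-H^s-relation} and the lower bound is the trivial Plancherel estimate $\nnorm{u}{\Ldn}{2}\le\norm{u}{H^s}{2}$ coming from $(1+|\xi|^2)^s\ge 1$. The paper merely states the two-sided inequality without spelling out the second step, so your write-up is, if anything, slightly more explicit.
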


\medskip
\begin{proof}
The assertion is a consequence of the following inequalities: for all $u\in \Ldn $
\[
\nnorm{u}{\Ldn }{2} \; \le \; \norm{u}{{H}^{s}}{2} \; \le \;  {(1+{n}^{2})}^{s} \, \nnorm{u}{\Ldn }{2} .
\]
\end{proof}

\medskip
\section{Auxiliary results from functional analysis: the space $\Umath $ and the operator $L$} 
\label{sec:aux_funct.anal}

\medskip  \noindent
We have the following spaces, defined in Section \ref{sec:Hall-MHD_funct-setting}, which appear in the functional setting of problem \eqref{eq:Hall-MHD_u}-\eqref{eq:Hall-MHD_ini-cond}
\[
\Vtest \; \subset \; \Vmath \; \subset \; \Hmath  . 
\]
Recall that $\Vtest $, see \eqref{eq:Vmath_m1,m2}, is the space of test functions used in Definition \ref{def:mart-sol}.
For fixed $m >  \frac{5}{2}$, 
let us consider the space
\begin{equation}
\Vast \; := \; {\Vmath }_{m}, 
\label{eq:Vast}
\end{equation}
where ${\Vmath }_{m} = {V}_{m} \times {V}_{m} $ is defined by \eqref{eq:Vmath_m}.
The choice of the space $\Vast $ corresponds to the properties on nonlinear maps $\MHD $ and $\tHall $, 
see Lemmas \ref{lem:MHD-term_properties} and \ref{lem:tHall-term_properties} and Corollaries  \ref{cor:MHD-map_conv-aux} and \ref{cor:tHall-term_conv_general} in   Section \ref{sec:Hall-MHD_funct-setting}.
In fact, instead of \eqref{eq:Vast} the space $\Vast $ can be defined as $ {\Vmath }_{m_1,m_2}$ for fixed $m_1,m_2 >  \frac{5}{2}$, defined by \eqref{eq:Vmath_m1,m2}.
However, for us it will be sufficient use the space $\Vast $ defined by \eqref{eq:Vast}.

\medskip  \noindent
\bf Space $\Umath $. \rm 
Since  the embeddings of  Sobolev space are not compact in  the case of an unbounded domain,  
we introduce some auxiliary space $\Umath $ which will be of crucial importance in the compactness in tightness results.

\medskip  \noindent
Since $\Vast $ is dense in $\Hmath $ and the embedding $\Vast \hookrightarrow \Hmath $ is continuous, by Lemma 2.5 from \cite{Holly+Wiciak'1995} (see \cite[Lemma C.1]{Brze+EM'13})
there exists a separable Hilbert space $\Umath $  such that 
$\Umath \subset \Vast $, $\Umath $ is dense in $\Vast $ and  
\begin{equation}
\mbox{the embedding  ${\iota }_{} : \Umath \hookrightarrow \Vast   $ is compact.}
\label{eq:Umath}
\end{equation}
Then we have
\begin{equation}
\Umath \; \hookrightarrow  \; \Vast \; \; \hookrightarrow  \; \Vtest \;
\hookrightarrow \; \Vmath \; \hookrightarrow \; \Hmath .
\label{eq:spaces}
\end{equation}

\medskip  \noindent
\bf Operator L. \rm 
We define some auxiliary operator which will be used in the proof of the existence of martingale solutions.
By \eqref{eq:Umath} and \eqref{eq:spaces} we infer that, 
in particular, $\Umath $ is compactly embedded into the space $\Hmath $.
Let us  denote 
\[
\iota  : \; \Umath \; \hookrightarrow \; \Hmath 
\]
and let 
\[
{\iota }^{*}  :  \Hmath \; \to \;  \Umath  .
\]
be its adjoint operator.
Since the range of $\iota $ is dense in $\Hmath $, the map ${\iota }^{*} : \Hmath  \to \Umath  $ is one-to-one. Let us put
\begin{equation}
\begin{split}
D(L) \; &:= \; {\iota }^{*}(\Hmath ) \subset \Umath ,  \\
Lu \; &:= \;  \bigl( {\iota }^{*} {\bigr) }^{-1} u , \qquad u \in D(L) . 
\end{split} \label{eq:op_L}
\end{equation}
It is clear that $L:D(L) \to \Hmath  $ is onto. Let us also notice that
\begin{equation} \label{eq:op_L_ilsk}
\ilsk{Lu}{w}{\Hmath } \; = \; \ilsk{u}{w}{\Umath }, \qquad u \in D(L), \quad w \in \Umath .
\end{equation}
Indeed, by \eqref{eq:op_L} we have for all $u\in D(L)$ and $w\in \Umath $
\[
\ilsk{Lu}{w}{\Hmath } \; = \; \ilsk{{({\iota }^{\ast })}^{-1}u}{\iota w}{\Hmath }
\; = \; \ilsk{ {\iota }^{\ast} {({\iota }^{\ast })}^{-1}u}{ w}{\Umath }
\; = \; \ilsk{u}{w}{\Umath },
\]
which proves \eqref{eq:op_L_ilsk}.
By equality \eqref{eq:op_L_ilsk} and the density of $\Umath $ in $\Hmath $, we infer that $D(L)$ is dense in $\Hmath $.

\medskip
\section{Appendix: Compactness and tightness results. The Skorokhod theorem} \label{sec:comp-tight}

\medskip  \noindent
In this section we present some compactness and tightness results being a straightforward adaptations to our framework  of the results proved in \cite{Brze+EM'13} and \cite{EM'14}. We use the spaces $\Hmath $ and $\Vmath $ which appear in the statement of problem \eqref{eq:Hall-MHD_functional} (see Definition \ref{def:mart-sol}) as well as the auxiliary space $\Umath $ constructed in Appendix \ref{sec:aux_funct.anal}.
By \eqref{eq:Umath} and  \eqref{eq:spaces}, in particular,
we have
\begin{equation*}
\Umath  \; \hookrightarrow \; \Vmath \; \hookrightarrow \; \Hmath 
\; \cong \; {\Hmath }^{\prime } \; \hookrightarrow \; {\Umath }^{\prime },
\end{equation*}
the embedding  $ \Umath \hookrightarrow \Vmath   $ being compact.

\medskip  \noindent
Let us consider the following functional spaces being the counterparts in our framework of the spaces used in \cite{Brze+EM'13} and \cite{EM'14}, see also  \cite{Metivier'88}:
\begin{itemize}
\item $\ccal ([0,T],{\Umath }^{\prime }) $ := the space of  continuous functions 
 $ \phi :[0,T] \to {\Umath }^{\prime } $  with the topology  $ {\tcal }_{1}$ induced by norm 
${|\phi |}_{\ccal ([0,T];{\Umath }^{\prime })} := \sup_{t\in [0,T]} \nnorm{\phi (t)}{{\Umath }^{\prime }}{}$,
\item ${L}_{w}^{2}(0,T;\Vmath ) $ := the space ${L}^{2} (0,T;\Vmath )$ with the weak topology 
                     $ {\tcal}_{2} $,      
\item ${L}^{2}(0,T;{\Hmath }_{loc})$ := the space of measurable functions 
 $ \phi :[0,T] \to \Hmath   $ such that for all $ R \in \nat $
\begin{equation*}                 
{p}_{T,R}(\phi):= \Bigl(  \int_{0}^{T} \int_{{\ocal }_{R}} \bigl[ {|{\phi }_{1} (t,x)|}^{2} +  {|{\phi }_{2} (t,x)|}^{2} \bigr] \, dxdt {\Bigr) }^{\frac{1}{2}}
<\infty , 
\end{equation*}   
where $\phi  = ({\phi }_{1},{\phi }_{2})$,   
with the topology  $ {\tcal }_{3}$  generated by the seminorms 
$({p}_{T,R}{)}_{R\in \nat } .$                     
\end{itemize}

\medskip  \noindent
Let ${\Hmath }_{w}$ denote the Hilbert space $\Hmath $ endowed with the weak topology. 
Let us consider the fourth space, see \cite{Brze+EM'13},
\begin{itemize} 
\item $\ccal ([0,T];{\Hmath }_{w}) $ : = the space of weakly continuous functions  
$ \phi : [0,T] \to \Hmath $  with the weakest topology ${\tcal }_{4}$ such that for all 
                          $h \in \Hmath  $   the  mappings 
\begin{equation*}
\ccal ([0,T];{\Hmath }_{w}) \ni \phi   \mapsto \ilsk{\phi (\cdot )}{h}{\Hmath } \in \ccal  ([0,T];\rzecz )  
\end{equation*} 
are continuous.                     
In particular,  
${\phi }_{n} \to \phi  $ in $\ccal ([0,T];{\Hmath }_{w}) $ iff  for all $ h \in \Hmath  $:
$
\ilsk{{\phi }_{n} (\cdot )}{h}{\Hmath }  \to \ilsk{\phi (\cdot )}{h}{\Hmath }   \mbox{ in the space }  
\ccal  ([0,T];\rzecz ).
$ 
\end{itemize}

\medskip
\subsection{Compactness and tightness criteria} 

\medskip
\begin{definition}\label{def:space_Z_T}
\rm Let us put
\begin{equation}
\mathcal{Z} \; := \; {L}_{w}^{2}(0,T;\Vmath )  
\cap {L}^{2}(0,T;{\Hmath }_{loc})  \cap \ccal ([0,T];{\Hmath }_{w}) \cap \ccal ([0,T]; {\Umath }^{\prime }) 
\label{eq:Z_T}
\end{equation}
and let  $\mathcal{T} $ be   the supremum of the corresponding four topologies, i.e. the smallest topology on $\mathcal{Z}$ such that the four natural embeddings from $\mathcal{Z}$ are continuous.
The space  $\mathcal{Z}$  will  also considered with the Borel $\sigma $-algebra, denoted by $\sigma (\mathcal{Z})$, i.e. the smallest $\sigma $-algebra containing the family $\mathcal{T} $.
\end{definition}

\medskip  \noindent
The following compactness criterion is a simple modification of Lemma 4.1 from  \cite{EM'14}. See also \cite[Section 3.1]{Brze+EM'13} for the case of the Navier-Stokes equations.

\medskip  
\noindent
\begin{lemma}  \label{lem:Dubinsky_unbound_cont}  
\rm (see \cite[Lemma 4.1]{EM'14}) \it
Let
\[
\kcal \subset {L}^{\infty }(0,T;\Hmath ) \cap {L}^{2}(0,T;\Vmath ) \cap \ccal ([0,T];{\Umath }^{\prime })
\]
satisfy the following three conditions 
\begin{description}
\item[(a) ]  
$\, \, \sup_{\phi \in \kcal } \norm{\phi }{{L}^{\infty }(0,T;\Hmath )}{} < \infty  $,
i.e. $\kcal $ is bounded in ${L}^{\infty }(0,T;\Hmath )$, 
\item[(b) ] $ \sup_{\phi \in \kcal } \int_{0}^{T} \norm{\phi (s)}{\Vmath }{2} \, ds < \infty  $,
  i.e. $\kcal $ is bounded in ${L}^{2}(0,T;\Vmath )$,
\item[(c) ] $\lim{}_{\delta \to 0 } \sup_{\phi \in \kcal } \sup_{|t-s|\le \delta } \norm{\phi (t)-\phi (s)}{{\Umath }^{\prime }}{} =0 $.
\end{description}
Then $\kcal \subset \zcal $  and $\kcal $ is $\tcal $-relatively compact in $\zcal $ defined by \eqref{eq:Z_T}.
\end{lemma}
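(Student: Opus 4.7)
The plan is to verify that $\kcal$ is relatively compact in each of the four topologies $\tcal_1,\tcal_2,\tcal_3,\tcal_4$ defining $\tcal$, and then conclude that it is $\tcal$-relatively compact in $\zcal$ via a standard diagonal-subsequence argument (any sequence in $\kcal$ has a subsequence convergent in each of the four spaces; by uniqueness of limits in $\ccal([0,T];\Umath')$ the four limits coincide). I would also first quickly verify the inclusion $\kcal\subset\zcal$: the only nontrivial part is $\kcal\subset\ccal([0,T];\Hmath_w)$, which follows from (a) (uniform boundedness in $\Hmath$), (c) (continuity into $\Umath'$), and the density of $\Umath$ in $\Hmath$ combined with an $\eps/3$ argument of the type used in step three below.

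\textbf{Step 1 (relative compactness in $\ccal([0,T];\Umath')$).} Apply the Arzel\`a--Ascoli theorem. Equicontinuity in $\Umath'$ is exactly hypothesis (c). For pointwise relative compactness, fix $t\in[0,T]$; by (a), $\{\phi(t):\phi\in\kcal\}$ is bounded in $\Hmath$, and since $\Umath\hookrightarrow\Hmath$ is compact (see \eqref{eq:Umath}) its adjoint $\Hmath\cong\Hmath'\hookrightarrow\Umath'$ is also compact, so $\{\phi(t)\}$ is relatively compact in $\Umath'$.

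\textbf{Step 2 (relative compactness in $\ccal([0,T];\Hmath_w)$).} The key point is that a bounded subset of $\Hmath_w$ is metrizable. Given a sequence $(\phi_n)\subset\kcal$, extract by Step 1 a subsequence converging in $\ccal([0,T];\Umath')$ to some $\phi$. Using (a), $\phi\in L^\infty(0,T;\Hmath)$. For each $h\in\Umath$, the real functions $t\mapsto\ilsk{\phi_n(t)}{h}{\Hmath}=\ddual{\Umath'}{\phi_n(t)}{h}{\Umath}$ are uniformly equicontinuous by (c) and uniformly bounded by (a), and they converge pointwise to $t\mapsto\ilsk{\phi(t)}{h}{\Hmath}$; by Arzel\`a--Ascoli the convergence is uniform on $[0,T]$. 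A standard $\eps/3$ density argument (approximate arbitrary $h\in\Hmath$ by $h'\in\Umath$ and use the uniform $\Hmath$-bound) upgrades this to all $h\in\Hmath$, giving $\phi_n\to\phi$ in $\ccal([0,T];\Hmath_w)$.

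\textbf{Step 3 (relative compactness in $L_w^2(0,T;\Vmath)$).} This is immediate: $L^2(0,T;\Vmath)$ is a separable Hilbert space, so by (b) and Banach--Alaoglu bounded sets are weakly relatively compact (and weakly metrizable on bounded sets).

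\textbf{Step 4 (relative compactness in $L^2(0,T;\Hmath_{loc})$).} This is the main obstacle because $\Vmath\hookrightarrow\Hmath$ is not compact on $\rzecz^3$. For each $R\in\nat$, let $\rho_R$ denote restriction of the spatial variable to the bounded smooth domain $\ocal_R$. By the Rellich--Kondrachov theorem the restriction operator $\Vmath\to L^2(\ocal_R)\times L^2(\ocal_R)$ is compact, while $\Hmath\hookrightarrow L^2(\ocal_R)\times L^2(\ocal_R)\hookrightarrow\Umath'$ with continuous embeddings. Combining the boundedness in $L^2(0,T;\Vmath)$ from (b) with the equicontinuity in $\Umath'$ from (c), a standard Aubin--Lions/Simon-type argument (see e.g.\ \cite{EM'14}) applied on $[0,T]\times\ocal_R$ yields relative compactness of the restricted family in $L^2(0,T;L^2(\ocal_R,\rzecz^3)\times L^2(\ocal_R,\rzecz^3))$, i.e., with respect to the seminorm $p_{T,R}$. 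A diagonal extraction over $R\in\nat$ then produces a subsequence converging in all seminorms simultaneously, which is convergence in the Fr\'echet space $L^2(0,T;\Hmath_{loc})$. Finally, the same subsequential argument, combined with Steps 1--3, shows that every sequence in $\kcal$ admits a subsequence $\tcal$-convergent in $\zcal$, which is the claimed relative compactness.
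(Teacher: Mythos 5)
Your proof is correct and follows essentially the same route as the paper's source for this lemma (the paper itself gives no proof, citing \cite[Lemma 4.1]{EM'14} and \cite[Section 3.1]{Brze+EM'13}, where the argument is exactly your decomposition into the four topologies: Arzel\`a--Ascoli in $\ccal([0,T];\Umath')$ using the compactness of $\Hmath\hookrightarrow\Umath'$, Banach--Alaoglu in ${L}^{2}_{w}(0,T;\Vmath)$, the density/$\eps/3$ upgrade for $\ccal([0,T];{\Hmath}_{w})$, and a localized Aubin--Lions argument on the exhausting domains ${\ocal}_{R}$ for ${L}^{2}(0,T;{\Hmath}_{loc})$, followed by a diagonal extraction and identification of the limits). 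The only cosmetic imprecision is the chain $\Hmath\hookrightarrow L^2({\ocal}_{R})\times L^2({\ocal}_{R})\hookrightarrow\Umath'$ in Step 4 --- functions defined only on ${\ocal}_{R}$ do not embed into $\Umath'$; the tool actually used in the cited proofs is the Lions-type interpolation inequality $\norm{u}{L^2({\ocal}_{R})}{2}\le \eps\,\norm{u}{\Vmath}{2}+{C}_{\eps ,R}\nnorm{u}{\Umath'}{2}$ for $u\in\Vmath$, obtained by contradiction from the compactness of the restriction $\Vmath\to L^2({\ocal}_{R})$ --- but this does not affect the validity of your argument.
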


\medskip  \noindent 
One of the main tools in the construction of a martingale solution is the tightness criterion in the space $\mathcal{Z}$ defined in identity  \eqref{eq:Z_T}.
We  use a slight  modification of  the criterion stated in \cite[Corollary 3.9]{Brze+EM'13},  in the framework of the Navier-Stokes equation and  in \cite[Corollary 4.2]{EM'14}, where more general setting is considered.

\medskip
\begin{cor} \label{cor:tigthness_criterion} 
\it Let $(\Xn {)}_{n \in \nat }$ be a sequence of continuous $\mathbb{F}$-adapted 
${\Umath }^{\prime }$-valued processes such that
\begin{description}
\item[(a)] there exists a positive constant ${C}_{1}$ such that
\begin{equation}
\sup_{n\in \nat} \e \bigl[ \sup_{s \in [0,T]} \nnorm{\Xn (s) }{\Hmath }{}  \bigr] \; \le \; {C}_{1} ,
\label{cond-a}
\end{equation}
\item[(b)] there exists a positive constant ${C}_{2}$ such that
\begin{equation}
\sup_{n\in \nat} \e \Bigl[  \int_{0}^{T} \norm{\Xn (s)}{\Vmath }{2} \, ds    \Bigr] \; \le \; {C}_{2} ,
\label{cond-b}
\end{equation}
\item[(c)]  $(\Xn {)}_{n \in \nat }$ satisfies the Aldous condition  in ${\Umath }^{\prime }$.
\end{description}
Let ${\tilde{\p }}_{n}$ be the law of $\Xn $ on ${\zcal }_{}$.
Then for every $\eps >0 $ there exists a compact subset ${K}_{\eps }$ of ${\zcal }_{}$ such that
\[
{\tilde{\p }}_{n} ({K}_{\eps }) \; \ge \; 1-\eps .
\]
\end{cor}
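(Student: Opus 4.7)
The plan is to apply the compactness criterion of Lemma \ref{lem:Dubinsky_unbound_cont} in a standard way. Given $\varepsilon > 0$, I will exhibit the compact set $K_\varepsilon$ as an intersection of three pieces: one giving the $L^\infty(0,T;\Hmath)$ bound, one giving the $L^2(0,T;\Vmath)$ bound, and one providing the uniform modulus-of-continuity control in $\Umath'$ required by condition (c) of that lemma. Each piece will be shown to have probability at least $1 - \varepsilon/3$ uniformly in $n$, using Chebyshev's inequality together with assumptions (a) and (b), and using the Aldous condition for the third.

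More concretely, first choose $R_1 = 3C_1/\varepsilon$ and $R_2 = 3C_2/\varepsilon$ and set
\[
A_1 \; := \; \{ \phi \in \zcal : \sup_{s\in [0,T]} \nnorm{\phi(s)}{\Hmath}{} \le R_1 \}, \qquad
A_2 \; := \; \{ \phi \in \zcal : \int_0^T \norm{\phi(s)}{\Vmath}{2}\,ds \le R_2 \}.
\]
By (a), (b) and Chebyshev, $\tilde{\p}_n(A_i^c) \le \varepsilon/3$ for $i=1,2$ and all $n$. Next, for each $k \in \nat$, the Aldous condition in $\Umath'$ produces $\delta_k > 0$ such that
\[
\sup_{n\in\nat} \p\Bigl( \sup_{|t-s|\le \delta_k} \nnorm{\Xn(t)-\Xn(s)}{\Umath'}{} > \tfrac{1}{k}\Bigr) \; \le \; \frac{\varepsilon}{3\cdot 2^k},
\]
so that setting $A_3 := \bigcap_{k\in\nat}\{\phi \in \zcal : \sup_{|t-s|\le \delta_k}\nnorm{\phi(t)-\phi(s)}{\Umath'}{}\le 1/k\}$ yields $\tilde{\p}_n(A_3^c) \le \varepsilon/3$ uniformly in $n$ by the subadditivity of $\tilde{\p}_n$.

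I then define $K_\varepsilon$ as the closure in $(\zcal,\tcal)$ of $A_1 \cap A_2 \cap A_3$. The family $A_1\cap A_2 \cap A_3$ satisfies conditions (a), (b), (c) of Lemma \ref{lem:Dubinsky_unbound_cont} by construction (condition (c) holds because for any $\eta > 0$ one picks $k$ with $1/k < \eta$ and then $\delta = \delta_k$), so it is relatively compact in $\zcal$. Its closure $K_\varepsilon$ is therefore compact, and by construction
\[
\tilde{\p}_n(K_\varepsilon) \; \ge \; \tilde{\p}_n(A_1 \cap A_2 \cap A_3) \; \ge \; 1 - \sum_{i=1}^{3} \tilde{\p}_n(A_i^c) \; \ge \; 1 - \varepsilon
\]
uniformly in $n$, which is the desired tightness.

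The routine part is the Chebyshev step; the delicate point is (i) verifying that the probabilities controlled by the Aldous condition, stated in terms of the process paths, actually translate into probabilities of the corresponding path-space events under the laws $\tilde{\p}_n$ on $\zcal$ (this uses that the processes $\Xn$ have a.s.\ paths in $\ccal([0,T];\Umath')$, hence in $\zcal$, so we really work with Borel subsets of $\zcal$), and (ii) showing that the three defining sets $A_1, A_2, A_3$ are Borel in $(\zcal,\sigma(\tcal))$, which follows from continuity of the relevant seminorms with respect to the embeddings $\zcal \hookrightarrow \ccal([0,T];\Hmath_w)$, $\zcal \hookrightarrow L^2_w(0,T;\Vmath)$, and $\zcal\hookrightarrow \ccal([0,T];\Umath')$ and standard lower-semicontinuity of norms under weak convergence. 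Once these measurability issues are in place, passing to the closure preserves the probability estimate and relative compactness upgrades to compactness, completing the proof.
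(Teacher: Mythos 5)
Your proof is correct and follows essentially the same route as the paper, which simply defers to \cite[Corollary 3.9]{Brze+EM'13} and \cite[Corollary 4.2]{EM'14}: Chebyshev's inequality for the $\Hmath$- and $\Vmath$-bounds, the Aldous condition for the equicontinuity set, intersection of the three sets, passage to the closure, and the deterministic compactness criterion of Lemma \ref{lem:Dubinsky_unbound_cont}. The only step you assert rather than prove --- that condition \textbf{[A]} yields the uniform-in-$n$ modulus-of-continuity estimate defining $A_3$ --- is precisely the auxiliary lemma invoked in those references, so nothing essential is missing.
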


\medskip  \noindent
The proof of Corollary \ref{cor:tigthness_criterion} is essentially the same as the proof of
\cite[Corollary 3.9]{Brze+EM'13}  or \cite[Corollary 4.2]{EM'14}.

\medskip  \noindent 
Let us recall the Aldous condition in the form given by  M\'{e}tivier  \cite{Metivier'88}.

\medskip  
\begin{definition} (M. M\'{e}tivier) \label{def:Aldous}
\rm A sequence $({X}_{n}{)}_{n\in \nat }$  satisfies the \bf  Aldous condition \rm
in the space ${\Umath }^{\prime }$
iff
\begin{description}
\item[\mbox{\bf [A]\rm }] for every  $\eps >0 $ and $ \eta >0 $ there exists  $ \delta >0 $ such that for every sequence $({{\tau}_{n} } {)}_{n \in \nat }$ of $\mathbb{F}$-stopping times with
${\tau }_{n}\le T$ one has
\[
\sup_{n \in \nat} \, \sup_{0 \le \theta \le \delta }  \p \bigl\{
{| {X}_{n} ({\tau }_{n} +\theta )-{X}_{n} ( {\tau }_{n}  ) |}_{{\Umath }^{\prime }} \ge \eta \bigr\}  
\; \le \;  \eps .
\]
\end{description}
\end{definition}

\medskip  \noindent
Below we recall a sufficient condition for the Aldous condition.

\medskip  
\begin{lemma} \label{lem:Aldous_criterion} 
\rm (See \cite[Lemma 9]{EM'13}) \it
Let $(E,\norm{\cdot }{E}{})$ be a separable Banach space and let $(\Xn {)}_{n \in \nat }$ be a sequence of $E$-valued random variables such that 
\begin{itemize} 
\item[\mbox{\bf [A']\rm } ] there exist $\alpha ,\beta >0 $ and $C>0$ such that
for every sequence $({{\tau}_{n} } {)}_{n \in \nat }$ of $\mathbb{F}$-stopping times with
${\tau }_{n}\le T$ and for every $n \in \nat $ and $\theta \ge 0 $ the following condition holds
\begin{equation} \label{eq:Aldous_est}
\e \bigl[ \bigl( \norm{ \Xn ({\tau }_{n} +\theta )- \Xn  ( {\tau }_{n}  ) }{E}{\alpha } \bigr] 
\; \le \;  C {\theta }^{\beta } .
\end{equation}
\end{itemize}
Then the sequence $(\Xn {)}_{n\in \nat }$  satisfies  condition \rm \bf [A] \rm in the space $E$.  \rm
\end{lemma}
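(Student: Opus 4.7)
The plan is to derive the Aldous condition \textbf{[A]} from the moment bound \textbf{[A']} by a single application of the Chebyshev (Markov) inequality with the exponent $\alpha$. The key observation is that \textbf{[A']} provides a uniform-in-$n$ control of the $\alpha$-th moment of the increment $X_n(\tau_n+\theta)-X_n(\tau_n)$ that is polynomial in $\theta$, and this type of control converts directly into a polynomial-in-$\theta$ bound on the tail probability, which can then be made arbitrarily small by restricting $\theta$.

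First, I would fix arbitrary $\eps>0$ and $\eta>0$ and let $(\tau_n)_{n\in\nat}$ be an arbitrary sequence of $\Fmath$-stopping times with $\tau_n\le T$. For any $\theta\ge 0$ and any $n\in\nat$, the Chebyshev inequality applied to the nonnegative random variable $\norm{X_n(\tau_n+\theta)-X_n(\tau_n)}{E}{\alpha}$ gives
\[
\p\bigl\{\norm{X_n(\tau_n+\theta)-X_n(\tau_n)}{E}{} \ge \eta\bigr\}
\; = \; \p\bigl\{\norm{X_n(\tau_n+\theta)-X_n(\tau_n)}{E}{\alpha}\ge \eta^{\alpha}\bigr\}
\; \le \; \frac{\e\bigl[\norm{X_n(\tau_n+\theta)-X_n(\tau_n)}{E}{\alpha}\bigr]}{\eta^{\alpha}}.
\]
Invoking hypothesis \textbf{[A']} on the right-hand side, the numerator is bounded by $C\theta^{\beta}$, so
\[
\p\bigl\{\norm{X_n(\tau_n+\theta)-X_n(\tau_n)}{E}{}\ge \eta\bigr\}\; \le \; \frac{C\,\theta^{\beta}}{\eta^{\alpha}}.
\]
Crucially, the right-hand side depends neither on $n$ nor on the particular stopping time $\tau_n$; it depends only on $\theta$, $\eta$, $C$, $\alpha$, $\beta$.

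Next, I would choose
\[
\delta \; := \; \Bigl(\frac{\eps\,\eta^{\alpha}}{C}\Bigr)^{1/\beta},
\]
so that for every $0\le\theta\le\delta$ the estimate above yields $\p\{\cdots\}\le C\delta^{\beta}/\eta^{\alpha}=\eps$. Taking the supremum over $\theta\in[0,\delta]$ and over $n\in\nat$ gives exactly condition \textbf{[A]} from Definition \ref{def:Aldous}. Since $\eps,\eta$ were arbitrary, the sequence $(X_n)$ satisfies the Aldous condition in $E$.

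There is essentially no obstacle: the argument is a routine Chebyshev step, and the only subtle point worth emphasizing is that the bound in \textbf{[A']} is uniform across all admissible stopping time sequences, which is precisely what is needed to take the supremum in $n$ (and, implicitly, over the choice of $\tau_n$) in \textbf{[A]}. The separability of $E$ plays no role in the argument itself and is only needed to ensure that the quantities $\norm{X_n(\tau_n+\theta)-X_n(\tau_n)}{E}{}$ are genuine real-valued random variables, so that Chebyshev's inequality applies without measurability issues.
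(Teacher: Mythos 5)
Your proof is correct and is exactly the standard argument for this lemma: the paper does not reprove it but cites \cite[Lemma 9]{EM'13}, whose proof is the same Chebyshev--Markov estimate followed by the choice $\delta=(\eps\,\eta^{\alpha}/C)^{1/\beta}$. Nothing is missing; the uniformity of the constant $C$ in $n$ and in the stopping times, which you rightly emphasize, is indeed the point that makes the supremum in condition \textbf{[A]} go through.
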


\medskip
\subsection{Jakubowski's generalization of the Skorokhod theorem} 

\medskip  \noindent
In the proof of the theorem on the existence of a martingale solution we use a version of the Skorokhod theorem for nonmetric spaces. For convenience of the reader let us recall  the following Jakubowski's \cite{Jakubowski'98} generalization of the Skorokhod theorem.

\medskip
\begin{theorem} \label{th:2_Jakubowski} \rm (Theorem 2 in \cite{Jakubowski'98}). \it
Let $(\mathcal{X} , \tau )$ be a topological space such that there exists a sequence $({f}_{m}) $ of continuous functions ${f}_{m}:\mathcal{X}  \to \rzecz $ that separates points of $\mathcal{X} $.
Let $({X}_{n} )$ be a sequence of  $\mathcal{X} $-valued Borel random variables. Suppose that for every $\eps >0$
there exists a compact subset ${K}_{\eps} \subset \xcal $ such that
\[
\sup_{n \in \nat } \p (\{ {X}_{n} \in {K}_{\eps } \} ) \; > \;  1-\eps .
\]
Then there exists a subsequence $({X}_{{n}_{k}}{)}_{k\in \nat }$, a sequence $({Y}_{k}{)}_{k\in \nat }$ of $\mathcal{X} $-valued Borel random variables and an $\mathcal{X} $-valued Borel random variable $Y$ defined on some probability space  $(\Omega , \fcal ,\p )$ such that
\[
\lcal ({X}_{{n}_{k}}) \; = \; \lcal ({Y}_{k}), \qquad k=1,2,...
\]
and for all $ \omega \in \Omega $:
\[
{Y}_{k}(\omega ) \; \stackrel{\tau }{\longrightarrow } \; Y(\omega )  \quad \mbox{ as } \; k \to \infty .
\]
\end{theorem}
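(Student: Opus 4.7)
The plan is to reduce Jakubowski's theorem to the classical Skorokhod representation theorem on the Polish space $\mathbb{R}^\mathbb{N}$ by encoding $\mathcal{X}$ through the separating family, and then pull the almost sure convergence back to $\mathcal{X}$ via a Borel isomorphism defined on a $\sigma$-compact set of full probability. I interpret the tightness hypothesis in the standard form $\inf_n\mathbb{P}(X_n\in K_\varepsilon)>1-\varepsilon$.

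I would first set up the embedding: define $F:\mathcal{X}\to\mathbb{R}^\mathbb{N}$ by $F(x):=(f_m(x))_{m\geq1}$. This map is continuous in the product topology and injective by the separation property, which in turn forces $(\mathcal{X},\tau)$ to be Hausdorff. The key local observation is that for any $\tau$-compact $K\subset\mathcal{X}$, the restriction $F|_K:K\to F(K)$ is a continuous bijection from a compact Hausdorff space onto a Hausdorff subspace of $\mathbb{R}^\mathbb{N}$, hence a homeomorphism. In particular, on every compact subset of $\mathcal{X}$ the original topology $\tau$ coincides with the weak topology induced by $(f_m)$.

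Next I would transfer the tightness and apply the classical Skorokhod theorem. Choose increasing compacts $K_m\subset\mathcal{X}$ with $\sup_n\mathbb{P}(X_n\notin K_m)<2^{-m}$, so that the compact images $F(K_m)\subset\mathbb{R}^\mathbb{N}$ witness tightness of the pushforward laws $\mathcal{L}(F(X_n))$ on the Polish space $\mathbb{R}^\mathbb{N}$. Prokhorov's theorem produces a weakly convergent subsequence, and the classical Skorokhod representation theorem on $\mathbb{R}^\mathbb{N}$ then supplies $(n_k)$, a probability space $(\Omega,\mathcal{F},\mathbb{P})$, and $\mathbb{R}^\mathbb{N}$-valued Borel random variables $Z_k,Z$ satisfying $\mathcal{L}(Z_k)=\mathcal{L}(F(X_{n_k}))$ and $Z_k(\omega)\to Z(\omega)$ in $\mathbb{R}^\mathbb{N}$ for every $\omega$ outside a $\mathbb{P}$-null set (which I discard). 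Setting $K^{\ast}:=\bigcup_m K_m$, Borel--Cantelli yields $\mathbb{P}(X_n\in K^{\ast})=1$ for every $n$, hence $\mathbb{P}(Z_k\in F(K^{\ast}))=1$; since each $F(K_m)$ is closed in $\mathbb{R}^\mathbb{N}$, the Portmanteau inequality gives $\mathbb{P}(Z\in F(K_m))\geq\limsup_k\mathbb{P}(Z_k\in F(K_m))\geq 1-2^{-m}$, and therefore $\mathbb{P}(Z\in F(K^{\ast}))=1$ as well. The bijection $F|_{K^{\ast}}$ has Borel inverse (it is continuous on each compact $F(K_m)$), so setting
$$Y_k(\omega):=(F|_{K^{\ast}})^{-1}(Z_k(\omega)),\qquad Y(\omega):=(F|_{K^{\ast}})^{-1}(Z(\omega))$$
on the full-probability set (and any fixed value elsewhere) yields Borel $\mathcal{X}$-valued random variables with $\mathcal{L}(Y_k)=\mathcal{L}(X_{n_k})$, via the Borel-isomorphism property of $F|_{K^{\ast}}$.

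The main obstacle is the last step: upgrading the automatic coordinatewise convergence $f_m(Y_k(\omega))\to f_m(Y(\omega))$ (which is exactly $Z_k(\omega)\to Z(\omega)$) to $\tau$-convergence $Y_k(\omega)\to Y(\omega)$. By the homeomorphism property of the first paragraph, these two notions of convergence agree whenever the tail of $(Y_k(\omega))$ together with $Y(\omega)$ lies in a single compact $K_{m(\omega)}$; securing this ``stabilization in a compact'' is the technical heart of the proof. I would arrange it by refining the Skorokhod subsequence through a diagonal/Borel--Cantelli extraction that exploits the uniform tightness bounds: thinning so that the exceptional probabilities $\mathbb{P}(X_{n_k}\notin K_m)$ decay rapidly in $k$ uniformly in $m$, and combining this with the closedness of $F(K_m)$ in $\mathbb{R}^\mathbb{N}$ and the pointwise convergence $Z_k\to Z$, one forces the tail of $(Y_k(\omega))$ and the limit $Y(\omega)$ to lie in a common $K_{M(\omega)}$ for a.e.\ $\omega$. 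On that compact, $\tau$-convergence becomes equivalent to the already-established coordinatewise convergence of $F$. Finally, measurability bookkeeping for $(F|_{K^{\ast}})^{-1}$ and a harmless redefinition on the exceptional null set handle the ``for all $\omega$'' quantifier in the statement.
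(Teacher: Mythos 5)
You should first note that the paper contains no proof of this statement: Theorem \ref{th:2_Jakubowski} is quoted verbatim from Jakubowski \cite{Jakubowski'98}, so your proposal can only be measured against Jakubowski's original argument. Your reduction --- embed $\mathcal{X}$ into $\mathbb{R}^{\mathbb{N}}$ by $F=(f_m)_{m}$, observe that $F$ restricted to any compact is a homeomorphism onto its image, transfer tightness, apply Prokhorov and the classical Skorokhod theorem to the image laws, and pull back through the Borel inverse on $K^{*}=\bigcup_m K_m$ --- is sound up to and including the equality of laws, and your reading of the hypothesis with $\inf_n$ in place of $\sup_n$ is the correct, intended one. The gap is exactly where you locate it, but the mechanism you propose cannot close it. The numbers $\mathbb{P}(X_{n}\notin K_m)$ are bounded by $\varepsilon_m$ \emph{uniformly in $n$}, but they need not decay in $n$ at all, so no subsequence extraction can make them ``decay rapidly in $k$'': if the $X_n$ are identically distributed on a Hilbert space $H$ with the weak topology, with common law $\sum_j 2^{-j}\delta_{je_j}$ and $K_m:=\{0\}\cup\{je_j:j\le m\}$, then $\mathbb{P}(X_{n_k}\notin K_m)=2^{-m}$ for every subsequence, so $\sum_k\mathbb{P}(X_{n_k}\notin K_m)=\infty$ for every fixed $m$ and Borel--Cantelli can never trap the tail of $(Y_k(\omega))$ in a fixed compact. (In this example the theorem still holds, but because the atoms force $Z_k(\omega)=Z(\omega)$ eventually --- a mechanism your argument does not capture.)

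Moreover, the defect is not just in the bookkeeping: using the classical Skorokhod theorem as a black box genuinely loses the theorem, so the stabilization must be built \emph{into} the construction rather than extracted afterwards. Take $\mathcal{X}=H$ with the weak topology, $f_m=\langle\cdot,e_m\rangle$, and $\mathcal{L}(X_n)=(1-\tfrac1n)\delta_0+\tfrac1n\delta_{ne_n}$; this sequence is tight and $F_*\mathcal{L}(X_n)\to\delta_0$ on $\mathbb{R}^{\mathbb{N}}$ because $F(ne_n)\to0$ coordinatewise. On $([0,1],\mathrm{Leb})$ put $Z_k:=F(ke_k)\,\mathbf{1}_{B_k}$ and $Z:=0$, where the $B_k$ are intervals of length $1/k$ wrapping cyclically around $[0,1]$. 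Then $\mathcal{L}(Z_k)=F_*\mathcal{L}(X_k)$, $\mathcal{L}(Z)=\delta_0$, and $Z_k(\omega)\to Z(\omega)$ for \emph{every} $\omega$, so this is a legitimate output of the classical theorem; yet $\sum_k|B_k|=\infty$ puts every $\omega$ in infinitely many $B_k$, and testing against $h=\sum_j j^{-1}e_j\in H$ gives $\langle Y_k(\omega),h\rangle=\mathbf{1}_{B_k}(\omega)$, so $Y_k=F^{-1}(Z_k)$ converges weakly to $Y=0$ for \emph{no} $\omega$. A correct completion that stays inside your framework is to apply Prokhorov and Skorokhod to the enriched variables $\bigl(F(X_n),(\mathbf{1}_{K_m}(X_n))_{m\in\mathbb{N}}\bigr)$ in the Polish space $\mathbb{R}^{\mathbb{N}}\times\{0,1\}^{\mathbb{N}}$: almost sure convergence of the $\{0,1\}$-coordinates means each indicator $I^{(m)}_k$ is eventually equal to $I^{(m)}$; since $\mathbb{P}(I^{(m)}_k=1,\ Z_k\notin F(K_m))=0$ and, by Portmanteau applied to the open set $\{i^{(m)}=1,\ z\notin F(K_m)\}$, also $\mathbb{P}(I^{(m)}=1,\ Z\notin F(K_m))=0$, while dominated convergence gives $\mathbb{P}(I^{(m)}=1)\ge 1-\varepsilon_m$ and hence $\mathbb{P}(\exists m:\ I^{(m)}=1)=1$, one gets for a.e.\ $\omega$ a single $m(\omega)$ with $Z(\omega)\in F(K_{m(\omega)})$ and $Z_k(\omega)\in F(K_{m(\omega)})$ for all large $k$; your homeomorphism-on-compacts observation then yields $Y_k(\omega)\to Y(\omega)$ in $\tau$. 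This compact-membership bookkeeping, which Jakubowski's own construction encodes, is the missing idea.
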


\medskip  \noindent
Proceeding analogously to \cite[Corollary 3.12]{Brze+EM'13} or \cite[Remark C.2]{EM'14} it is easy to prove the following result for the space $\mathcal{Z}$ from Definition \ref{def:space_Z_T}.

\medskip
\begin{lemma}\label{lem-Z_T} 
The topological space $\mathcal{Z}$  satisfies the assumptions of Theorem \ref{th:2_Jakubowski}.
\end{lemma}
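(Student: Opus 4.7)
\textbf{Proof proposal for Lemma \ref{lem-Z_T}.} The goal is to exhibit a countable family ${(f_m)}_{m\in\nat}$ of $\tcal$-continuous functions $f_m:\zcal\to\rzecz$ which separates the points of $\zcal$. Since $\zcal$ is, by Definition~\ref{def:space_Z_T}, carried with the supremum $\tcal$ of the four component topologies, the natural embedding
\[
\iota : (\zcal,\tcal) \; \hookrightarrow \; \ccal([0,T];{\Hmath}_{w})
\]
is continuous. This one embedding will suffice for both continuity and separation, so I plan to draw the entire separating family out of the $\ccal([0,T];{\Hmath}_{w})$ factor.

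\textbf{Construction.} Let ${(h_j)}_{j\in\nat}\subset \Hmath$ be a countable dense sequence in $\Hmath$, and let ${(t_k)}_{k\in\nat}$ be an enumeration of $\q\cap[0,T]$ together with the endpoints $0,T$. For each pair $(j,k)\in\nat\times\nat$ define
\[
f_{j,k} : \zcal \; \to \; \rzecz, \qquad f_{j,k}(\phi) \; := \; \ilsk{\phi(t_k)}{h_j}{\Hmath}.
\]
Re-indexing $(j,k)\mapsto m$ gives the desired countable family ${(f_m)}_{m\in\nat}$.

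\textbf{Step 1: Continuity of $f_{j,k}$ on $\zcal$.} By the definition of the topology on $\ccal([0,T];{\Hmath}_{w})$, for every fixed $h\in\Hmath$ the map
\[
\ccal([0,T];{\Hmath}_{w}) \; \ni \; \phi \; \mapsto \; \ilsk{\phi(\cdot)}{h}{\Hmath} \; \in \; \ccal([0,T];\rzecz)
\]
is continuous, and evaluation at $t_k$ is continuous $\ccal([0,T];\rzecz)\to\rzecz$. Composing with the continuous embedding $\iota$ shows $f_{j,k}\in\ccal((\zcal,\tcal),\rzecz)$.

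\textbf{Step 2: Separation of points.} Let ${\phi}^{(1)},{\phi}^{(2)}\in\zcal$ with ${\phi}^{(1)}\ne{\phi}^{(2)}$. Both belong to $\ccal([0,T];{\Hmath}_{w})$, so there exists $t_\ast\in[0,T]$ with ${\phi}^{(1)}(t_\ast)\ne{\phi}^{(2)}(t_\ast)$ in $\Hmath$. Pick $h\in\Hmath$ with $\ilsk{{\phi}^{(1)}(t_\ast)-{\phi}^{(2)}(t_\ast)}{h}{\Hmath}\ne 0$. By weak continuity of the map $t\mapsto\ilsk{{\phi}^{(1)}(t)-{\phi}^{(2)}(t)}{h}{\Hmath}\in\rzecz$, there is a rational $t_k$ near $t_\ast$ for which $\ilsk{{\phi}^{(1)}(t_k)-{\phi}^{(2)}(t_k)}{h}{\Hmath}\ne 0$, and then by density of ${(h_j)}$ in $\Hmath$ one finds $h_j$ so close to $h$ that $f_{j,k}({\phi}^{(1)})\ne f_{j,k}({\phi}^{(2)})$.

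\textbf{Expected obstacle.} The argument is essentially bookkeeping; the only point that requires care is that elements of $\zcal$ are genuine functions $[0,T]\to\Hmath$ (inherited from the $\ccal([0,T];{\Hmath}_{w})$ component) and not equivalence classes in $L^2$, so that pointwise evaluation $\phi\mapsto\phi(t_k)$ is meaningful and two distinct elements of $\zcal$ must differ at some $t_\ast$. I expect no analytical difficulty; this is the same pattern already used in \cite[Lemma 3.12]{Brze+EM'13} (and noted in \cite[Remark C.2]{EM'14}), adapted to the present product-space setting.
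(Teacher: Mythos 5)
Your proposal is correct: the continuity of each $f_{j,k}$ follows exactly as you say from the definition of the topology ${\tcal }_{4}$ on $\ccal ([0,T];{\Hmath }_{w})$ and the fact that $\tcal $ is finer than it, and your separation argument is sound because two distinct elements of $\zcal $ are genuinely distinct as functions $[0,T]\to \Hmath $ (equality in the $\ccal ([0,T];{\Hmath }_{w})$ component forces equality a.e., hence equality in the other three components), a point you rightly flag.

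The route differs slightly from the one the paper invokes. The paper does not write out a proof but refers to \cite[Corollary 3.12]{Brze+EM'13} and \cite[Remark C.2]{EM'14}, where the separating family is pulled back through the other continuous injection $\zcal \hookrightarrow \ccal ([0,T];{\Umath }^{\prime })$: since $\ccal ([0,T];{\Umath }^{\prime })$ is a separable metric space, it automatically carries a countable family of continuous real functions separating its points (for instance distances to a countable dense set), and composing with the embedding finishes the argument. Your approach instead works through the $\ccal ([0,T];{\Hmath }_{w})$ factor and builds the family explicitly as evaluations $\phi \mapsto \ilsk{\phi (t_k)}{h_j}{\Hmath }$ at rational times against a countable dense set in $\Hmath $. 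What the reference argument buys is brevity: one abstract fact about separable metric spaces and no construction at all. What yours buys is that it is self-contained and produces concrete functionals, which are moreover exactly the functionals used later in the limiting arguments (e.g. in Lemma \ref{lem:pointwise_conv}(a)); it also avoids even mentioning the auxiliary space $\Umath $. Both are complete proofs; there is no gap in yours.
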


\medskip
\section{Proof of Lemma \ref{lem:Hall-term_conv_general}} \label{app:Hall-term_conv_general_proof}

\medskip
\begin{proof}
\bf Step ${1}^{0}$. \rm Assume first that $\psi \in \vcal $. There exists $R>0 $ such that $\supp \psi $ is a compact subset of the ball $K_R:= \{ x \in {\rzecz }^{3} : \; |x| < R \} $.
We have
\[
\begin{split}
&\int_{0}^{t} [\dual{\Hall (\un (s),\wn (s))}{\psi }{}  - \dual{\Hall (u(s),w(s))}{\psi }{} ] \, ds  \\
\; &= \; \int_{0}^{t}\dual{\Hall (\un (s) -u (s) ,\wn (s))}{\psi }{} \, ds  
+ \int_{0}^{t}\dual{\Hall (u(s), \wn (s)- w(s))}{\psi }{} \, ds \; =: \; I_1(n) + I_2(n).
\end{split}
\]
We will analyze separately the terms $I_1(n)$ and $I_2(n)$. 

\medskip  \noindent
Let us consider the term $I_1(n)$. 
By \eqref{eq:hall-form}, \eqref{eq:Hall-map_V-V},
 the H\"{o}lder inequality and the Sobolev embedding theorem we obtain
\[
\begin{split}
&|\dual{\Hall (u,w)}{\psi }{}| \; = \; 
|\hall (u,w,\psi )| \; = \; \Bigl| \int_{K_R} [u \times (\curl w)] \cdot \curl \psi \, dx   \Bigr|
\; \le \; \nnorm{u}{L^2(K_R)}{} \cdot \nnorm{\curl w}{L^2}{} \cdot \nnorm{\curl \psi }{L^{\infty }}{} \\
\; &\le \tilde{c} \, \nnorm{u}{L^2(K_R)}{} \cdot \norm{w}{H^1}{} \cdot \norm{\curl \psi }{H^{m -1}}{} 
\; \le c \, \nnorm{u}{L^2(K_R)}{} \cdot \norm{w}{H^1}{} \cdot \norm{\psi }{H^{m }}{}.
\end{split}
\]  
for every $m> \frac{5}{2}$.
Thus by the Schwarz inequality for all $t \in 0,T]$
\[
\begin{split}
|I_1(n)| \; &\le \; c \, \int_{0}^{t} \nnorm{\un (s) - u(s)}{L^2(K_R)}{} \cdot \norm{\wn }{H^1}{} \, ds  \cdot \norm{\psi }{H^{m }}{} \\
\; & \le \; c\, \norm{\un -u}{{L}^{2}(0,T;L^2(K_R))}{} \cdot \norm{\wn }{{L}^{2}(0,T;V)}{}
 \cdot \norm{\psi }{H^{m }}{} \\
\; &= \; c\, {p}_{T,R} (\un -u ) \cdot \norm{\wn }{{L}^{2}(0,T;V)}{} \cdot \norm{\psi }{H^{m }}{} .
\end{split}
\]
Recall that ${p}_{T,R}$ denotes  the seminorm defined by \eqref{eq:seminorms-L^2(0,T;H_loc)}. 
Since $\un \to u $ in ${L}^{2}(0,T; {H}_{loc})$ and the sequence $(\wn )$ is bounded in ${L}^{2}(0,T;V)$, we infer that
\[
\lim_{n\to \infty } I_1(n) \; = \; 0.
\] 
\noindent
Let us move to the term $I_2(n)$.  Note that 
\[
\dual{\Hall (u,\wn -w)}{\psi }{}
\; = \; \hall (u, \wn - w , \psi ) \; = \; \int_{{\rzecz }^{3}} [u \times \curl (\wn -w )] \cdot \curl \psi \, dx.
\]
On the other hand, the map 
\[
\xi : L^2(0,T;L^2({\rzecz }^{3},{\rzecz }^{3})) \ni z \; \mapsto \, \int_{{\rzecz }^{3}} [u \times z] \cdot \curl \psi \, dx  \in \rzecz 
\]
is continuous linear functional on the space  $L^2(0,T;L^2({\rzecz }^{3},{\rzecz }^{3}))$.
 Since $\wn \to w $ weakly in ${L}^{2}(0,T;V)$, we infer that $\frac{\partial \wn }{\partial x_i} \to \frac{\partial w}{\partial x_i}$ weakly in ${L}^{2}(0,T;{L}^{2}({\rzecz }^{3},{\rzecz }^{3}))$  for each $i=1,2,3$. Thus  $\curl (\wn -w ) \to 0 $  weakly in ${L}^{2}(0,T;{L}^{2}({\rzecz }^{3},{\rzecz }^{3}))$.
 In conclusion,
\[
\lim_{n\to \infty } I_2(n) \; = \; \lim_{n\to \infty } \xi [\curl (\wn -w )] \; = \; 0 .
\]
\bf Step ${2}^{0}$. \rm If $\psi \in \Hsol{m} $, then for every $\eps >0 $ there exists ${\psi }_{\eps } \in \vcal $ such that $\norm{\psi - {\psi }_{\eps }}{{H}^{m }}{} < \eps $.
We have
\[
\begin{split}
&\dual{\Hall (\un ,\wn )-\Hall (u,w)}{\psi }{}   \\
\; &= \; \dual{\Hall (\un ,\wn )-\Hall (u,w)}{\psi -{\psi }_{\eps }}{} 
+ \dual{\Hall (\un ,\wn )-\Hall (u,w)}{{\psi }_{\eps }}{} .
\end{split}
\]
Thus, by \eqref{eq:Hall-map_est-H-V}
\[
\begin{split}
& \Bigl| \int_{0}^{t} \dual{\Hall (\un (s),\wn (s))-\Hall (u(s),w(s))}{\psi }{} \, ds \Bigr| \\
\; &\le \; \int_{0}^{t} |\dual{\Hall (\un (s),\wn (s))-\Hall (u(s),w(s))}{\psi -{\psi }_{\eps }}{} | \, ds 
\\
& \qquad + \Bigl| \int_{0}^{t} \dual{\Hall (\un (s),\wn (s))-\Hall (u(s),w(s))}{{\psi }_{\eps }}{} \, ds \Bigr|
\\
\; &\le \; \int_{0}^{t}\Bigl[ 
\nnorm{\Hall (\un (s),\wn (s))}{\Hsolprime{m}}{} 
+ \nnorm{\Hall (u(s),w(s))}{\Hsolprime{m}}{}  \Bigr] \cdot \norm{\psi - {\psi }_{\eps }}{{H}^{m }}{} \Bigr] \, ds \\
& \qquad + \Bigl| \int_{0}^{t} \dual{\Hall (\un (s),\wn (s))-\Hall (u(s),w(s))}{{\psi }_{\eps }}{} \, ds \Bigr|  \\
\; & \le \;  \eps \, c \, \int_{0}^{t}\Bigl[ \nnorm{\un (s)}{L^2}{} \norm{\wn (s)}{H^1}{} +  \nnorm{u(s)}{L^2}{} \norm{w(s)}{H^1}{}  \Bigr] \, ds  \\
& \qquad + \Bigl| \int_{0}^{t} \dual{\Hall (\un (s),\wn (s))-\Hall (u(s),w(s))}{{\psi }_{\eps }}{} \, ds \Bigr| 
\\
\; & \le \;  \frac{\eps \, c }{2}\, \int_{0}^{t}\Bigl[ \nnorm{\un (s)}{L^2}{2} + \norm{\wn (s)}{H^1}{2} +  \nnorm{u(s)}{L^2}{2}+  \norm{w(s)}{H^1}{2}  \Bigr] \, ds  \\
& \qquad + \Bigl| \int_{0}^{t} \dual{\Hall (\un (s),\wn (s))-\Hall (u(s),w(s))}{{\psi }_{\eps }}{} \, ds \Bigr|  
\\
\; & \le \;  \frac{\eps \, c }{2}\, \Bigl[ \sup_{n\in \nat } \Bigl( \norm{\un }{L^{2}(0,T;H)}{2} + \norm{\wn }{L^2(0,T;V)}{2} \Bigr)  +   \norm{u}{L^{2}(0,T;H)}{2}+ \norm{w}{{L}^{2}(0,T;V)}{2}  \Bigr]  \\
& \qquad + \Bigl| \int_{0}^{t} \dual{\Hall (\un (s),\wn (s))-\Hall (u(s),w(s))}{{\psi }_{\eps }}{} \, ds \Bigr| .
\end{split}
\]  
Passing to the upper limit as $n \to \infty $ and using step ${1}^{0}$ we obtain
\[
\limsup_{n\to \infty } \Bigl| \int_{0}^{t} \dual{\Hall (\un (s),\wn (s))-\Hall (u(s),w(s))}{\psi }{} \, ds \Bigr| \; \le \; M \eps ,
\]
where $M=\frac{c}{2}\, \bigl[ \sup_{n\in \nat } \bigl( \norm{\un }{L^{2}(0,T;H)}{2} + \norm{\wn }{L^2(0,T;V)}{2} \bigr)  +   \norm{u}{L^{2}(0,T;H)}{2}+ \norm{w}{{L}^{2}(0,T;V)}{2}  \bigr] < \infty $.
Since $\eps >0 $ is arbitrary, we infer that the assertion holds for all 
$\psi \in \Hsol{m} $. The proof of Lemma \ref{lem:Hall-term_conv_general} is thus complete.
\end{proof}

\medskip

\end{document}